\def\be{\begin{equation}}
\def\ee{\end{equation}}
\def\hat{\widehat}
\def\tilde{\widetilde}
\def\P{{\mathcal P}}
\def\tilde{\widetilde}
\def\max{{\mathrm{max}}}
\newcommand{\ep}{\epsilon}
\newcommand{\tf}{\tilde{f}}
\newtheorem{thm}{Theorem}[subsection]
\newaliascnt{lmm}{thm}
\newtheorem{lmm}[lmm]{Lemma}
\newtheorem{cor}[thm]{Corollary}
\newtheorem{prop}[thm]{Proposition}
\theoremstyle{definition}
\newcommand{\md}{\mathcal{D}}
\newcommand{\mf}{\mathcal{F}}
\newcommand{\cp}{\mathcal{P}}
\numberwithin{equation}{subsection}
\renewcommand{\Re}{\operatorname{Re}}
\newcommand{\slc}{\mathrm{SL}(2,\C)}
\newcommand{\Z}{\mathbb{Z}}
\newcommand{\R}{\mathbb{R}}
\newcommand{\C}{\mathbb{C}}
\newcommand{\E}{\mathbb{E}}
\renewcommand{\P}{\mathbb{P}}
\renewcommand{\tilde}{\widetilde}
\renewcommand{\hat}{\widehat}
\renewcommand{\Im}{\operatorname{Im}}
\newcommand{\normord}[1]{:\mathrel{#1}:}
\newcommand{\vertiii}[1]{{\vert\kern-0.25ex\vert\kern-0.25ex\vert #1 
    \vert\kern-0.25ex\vert\kern-0.25ex\vert}}
\newcommand{\tmu}{\tilde{\mu}}
\newcommand{\talpha}{\tilde{\alpha}}
\begin{document}

\title{\bf Rigorous results for timelike Liouville field theory}
\author{Sourav Chatterjee\thanks{Department of Statistics, Stanford University, 390 Jane Stanford Way, Stanford, CA 94305, USA. Email: \href{mailto:souravc@stanford.edu}{\tt souravc@stanford.edu}.}}
\affil{Stanford University}


\maketitle


\begin{abstract}
Liouville field theory has long been a cornerstone of two-dimensional quantum field theory and quantum gravity, which has attracted much recent attention in the mathematics literature. Timelike Liouville field theory is a version of Liouville field theory where the kinetic term in the action appears with a negative sign, which makes it closer to a theory of quantum gravity than ordinary (spacelike) Liouville field theory. Making sense of this `wrong sign' requires a theory of Gaussian random variables with negative variance. Such a theory is developed in this paper, and is used to prove the timelike DOZZ formula for the $3$-point correlation function when the parameters satisfy the so-called `charge neutrality condition'. Expressions are derived also for the $k$-point correlation functions for all $k\ge 3$, and it is shown that these functions approach the correct semiclassical limits as the coupling constant is sent to zero.
\newline
\newline
\noindent {\scriptsize {\it Key words and phrases.} Liouville field theory, quantum gravity, conformal field theory, DOZZ formula, semiclassical limit.}
\newline
\noindent {\scriptsize {\it 2020 Mathematics Subject Classification.} 81T40, 17B69, 81T20.}
\end{abstract}



\tableofcontents


\section{Introduction}
Liouville field theory was introduced by \citet{polyakov81} in 1981 in the context of bosonic string theory and 2D quantum gravity. In a nutshell, it is a 2D conformal field theory whose classical equation of motion is a generalization of Liouville's equation, which is a PDE describing the evolution of a Riemannian metric on $\R^2$. Liouville field theory has found applications in various areas of theoretical physics, including string theory \cite{polyakov81}, three-dimensional general relativity \cite{coussaertetal95}, string theory in anti-de Sitter space \cite{ribaultteschner05}, and supersymmetric gauge theory \cite{aldayetal10}. Recent years have seen an explosion of activity in the mathematical literature on proving the many tantalizing conjectures made by physicists in the early years of Liouville field theory. This includes the connection with Gaussian multiplicative chaos and the proof of the KPZ formula by \citet{duplantiersheffield11}, the proof of the DOZZ formula by \citet*{kupiainenetal20}, the connection with the Brownian map by \citet{millersheffield20}, existence and uniqueness of the Liouville metric by~\citet{dingetal20} and \citet{gwynnemiller21}, and many other pathbreaking works~\cite{gwynneetal19, gwynneetal20, aruetal21, davidetal16, huangetal18, bourgadefalconet22, gwynnepfeffer22, anggwynne21, milleretal22, borgaetal22}. We refer to \cite{chatterjeewitten25, berestyckipowell24} for surveys.


Liouville field theory has a parameter $b>0$ known as the `Liouville coupling constant'. When this parameter is replaced by $ib$, where $i=\sqrt{-1}$, we obtain `timelike' or `imaginary' Liouville field theory (in contrast with the usual Liouville field theory, which is sometimes called `spacelike' Liouville field theory). Timelike Liouville theory has applications in quantum cosmology~\cite{freivogeletal06, sekinosusskind09, harlowsusskind10}, tachyon condensation~\cite{stromingertakayanagi03}, and other areas of theoretical physics. Timelike Liouville theory also has deep and unexpected connections in probability theory and statistical mechanics. \citet{delfinoviti10} conjectured a formula for the $3$-point connectivity probabilities in 2D critical percolation in terms of the $3$-point correlation function of timelike Liouville theory. \citet*{ikhlefetal16} conjectured a similar formula for the nesting loops statistics of conformal loop ensembles. Both conjectures were recently proved by \citet*{angetal21}.

Replacing $b$ by $ib$ and replacing the Liouville field $\phi$ by $i\phi$ have the cumulative effect of reversing the sign in front of the kinetic term in the Liouville action. The wrong sign in front of the kinetic term is a signature of models of quantum gravity~\cite{hawking78}. For this reason, timelike Liouville theory is closer to a theory of 2D quantum gravity than ordinary (spacelike) Liouville theory~\cite{bautistaetal19}. From a mathematical perspective, the wrong sign presents an unusual challenge. While spacelike Liouville theory has been made rigorous using tools from probability theory, converting those proofs to the timelike case (or indeed, any `true' model of quantum gravity) would require a theory of Gaussian random variables with negative variance. One of the contributions of this paper is the development of such a theory. 

The key object that allows explicit computations in spacelike Liouville field theory is the $3$-point correlation function, given by the celebrated DOZZ formula (named in honor of those who first computed it, \citet{dornotto94} and \citet{zamolodchikovzamolodchikov96}). The DOZZ formula was proved rigorously by \citet*{kupiainenetal20} in 2016, nearly twenty years after it was discovered in the physics literature. Surprisingly, although timelike Liouville theory is formally obtained by replacing $b$ by $ib$, it turns out that a straightforward analytic continuation to replace $b$ by $ib$ in the DOZZ formula does not yield the correct $3$-point function for timelike Liouville theory~\cite{zamolodchikov05}. An explicit expression, called the timelike DOZZ formula, was proposed by \citet{schomerus03}, \citet{zamolodchikov05}, and \citet{kostovpetkova06, kostovpetkova07, kostovpetkova07a}, with various heuristic justifications. \citet{schomerus03}, for instance, obtained the formula assuming that certain recursion relations discovered by \citet{teschner95} for spacelike Liouville theory (and rigorously proved by \citet{kupiainenetal20}) continue to hold for the timelike theory. Later, \citet*{harlowetal11} argued that the formula may be obtained by changing the cycle of integration for the path integral, and \citet{giribet12} argued that it can obtained from a Coulomb gas representation. 

The quest for a rigorous construction of timelike Liouville theory was initiated by \citet*{guillarmouetal23} for a compactified version of this theory, where the Liouville field at a point is allowed to take values in a compact subset of the real line instead of the entire real line. In the present paper, we construct the original non-compact version of the theory in a subset of the parameter space (satisfying the so-called `charge neutrality condition') using our theory of wrong sign Gaussian distributions, and prove the validity of the timelike DOZZ formula in this region. In addition to this, we also give expressions for $k$-point correlation functions for all $k\ge 3$. Interestingly, for $k=3$, the formulas so obtained are very similar to the ones obtained in \cite{guillarmouetal23} for the compact theory under similar conditions on the parameters. The two models are, however, different in other aspects; for example, the compact theory has a discrete spectrum (as noted in \cite{guillarmouetal23}) and the non-compact theory is likely to have continuous spectrum~\cite{ribaultsantachiara15}.  Lastly, we show that the correlation functions approach the correct semiclassical limits for timelike Liouville theory as $b\to 0$.

In the remainder of this section, we give the heuristic definitions of timelike Liouville theory and its correlation functions, as they appear in physics, together with a summary of the main results.

\subsection{The action for timelike Liouville theory}
Let $g(z)|dz|^2$ be the round metric on $\C$, given by 
\[
g(z) := \frac{4}{(1+|z|^2)^2}. 
\]
For a field $\phi:\C \to\R$, the action for timelike Liouville theory with background metric $g$ is heuristically defined as
\begin{align}\label{idef}
I(\phi) = \frac{1}{4\pi}\int_{\C}  (\phi(z)\Delta_g\phi(z) + 2Q \phi(z) + 4\pi \mu \normord{e^{2b\phi(z)}})g(z) d^2z,
\end{align}
where the various terms are as follows:
\begin{itemize}
\item $b$ is a positive constant known as the Liouville coupling constant.
\item $\mu$ is another positive constant known as the cosmological constant.
\item $Q := b - \frac{1}{b}$. 
\item $\Delta_g $ is the Laplacian operator for the metric $g$, given by 
\[
\Delta_g \phi (z) = g(z)^{-1}\Delta\phi(z),
\]
where $\Delta \phi$ is the usual Laplacian of $\phi$, viewing $\phi$ as a function of two real variables.
\item $\int_{\C} \ldots d^2z$ denotes integration with respect to Lebesgue measure on $\C$.
\item $\normord{e^{2b\phi(z)}}$ denotes the normal ordered version of the function $e^{2b\phi(z)}$, defined heuristically~as 
\begin{align*}
\normord{e^{2b\phi(z)}} &= e^{2b\phi(z)+2b^2G_g(z,z)}, 
\end{align*}
where $G_g$ is the Green's function for $g$, defined as the inverse of $-\frac{1}{2\pi}\Delta_g$ on functions that integrate to zero with respect to $g(z)d^2z$. The Green's function $G_g$ is explicitly given by
\[
G_g(z,z') = \sum_{n=1}^\infty \frac{2\pi }{\lambda_n} f_n(z)f_n(z'),
\]
where $\{f_n\}_{n\ge 0}$ are eigenfunctions of $-\Delta_g$ with eigenvalues $0=\lambda_0<\lambda_1<\cdots$. Note that the above definition of $\normord{e^{2\phi(z)}}$ is not mathematically well-defined because $G_g(z,z)= \infty$ for all $z$. 
\end{itemize}
Readers familiar with spacelike Liouville field theory may recognize that the above action is obtained by replacing $b$ with $ib$ and $\phi$ with $i\phi$ in the action for the spacelike theory. 

Heuristically, timelike Liouville field theory defines a `measure' on the space of fields that has density $e^{-I(\phi)}$ with respect to `Lebesgue measure' on the space of fields. A major problem with making sense of the above `measure' is that the action $I$ is unbounded below. To see this, observe that by integration by parts, the first term in the action (which is called the `kinetic term') is given by 
\[
\int_{\C} \phi(z) \Delta_g \phi(z) g(z) d^2z = - \int_{\C} |\nabla \phi(z)|^2 d^2z,
\]
where $\nabla \phi$ is the gradient of $\phi$ (viewing $\phi$ as a function of two real variables) and $|\nabla \phi(z)|$ is the Euclidean norm of $\nabla \phi(z)$. This means that we can make $\phi$ more and more wiggly to make this term diverge to $-\infty$, while preserving the remaining terms in the action within finite bounds. This problem does not occur in spacelike Liouville theory, because the kinetic term $\int |\nabla \phi|^2$ appears with a plus sign. The appearance of the kinetic term with the `wrong' (i.e., negative) sign in the action is a common feature of models in quantum gravity. Its most consequential appearance is in the Einstein--Hilbert action for Einstein gravity~\cite{gibbonshawking77}, which is one of the roadblocks to quantizing Einstein gravity.

\subsection{Correlation functions of timelike Liouville theory}
Once we have some kind of sense of a measure with density $e^{-I(\phi)}$ on the space of fields, we would then like to understand the behavior of a `random' field $\phi$ `drawn' from this measure, in the sense of drawing a random field from a probability distribution. The main role of such a random field $\phi$ is that it defines a random metric $\normord{e^{2b\phi(z)}} g(z) |dz|^2$ on $\C$. Any theory of quantum gravity is a theory of a random metric that fluctuates around the critical points of the action, where the critical points give the classical equations of motion. For example, the critical points of the Einstein--Hilbert action are the metrics on $\R^4$ that satisfy Einstein's equation of general relativity. 

One way to understand the behavior of this random field is to take expectations of observables  like $e^{2\alpha \phi(z)}$ for $\alpha, z \in \C$. More generally, we can take expectations of products of such observables. To get finite results, we must normalize $e^{2\alpha \phi(z)}$ appropriately. This normalization yields the so-called `vertex operators'. Take any $k\ge 1$, and let $z_1,\ldots,z_k$ be distinct points in $\C$. Let $\alpha_1,\ldots,\alpha_k$ be arbitrary complex numbers. The $k$-point correlation function of timelike Liouville theory is heuristically is defined~as
\begin{align}\label{favg}
C(\alpha_1,\ldots,\alpha_k; z_1,\ldots,z_k; b; \mu) :=\int \biggl(\prod_{j=1}^k V_\phi(\alpha_j, z_j) \biggr)e^{-I(\phi)}\mathcal{D}\phi,
\end{align}
where $\int \ldots \mathcal{D}\phi$ denotes integration with respect to a hypothetical `Lebesgue measure' on the space of functions from $\C$ into $\R$, $I$ is the action defined in equation \eqref{idef}, and $V_\phi(\alpha,z)$ is the `vertex operator' 
\begin{align*}
V_\phi(\alpha, z) &:= e^{\chi \alpha(b-\alpha)} g(z)^{-\Delta_\alpha}\normord{e^{2\alpha \phi(z)}},
\end{align*}
where $\Delta_\alpha := \alpha(Q-\alpha)$ and $\chi := \ln 4 - 1$. The number $\Delta_\alpha$ is called the `conformal weight' of the vertex operator, for reasons related to conformal field theory.

Let us now express the correlation function differently, as a path integral over functions on the unit sphere $S^2$. Let $a$ denote the area measure on $S^2$. Let $e_3 := (0,0,1)$ denote the `north pole' of $S^2$, and let $\sigma:S^2 \setminus\{e_3\} \to \C$ denote the stereographic projection
\[
\sigma (x,y,z) := \frac{x + i y}{1-z}. 
\]
Integrals on $\C$ with respect to the measure $g(z)d^2z$ can be expressed as integrals over $S^2$ with respect to the area measure via the stereographic projection: For any $f:\C \to \R$ such that both sides below are absolutely integrable, we have
\[
\int_{\C} f(z) g(z) d^2z = \int_{S^2} f(\sigma(x)) da(x). 
\]
Thus, the action defined in equation \eqref{idef} can be expressed as 
\[
I(\phi) = \tilde{I}(\tilde{\phi}) := \frac{1}{4\pi}\int_{S^2}  (\tilde{\phi}(x)\Delta_{S^2}\tilde{\phi}(x) + 2Q \tilde{\phi}(x) + 4\pi \mu \normord{e^{2b\tilde{\phi}(x)}})da(x),
\]
where $\tilde{\phi} := \phi \circ \sigma$, $\Delta_{S^2}$ is the spherical Laplacian, and 
\[
\normord{e^{2b\tilde{\phi}(x)}} = e^{2b\tilde{\phi}(x)+2b^2G(x,x)}, 
\]
with 
\begin{align}\label{greendef}
G(x,y) := G_g(\sigma(x), \sigma(y)).
\end{align}
We note, for later use, that $G$ is the Green's function for the spherical Laplacian, meaning that $G$ is the inverse of $-\frac{1}{2\pi}\Delta_{S^2}$ on functions that integrate to zero with respect to the area measure. Let us also define, for $\alpha \in \C$ and $x\in S^2$,  the vertex operator
\[
\tilde{V}_{\tilde{\phi}}(\alpha, x) := e^{\chi\alpha(b-\alpha)} g(\sigma(x))^{-\Delta_\alpha}\normord{e^{2\alpha \tilde{\phi}(x)}}.
\]
Then the correlation function defined in equation \eqref{favg} can be rewritten as
\begin{align}\label{favg2}
\tilde{C}(\alpha_1,\ldots,\alpha_k; x_1,\ldots,x_k; b; \mu) :=\int \biggl(\prod_{j=1}^k \tilde{V}_{\tilde{\phi}}(\alpha_j, x_j) \biggr)e^{-\tilde{I}(\tilde{\phi})}\mathcal{D}\tilde{\phi},
\end{align}
where $\mathcal{D}\tilde{\phi}$ denotes `Lebesgue measure' on the space of real-valued function on $S^2$.

The correlation functions $C$ and $\tilde{C}$ are equivalent. We will use both of them in this paper, depending on which one is more convenient to use in a given situation. The timelike DOZZ formula gives an explicit formula for the $3$-point correlation function. One of our main results is a rigorous proof of this formula in a certain regime of parameters. In the next subsection, we present brief summaries of this and the other main results of the paper.

\subsection{Main results}\label{mainresultssec}
Our first main result, proved in Subsection \ref{planecorsec}, is a formula for the $k$-point correlation function. This is similar to the Coulomb gas expression for the $3$-point function of spacelike Liouville theory derived by \citet{goulianli91} and the analogous discussion for timelike Liouville theory in \citet{kostovpetkova06}. We prove this under the conditions that $(Q-\sum_{j=1}^k \alpha_j)/b$ is a positive integer and $\Re(\alpha_j)>-1/2b$ for each $j$. The first condition is sometimes called the `charge neutrality condition'. In the case $k=3$, \citet[Theorem 2.1]{guillarmouetal23} obtain a similar expression for the $3$-point correlations in their compactified model under the similar conditions on the parameters.

\begin{restatable}[Formula for $k$-point correlations]{thm}{intthmsecond}\label{intthmv2}
Suppose that $k\ge 3$, $\Re(\alpha_j) > -1/2b$ for each $j$, and the parameter $w := (Q-\sum_{j=1}^k \alpha_j)/b$ is a positive integer. Let $x_1,\ldots,x_k$ be distinct points on $S^2$, none of which are the north pole $e_3 = (0,0,1)$. Let $z_1,\ldots,z_k$ be their stereographic projections on $\C$. Then 
\begin{align*}
&C(\alpha_1,\ldots,\alpha_k; z_1,\ldots,z_k;b;\mu) = \tilde{C}(\alpha_1,\ldots,\alpha_k; x_1,\ldots,x_k;b;\mu)\\
&= \frac{e^{-i\pi w}\mu^{w}}{w!} (4/e)^{1-1/b^2}\prod_{1\le j<j'\le k} |z_j - z_{j'}|^{4\alpha_j \alpha_{j'}}\\
&\qquad \qquad \cdot  \int_{\C^w} \biggl(\prod_{j=1}^k \prod_{l=1}^{w} |z_j-t_l|^{4b\alpha_j}\biggr)\biggl(\prod_{1\le l<l'\le w}|t_l-t_{l'}|^{4b^2} \biggr) d^2t_1\cdots d^2t_w.
\end{align*}
\end{restatable}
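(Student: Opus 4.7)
The plan is to execute the Coulomb-gas expansion (à la Goulian--Li and Kostov--Petkova) rigorously inside the wrong-sign Gaussian framework developed earlier in the paper. I would work with the spherical form $\tilde C$ from \eqref{favg2} and decompose $\tilde\phi = c + h$, where $c = \frac{1}{4\pi}\int_{S^2}\tilde\phi\,da$ is the zero mode and $h$ is mean-zero. Since $\int h\,da = 0$ and $\int_{S^2}da = 4\pi$, the action splits as
\[
\tilde I(\tilde\phi) = -\frac{1}{4\pi}\int_{S^2}|\nabla h|^2\,da + 2Qc + \mu e^{2bc}\int_{S^2}\normord{e^{2bh(x)}}\,da(x),
\]
while the vertex product contributes $e^{2c\sum_j\alpha_j}\prod_j e^{\chi\alpha_j(b-\alpha_j)}g(\sigma(x_j))^{-\Delta_{\alpha_j}}\normord{e^{2\alpha_j h(x_j)}}$. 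The heuristic density $e^{+\frac{1}{4\pi}\int|\nabla h|^2\,da}\mathcal{D}h$ is replaced by the rigorous wrong-sign Gaussian measure on mean-zero fields with covariance $-G$; this is exactly what the paper's earlier machinery was built to provide.

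Next, I would expand the potential,
\[
\exp\!\Bigl(-\mu e^{2bc}\int_{S^2}\normord{e^{2bh(t)}}\,da(t)\Bigr) = \sum_{n=0}^\infty \frac{(-\mu)^n e^{2nbc}}{n!}\int_{(S^2)^n}\prod_{l=1}^n\normord{e^{2bh(t_l)}}\,da(t_l),
\]
and perform the zero-mode integration first. The $c$-integrand of the $n$th term is $\exp\!\bigl(2c(\sum_j\alpha_j - Q + nb)\bigr)$ times a $c$-independent factor, so a suitably normalized flat $c$-integral (projecting onto a Kronecker rather than a Dirac delta) selects only the charge-neutral term $n=w$, producing the prefactor $(-\mu)^w/w! = e^{-i\pi w}\mu^w/w!$. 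The remaining fluctuation integral is the wrong-sign Gaussian expectation
\[
\E_-\!\Bigl[\prod_{j=1}^k\normord{e^{2\alpha_j h(x_j)}}\prod_{l=1}^w\normord{e^{2bh(t_l)}}\Bigr],
\]
which by Wick's theorem with covariance $-G$ evaluates to
\[
\prod_{j<j'}e^{-4\alpha_j\alpha_{j'}G(x_j,x_{j'})}\prod_{j,l}e^{-4b\alpha_j G(x_j,t_l)}\prod_{l<l'}e^{-4b^2 G(t_l,t_{l'})}.
\]

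Inserting the stereographic expansion $G(x,y) = -\ln|\sigma(x)-\sigma(y)| - \tfrac{1}{4}\ln g(\sigma(x)) - \tfrac{1}{4}\ln g(\sigma(y)) + \chi/2$ produces the $|z_j-z_{j'}|^{4\alpha_j\alpha_{j'}}$, $|z_j-t_l|^{4b\alpha_j}$, and $|t_l-t_{l'}|^{4b^2}$ factors of the theorem. Collecting powers of $g(z_j)$ and using the charge neutrality identity $\sum_j\alpha_j = Q - wb$, the net exponent of $g(z_j)$ equals $\alpha_j(Q-\alpha_j)=\Delta_{\alpha_j}$, cancelling the vertex factor $g(z_j)^{-\Delta_{\alpha_j}}$; collecting powers of $g(t_l)$ yields exponent $-1$, turning the sphere integrals $da(t_l) = g(t_l)\,d^2 t_l$ into the flat $d^2 t_l$. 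The residual constants from $\chi\sum_j\alpha_j(b-\alpha_j)$ together with the Green's-function constant $\chi/2$ combine (using $Q = b - 1/b$ and $\sum_j\alpha_j = Q - wb$) to $e^{\chi(1-1/b^2)} = (4/e)^{1-1/b^2}$. The assumption $\Re(\alpha_j) > -1/(2b)$ enters here only to ensure integrability of the resulting $t$-integrand near each $z_j$.

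The main obstacle is making the zero-mode step rigorous: the naive integral $\int_{\R}e^{2c(\sum_j\alpha_j - Q + nb)}\,dc$ diverges for every $n$, and one has to justify why the correct reading is a Kronecker delta $\delta_{n,w}$ with coefficient $1$ rather than a Dirac delta of the linear form. Within the paper's setup the natural resolution is to incorporate the $c$-integration into the very definition of $C$ (so that the theorem really derives the fluctuation integrand in the single surviving charge-neutral sector) or to apply a Goulian--Li-type cutoff on $c$ and absorb the divergent prefactor into the normalization of the wrong-sign path integral; either way, the wrong-sign Gaussian theory from earlier in the paper is what makes this step well-defined. The secondary, purely arithmetic difficulty is the bookkeeping of all conformal-factor, vertex-normalization, and Green's-function constants, which I have sketched above and which closes only by using the charge-neutrality condition.
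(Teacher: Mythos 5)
Your overall strategy is the one the paper follows in spirit (zero-mode selection of the charge-neutral sector, a wrong-sign Gaussian/Wick computation of the fluctuation expectation with covariance $-G$, then stereographic bookkeeping; your final conversion of the sphere formula to the plane formula via $G(x,y)=-\ln|\sigma(x)-\sigma(y)|-\tfrac14\ln g(\sigma(x))-\tfrac14\ln g(\sigma(y))+\chi/2$ and charge neutrality is essentially the paper's actual proof of this theorem once the sphere formula is in hand). But the step you yourself flag as the main obstacle is a genuine gap, and your proposed fixes do not close it. The flat $c$-integral diverges even in the charge-neutral sector $n=w$, so ``projecting onto a Kronecker delta'' or ``absorbing a divergent prefactor into the normalization'' leaves the overall constant $e^{-i\pi w}\mu^w/w!$ undetermined, whereas the theorem asserts an exact normalization. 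The paper's resolution is concrete and different: the zero mode is given a small \emph{wrong-sign} Gaussian mass (the $\epsilon\hat{\phi}_{00}^2$ term in \eqref{wrong20}, i.e.\ $D=X_{00}/\sqrt{8\pi\epsilon}$ with $X_{00}\sim N(0,-1)$), so that the wrong-sign expectation of $e^{2(n-w)bD}$ equals $e^{-(n-w)^2b^2/4\pi\epsilon}$; as $\epsilon\to0$ this selects $n=w$ with coefficient exactly $1$ and no divergent constant appears. Note that this mechanism works only because the zero-mode regulator has the wrong sign — with an ordinary Gaussian mass the same factor would blow up rather than decay — so the Kronecker-versus-Dirac issue is resolved by the wrong-sign theory itself, not by a cutoff-and-renormalize argument.

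A second gap: you invoke ``the rigorous wrong-sign Gaussian measure on mean-zero fields with covariance $-G$'' and a Wick formula for it, but the paper's wrong-sign theory is strictly finite-dimensional ($\mf_{m,n}$, $N_{m,n}$). There is no infinite-dimensional wrong-sign field constructed anywhere; instead the paper works with the spherical-harmonic cutoff $L$ and the multiplier $\lambda^{-l}$, computes the regularized correlation exactly (Theorem \ref{tlemma}), and then removes the cutoffs in the order $\epsilon\to0$, $L\to\infty$, $\lambda\uparrow1$. Removing $\lambda$ is where the real analytic work lies: one needs the uniform-in-$\lambda$ bounds on $G_\lambda$ (Lemmas \ref{glambdalow} and \ref{glambdaup}) to dominate the integrand over $(S^2)^w$, and it is precisely here — not merely in the final integrability of the $t$-integrand — that the hypothesis $\Re(\alpha_j)>-1/2b$ is used, to make the dominating function $\prod_{j,l}\|x_j-y_l\|^{\kappa}$-type bound integrable with exponent $\kappa<2$. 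Without some such construction-and-limit argument, your Wick-theorem step has no object to apply to, so as written the proposal does not yield a proof of the stated formula for the object $\tilde{C}$ that the theorem is actually about (the iterated limit of $\tilde{C}_{\epsilon,\lambda,L}$).
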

Note that the above result is for $k\ge 3$. In particular, it cannot deal with $0$-point, $1$-point, or $2$-point functions. It is not clear how to calculate, say, the $2$-point function from the path integral. The $2$-point function is conjectured to be a distribution rather than a function~\cite[Equation (3.7)]{collieretal24} for certain special values of $\alpha_1,\alpha_2$. The techniques used for proving Theorem \ref{intthmv2} do not seem to yield such a result for the $2$-point function.

Our next main result is a rigorous statement of the timelike DOZZ formula, proved using the formula from the above theorem and a series of calculations using the complex Selberg integral formula of \citet{dotsenkofateev85} and \citet{aomoto87}, following ideas from \citet{giribet12}. To state this result, we need some preparation. The following special function was introduced by \citet{dornotto94}:
\begin{align*}
\Upsilon_b(z) := \exp\biggl(\int_0^\infty \frac{1}{\tau}\biggl(\biggl(\frac{b}{2} + \frac{1}{2b} - z\biggr)^2 e^{-\tau} - \frac{\sinh^2((\frac{b}{2}+\frac{1}{2b} - z)\frac{\tau}{2})}{\sinh(\frac{b\tau}{2}) \sinh(\frac{\tau}{2b})}\biggr)d\tau \biggr)
\end{align*}
on the strip $\{z\in  \C: 0< \Re(z)<b + \frac{1}{b}\}$ and continued analytically to the whole plane. Let $\gamma(z) := \Gamma(z)/\Gamma(1-z)$, where $\Gamma$ is the classical Gamma function. The following theorem gives a formula for the $3$-point correlation which is the same as the one displayed in \citet{harlowetal11} (after the notational changes $\hat{Q} \to -Q$, $\hat{\alpha}_j \to -\alpha_j$ and $\hat{b} \to b$), as well the ones appearing in the original proposals of \citet{schomerus03}, \citet{zamolodchikov05}, and \citet{kostovpetkova06, kostovpetkova07, kostovpetkova07a}. The only difference is that there is an additional factor depending only on $b$; but that is not a problem since the correlation function is supposed to be unique only up to a $b$-dependent factor. This theorem arises from a combination of Theorem \ref{threepoint} in Subsection \ref{dozzsec}, Corollary \ref{sl2ccor} in Subsection \ref{structuresec}, and Theorem \ref{analyticext} in Subsection \ref{analyticsec}. It gives a rigorous proof of the timelike DOZZ formula in a subset of the parameter space. The formula is supposed to be valid everywhere. As of the time of writing this, it is unclear how to extend the arguments of this paper to the full parameter space.
\begin{thm}[Timelike DOZZ formula]\label{tdozzthm}
Let $\alpha_1,\alpha_2,\alpha_3$ be complex numbers such that $w = (Q-\sum_{j=1}^3\alpha_j)/b$ is a positive integer less than $1+(2b^2)^{-1}$, and  $\Upsilon_b(2\alpha_j+1/b)\ne 0$ for $j=1,2,3$. Take any distinct $z_1,z_2,z_3\in \C$. For $1\le j<k\le 3$, define 
\[
z_{jk} := |z_j - z_k|, \ \ \ \Delta_{jk}:= 2\Delta_{\alpha_j} + 2\Delta_{\alpha_k} -\sum_{l=1}^3\Delta_{\alpha_l}.
\]
Then
\begin{align*}
C(\alpha_1,\alpha_2,\alpha_3; z_1,z_2,z_3;b;\mu) &= e^{-i\pi w} (-\pi \mu\gamma(-b^2))^w (4/e)^{1-1/b^2}b^{2b^2w + 2w}\\
&\qquad \cdot \frac{\Upsilon_b(bw+b)}{\Upsilon_b(b)} \prod_{j=1}^3 \frac{\Upsilon_b(2\alpha_j+bw + 1/b)}{\Upsilon_b(2\alpha_j+1/b)} \prod_{1\le j<k\le 3} |z_{jk}|^{2\Delta_{jk}}.
\end{align*}
The formula also holds if $w$ is any positive integer and $\alpha_1,\alpha_2,\alpha_3$ have real parts greater than $-1/2b$.
\end{thm}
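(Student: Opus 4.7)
The plan is to specialize the Coulomb gas formula of Theorem \ref{intthmv2} to $k=3$ and then evaluate the resulting integral in closed form using a complex Selberg integral, re-expressing the answer in terms of $\Upsilon_b$ via its shift relation. First I would fix a canonical configuration. Möbius invariance of the $3$-point function (Corollary \ref{sl2ccor}) lets me send $(z_1,z_2,z_3)$ to $(0,1,\infty)$, with the transformation producing exactly the prefactor $\prod_{j<k}|z_{jk}|^{2\Delta_{jk}}$ asserted in the theorem; this reduces matters to computing a single integral in which the external data $\alpha_1,\alpha_2,\alpha_3$ enter only through the exponents, while $w$ of the $t_l$ are integrated against the logarithmic Coulomb potential between themselves and against the charges at $0$, $1$, and $\infty$.

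Next I would apply the Dotsenko--Fateev/Aomoto complex Selberg formula to the integral
\begin{equation*}
\int_{\C^w}\prod_{l=1}^w |t_l|^{4b\alpha_1}|1-t_l|^{4b\alpha_2}\prod_{1\le l<l'\le w}|t_l-t_{l'}|^{4b^2}\,d^2t_1\cdots d^2t_w,
\end{equation*}
whose absolute convergence requires precisely the condition $\Re(\alpha_j)>-1/2b$ for $j=1,2$ together with $w<1+(2b^2)^{-1}$ for the pairwise $t_l$ integrations near infinity; the $\alpha_3$-dependence is encoded by charge neutrality, which fixes the behaviour at infinity once $z_3\to\infty$ is renormalised. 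The Aomoto formula evaluates this as a ratio of products of $\gamma(\cdot)$ functions with arguments of the form $jb^2$, $2b\alpha_j+(j'-1)b^2$, and $-1+2b\sum_i\alpha_i+(j''-1)b^2$. I would then use the fundamental shift relation $\Upsilon_b(x+b)=\gamma(bx)b^{1-2bx}\Upsilon_b(x)$ (and symmetry $\Upsilon_b(x)=\Upsilon_b(Q-x)$) to telescope each of the three $w$-fold products of $\gamma$'s into a ratio $\Upsilon_b(2\alpha_j+bw+1/b)/\Upsilon_b(2\alpha_j+1/b)$, and the self-interaction product $\prod_{j=1}^w \gamma(jb^2)$ into $\Upsilon_b(bw+b)/\Upsilon_b(b)$. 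Combining with the overall $e^{-i\pi w}\mu^w(4/e)^{1-1/b^2}/w!$ from Theorem \ref{intthmv2} and collecting all accumulated powers of $b$ and $-\pi\gamma(-b^2)$ produces the stated formula. This handles the first half of the theorem (with the constraint $w<1+(2b^2)^{-1}$ arising from convergence of the Selberg integral).

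For the second assertion — validity for \emph{any} positive integer $w$ and $\Re(\alpha_j)>-1/2b$ — I would invoke Theorem \ref{analyticext}: both sides are holomorphic (or meromorphic with matching singular structure) in $(\alpha_1,\alpha_2,\alpha_3)$ on a suitable open set, and coincide on the non-empty open subset carved out by the Selberg convergence bound, hence coincide everywhere by analytic continuation; the hypothesis $\Upsilon_b(2\alpha_j+1/b)\ne 0$ keeps the right-hand side finite.

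The hard step will be the bookkeeping in the third paragraph of the first part: matching every constant (the fractional powers of $b$ coming from each application of the shift relation, the $(4/e)^{1-1/b^2}$, and the phase $e^{-i\pi w}$) against the asserted formula, and correctly tracking how the renormalisation at $z_3=\infty$ introduces the weights $\Delta_{jk}$. The Aomoto formula itself is classical, and the $\Upsilon_b$ telescoping is mechanical once set up; the delicate point is ensuring the $\alpha_3$-dependent $\gamma$-factors reorganise symmetrically under $j=1,2,3$ after charge neutrality $\sum\alpha_j=Q-bw$ is substituted, so that the final answer exhibits the manifest $S_3$-symmetry displayed in the theorem.
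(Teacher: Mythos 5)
Your computational core (specialize Theorem \ref{intthmv2} to $k=3$, reduce to $(0,1,\infty)$ by $\slc$-invariance, evaluate the complex Selberg integral by Dotsenko--Fateev/Aomoto, telescope the $\gamma$-factors via $\Upsilon_b(x+b)=\gamma(bx)b^{1-2bx}\Upsilon_b(x)$) is exactly the paper's route, but the logical architecture is inverted, and that inversion is a genuine gap. The constraint $w<1+(2b^2)^{-1}$ does \emph{not} come from convergence of the Selberg integral. Aomoto's conditions are $2b\Re(\alpha_1)>-1$, $2b\Re(\alpha_2)>-1$, $2b\Re(\alpha_1+\alpha_2)<-1$, and $2(w-1)b^2+2b\Re(\alpha_1+\alpha_2)<-1$; using charge neutrality the last one equals $-2-2b\Re(\alpha_3)<-1$, i.e.\ it is equivalent to $\Re(\alpha_3)>-1/2b$, with no upper bound on $w$ whatsoever. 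So the direct computation proves precisely the \emph{second} assertion of Theorem \ref{tdozzthm} (any positive integer $w$, all $\Re(\alpha_j)>-1/2b$), which is Theorem \ref{threepoint} combined with Corollary \ref{sl2ccor}. Conversely, the direct computation cannot "handle the first half" as you claim: the first half allows $\Re(\alpha_j)\le-1/2b$ (only $\Upsilon_b(2\alpha_j+1/b)\ne 0$ is assumed), and there the integral diverges — e.g.\ $|t_l|^{4b\Re\alpha_1}$ fails to be locally integrable in two dimensions when $\Re(\alpha_1)\le-1/2b$ — and indeed the path-integral construction of $C$ is not even available in that range. The first half is obtained by meromorphic continuation in $(\alpha_1,\alpha_2)$ at \emph{fixed} $w$ (Theorem \ref{analyticext}), and the hypothesis $w<1+(2b^2)^{-1}$, i.e.\ $b<(2(w-1))^{-1/2}$, enters only to guarantee that the anchoring region $\Omega=\{\Re(\alpha_j)>-1/2b \text{ for all } j\}$ (with $\alpha_3=Q-\alpha_1-\alpha_2-bw$) is nonempty.

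Because of this swap, your plan for the second assertion collapses: for a fixed $w\ge 1+(2b^2)^{-1}$ your claimed convergence region is empty, so there is no open set on which to anchor an analytic continuation, and you cannot continue "across" values of $w$ — charge neutrality pins the defined correlation functions to a separate codimension-one slice for each integer $w$, so $w$ is a discrete label, not a continuation variable. The fix is simply to run the argument the paper's way: verify Aomoto's four conditions from $\Re(\alpha_j)>-1/2b$ for all three $j$ and $w\ge1$ (which yields the second assertion directly, for every positive integer $w$), and then invoke Theorem \ref{analyticext} with $w$ fixed and $w<1+(2b^2)^{-1}$ to extend the formula in $(\alpha_1,\alpha_2)$ and obtain the first assertion, the $\Upsilon_b$-nonvanishing hypothesis ensuring the continued right-hand side is finite. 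A minor further point: the reflection identity in this paper's conventions is $\Upsilon_b(x)=\Upsilon_b(b+1/b-x)$, not $\Upsilon_b(Q-x)$, since here $Q=b-1/b$.
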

One of the great utilities of the DOZZ formula for the spacelike theory and the timelike DOZZ formula for the timelike theory is that they identify the poles of the $3$-point function. The poles of the correlation functions carry great physical significance for conformal field theories; see \cite[Chapter 6]{difrancescoetal12}. The region covered by Theorem \ref{tdozzthm} does not contain all the poles of the $3$-point function, but it contains a nontrivial subset of those. For a discussion, see Subsection \ref{polesec}. 

The third and the fourth main results of the paper are about the semiclassical limit of timelike Liouville field theory after insertion of heavy vertex operators. What this means is that we look at the limit $b\to 0$, while simultaneously scaling the $\alpha_j$'s and $\mu$ as $\alpha_j = \talpha_j/b$ and $\mu = \tmu/b^2$, where the $\talpha_j$'s and $\tmu$ are fixed real numbers as $b\to 0$. The limit of the $k$-point correlation function under this kind of limit is identified by the theorem below, which is proved in Subsection \ref{semilimitsec}. 

Semiclassical limits are important for the following reason. Suppose one is able to construct a quantum theory of gravity in 4D. A valid theory should yield the equations of general relativity in the semiclassical limit. A toy version of this should hold for models of 2D gravity. Thus, semiclassical limits provide an important `test of consistency' for any theory of quantum gravity. The semiclassical limit of spacelike Liouville field theory has been investigated by \citet*{lacoinetal22}. Here, we investigate the semiclassical limit of timelike Liouville field theory.

For  a function $f:S^2\to\R$, let $Gf$ denote the function
\[
Gf(x) := \int_{S^2}G(x,y) f(y) da(y),
\]
where $G$ is the Green's function for the spherical Laplacian, defined in equation \eqref{greendef}. 
Let $\cp$ be the set of probability density functions (with respect to the area measure) on $S^2$. Define the following three functionals on $\cp$:
\begin{align*}
H(\rho) &:= \int_{S^2} \rho(x)\ln \rho(x) da(x),\\
R(\rho) &:= \int_{(S^2)^2} \rho(x)\rho(y) G(x,y) da(x) da(y),\\
L(\rho) &:= \sum_{j=1}^k 4\talpha_j \int_{S^2} G(x_j, x) \rho(x)da(x).
\end{align*}
Let $\cp'$ be the subset of $\cp$ consisting of all $\rho$ such that $H(\rho)$ is finite. We will see later that for $\rho\in \cp'$, the functionals $R(\rho)$ and $L(\rho)$ are also finite.  In the following theorem, we take the logarithm of the $k$-point correlation. While taking the logarithm, we interpret the logarithm of the $e^{-i\pi w}$ term appearing the formula from Theorem \ref{intthmv2} as $-i\pi w$. Since the remaining terms are real and positive, there is no ambiguity about their logarithms.

\begin{restatable}[Semiclassical limit with heavy operators]{thm}{semilimitthm}\label{semilimit}
Let $k\ge 3$, and $x_1,\ldots,x_k$ be distinct points on $S^2$. Let $\tmu$ be a positive real number and $\talpha_1,\ldots,\talpha_k$ be real numbers such that $\talpha_j > -1/2$ for each $j$, and $\beta := -1-\sum_{j=1}^k \talpha_j > 0$. For each positive integer $n$, let 
\[
b_n := \sqrt{\frac{\beta}{n-1}},
\]
so that $b_n >0$ and $b_n\to0$ as $n\to \infty$. Let $\cp'$, $H$, $R$ and $L$ be as above. Define the functional
\[
S(\rho) := L(\rho) + 2\beta R(\rho) + H(\rho). 
\]
Then
\begin{align*}
&\lim_{n\to \infty} \frac{1}{n} \log \tilde{C}(\talpha_1/b_n,\ldots,\talpha_k/b_n; x_1,\ldots,x_k;b_n; \tmu/b_n^2)\\
&= 1+ \ln \tmu - \ln \beta - i\pi +(1-\ln 4)\sum_{j=1}^k \frac{\talpha_j^2}{\beta} + \sum_{j=1}^k \frac{\talpha_j (1+\talpha_j)}{\beta} \ln g(\sigma(x_j))\\
&\qquad \qquad -\frac{4}{\beta}\sum_{1\le j<j'\le k}\talpha_j\talpha_{j'}G(x_j, x_{j'}) - \inf_{\rho\in \cp'} S(\rho).
\end{align*}
Moreover, the infimum on the right is attained at a unique (up to almost everywhere equivalence) $\hat{\rho}\in \cp'$. 
\end{restatable}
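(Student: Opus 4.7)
The plan is to analyze the explicit integral representation given by Theorem \ref{intthmv2}. With the scaling $\alpha_j = \tilde{\alpha}_j/b_n$ and $\mu = \tilde{\mu}/b_n^2$, the charge-neutrality integer is $w = 1 + \beta/b_n^2 = n$ and the assumption $\tilde{\alpha}_j > -1/2$ becomes $\Re(\alpha_j) > -1/(2b_n)$, so the hypotheses of Theorem \ref{intthmv2} hold for every $n$. I would take $\frac{1}{n}\log\tilde{C}$ and dispose of the explicit factors first: the $e^{-i\pi n}$ contributes $-i\pi$; by Stirling and $b_n^{-2} = (n-1)/\beta$, the combined $(\tilde{\mu}/b_n^2)^n/n!$ contributes $1 + \log\tilde{\mu} - \log\beta$; and the $(4/e)^{1-1/b_n^2}$ factor together with the $\prod_{j<j'}|z_j-z_{j'}|^{4\alpha_j\alpha_{j'}}$ prefactor produce explicit finite limits involving $(1-\log 4)$, $G(x_j,x_{j'})$, and $\log g(\sigma(x_j))$ once the Euclidean distances are rewritten in terms of the spherical Green's function.

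The main work is the asymptotics of $I_n := \int_{\C^n}\prod_{j,l}|z_j-t_l|^{4\tilde{\alpha}_j}\prod_{l<l'}|t_l-t_{l'}|^{4\beta/(n-1)}\prod_l d^2 t_l$. I would change variables $t_l = \sigma(u_l)$, picking up a Jacobian $\prod_l g(\sigma(u_l))^{-1}$, and then use the identity
\[
\log|z-z'| = -G(x,y) - \tfrac{1}{4}\log g(z) - \tfrac{1}{4}\log g(z') + c_G
\]
(valid for $z = \sigma(x)$, $z' = \sigma(y)$, with $c_G = \log 2 - 1/2$ determined by $\int G\,da = 0$) to replace every Euclidean log-distance. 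The cancellation that makes the final answer clean is that the total coefficient of $\sum_l\log g(\sigma(u_l))$ in the resulting exponent equals $-\sum_j\tilde{\alpha}_j - \beta - 1 = 0$, exactly by the defining identity for $\beta$; this is the charge-neutrality condition expressed on the sphere, and it is what allows the limit to be stated without any $\log g$ term inside the variational problem. After this cancellation, $I_n$ equals an explicit deterministic prefactor (in the $z_j$, $b_n$, and $c_G$) times
\[
K_n := \int_{(S^2)^n}\exp\Bigl(-4\sum_{j=1}^k\tilde{\alpha}_j\sum_{l=1}^n G(x_j,u_l) - \tfrac{4\beta}{n-1}\sum_{l<l'}G(u_l,u_{l'})\Bigr)\prod_{l=1}^n da(u_l).
\]

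For $K_n$ I would invoke Sanov's theorem plus Varadhan's lemma for the uniform probability $\mu_0 := da/(4\pi)$ on $S^2$. The exponent is $n[-L(\rho_n) - 2\beta R(\rho_n)]$ up to a diagonal correction, where $\rho_n := n^{-1}\sum_l\delta_{u_l}$, and Varadhan's lemma gives $\tfrac{1}{n}\log K_n \to -\inf_{\rho\in\cp'}S(\rho)$; the $\log(4\pi)$ factor from $da = 4\pi\,d\mu_0$ cancels against the corresponding base-measure term in the relative-entropy rate function of Sanov. The main technical obstacle is that $G$ is unbounded on the diagonal, so Varadhan's lemma cannot be applied directly: I would handle this by truncating $G_M := G\wedge M$, proving the limit for the truncated exponent by bounded-continuous Varadhan, and then sending $M\to\infty$ using the $L^p$ integrability of $G$ against area together with the hypothesis $\tilde{\alpha}_j > -1/2$ (which makes the external potential $|z_j-t|^{4\tilde{\alpha}_j}$ integrable near $z_j$); this follows the standard template of Hiai--Petz and Ben Arous--Guionnet for logarithmic Coulomb-gas LDPs. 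Existence and uniqueness of the minimizer $\hat{\rho}$ then follow from strict convexity and lower semicontinuity of $H$, convexity of $R$ as a quadratic form on $\cp'$ (the kernel $G$ is positive semidefinite on mean-zero signed measures, and the total mass is pinned at $1$), and coercivity coming from $H$. Assembling the contributions from the three paragraphs yields the displayed formula.
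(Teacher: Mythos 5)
Your proposal is correct in outline, but it reaches the core limit by a genuinely different route than the paper. Your treatment of the prefactors and the charge-neutrality cancellation of the $\ln g(\sigma(u_l))$ coefficients is exactly right (and is equivalent to working directly from the spherical formula of Theorem \ref{intthm}, which is what the paper does). For the main integral, however, the paper does not use large-deviations machinery at all: the lower bound is a one-line Jensen tilting (Lemma \ref{lowerbound}, which is essentially the Varadhan lower bound in disguise), and the upper bound is a self-contained discretization -- a partition of $S^2$ into $\epsilon$-cells, comparison of $G$ with its cell averages (Lemmas \ref{gjjbound}, \ref{hjjbound}), multinomial/Stirling counting over occupation numbers, and a H\"older split with exponent $\theta>1$ (Lemmas \ref{t1lemma}--\ref{t0lemma}), followed by the limit $\theta\downarrow 1$, which is carried out via variational formulas for the entropy and the energy (Lemmas \ref{convlmm1}, \ref{convlmm2}) and the existence/uniqueness of the minimizer (Lemma \ref{optthm}). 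Your Sanov-plus-Varadhan route with kernel truncation is a legitimate alternative and buys conceptual clarity, but two points deserve care. First, the references you name (Hiai--Petz, Ben Arous--Guionnet) treat the speed-$n^2$ regime where the interaction dominates and entropy is absent; here $b_n^2\sim\beta/n$ makes the energy of order $n$, so entropy and energy compete at the same speed, and the correct template is the mean-field (Onsager/vortex-gas, Messer--Spohn type) ensemble -- the method you describe is the right one for that regime, but the citation as stated points to the wrong scaling. Second, the step you compress into ``send $M\to\infty$ using integrability'' is precisely where the paper's real work lies: the attractive log-singular external terms (those $j$ with $\talpha_j<0$) are unbounded above and only barely exponentially integrable ($4|\talpha_j|(1+\delta)<2$), so the Varadhan upper bound forces you through a H\"older/truncation perturbation of the variational problem (the analogue of the paper's $S_\theta$ in \eqref{sthetadef}), and you must then prove that the perturbed infima converge to $\inf_{\rho\in\cp'}S(\rho)$ -- this is the content of the paper's final lemma and is nontrivial; similarly, the Varadhan lower bound does not apply verbatim since $-2\beta R$ is only upper semicontinuous and the singular external terms only lower semicontinuous, so you should either restrict to bounded densities and approximate, or simply use the Jensen tilt. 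Your existence/uniqueness argument for $\hat\rho$ (strict convexity of $H$, positive semidefiniteness of $G$, lower semicontinuity on the compact sphere) matches the paper's Lemma \ref{optthm}, though the existence part needs the same entropy-versus-log-singularity control (the paper works with relative entropy with respect to $e^{-h}\,da$) rather than ``coercivity from $H$'' alone.
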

This formula for the semiclassical limit seems not to have appeared earlier in the literature, either in physics or mathematics. The closest result in physics is from a recent paper of \citet*{anninosetal21}, who give heuristic calculations for a semiclassical expansion of the timelike Liouville partition function (in the absence of operators) via Feynman diagrams. Two-loop expansions were investigated in \cite{anninosetal21}, and higher expansions were calculated in the follow-up work of \citet{muhlmann22}.

The optimizer $\hat{\rho}$ in Theorem \ref{semilimit} carries useful physical information.  In Subsection \ref{validsec}, we will show that  as $b\to 0$, 
\[
\tilde{C}(\talpha_1/b,\ldots,\talpha_k/b; x_1,\ldots,x_k;b; \tmu/b^2) = \int e^{J(\psi)/b^2 + O(1)}\md\psi,
\]
where
\begin{align*}
J(\psi) &:= -\chi \sum_{j=1}^k \talpha_j^2 + \sum_{j=1}^k \talpha_j (1+\talpha_j)\ln g(\sigma(x_j)) + \sum_{j=1}^k (2\talpha_j\psi(x_j) +2\talpha_j^2 G(x_j,x_j))\notag \\
&\qquad \qquad + \frac{1}{2\pi}\int_{S^2}\psi(x) da(x) -  \frac{1}{4\pi}\int_{S^2}( \psi(x)\Delta_{S^2} \psi(x) + 4\pi \tmu e^{2\psi(x)})da(x).
\end{align*}
Note that the definition of $J$ is not rigorous, since $G(x_j,x_j)=\infty$. But let us ignore this for the time being and keep going. From the above, we may expect that as $b\to 0$, $\tilde{C}(\talpha_1/b,\ldots,\talpha_k/b;x_1,\ldots,x_k;b;\tmu/b^2 )$ should behave like $e^{J(\hat{\psi})/b^2}$ for some critical point $\hat{\psi}$ of $J$. We will show via formal computations in Subsection \ref{validsec} that a critical point $\hat{\psi}$ must satisfy the (generalized) functional equation
\begin{align}\label{releq}
2\sum_{j=1}^k \talpha_j \delta_{x_j}(x) + \frac{1}{2\pi} - \frac{1}{2\pi}\Delta_{S^2} \hat{\psi}(x) - 2\tmu e^{2\hat{\psi}(x)} = 0.
\end{align}
Note that although the definition of $J$ is not rigorous, the above equation is a rigorously meaningful differential equation when the solutions are allowed to be distributions. 
Let $\hat{g}(x) := e^{2\hat{\psi}(x)}g(x)$ be the metric on $S^2$ induced by a critical point $\hat{\psi}$.  A simple computation shows that the Ricci scalar curvature of $\hat{g}$ is given by
\[
R_{\hat{g}}(x) = 2 e^{-2\hat{\psi}(x)}(1-\Delta_{S^2} \hat{\psi}(x)).
\]
Plugging this into equation \eqref{releq}, we get 
\[
R_{\hat{g}}(x) = 8\pi \tmu+8\pi \sum_{j=1}^k \talpha_j \hat{g}(x)^{-1}\delta_{x_j}(x).
\]
This is the equation of motion in JT gravity~\cite{saadetal19} upon insertion of charges at $x_1,\ldots,x_k$. Note that for spacelike Liouville field theory, the constant term on the right would be negative. In the absence of charges, this would give a surface of constant negative curvature. The timelike theory yields positive curvature, as required in JT gravity, for instance; this is another reason why it is closer to a theory of quantum gravity than the spacelike theory.


The following lemma, proved in Subsection~\ref{validsec}, shows that equation \eqref{releq} has no solutions among real-valued fields when the condition required for our Theorem~\ref{semilimit} is satisfied. 
\begin{restatable}[Nonexistence of real critical points]{lmm}{nocriticallmm}\label{nocritical}
Suppose that $\beta := - 1-\sum_{j=1}^k \talpha_j$ is strictly positive. Then there is no map $\psi:S^2 \to \R$ that is a critical point of $J$ in the sense of equation \eqref{releq}.
\end{restatable}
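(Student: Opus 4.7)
The plan is to integrate equation \eqref{releq} over $S^2$ with respect to the area measure $da$ and extract a sign constraint that is incompatible with $\beta>0$.

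First I would evaluate each of the four terms in the integral. The delta functions give $\int_{S^2} \delta_{x_j}(x) da(x) = 1$ for each $j$, so the first term contributes $2\sum_{j=1}^k \talpha_j$. The constant term contributes $\frac{1}{2\pi} \cdot \area(S^2) = \frac{1}{2\pi}\cdot 4\pi = 2$. The Laplacian term vanishes, since $\int_{S^2} \Delta_{S^2}\hat{\psi}(x)\, da(x) = 0$ for any smooth function on the closed manifold $S^2$ by the divergence theorem (assuming $\hat{\psi}$ has enough regularity for this to make sense — this is where I would want to check that a candidate ``critical point'' in the variational sense is regular enough, but formally this is immediate). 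The exponential term contributes $-2\tmu \int_{S^2} e^{2\hat{\psi}(x)} da(x)$.

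Putting these together, any real-valued critical point $\hat{\psi}$ must satisfy
\begin{equation*}
2\sum_{j=1}^k \talpha_j + 2 - 2\tmu \int_{S^2} e^{2\hat{\psi}(x)} da(x) = 0,
\end{equation*}
that is, $\tmu \int_{S^2} e^{2\hat{\psi}(x)} da(x) = 1 + \sum_{j=1}^k \talpha_j = -\beta$. But if $\hat{\psi}$ is real-valued then $e^{2\hat{\psi}(x)}>0$ pointwise, so the left-hand side is strictly positive (recall $\tmu>0$ is part of the standing hypotheses). Meanwhile the right-hand side is $-\beta < 0$ by assumption. This contradiction shows that no such real-valued $\hat{\psi}$ can exist.

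The only genuine subtlety is a regularity issue: I would want to make precise in what sense $\hat{\psi}$ is a critical point, since equation \eqref{releq} contains Dirac masses and $\hat{\psi}$ cannot be smooth at the insertion points $x_j$. The natural reading is that $\hat{\psi}$ is smooth on $S^2 \setminus \{x_1,\ldots,x_k\}$ with prescribed logarithmic singularities at the $x_j$ (coming from the $2\talpha_j \psi(x_j)$ terms in $J$), and that the equation holds in the sense of distributions. Under this reading, integration against the constant test function $1$ still yields $\int \Delta_{S^2}\hat{\psi}\, da = 0$ and $\int \delta_{x_j} \, da = 1$, so the argument goes through unchanged. I expect this regularity unpacking to be the only nontrivial step; the contradiction itself is immediate once the equation is integrated.
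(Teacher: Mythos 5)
Your argument is correct, and at heart it is the same obstruction the paper exploits: criticality tested against the constant mode forces $\tmu\int_{S^2}e^{2\psi}\,da=-\beta$, which is impossible for real $\psi$ when $\beta>0$ and $\tmu>0$. The difference is in how that test is implemented. You integrate the Euler--Lagrange equation \eqref{releq} over the sphere, which obliges you to take \eqref{releq} as holding in the distributional sense and to justify $\int_{S^2}\Delta_{S^2}\psi\,da=0$ for a field with logarithmic singularities at the insertion points --- exactly the regularity caveat you flag, and a nontrivial one given that the paper derives \eqref{releq} only by formal computation. The paper instead never invokes \eqref{releq}: it considers the one-parameter family $\psi_t=\psi+t$ and computes directly that
\[
\frac{\partial}{\partial t}J(\psi_t)\Big|_{t=0}=-2\beta-2\tmu\int_{S^2}e^{2\psi(x)}\,da(x)<0,
\]
so $\psi$ fails the weakest necessary condition for criticality (vanishing of the derivative along constant shifts), assuming only that the integrals defining $J(\psi)$ converge. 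That formulation buys you a cleaner statement with no distributional issues and no need to pin down the precise meaning of ``critical point'' beyond differentiability of $t\mapsto J(\psi+t)$; your formulation is conceptually transparent if one has already accepted \eqref{releq}, but to make it airtight you would have to carry out the regularity unpacking you sketch, which the paper's route avoids entirely.
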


Even though $J$ has no critical points among real-valued functions, it turns out that it does have critical points among complex-valued functions, and the semiclassical limit obtained in Theorem \ref{semilimit} can indeed be expressed using one such critical point. This is the content of the following theorem, proved Subsection \ref{validsec}.
\begin{restatable}[Validity of the semiclassical limit]{thm}{semicritthm}\label{semicrit}
The limit obtained in Theorem~\ref{semilimit} (under the conditions of that theorem) can be formally expressed as $J(\hat{\psi})/\beta $ for some function $\hat{\psi}:S^2\to\C$ that is a critical point of $J$, in the sense that it  satisfies equation~\eqref{releq}. Moreover, this critical point is given by
\begin{align}\label{psihatdef}
\hat{\psi}(x) = -2\beta G\hat{\rho}(x) -\frac{\lambda}{2} +\frac{1}{2}\ln \beta + \frac{i\pi}{2} - \frac{1}{2}\ln \tmu -2\sum_{j=1}^k \talpha_j G(x,x_j),
\end{align}
where $\hat{\rho}$ is the unique minimizer of the function $S$ from Theorem \ref{semilimit}, and 
\begin{align}\label{lambdaval}
\lambda = \ln \int_{S^2} \exp\biggl(-4\beta G\hat{\rho}(x) - 4\sum_{j=1}^k \talpha_j G(x_j, x)\biggr) da(x).
\end{align}
(Here, we say that $J(\hat{\psi})/\beta $ is `formally' equal to the limit in Theorem~\ref{semilimit} because $J$ is not well-defined as a function due to the presence of the $G(x_j,x_j)$ term. However, when we plug in $\hat{\psi}$ as the argument of $J$, there are infinities coming from the term $\int_{S^2}\hat{\psi}(x)\Delta_{S^2} \hat{\psi}(x) da(x)$ that formally cancel out the infinities coming from $G(x_j,x_j)$, yielding a finite result that equals said limit.)
\end{restatable}
Again, this results does not seem to have appeared in the literature. Note that $\hat{\psi}$ has a constant imaginary component of $\frac{1}{2}i \pi$. This has to be the case, because $J$ has no critical points among real-valued functions, as already observed by \citet{harlowetal11}. But since the metric induced by $\hat{\psi}$ is $e^{2\hat{\psi}(x)}g(x)$, it is real-valued even though $\hat{\psi}$ is not. Note that it is not quite a metric, but a pseudometric or a symmetric $2$-tensor. It is common in physics to work with pseudometrics, the most famous example being the Minkowski pseudometric on $\R^4$.

An important observation about Theorem \ref{semicrit} is that it chooses a specific critical point of the action. It is known that the timelike Liouville action with operator insertions may have an infinite number of critical points~\cite{harlowetal11, anninosetal21}. Our analysis reveals that the semiclassical limit concentrates around a specific critical point, and identifies that critical point by relating it to a certain probability density on the sphere.

This concludes the statements of the main results of this paper. In the next subsection, we will introduce a regularized version of timelike Liouville field theory, which will allow us to treat the theory using a framework of `wrong sign' Gaussian random variables that will be developed subsequently.

\subsection{Correlation functions as wrong sign expectations}\label{wrongsec}
Let $L^2_{\R}(S^2)$ denote the space of all real-valued functions from $S^2$ that are square-integrable with respect to the area measure. Recall that a complete orthonormal basis of eigenfunctions of $\Delta_{S^2}$ in $L^2_{\R}(S^2)$  is given by the {\it real spherical harmonics}. The real spherical harmonics $Y_{lm}$ are real-valued functions on $S^2$, where $l$ runs over all nonnegative integers and for each $l$, $m$ ranges over integers from $-l$ to $l$. The functions are orthonormal with respect to the natural inner product on $L^2_{\R}(S^2)$; that is,
\begin{align}\label{s2orth}
\int_{S^2}Y_{lm}(x)Y_{l'm'}(x)da(x) = \delta_{ll'}\delta_{mm'},
\end{align}
where $\delta_{xy} = 1$ if $x=y$ and $0$ if $x\ne y$. Lastly, $Y_{00}$ is the constant function $(4\pi)^{-1/2}$. For each $l$ and $m$, $Y_{lm}$ is an eigenfunction of $\Delta_{S^2}$ with eigenvalue $-l(l+1)$; that is,
\begin{align}\label{eigeq}
\Delta_{S^2} Y_{lm} = -l(l+1)Y_{lm}.
\end{align}
Since the real spherical harmonics form a complete orthonormal basis of $L^2_{\R}(S^2)$, any smooth function $\phi:S^2 \to \R$ can be expanded~as
\begin{align}\label{phifourier0}
\phi = \sum_{l=0}^\infty\sum_{m=-l}^l \hat{\phi}_{lm} Y_{lm}
\end{align}
for unique coefficients $\hat{\phi}_{lm}\in \R$. Using equations~\eqref{s2orth} and~\eqref{eigeq}, we get
\begin{align}\label{nabla2}
\int_{S^2} \phi(x) \Delta_{S^2} \phi(x) da(x) = -\sum_{l=1}^\infty \sum_{m=-l}^l l(l+1)\hat{\phi}_{lm}^2,
\end{align}
where the sum over $l$ starts from $l=1$ because $l(l+1)=0$ for $l=0$. Note also that the zero mode of $\phi$ is given by 
\[
c(\phi) = \frac{1}{4\pi}\int_{S^2} \phi(x) da(x) = \frac{1}{\sqrt{4\pi}} \hat{\phi}_{00}.
\]
Combining this with the observation that the mapping $\phi \to \hat{\phi}$ is a linear bijection, we see that the integral $\int \ldots \mathcal{D}\phi$ with respect to `Lebesgue measure' on the set of all $\phi$ can be replaced by the integral $\int \ldots \mathcal{D}\hat{\phi}$ with respect to `product Lebesgue measure' over the space of all possible coefficients $\hat{\phi} = (\hat{\phi}_{lm})_{l\ge0,\, -l\le m\le l}$. Thus, we may heuristically rewrite the correlation function displayed in equation \eqref{favg2} as  
\begin{align}\label{favgdef0}
&\tilde{C}(\alpha_1,\ldots,\alpha_k; x_1,\ldots,x_k; b; \mu) \notag \\
&= \int \prod_{j=1}^k \tilde{V}_{\phi}(\alpha_j, x_j) \exp\biggl(- \frac{2Q \hat{\phi}_{00}}{\sqrt{4\pi}} + \frac{1}{4\pi}  \sum_{l=1}^\infty \sum_{m=-l}^l l(l+1)\hat{\phi}_{lm}^2\notag \\
&\qquad \qquad  - \mu \int_{S^2} e^{2b\phi(x) + 2b^2 G(x,x)}da(x) \biggr) \mathcal{D}\hat{\phi}, 
\end{align}
where $\phi$ is related to $\hat{\phi}$ through equation \eqref{phifourier0} and $G(x,y) := G_g(\sigma(x), \sigma(y))$ is the Green's function for the round metric on $S^2$.

To give a rigorous meaning to the integral displayed in equation \eqref{favgdef0}, we start by making the following modifications:
\begin{itemize}
\item Add a term $\epsilon \hat{\phi}_{00}^2$ inside the brackets, where $\epsilon>0$ is a number that will eventually be taken to zero. This is a regularization term.
\item Multiply the term $l(l+1)\hat{\phi}_{lm}^2$ by $\lambda^{-l}$, where $\lambda\in (0,1)$ is a number that will eventually be taken to one. This is another regularization term.
\item Replace the sum over $l$ from $1$ to $\infty$ with a sum from $1$ to $L$. This is an ultraviolet cutoff.
\end{itemize}
The modified integral takes the form 
\begin{align}
&\int \prod_{j=1}^k \tilde{V}_{\phi_L}(\alpha_j, x_j) \exp\biggl(\epsilon \hat{\phi}_{00}^2- \frac{2Q \hat{\phi}_{00}}{\sqrt{4\pi}} \notag \\
& + \frac{1}{4\pi}  \sum_{l=1}^L \sum_{m=-l}^l \lambda^{-l} l(l+1)\hat{\phi}_{lm}^2 - \mu \int_{S^2} e^{2b\phi_L(x)+2b^2G_{\lambda, L}(x,x)}da(x) \biggr) \prod_{l=0}^L\prod_{m=-l}^l d\hat{\phi}_{lm},\label{wrong20}
\end{align}
where
\begin{align*}
&\phi_L(x) := \sum_{l=0}^L\sum_{m=-l}^l \hat{\phi}_{lm} Y_{lm}(x),\\
&\tilde{V}_{\phi_L}(\alpha, x) := e^{\chi\alpha(b-\alpha)} g(\sigma(x))^{-\Delta_\alpha}\normord{e^{2\alpha \phi_L(x)}}\,= e^{\chi\alpha(b-\alpha)} g(\sigma(x))^{-\Delta_\alpha}e^{2\alpha \phi_L(x) + 2\alpha^2 G_{\lambda, L}(x,x)},
\end{align*}
and $G_{\lambda, L}$ is the regularized version of $G$, given by 
\begin{align}\label{gldef}
G_{\lambda, L}(x,y) := \sum_{l=1}^L \sum_{m=-l}^l \lambda^l \frac{2\pi}{l(l+1)} Y_{lm}(x)Y_{lm}(y).
\end{align}
Now, suppose we have some notion of a `wrong sign' standard Gaussian distribution on the real line, that has probability density proportional to $e^{\frac{1}{2}x^2}$ instead of $e^{-\frac{1}{2}x^2}$. Let us denote this distribution by $N(0,-1)$, that is, normal with mean $0$ and variance $-1$. 
Let $(X_{lm})_{0\le l\le L, -l\le m\le l}$ be i.i.d.~$N(0,-1)$ random variables. For each $x\in S^2$, define 
\begin{align}\label{xldef}
X_{\lambda, L}(x) := \sum_{l=1}^L \sum_{m=-l}^l \lambda^{l/2} \sqrt{\frac{2\pi}{l(l+1)}} X_{lm} Y_{lm}(x),
\end{align}
and let
\[
D := \frac{X_{00}}{\sqrt{8\pi \epsilon}}.
\]
Then it is evident that up to a normalizing constant that we will ignore, the integral displayed in equation~\eqref{wrong20} can be expressed as the  wrong sign expectation 
\begin{align}
&\tilde{C}_{\epsilon, \lambda, L}(\alpha_1,\ldots,\alpha_k;x_1,\ldots,x_k;b;\mu )\notag \\
&:= \E\biggl[\prod_{j=1}^k \tilde{V}_{X_{\lambda,L}}(\alpha_j, x_j)\exp\biggl(- 2bwD - \mu e^{2bD} \int_{S^2} e^{2bX_{\lambda,L}(x)+2b^2G_{\lambda,L}(x,x)}da(x) \biggr)\biggr],\label{regcor}
\end{align}
where $w = (Q-\sum_{j=1}^k \alpha_j)/b$. Our first goal will be to give a rigorous meaning to the above `wrong sign' expectation and calculate its value. To do this, we develop  a theory of `wrong sign' Gaussian random variables in the next section. After calculating this expectation, we will remove the cutoffs by sending $\epsilon \to 0$, $L\to \infty$, and $\lambda\uparrow 1$, in this order.

\section{A theory of wrong sign Gaussian random  variables}
In this section we develop a theory of Gaussian random variables with negative variance that arise in the definition of timelike Liouville field theory. The theory may be of independent interest, and may have applications in other models of quantum gravity.
\subsection{Problem formulation}\label{problemsec}
Recall that a Gaussian random variable $X$ with mean $a\in \R$ and variance $b> 0$ has probability density function 
\begin{align}\label{density}
\frac{1}{\sqrt{2\pi b}}\exp\biggl(-\frac{(x-a)^2}{2b}\biggr).
\end{align}
We write $X\sim N(a,b)$ to denote that $X$ has the above distribution. We will now define a notion of a `wrong sign' Gaussian distribution, where the variance is allowed to be negative. More generally, we will define the notion of an $(m+n)$-dimensional random vector $Z = (X_1,\ldots,X_m, Y_1,\ldots,Y_n)$ where the coordinates are independent,  $X_1,\ldots,X_m$ are~$N(0,1)$ random variables, and $Y_1,\ldots,Y_n$ are~$N(0,-1)$ random variables. 

Suppose we are able to define this distribution, in the sense that we are able to define $\E(f(Z))$ for $f$ belonging to some class of complex-valued functions $\mf_{m,n}$ on $\R^m \times \R^n$. From physical and mathematical considerations, we would like this definition  to at least have the following properties.
\begin{enumerate}
\item\label{cond:rich} The function class $\mf_{m,n}$ should be rich enough to include elementary functions such as polynomials and exponentials. Moreover, for such functions, $\E(f(Z))$ should be the same as what we would obtain if we first compute the expectation assuming that $Y_1,\ldots,Y_n$ are i.i.d.~$N(0,v)$ for some $v>0$, and naively substitute $v=-1$ in the formula obtained from this computation. For example, if $X\sim N(0,-1)$, then we should have $\E(e^{aX}) = e^{-\frac{1}{2}a^2}$, because $\E(e^{aY}) = e^{\frac{1}{2}v a^2}$ when $Y\sim N(0,v)$ for $v>0$. 
\item\label{cond:lin} Since expectation must be linear, the class $\mf_{m,n}$ should be a vector space over $\C$, and we should have $\E(af(Z)+bg(Z)) = a \E(f(Z))+b \E(g(Z))$ for all $f,g\in \mf_{m,n}$ and $a,b\in \C$. Moreover, if $f$ is identically equal to a constant $c$, then $\E(f(Z))$ should be equal to $c$.
\item\label{cond:real} If $f$ is real-valued, then $\E(f(Z))$ should be real. This comes from physical considerations, because the expected value of a real-valued observable should not have a nonzero imaginary component.
\end{enumerate}

\subsection{Ideas that do not work}\label{donotworksec}
Suppose we want to define $\E(f(X))$, where $X\sim N(0,-1)$, such that the definition obeys the natural conditions stated in the previous subsection. There are two `obvious' approaches that look promising but do not work.  The simplest idea is to just work with $e^{\frac{1}{2}x^2}$ as a density and define $\E(f(X))$ only for those $f$ that are integrable with respect to this density. But this clearly violates condition~\ref{cond:rich}, since this class does not even contain the polynomials. The other idea is that perhaps integrating with respect to some other suitable measure, or maybe a signed or complex measure, may give the desired result. We will now demonstrate that none of these ideas can work. 

Suppose that $\E(f(X))$ is given by $\int f(x) d\gamma(x)$ for some $\gamma$ which is either a nonnegative measure, or a signed measure, or a complex measure on the real line. Note that by condition~\ref{cond:rich}, we should have $\E(X^2)=-1$. Thus, $\gamma$ cannot be a  nonnegative measure. Suppose that $\gamma$ is a signed measure. A signed measure is allowed to take the values $+\infty$ or $-\infty$, but not both. We will now show that this condition would be violated for our $\gamma$. If $Y\sim N(0,v)$, then $\E(\cos aY) = e^{-\frac{1}{2}va^2}$ for any $a\in \R$. Thus, by condition~\ref{cond:rich}, we should have 
\[
\E(\cos aX) = e^{\frac{1}{2}a^2}.
\]
In other words, for any $a>0$,
\begin{align}\label{cosid}
\int \cos ax \, d\gamma(x) =e^{\frac{1}{2}a^2}.
\end{align}
Since the right side tends to $\infty$ as $a\to \infty$ and $|\cos ax|\le 1$ for all $a$ and $x$, it is easy to show from this that there exists a Borel set $A$ such that $\gamma(A)=\infty$. Next, by the same calculation, we have
\[
\int (1- \cos ax) \, d\gamma(x) = \E(1-\cos aX)  = 1- e^{\frac{1}{2}a^2}.
\]
The right side tends to $-\infty$ as $a \to \infty$. But $0\le 1-\cos ax \le 2$ for all $a$ and $x$. Again, it follows from this that there is some Borel set $B$ such that $\gamma(B)=-\infty$. This shows that $\gamma$ cannot be a signed measure. 

Lastly, suppose that $\gamma$ is a complex measure. Recall that for any complex measure $\gamma$, there is a nonnegative measure $|\gamma|$ with finite total mass, called the `variation' of $\gamma$, such that for any $f$, $|\int fd\gamma|\le \int |f|d|\gamma|$. But  then, the identity \eqref{cosid} shows that
\[
e^{\frac{1}{2}a^2}= \biggl|\int_{\R} \cos ax\, d\gamma(x)\biggr|\le \int_{\R} |\cos ax|\, d|\gamma|(x) \le |\gamma|(\R).
\]
Since this holds for every $a$, we have $|\gamma|(\R)=\infty$, which violates the condition that $|\gamma|$ has finite total mass.

\subsection{The wrong way to do analytic continuation}\label{anasec}
Physicists define wrong sign Gaussian distributions via analytic continuation. One approach goes as follows. Suppose we want to evaluate $\E(f(X))$ for some function $f$, where $X\sim N(0,-1)$. We define $h(s) := \E(f(sY))$, where $Y\sim N(0,1)$ and $s>0$; then, we analytically continue $h$ to the imaginary axis; finally, we define $\E(f(X)) := h(i)$, with the idea that $iY$ mimics a $N(0,-1)$ random variable. This works well in many situations, for example when $f$ is polynomial or exponential. However, there is no mathematical theory around this, and therefore we do not know precise conditions under which this approach does not lead to contradictions or violations of the conditions listed in Subsection~\ref{problemsec}.

Indeed, problems do arise in practice. This is particularly relevant for  timelike Liouville field theory. The approach outlined above corresponds to the following method of going from spacelike to timelike Liouville theory: Compute the correlations functions for spacelike Liouville theory, and then analytically continue in the parameter $b$ to replace it by $ib$. It was shown by \citet{zamolodchikov05} that this fails `rather dramatically', to quote from the discussion in \citet[Section 7]{harlowetal11}. The following simple example is a toy version of the path integral in timelike Liouville theory, which illustrates the kind of problem that leads to this failure.

Consider the function $f(x) := \exp(-e^x- e^{-x})$ on the real line. To define $\E(f(X))$ for $X\sim N(0,-1)$, let us define $h(s) := \E(f(sY))$ for $s>0$, where $Y\sim N(0,1)$. By the change of variable $u = e^{sy}$ below, we obtain
\begin{align*}
h(s) &= \frac{1}{\sqrt{2\pi}} \int_{-\infty}^\infty \exp\biggl(-e^{sy}-e^{-sy}-\frac{1}{2}y^2\biggr) dy\\
&= \frac{1}{\sqrt{2\pi} s} \int_0^\infty \frac{1}{u}\exp\biggl(-u-\frac{1}{u} -\frac{1}{2s^2}(\ln u)^2\biggr) du.
\end{align*}
Let us now extend the domain of $h$ by defining $h:\C\setminus\{0\}\to \C$ as
\begin{align*}
h(z) 
&:= \frac{1}{\sqrt{2\pi} z} \int_0^\infty \frac{1}{u} \exp\biggl(-u -\frac{1}{u}-\frac{1}{2z^2}(\ln u)^2\biggr) du.
\end{align*}
It is easy to see that the integral on the right is absolutely convergent for any $z\in \C \setminus\{0\}$. Moreover, a simple argument via the dominated convergence theorem shows that $h$ is holomorphic on this domain. Thus, the analytic continuation approach as outlined above, dictates that we should define
\begin{align}\label{efx}
\E(f(X)) := h(i) = \frac{1}{\sqrt{2\pi} i} \int_0^\infty\frac{1}{u} \exp\biggl(-u -\frac{1}{u}+\frac{1}{2}(\ln u)^2\biggr) du.
\end{align}
But note that this is not a real number, thus violating our condition \ref{cond:real} that the expected value of a real-valued function should be real.


\subsection{Analytic continuation done right}
We will now present a rigorous theory of wrong sign Gaussian random variables that avoids contradictions and gives the `right' way to implement analytic continuation. The idea is as follows. Suppose we want to calculate $\E(f(X))$ where $X\sim N(0,-1)$. Instead of first calculating $\E(f(sY))$ for $Y\sim N(0,1)$ and $s>0$ and then analytically continuing to $s=i$, the correct thing to do is to first analytically continue the function $f$, and define $\E(f(X))$ to be $\E(f(iY))$. This small adjustment guarantees that expected values of real-valued functions are real, as we will see below. Fundamentally, it is a consequence of the Schwarz reflection principle. We will see in subsequent sections that it yields  the correct formula for the correlation functions. 

For the general definition, take any $m\ge 0$ and $n>0$.  We define $\mf_{m,n}$ to be the class of functions $f:\R^{m+n}\to \C$ such that $f$ has an analytic continuation in the last $n$ coordinates  to a function $\tilde{f} : \R^m \times \Omega \to \C$, where $\Omega$ is an open subset of $\C^n$ that contains $(\R \cup i\R)^n$, such that 
\[
\E|\tilde{f}(W_1,\ldots,W_m, iW_{m+1},\ldots,iW_{m+n})|<\infty,
\]
where $W_1,\ldots,W_{m+n}$ are i.i.d.~$N(0,1)$ random variables. If such an $\tf$ exists, we define
\begin{align}\label{wrongdef}
\E(f(Z)) := \E(\tilde{f}(W_1,\ldots,W_m, iW_{m+1},\ldots,iW_{m+n})),
\end{align}
where the vector $Z:= (X_1,\ldots,X_m, Y_1,\ldots,Y_n)$ has independent coordinates, $X_1,\ldots,X_m$ are  $N(0,1)$ random variables, and $Y_1,\ldots,Y_n$ are $N(0,-1)$ random variables. We will henceforth denote this by $Z\sim N_{m,n}$. 

An immediate problem with the above definition is that it is not clear that $\tf$, if it exists, is unique. 
The following lemma shows that $\tf$ is unique on the relevant part of the domain.
\begin{lmm}\label{uniquelmm}
In the above setting, if $\tf_1$ and $\tf_2$ are two analytic continuations of $f$ that satisfy the necessary criteria, then $\tf_1=\tf_2$ on $\R^m \times(\R\cup i\R)^n$.
\end{lmm}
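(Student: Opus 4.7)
The plan is to reduce the claim to the classical identity theorem for holomorphic functions on a connected open subset of $\C^n$, and the main work lies in arguing that the set $(\R\cup i\R)^n$ belongs to the same connected component as $\R^n$ inside the intersection of the domains of the two continuations.

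First, I would fix an arbitrary $(x_1,\dots,x_m)\in\R^m$ and set $F(z):=\tf_1(x_1,\dots,x_m,z)-\tf_2(x_1,\dots,x_m,z)$ for $z$ in the intersection $\Omega:=\Omega_1\cap\Omega_2$ of the two (possibly distinct) domains produced by the hypothesis. This $\Omega$ is open in $\C^n$ and, since both $\Omega_j$ contain $(\R\cup i\R)^n$ by assumption, so does $\Omega$. By construction, $F$ is holomorphic on $\Omega$, and $F\equiv 0$ on $\R^n$ because $\tf_1$ and $\tf_2$ both restrict to $f$ on $\R^m\times\R^n$.

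Next, I would apply the standard fact that $\R^n$ is a totally real submanifold of $\C^n$ of maximal dimension: a holomorphic function on a polydisc around a point $x_0\in\R^n$ that vanishes on $\R^n$ has vanishing Taylor coefficients, since the complex partials $\partial^\alpha F/\partial z^\alpha$ at $x_0$ equal the real partials of the zero function $F|_{\R^n}$. Hence $F$ vanishes identically on such a polydisc, and therefore on the connected component $U$ of $\Omega$ containing $\R^n$, by the usual clopen-set argument for analytic continuation.

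The final and main step is to verify $(\R\cup i\R)^n\subseteq U$. Here the point is that $(\R\cup i\R)^n$ is itself path-connected: for any $z=(z_1,\dots,z_n)\in(\R\cup i\R)^n$, the straight-line path $t\mapsto(tz_1,\dots,tz_n)$, $t\in[0,1]$, connects $z$ to the origin and stays in $(\R\cup i\R)^n$, because if $z_j\in\R$ then $tz_j\in\R$ and if $z_j\in i\R$ then $tz_j\in i\R$. This path lies in $\Omega$ since $(\R\cup i\R)^n\subseteq\Omega$ by hypothesis, and the origin lies in $\R^n\subseteq U$, so the whole of $(\R\cup i\R)^n$ is contained in $U$. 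Combining this with the previous step gives $F\equiv 0$ on $(\R\cup i\R)^n$ for every choice of $(x_1,\dots,x_m)\in\R^m$, which is the lemma.

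The step I would flag as the only real obstacle is the last one — ensuring that each of the $2^n$ axis-aligned coordinate subspaces making up $(\R\cup i\R)^n$ can be reached from $\R^n$ without leaving $\Omega_1\cap\Omega_2$, because the two domains $\Omega_1,\Omega_2$ are a priori unrelated open neighborhoods. The hypothesis that each $\Omega_j$ already contains the entire set $(\R\cup i\R)^n$, together with the observation that this set is path-connected through the origin, is precisely what makes the argument go through, and is the reason the definition in the previous subsection bakes in containment of $(\R\cup i\R)^n$ rather than just some neighborhood of $\R^n$.
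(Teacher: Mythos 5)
Your argument is correct, but it takes a genuinely different route from the paper. The paper proves the lemma by induction on the coordinates: for each $k$ it freezes all variables except the $k$-th, looks at the one-dimensional slice of $\Omega_1\cap\Omega_2$ through that point, notes that the slice contains the connected set $\R\cup i\R$ and hence lies in a single open connected component, and applies the one-variable identity theorem to the two slice functions, which agree on $\R$; iterating this moves one coordinate at a time from $\R$ into $i\R$. You instead complexify all $n$ coordinates at once: for fixed real $(x_1,\ldots,x_m)$ the difference $F=\tf_1(x,\cdot)-\tf_2(x,\cdot)$ is holomorphic on $\Omega_1\cap\Omega_2\subseteq\C^n$ and vanishes on the totally real set $\R^n$, so all its Taylor coefficients at a real point vanish and $F\equiv 0$ on the connected component $U$ containing $\R^n$; your key extra observation is that every $z\in(\R\cup i\R)^n$ is joined to the origin by the segment $t\mapsto tz$, which stays inside $(\R\cup i\R)^n\subseteq\Omega_1\cap\Omega_2$, so $(\R\cup i\R)^n\subseteq U$. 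Your route trades the paper's elementary one-variable slicing for the several-variables identity theorem plus the totally-real vanishing fact, and in exchange it treats points with several imaginary coordinates in a single stroke (in the paper that is precisely what the induction is for, via slices based at points whose earlier coordinates already lie in $\R\cup i\R$). Both arguments hinge on the same structural hypothesis, which you rightly single out: the domains are required to contain all of $(\R\cup i\R)^n$, not merely a neighborhood of $\R^n$, and this is what keeps the relevant sets in one connected component.
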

\begin{proof}
We will  show by induction on $k$ that for each $1\le k\le n$, $\tf_1$ and $\tf_2$ coincide on $\R^m \times (\R \cup i\R)^k \times \R^{n-k}$. First, take $k=1$. Let the domains of $\tf_1$ and $\tf_2$ be $\R^m \times \Omega_1$ and $\R^m \times \Omega_2$. Fix $x_1,\ldots,x_m, y_2,\ldots,y_n\in \R$. Let 
\[
\Omega:=\{z\in \C: (x_1,\ldots,x_m, z, y_2,\ldots,y_n)\in \Omega_1\cap \Omega_2\}.
\]
Since $\Omega_1\cap \Omega_2$ contains $(\R\cup i\R)^n$, it follows that $\Omega$ contains $\R \cup i\R$. Since $\R \cup i\R$ is a connected set, $\R \cup i\R$ must be a subset of one of the connected components $\Omega_0$ of $\Omega$. Since $\Omega_1\cap \Omega_2$ is open, it follows that $\Omega$ is open.  Thus, every connected component of $\Omega$ is open. In particular, $\Omega_0$ is open. 

Define $h_1, h_2:\Omega_0\to \C$ as 
\[
h_1(z) := \tf_1(x_1,\ldots,x_m, z, y_2,\ldots,y_n), \ \ \  h_2(z) := \tf_2(x_1,\ldots,x_m, z, y_2,\ldots,y_n).
\]
Then $h_1$ and $h_2$ are holomorphic on the open connected set $\Omega_0$, and coincide on the real line due to the hypothesis that $\tf_1=\tf_2=f$ on $\R^m \times (\R\cup i\R)^n$. Thus, $h_1(z)=h_2(z)$ for all $z\in \Omega_0$. In particular, $h_1(iy)=h_2(iy)$ for all $y\in \R$. This proves the claim for~$k=1$. 

Next, suppose that we have proved the claim up to $k-1$, for some $k\ge 2$. Fix some $x_1,\ldots,x_m, y_1,\ldots, y_{k-1}, y_{k+1},\ldots,y_n\in \R$, and define
\[
\Omega:=\{z\in \C: (x_1,\ldots,x_m, y_1,\ldots, y_{k-1}, z, y_{k+1}\ldots,y_n)\in \Omega_1\cap \Omega_2\}.
\]
Let $\Omega_0$ be the connected component of $\Omega$ that contains $\R\cup i\R$. Define $h_1, h_2:\Omega_0\to \C$ as 
\begin{align*}
h_1(z) &:= \tf_1(x_1,\ldots,x_m, y_1,\ldots, y_{k-1},z, y_{k+1},\ldots,y_n), \\
 h_2(z) &:= \tf_2(x_1,\ldots,x_m, y_1,\ldots, y_{k-1},z, y_{k+1},\ldots,y_n).
\end{align*}
As before, $\Omega_0$ is an open connected set, $h_1$ and $h_2$ are holomorphic on $\Omega_0$, and coincide on $\R$. Thus, $h_1=h_2$ on  $i\R$. This completes the induction step. The case $k=n$ proves the lemma.
\end{proof}

It follows from equation \eqref{wrongdef} and Lemma \ref{uniquelmm} that our definition of wrong sign expectation satisfies the  conditions \ref{cond:rich} and \ref{cond:lin}. (To see that it satisfies condition~\ref{cond:lin}, note that equation \eqref{wrongdef} implies linearity of wrong sign expectation, and that if $f$ is identically equal to a constant $c$, then $\E(f(Z)) = c$. To see that it satisfies condition \ref{cond:rich}, note that polynomials and exponentials admit straightforward analytic continuations, and applying the definition \eqref{wrongdef} of wrong sign expectation to such functions yield the required answers.) The following result shows that condition \ref{cond:real} is also satisfied.
\begin{thm}\label{wrongthm}
Suppose that $f\in \mf_{m,n}$ is real-valued and $Z\sim N_{m,n}$. Then $\E(f(Z))$, as defined in equation \eqref{wrongdef}, is real.
\end{thm}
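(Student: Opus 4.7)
The plan is to invoke the Schwarz reflection principle one variable at a time, together with the symmetry of the one-dimensional standard Gaussian, to show that the expectation equals its own complex conjugate.

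First, I would prove the key symmetry property: for every $f \in \mf_{m,n}$ that is real-valued on $\R^{m+n}$, any analytic continuation $\tilde f : \R^m \times \Omega \to \C$ as in the definition satisfies
\begin{equation*}
\tilde f(x_1,\dots,x_m, z_1,\dots,z_n) = \overline{\tilde f(x_1,\dots,x_m, \bar z_1,\dots,\bar z_n)}
\end{equation*}
whenever both sides are defined, and in particular on $\R^m \times (\R\cup i\R)^n$. To establish this, I would define the candidate function $g(x, z_1,\dots,z_n) := \overline{\tilde f(x, \bar z_1,\dots,\bar z_n)}$ on the reflected domain, observe that $g$ is holomorphic in each $z_j$ separately (complex conjugation of a holomorphic function composed with conjugation of the variable is holomorphic), and that $g$ agrees with $f$ on $\R^{m+n}$ since $f$ is real-valued there. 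I would then adapt the induction-on-$k$ argument from Lemma \ref{uniquelmm} (replacing $\tilde f_2$ with $g$ and extending $\Omega_2$ via reflection) to conclude that $g = \tilde f$ on all of $\R^m \times (\R\cup i\R)^n$.

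Second, I would apply this identity at the point $(W_1,\dots,W_m, iW_{m+1},\dots,iW_{m+n})$. Since $\overline{iW_{m+j}} = -iW_{m+j}$, we obtain the pointwise identity
\begin{equation*}
\tilde f(W_1,\dots,W_m, iW_{m+1},\dots,iW_{m+n}) = \overline{\tilde f(W_1,\dots,W_m, -iW_{m+1},\dots,-iW_{m+n})}.
\end{equation*}
Taking expectations on both sides (legitimate by the integrability hypothesis in the definition of $\mf_{m,n}$, which extends to the reflected vector since $|\tilde f(W,-iW')| = |\tilde f(W,iW')|$ has the same distribution as $|\tilde f(W,iW')|$ by symmetry of $W'$), and using that expectation and complex conjugation commute, yields
\begin{equation*}
\E[\tilde f(W_1,\dots,W_m,iW_{m+1},\dots,iW_{m+n})] = \overline{\E[\tilde f(W_1,\dots,W_m,-iW_{m+1},\dots,-iW_{m+n})]}.
\end{equation*}

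Finally, since $W_{m+1},\dots,W_{m+n}$ are i.i.d.~$N(0,1)$, the vector $(-W_{m+1},\dots,-W_{m+n})$ has the same joint distribution as $(W_{m+1},\dots,W_{m+n})$, so the expectation on the right equals $\E[\tilde f(W_1,\dots,W_m, iW_{m+1},\dots,iW_{m+n})]$ itself. Therefore $\E(f(Z)) = \overline{\E(f(Z))}$, which is the desired conclusion. The main obstacle is the first step: making sure the reflection identity holds globally on $\R^m \times (\R\cup i\R)^n$ without assuming any symmetry of $\Omega$ under conjugation, which is handled by the one-variable-at-a-time uniqueness argument already developed for Lemma \ref{uniquelmm}.
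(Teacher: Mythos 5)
Your proof is correct, and it takes a genuinely different (and leaner) route than the paper. The paper works one coordinate at a time: it proves by induction that the sum of $\tf$ over all sign flips of the first $k$ imaginary coordinates is real, and each induction step invokes the generalized Schwarz reflection principle, which requires a careful topological argument that the intersection of the relevant connected component with the open upper half-plane is itself connected. You bypass the reflection principle entirely by observing that, because $f$ is real-valued on $\R^{m+n}$, the conjugate-reflected function $g(x,z):=\overline{\tf(x,\bar z_1,\dots,\bar z_n)}$, defined on $\R^m\times\bar\Omega$ (which is open and still contains $(\R\cup i\R)^n$), is another analytic continuation of $f$; hence Lemma \ref{uniquelmm} forces $\tf=g$ on $\R^m\times(\R\cup i\R)^n$, which is exactly the global conjugation symmetry $\tf(x,iy)=\overline{\tf(x,-iy)}$. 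The final step --- averaging out the sign via the symmetry of the Gaussian law of $(W_{m+1},\dots,W_{m+n})$ --- is common to both arguments, and your integrability remark (the reflected integrand has the same modulus pointwise, hence is integrable) is all that is needed to justify taking expectations. What your route buys is economy: the reflection principle is designed for functions holomorphic only on one side of $\R$ and continuous up to it, whereas here $\tf$ is already holomorphic on a full neighborhood of $\R$ in each of the last $n$ variables, so the identity-theorem machinery already packaged in Lemma \ref{uniquelmm} suffices; the paper's proof, by contrast, redoes that work coordinate-by-coordinate with symmetrized sums, at the cost of the connectivity argument but without needing to introduce the reflected domain $\bar\Omega$. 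One small point of care if you write this up: when invoking Lemma \ref{uniquelmm} you should note explicitly, as you did, that $g$ satisfies the same integrability criterion, so that it is an admissible continuation in the sense of the definition of $\mf_{m,n}$.
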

\begin{proof}
Let $\tf$ be the analytic continuation of $f$ on the domain $\R^m \times \Omega$, satisfying the required criteria. We will prove by induction on $k$ that for any $1\le k\le n$ and any $x_1,\ldots,x_m, y_1,\ldots,y_n\in \R$,
\begin{align}\label{schwarz}
\sum_{s_1,\ldots,s_k\in \{-1,1\}} \tf(x_1,\ldots,x_m, i s_1 y_1,\ldots, i s_k y_k, y_{k+1},\ldots,y_n)\in \R.
\end{align}
First, take $k=1$. Fix $x_1,\ldots,x_m, y_2,\ldots,y_n\in \R$ and let
\[
\Omega_*:=\{z\in \C: (x_1,\ldots,x_m, z, y_2,\ldots,y_n)\in \Omega\}.
\]
Define $h:\Omega_* \to \C$ as 
\[
h(z) :=\tf(x_1,\ldots,x_m, z, y_2,\ldots,y_n).
\]
As in the proof of Lemma \ref{uniquelmm}, $\Omega_*$ has an open connected component $\Omega_0$ containing $\R \cup i\R$. Let $\Omega_0^+ := \{z\in \Omega_0: \Im(z) >0\}$ be the intersection of $\Omega_0$ with the open upper half-plane. Clearly, $\Omega_0^+$ is open. We claim that $\Omega_0^+$ is connected. To see this, suppose not. Since a subset of Euclidean space is connected if and only if it is path connected, this implies that $\Omega_0^+$ has multiple nonempty path connected components. It is easy to see that each component is open. Thus there exist disjoint nonempty open sets $A,B$ whose union is $\Omega_0^+$, such that no point in $A$ is connected to a point in $B$ by a continuous path that lies entirely in $\Omega_0^+$. 

Since $\Omega_0$ is open and contains $\R$, we deduce that for each $x\in \R$, there is an open disk $B_x$ with center $x$ such that $B_x\subseteq \Omega_0$. Let $B_x^+ := B_x\cap \Omega_0^+$, so that $B_x^+$ is also the intersection of $B_x$ and the open upper half-plane. Thus, $B_x^+$ is open and connected, but is also the union of the disjoint open sets $B_x^+\cap A$ and $B_x^+\cap B$. Therefore, one of these two sets must be empty. In other words, either $B_x^+ \subseteq A$ or $B_x^+\subseteq B$. Let
\[
U := \bigcup_{x: B_x^+\subseteq A} (B_x\cap \R), \ \ \  V := \bigcup_{x: B_x^+\subseteq B} (B_x\cap \R).
\]
Since $U$ and $V$ are unions of open intervals, they are open subsets of $\R$. By the above argument, they are disjoint, and their union is $\R$. Since $\R$ is connected, this implies that one of $U$ and $V$ must be empty. Suppose that $V$ is empty, so that no $B_x^+$ intersects $B$. 

Take any $x\in B$ and $y\in A$. Since $\Omega_0$ is connected and hence path connected, there is a continuous curve $\gamma:[0,1]\to \Omega_0$ such that $\gamma(0)=x$ and $\gamma(1)=y$. By the construction of $A$ and $B$, we know that this path cannot lie entirely in $\Omega_0^+$. Let $t:= \inf\{s: \gamma(s)\notin \Omega_0^+\}$. By the continuity of $\gamma$, we deduce that $\Im(\gamma(t)) = 0$, and hence, $w:=\gamma(t)\in \R$. Let $B_w$ be as above, so that $B_w^+\subseteq A$. Then, again by the continuity of $\gamma$, there exists $s\in [0,t)$ that is so close to $t$ that $\gamma(s)\in B_w$. Then the restriction of $\gamma$ to the interval $[0,s]$ is a continuous curve, fully contained in $\Omega_0^+$, that connects a point in $A$ to a point in $B$. This is a contradiction that proves our claim that $\Omega_0^+$ is connected.

Let $\tilde{\Omega}$ be the union of $\Omega_0^+$, its reflection across the real line, and $\R$. It is easy to see that $\tilde{\Omega}$ is open, by checking that each point in $\tilde{\Omega}$ belongs to an open disk that is contained in $\tilde{\Omega}$. Note that  $h$ is holomorphic in $\Omega_0^+$, $h$ real-valued on $\R$, and each $x\in \R$ is contained in an open disk $B_x$ such that the intersection of $B_x$ and the open upper half-plane is contained in $\Omega_0^+$. Thus, by the generalized form of the Schwarz reflection principle~\cite[Theorem 11.14]{rudin87}, $h$ has an extension to a holomorphic function  $\tilde{h}$ on $\tilde{\Omega}$, defined as
\[
\tilde{h}(z) :=
\begin{cases}
h(z) &\text{ if } z\in \Omega_0^+\cup \R,\\
\overline{h(\overline{z})} &\text{ if } \overline{z}\in \Omega_0^+.
\end{cases}
\]  
Note that $\tilde{\Omega}$ and $\Omega_0$ are open sets that contain $i\R$. Thus, for each $ix\in i \R$, there is an open disk $B_{ix}$ centered at $ix$ such that $B_{ix}\subseteq \tilde{\Omega}\cap \Omega_0$. Let $W$ be the union of these disks. Since $i \R$ is path connected, it follows that $W$ is path connected, and hence, connected. Clearly, $W$ is open. Now, note that both $h$ and $\tilde{h}$ are holomorphic on $W$, and coincide on the upper half of the imaginary axis. Therefore, $h=\tilde{h}$ everywhere on $W$. In particular, for any $ix$ on the upper half of the imaginary axis,
\[
h(-ix) = \tilde{h}(-ix) = \overline{h(ix)}. 
\]
This is the same as saying that any $y_1\in \R$,
\[
\tf(x_1,\ldots,x_m,-iy_1,y_2,\ldots,y_n) = \overline{\tf(x_1,\ldots,x_m,i y_1, y_2,\ldots,y_n)}. 
\]
This proves the claim \eqref{schwarz} for $k=1$. Next, suppose that it holds for $k-1$. Fix $x_1,\ldots,x_m,y_1,\ldots,y_{k-1}, y_{k+1},\ldots,y_n\in \R$, and define
\[
\Omega_*:=\{z\in \C: (x_1,\ldots,x_m, y_1,\ldots, y_{k-1}, z, y_{k+1},\ldots,y_n)\in \Omega\}.
\]
Define $h:\Omega_* \to \C$ as 
\[
h(z) := \sum_{s_1,\ldots,s_{k-1}\in \{-1,1\}} \tf(x_1,\ldots,x_m, i s_1y_1,\ldots,is_{k-1}y_{k-1}, z, y_{k+1},\ldots,y_n). 
\]
Note that $\Omega_*$ is open, $\Omega_*$ contains $\R\cup i\R$, $h$ is holomorphic on $\Omega_*$, and by the induction hypothesis, $h$ real-valued on $\R$. Proceeding exactly as before, we deduce that $h(-ix) = \overline{h(ix)}$ for any $x\in \R$. Thus, 
\begin{align*}
\sum_{s_1,\ldots,s_{k-1}\in \{-1,1\}} &\tf(x_1,\ldots,x_m, i s_1y_1,\ldots,is_{k-1}y_{k-1}, -iy_k, y_{k+1},\ldots,y_n) \\
&= \overline{\sum_{s_1,\ldots,s_{k-1}\in \{-1,1\}} \tf(x_1,\ldots,x_m, i s_1y_1,\ldots,is_{k-1}y_{k-1}, iy_k, y_{k+1},\ldots,y_n)}
\end{align*}
for any $x_1,\ldots,x_m, y_1,\ldots,y_n\in \R$. This completes the induction step. 

Let $W_1,\ldots,W_{m+n}$ be i.i.d.~$N(0,1)$ random variables. Since the standard Gaussian distribution is symmetric around zero, we have that for any $s_1,\ldots,s_n\in \{-1,1\}$, 
\begin{align*}
\E(\tf(W_1,\ldots,W_m, i W_{m+1},\ldots, iW_{m+n})) =\E(\tf(W_1,\ldots,W_m, is_1W_{m+1},\ldots,i s_n W_{m+n})).
\end{align*}
This shows that 
\begin{align*} 
&\E(\tf(W_1,\ldots,W_m, i W_{m+1},\ldots, iW_{m+n}))\\
 &= \frac{1}{2^n}\sum_{s_1,\ldots,s_n\in \{-1,1\}}\E(\tf(W_1,\ldots,W_m, is_1W_{m+1},\ldots,i s_n W_{m+n}))\\
&= \frac{1}{2^n}\E\biggl(\sum_{s_1,\ldots,s_n\in \{-1,1\}}\tf(W_1,\ldots,W_m, is_1W_{m+1},\ldots,i s_n W_{m+n})\biggr).
\end{align*}
By equation \eqref{schwarz} for $k=n$, the right side is real. This completes the proof.
\end{proof}

\subsection{Examples}
It is clear that we get the expected results with simple functions like polynomials and exponentials. For a nontrivial example, let us consider the function 
\[
f(x)=\exp(-e^x-e^{-x})
\] 
from Subsection \ref{anasec}, which caused problems with the `naive' analytic continuation approach. This function can be analytically continued as $\tf(z) = \exp(-e^z - e^{-z})$ to the entire complex plane. Also, if $Z\sim N(0,1)$, then $\E|\tf(iZ)|< \infty$ since $\tf$ is bounded on the imaginary axis. Thus, $f\in \mf_{0,1}$, and for $X\sim N(0,-1)$, we have
\begin{align*}
\E(f(X)) &= \E(\tf(iZ)) = \E(\exp(-e^{iZ} - e^{-iZ})). 
\end{align*}
A simple application of the dominated convergence theorem shows that the expectation on the right can be calculated by expanding in power series and moving the expectation within the sum, to give
\begin{align*}
\E(f(X)) &= \sum_{k=0}^\infty \frac{(-1)^k}{k!}\E((e^{iZ} + e^{-iZ})^k)\\
&=   \sum_{k=0}^\infty \frac{(-1)^k}{k!}\biggl\{\sum_{j=0}^k {k\choose j} \E(e^{i(k-2j)Z})\biggr\}\\
&= \sum_{k=0}^\infty \sum_{j=0}^k \frac{(-1)^k e^{-\frac{1}{2}(k-2j)^2}}{j!(k-j)!}.
\end{align*}
This is a finite real value, not suffering from the problem with the `wrong' imaginary value in equation \eqref{efx} that we previously calculated using the naive analytic continuation method.

The next example shows that it is important to carefully verify the conditions of Theorem \ref{wrongthm} before defining a wrong sign expectation. Consider the function 
\[
f(x)=\sqrt{1+x^2}
\]
on $\R$. This a smooth function on $\R$ which can be analytically continued to a domain whose {\it closure} contains $\R\cup i\R$, as follows. Let $\sqrt{\cdot}$ denote the analytic branch of the square-root in $\C\setminus\{ix: -\infty<x\le 0\}$. Explicitly, if $z= re^{i\theta}$ for $r> 0$ and $\theta \in (-\pi/2, 3\pi/2)$, then $\sqrt{z} = \sqrt{r}e^{i\theta/2}$. In particular, for a negative real number $x$, $\sqrt{x} = i\sqrt{|x|}$, since $x=|x|e^{i\pi}$. Then the function
\begin{align}\label{tfdef}
\tilde{f}(z) := \sqrt{1+z^2}
\end{align}
is analytic in the domain 
\begin{align}\label{omegadef}
\Omega := \{x+iy : 1+x^2 - y^2 \ne 0 \text{ or } xy > 0\},
\end{align}
and is equal to $f$ on $\R$. Note that $\Omega$ contains $\R$ and $i\R\setminus\{-i,i\}$. The omission of $\pm i$ from the domain can be remedied by  extending  $\tilde{f}$ continuously to the full imaginary axis by defining $\tilde{f}(\pm i) = 0$. Now suppose we define $\E(f(X))$, for $X\sim N(0,-1)$, to be the number $\E(\tilde{f}(iZ))$, where $Z\sim N(0,1)$. Then it turns out the $\E(f(X))$ has a nonzero imaginary component even though $f$ is real-valued, thus violating our condition \ref{cond:real}. To see this, simply note that 
\[
\tilde{f}(iZ) =
\begin{cases}
\sqrt{1-Z^2} &\text{ if } |Z|\le 1,\\
i\sqrt{Z^2 - 1} &\text{ if } |Z|>1,
\end{cases}
\]
which gives 
\[
\E(\tilde{f}(iZ)) = \int_{|x|\le 1}\frac{1}{\sqrt{2\pi}} \sqrt{1-x^2} e^{-\frac{1}{2}x^2} dx + i\int_{|x|>1} \frac{1}{\sqrt{2\pi}} \sqrt{x^2-1} e^{-\frac{1}{2}x^2} dx.
\]
Clearly, the imaginary part is nonzero. In a nutshell, we cannot define $\E\sqrt{1+X^2}$ as $\E\sqrt{1+(iZ)^2}$ even though the latter expectation is mathematically well-defined and `almost' satisfies the conditions of Theorem \ref{wrongthm}.

Can we simulate wrong sign Gaussian random variables on the computer? Unfortunately, the answer seems to be no, for the following reason. Let $A$ be a Borel subset of $\R$ and $f$ be the indicator of $A$, that is, the function that is $1$ on $A$ and $0$ outside. Then, unless $A=\R$ or $A = \emptyset$, $f$ is not continuous on $\R$ and therefore cannot be analytically continued to any open subset of $\C$ that contains $\R$. Consequently, unless $A=\R$ or $A=\emptyset$, we cannot define $\P(X\in A)$ for $X\sim N(0,-1)$. On the other hand, $\P(X\in \R)=1$ and $\P(X\in \emptyset) = 0$. This indicates that it is impossible to simulate $X$  on a computer.

\subsection{Integration by parts}
Recall that if $Y\sim N(0,1)$, then for any differentiable function $f$ such that $\E|Yf(Y)|$ and $\E|f'(Y)|$ are finite, we have the identity 
\[
\E(Yf(Y)) = \E(f'(Y))
\]
obtained using integration by parts. This  simple identity is the basis of a large class of identities in physics that often go by the name of `Ward identities'. More generally, if $Y\sim N(0,v)$, then
\[
\E(Yf(Y)) = v \E(f'(Y)). 
\]
Thus, if $X\sim N(0,-1)$, we should have
\[
\E(Xf(X)) = - \E(f'(X))
\]
for any $f$ such that both sides are defined. 
It is not clear if the naive analytic continuation from Subsection \ref{anasec} satisfies this condition. The following theorem shows that our definition of wrong sign expectation does.
\begin{thm}
Let $Z = (X_1,\ldots,X_m, Y_1,\ldots,Y_n)\sim N_{m,n}$. Take any $1\le j\le n$ and let $f:\R^{m+n} \to \C$ be a function that is differentiable in coordinate $m+j$, with the derivative denoted by $\partial_{m+j} f$. If the functions $g(x_1,\ldots,x_{m+n}) := x_{m+j}f(x_1,\ldots,x_{m+n})$ and $\partial_{m+j} f$ are in $\mf_{m,n}$, then $\E(Y_j f(Z)) = -\E(\partial_{m+j} f(Z))$.
\end{thm}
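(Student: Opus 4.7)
The plan is to unwind the definition of wrong sign expectation and reduce the claim to ordinary Gaussian integration by parts for a holomorphic composition. Let $\tilde{f}:\R^m\times\Omega\to\C$ be the analytic continuation of $f$ guaranteed by $f\in\mf_{m,n}$, and let $W_1,\ldots,W_{m+n}$ be i.i.d.\ $N(0,1)$.

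\textbf{Step 1: identify the analytic continuations of $g(x):=x_{m+j}f(x)$ and of $\partial_{m+j}f$.} Define $\tilde{g}(x_1,\ldots,x_m,z_1,\ldots,z_n):=z_j\,\tilde{f}(x_1,\ldots,x_m,z_1,\ldots,z_n)$. This is holomorphic on $\R^m\times\Omega$ in the last $n$ variables and coincides with $g$ on $\R^{m+n}$; by Lemma \ref{uniquelmm} it must equal the analytic continuation used to define $\E(g(Z))$ on $\R^m\times(\R\cup i\R)^n$. Similarly, because $\tilde{f}$ is holomorphic in each of its last $n$ variables, the partial derivative $\partial_{z_j}\tilde{f}$ exists and is holomorphic on $\R^m\times\Omega$; on $\R^{m+n}$ it equals $\partial_{m+j}f$, so by the same uniqueness it is the analytic continuation used to define $\E(\partial_{m+j}f(Z))$. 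Using these identifications and the definition \eqref{wrongdef},
\[
\E(Y_j f(Z))=\E\bigl(iW_{m+j}\,\tilde{f}(W_1,\ldots,W_m,iW_{m+1},\ldots,iW_{m+n})\bigr),
\]
\[
\E(\partial_{m+j}f(Z))=\E\bigl(\partial_{z_j}\tilde{f}(W_1,\ldots,W_m,iW_{m+1},\ldots,iW_{m+n})\bigr).
\]

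\textbf{Step 2: apply scalar Gaussian integration by parts to $W_{m+j}$.} Set $h(w):=\tilde{f}(w_1,\ldots,w_m,iw_{m+1},\ldots,iw_{m+n})$. By the chain rule applied to the holomorphic function $\tilde{f}$ in its $(m+j)$-th slot, $\partial_{w_{m+j}}h(w)=i\,\partial_{z_j}\tilde{f}(w_1,\ldots,iw_{m+1},\ldots,iw_{m+n})$. Condition on the variables other than $W_{m+j}$ and apply the standard identity $\E(UF(U))=\E(F'(U))$ for $U\sim N(0,1)$ to the real and imaginary parts of $w_{m+j}\mapsto h(w)$ separately. This yields $\E(W_{m+j}h(W))=\E(\partial_{w_{m+j}}h(W))=i\,\E(\partial_{z_j}\tilde{f}(\cdots))$, so multiplying by $i$ gives $\E(iW_{m+j}h(W))=-\E(\partial_{z_j}\tilde{f}(\cdots))$. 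Combining with Step 1 gives the desired identity $\E(Y_jf(Z))=-\E(\partial_{m+j}f(Z))$.

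\textbf{Main obstacle.} The nontrivial point is justifying the scalar integration by parts for the complex-valued function $h$, i.e.\ checking that the boundary terms at $\pm\infty$ in the $W_{m+j}$ variable vanish. One Fubini-style argument: by the hypothesis $g\in\mf_{m,n}$, $\E|W_{m+j}h(W)|<\infty$, and by $\partial_{m+j}f\in\mf_{m,n}$, $\E|\partial_{z_j}\tilde{f}(\cdots)|<\infty$; hence for almost every fixing of the remaining variables, both $w_{m+j}\mapsto w_{m+j}h(w)\varphi(w_{m+j})$ and $w_{m+j}\mapsto(\partial_{w_{m+j}}h(w))\varphi(w_{m+j})$ are integrable (with $\varphi$ the standard Gaussian density). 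Since $\varphi'(u)=-u\varphi(u)$ and $h(w)\varphi(w_{m+j})\to0$ as $w_{m+j}\to\pm\infty$ (a standard consequence of integrability of both $h\varphi$ and $(\partial h)\varphi$ together with continuity of $h$), an integration by parts on the real line followed by Fubini completes the argument. Aside from this integrability check, everything reduces to the uniqueness of analytic continuation (Lemma \ref{uniquelmm}) and the fact that $\tilde{f}$ is holomorphic in each of its last $n$ complex arguments.
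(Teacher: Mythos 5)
Your overall strategy is essentially the paper's: pass to analytic continuations, use Lemma \ref{uniquelmm} to identify them on $\R^m\times(\R\cup i\R)^n$ so that the wrong sign expectations become ordinary Gaussian expectations along the imaginary axis, and then apply one-dimensional Gaussian integration by parts, picking up the factor $i^2=-1$. Your verification of the boundary terms (finiteness of $\E|W_{m+j}h(W)|$ and $\E|\partial_{z_j}\tf(\cdots)|$, a.e.\ conditional integrability via Fubini, and the vanishing of $h\varphi$ at $\pm\infty$) is correct and in fact more detailed than the paper, which simply cites the standard Gaussian integration-by-parts identity whose hypotheses are exactly the two integrability conditions you check. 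However, there is one genuine gap, and it occurs at your very first step: the theorem does \emph{not} assume $f\in\mf_{m,n}$. The only hypotheses are that $g(x)=x_{m+j}f(x)$ and $\partial_{m+j}f$ belong to $\mf_{m,n}$, so there is no ``analytic continuation of $f$ guaranteed by $f\in\mf_{m,n}$'' to invoke. As written, your argument proves a weaker statement in which $f$ is additionally assumed to admit such a continuation; the whole point of the hypothesis structure (and the reason the conclusion only mentions $\E(Y_jf(Z))=\E(g(Z))$ and $\E(\partial_{m+j}f(Z))$, never $\E(f(Z))$) is that nothing is assumed about $f$ itself.

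The paper closes exactly this hole by \emph{constructing} the needed continuation of $f$ from the given continuation $h$ of $\partial_{m+j}f$: it defines $\tf$ by integrating $h$ in the $j$-th variable from $0$, namely $\tf(x,z):=f(x,z_1,\ldots,z_{j-1},0,z_{j+1},\ldots,z_n)+\int_0^1 z_j\,h(x,z_1,\ldots,z_{j-1},tz_j,z_{j+1},\ldots,z_n)\,dt$, which restricts to $f$ on $\R^{m+n}$ and satisfies $\partial_{m+j}\tf=h$ by construction; from that point on the computation is the one you wrote, with $\tilde g:=z_j\tf$ identified with the hypothesized continuation of $g$ via Lemma \ref{uniquelmm}. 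Alternatively, you could repair your version by producing $\tf$ from $\tilde g$ directly: the restriction of $\tilde g$ to the hyperplane $\{z_j=0\}$ is an analytic continuation of $g|_{x_{m+j}=0}\equiv 0$, so a Lemma \ref{uniquelmm}-type propagation forces it to vanish near the relevant points of $(\R\cup i\R)^n$, and then $\tilde g(z)/z_j$ has a removable singularity across $z_j=0$ and furnishes the continuation of $f$ you use. Note that in either repair only the \emph{existence} of the continuation of $f$ is needed, not the integrability clause of membership in $\mf_{m,n}$, since your integration by parts only uses integrability of $z_j\tf$ and of $\partial_{z_j}\tf$ along the imaginary axis; so the missing ingredient is recoverable from the stated hypotheses, but it must be supplied rather than assumed.
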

\begin{proof}
Since $\partial_{m+j}f \in \mf_{m,n}$, it has an analytic continuation $h$ to $\R^m \times \Omega$, where $\Omega$ is an open subset of $\C^n$ that contains $(\R\cup i\R)^n$. Define $\tf: \R^m \times \Omega\to \C$ as
\begin{align*}
\tf(x_1,\ldots,x_m, z_1,\ldots,z_n) &:= f(x_1,\ldots,x_m, z_1,\ldots,z_{j-1},0,z_{j+1},\ldots,z_n) \\
&\qquad + \int_0^{1} z_j h(x_1,\ldots, x_m, z_1,\ldots,z_{j-1}, tz_j, z_{j+1},\ldots,z_n) dt.
\end{align*}
Then $\tf$ is an analytic function on $\R^m \times \Omega$, and is equal to $f$ on $\R^{m+n}$. Thus, the function 
\[
\tilde{g}(x_1,\ldots,x_m, z_1,\ldots,z_n) := z_j \tf(x_1,\ldots,x_m, z_1,\ldots,z_n)
\]
is the analytic continuation of $g$ to $\R^m \times \Omega$.  So by Lemma \ref{uniquelmm} and the fact that $g\in \mf_{m,n}$, we conclude that $\E|\tilde{g}(W)|<\infty$ and $\E(g(Z)) = \E(\tilde{g}(W))$, where
\[
W := (W_1,\ldots,W_m, iW_{m+1},\ldots,iW_{m+n}),
\]
and $W_1,\ldots,W_{m+n}$ are  i.i.d.~$N(0,1)$ random variables.
Thus, 
\[
\E(Y_j f(Z)) = \E(g(Z)) = \E(\tilde{g}(W)) = \E(iW_{m+j} \tf(W)). 
\]
On the other hand, $\E(\partial_{m+j} f(Z)) =\E(h(W))$ and $\E|h(W)|<\infty$. By the definition of $\tf$, we have that $\partial_{m+j} \tf = h$. Thus, by the usual integration by parts formula for Gaussian random variables, we get
\begin{align*}
\E(iW_{m+j}\tf(W))= i^2\E(\partial_{m+j} \tf(W)) = - \E(h(W)) = -\E(\partial_{m+j} f(Z)).
\end{align*}
Combining the last two displays completes the proof.
\end{proof}

\subsection{Application to the backward heat equation}
The notion of wrong sign Gaussian random variables that we defined can be used to produce solutions to the backward heat equation, just as Gaussian random variables can be used to produce solutions to the heat equation. Recall that the heat equation on $[0,\infty) \times \R^n$ is the partial differential equation  
\[
\partial_t f = \Delta f, \ \  f(0,\cdot) = h(\cdot)
\]
where $f:[0,\infty) \times \R^n \to \R$ is a continuous function which is smooth on $(0,\infty)\times \R^n$, $\partial_t f$ is the partial derivative of $f$ in the first  coordinate (that is, the time coordinate), and $\Delta f$ is the Laplacian of $f$ in the last $n$ coordinates (that is, the space coordinates), and $h:\R^n \to \R$ is an initial condition. It is well known that under mild conditions on $h$, the solution can be expressed as
\[
f(t,x) = \E(h(x + \sqrt{2t} Z)),
\]
where $Z=(Z_1,\ldots,Z_n)$ is a vector of i.i.d.~$N(0,1)$ random variables.

The backward heat equation is 
\[
\partial_t f = -\Delta f, \ \ f(0,\cdot) = h(\cdot).
\]
Unlike the forward heat equation, the backward equation requires far more stringent conditions on $h$ for a solution to exist. The reason is that a solution to the backward equation is just a time-reversed version of the forward equation; and the forward equation causes a function to become instantly analytic. Thus, unless $h$ is real analytic, there cannot exist a solution to the backward equation for any length of time. Moreover, it is known that the solution does not depend   continuously on the initial condition $h$, making it hard to simulate solutions.

The backward heat equation has a sizable body of literature. It is part of the general area of inverse problems, and is sometimes called the `final value problem' for the forward heat equation. Following early results by \citet{yosida59} and \citet{miranker61}, the problem was investigated in depth using the theory of quasi-reversibility developed  by \citet{latteslions69}. It has since been the subject of many investigations. For recent advances and a survey of the literature, see \cite{dongzhang20}.

The following theorem shows that the solution to the backward heat equation may be obtained using wrong sign Gaussian random variables, analogous to the probabilistic solution to the forward equation described above.
\begin{thm}
Let $h:\C^n \to\C$ be a holomorphic function and $T$ be a positive real number such that $|h(z)| = O(e^{\frac{1}{4T}|z|^2})$ as $|z|\to \infty$. 
Let $X\sim N_{0,n}$. Then, for any $x\in \R^n$ and $t\in [0, T)$,  $f(t,x) := \E(h(x + \sqrt{2t}X))$ is well-defined, and the function $f$ solves the backward heat equation in the time interval $[0,T)$ with initial condition $h|_{\R^n}$.
\end{thm}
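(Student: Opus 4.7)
The strategy is to exploit the fact that $h$ is already entire, so the wrong-sign expectation collapses to an ordinary Gaussian expectation along a purely imaginary shift, and then to derive the backward heat equation by Gaussian integration by parts.

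The first step is to rewrite $f$ concretely. Since the map $w\mapsto h(x+\sqrt{2t}\,w)$ on $\R^n$ already possesses the entire analytic continuation $z\mapsto h(x+\sqrt{2t}\,z)$ to $\C^n$, the definition \eqref{wrongdef} gives
\[
f(t,x)=\E\bigl(h(x+i\sqrt{2t}\,W)\bigr),
\]
where $W=(W_1,\ldots,W_n)$ are i.i.d.\ $N(0,1)$. Because $|x+i\sqrt{2t}\,W|^2=|x|^2+2t|W|^2$ and $|h(z)|=O(e^{|z|^2/4T})$, integrability of the right-hand side reduces to finiteness of $\E\exp(t|W|^2/(2T))$, which holds precisely when $t<T$; so $f$ is well-defined on $[0,T)\times\R^n$. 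Reality of $f(t,x)$ then follows from Theorem~\ref{wrongthm}, since $h|_{\R^n}$ is real-valued. A Cauchy-estimate argument applied to $h$ on small polydisks yields bounds $|\partial^\beta h(z)|\le C_{\beta,T'}\,e^{|z|^2/4T'}$ for every multi-index $\beta$ and every $T'<T$, and these supply the uniform integrable majorants needed to differentiate under the expectation any number of times on $(0,T)\times\R^n$, producing in particular
\[
\Delta f(t,x)=\E\bigl((\Delta h)(x+i\sqrt{2t}\,W)\bigr),\qquad \partial_t f(t,x)=\sum_{j=1}^n\E\!\left(\frac{iW_j}{\sqrt{2t}}(\partial_j h)(x+i\sqrt{2t}\,W)\right).
\]

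The PDE now falls out of ordinary real-Gaussian integration by parts. Apply the classical identity $\E(W_j G(W))=\E(\partial_{W_j}G(W))$ separately to the real and imaginary parts of $G(W):=(\partial_j h)(x+i\sqrt{2t}\,W)$, and use the chain rule $\partial_{W_j}G(W)=i\sqrt{2t}\,(\partial_j^2 h)(x+i\sqrt{2t}\,W)$, to obtain $\E(W_j(\partial_j h)(x+i\sqrt{2t}\,W))=i\sqrt{2t}\,\E((\partial_j^2h)(x+i\sqrt{2t}\,W))$. Multiplying by $i/\sqrt{2t}$ and summing over $j$ yields $\partial_t f=-\Delta f$. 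The initial condition $f(0,x)=h(x)$ and continuity at $t=0$ follow from dominated convergence with the same majorant, using that $h(x)\in\R$.

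The main technical point is producing integrable majorants valid on the \emph{whole} interval $[0,T)$ rather than on some strictly smaller subinterval. This is handled by a Cauchy estimate of $h$ on a polydisk of radius $r$ around $z$ combined with the elementary bound $2|z_j|r\le\eta|z|^2+r^2/\eta$, which lets one replace the exponent $|z|^2/4T$ by any $|z|^2/4T'$ with $T'<T$, at the cost of a $\beta$-dependent multiplicative constant. Beyond this, the only thing to verify is that the complex-shifted Gaussian integration by parts is legitimate for $G(W)$; but this is immediate upon applying the real identity to $\Re(G)$ and $\Im(G)$ separately, with both integrability hypotheses supplied by the Cauchy estimates.
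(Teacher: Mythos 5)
Your proposal is correct and follows essentially the same route as the paper: rewrite $f(t,x)$ as the ordinary Gaussian expectation $\E(h(x+i\sqrt{2t}\,W))$, justify differentiation under the expectation by Cauchy-integral-formula bounds on the derivatives of $h$, and conclude $\partial_t f=-\Delta f$ by standard Gaussian integration by parts. The only (cosmetic) differences are that you use the exact identity $|x+i\sqrt{2t}\,W|^2=|x|^2+2t|W|^2$ where the paper uses a Cauchy--Schwarz estimate, and you package the derivative bounds as global Cauchy estimates with any $T'<T$ rather than the paper's local bound near $(t,x)$; both serve the same purpose.
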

\begin{proof}
Take any $t\in [0,T)$ and $x\in \R^n$. Define $g:\C^n \to \C$ as 
\[
g(z) := h(x+\sqrt{2t} z). 
\]
If $t=0$, then $g$ is just the constant $h(x)$. Thus, 
\[
f(0,x) = h(x).
\]
Next, suppose that $t>0$. By the Cauchy--Schwarz inequality, we have that for any $z\in \C^n$ and any $\alpha> 0$,
\begin{align*}
|x+\sqrt{2t} z|^2 &\le (|x|+\sqrt{2t} |z|)^2\\
&= (\alpha^{-1}\alpha |x|+ \sqrt{2t}|z|)^2\\
&\le (\alpha^{-2}+1) (\alpha^2 |x|^2 + 2 t|z|^2). 
\end{align*}
Let us now choose $\alpha$ so large that $\gamma:= (\alpha^{-2}+1) t < T$ (which is possible since $t< T$), and let $\beta := 1+\alpha^2$. Then  the above inequality and the hypothesis of the theorem shows that there is a finite constant $C$ such that for all $z\in \C^n$, 
\begin{align*}
|g(z)| &= |h(x+\sqrt{2t} z)|\\
&\le C \exp\biggl(\frac{1}{4T}|x+\sqrt{2t}z|^2\biggr)\\
&\le C\exp\biggl(\frac{\beta}{4T}|x|^2 + \frac{2\gamma}{4T}|z|^2\biggr).
\end{align*}
Since $\gamma<T$, this proves that $\E|g(iZ)|< \infty$, where $Z\sim N_{n,0}$. Thus, $f(t,x) = \E(g(X))$ is well-defined.

Next, fix some $(t,x)\in [0,T)\times \R^n$. The above inequality shows that there is a small enough ball $B$ around $(t,x)$, and some $\beta >0$ and $\gamma \in (0,T)$ such that for all $(s,y)\in B$ with $s\ge 0$, and all $z\in \C^n$,
\begin{align}\label{hsy}
|h(y + \sqrt{2s}z)|\le C\exp\biggl(\frac{\beta}{4T}|x|^2 + \frac{\gamma}{2T}|z|^2\biggr).
\end{align}
By the dominated convergence theorem, this implies that for any sequence $(s_n,y_n)\to (t,x)$, we have
\begin{align*}
f(s_n,y_n) &= \E(h(y_n + i\sqrt{2s_n} Z)) \to \E(h(x+ i \sqrt{2t} Z)) = f(t,x). 
\end{align*}
Thus, $f$ is continuous on $[0,T)\times \R^n$. Since we have already observed that $f(0,x)=h(x)$, it only remains to show that $f$ satisfies the backward heat equation in $(0,T)\times \R^n$. 

For this, take any $(t,x)\in (0,T)\times \R^n$ and let $B$ be a ball as above, with the radius $r$ of the ball so small that the time coordinate is bigger than $\frac{1}{2}t$ everywhere in the ball. Then note that for any $(s,y)\in B$ and $z\in \C^n$, 
\begin{align}\label{delhsy}
\partial_s h(y + \sqrt{2s}z)= \frac{1}{\sqrt{2s}} \sum_{j=1}^n z_j\partial_j h(y+\sqrt{2s} z),
\end{align}
where $\partial_s$ denotes partial derivative with respect to the parameter $s$, and $\partial_j h$ is the partial derivative of $h$ in coordinate $j$. Let $B'$ denote the ball of radius $\frac{1}{2}r$ with center $(t,x)$.   Since $h$ is holomorphic, Cauchy's integral formula for partial derivatives of a holomorphic function in several complex variables gives that for any $w_1,\ldots,w_n \in \C$ and $r_1,\ldots,r_n >0$,
\begin{align*}
&\partial_j h(w_1,\ldots,w_n) \\
&= \frac{1}{(2\pi i)^n} \oint_{C(w_1, r_1)}\cdots\oint_{C(w_n, r_n)}\frac{h(u_1,\ldots,u_n)}{(u_j-w_j)\prod_{k=1}^n(u_k-w_k)} du_n \cdots du_1,
\end{align*}
where $C(w_j, r_j)$ is the circular contour of radius $r_j$ centered at $w_j$, traversed in the counterclockwise direction. From this formula, and the inequality \eqref{hsy}, it follows that there is a constant $C_1$ such that for any $(s,y)\in B'$ and $1\le j\le n$,
\begin{align}\label{delhsy2}
|\partial_j h(y+\sqrt{2s} z)| &\le C_1\exp\biggl(\frac{\beta}{4T}|x|^2 + \frac{\gamma}{2T}|z|^2\biggr).
\end{align}
By the identity \eqref{delhsy}, this allows us to apply the dominated convergence theorem and conclude that 
\begin{align*}
\partial_t f(t,x) &= \E(\partial_t h(x + i\sqrt{2t} Z))= \frac{1}{\sqrt{2t}}\sum_{j=1}^n \E(iZ_j \partial_j h(x+i \sqrt{2t} Z)). 
\end{align*}
Again by the inequality \eqref{delhsy2}, we can apply usual Gaussian integration by parts to get
\[
\E(iZ_j \partial_j h(x+i \sqrt{2t} Z)) = - \E(\partial_j^2 h(x+i\sqrt{2t}Z)). 
\]
Combining the last two displays completes the proof.
\end{proof}

\subsection{Semiclassical approximation}
Suppose we want to `tilt' the $N(0,-1)$ density by a  multiplicative factor of $e^{-F(x)}$ for some function $F$. That is, we want to make sense of `probability densities' proportional to $e^{H(x)}$, where 
\begin{align}\label{tilted}
H(x) := \frac{1}{2}x^2 - F(x).
\end{align}
Let $\langle g \rangle$ denote expected value of a function $g:\R\to \R$ under such a hypothetical probability density. Given our development until now, a natural definition would be
\[
\langle g \rangle := \frac{\E(g(X)e^{-F(X)})}{\E(e^{-F(X)})},
\]
where $X\sim N(0,-1)$. 
For this definition to work, we need that the functions $e^{-F}$ and $ge^{-F}$ are in $\mf_{0,1}$, and further, that the denominator is nonzero. Suppose that these conditions are satisfied, so that the above definition makes sense. A test for whether the definition is `physically meaningful' is whether it converges to the `correct semiclassical limit'. Roughly speaking, this means the following. 

Suppose that instead of $e^{H(x)}$, we have a probability density proportional to $e^{H(x)/b^2}$, 
where $b$ is a positive real number that we would eventually like to send to zero. We want to understand the behavior of the expectation of $g$ with respect to this density. In analogy with the above discussion, this would require making sense of the ratio
\[
\frac{\int g(x)e^{H(x)/b^2} dx }{\int e^{H(x)/b^2} dx},
\]
which, by a `change of variable', is equal to
\[
\frac{\int g(bx)\exp(\frac{1}{2}x^2 - \frac{F(bx)}{b^2}) dx }{\int \exp(\frac{1}{2}x^2 - \frac{F(bx)}{b^2})dx}. 
\]
Thus, we should define the expectation of $g$ with respect to the density proportional to $e^{H(x)/b^2}$ as 
\begin{align}\label{gbdef}
\langle g\rangle_b := \frac{\E(g(bX) e^{-F(bX)/b^2})}{\E(e^{-F(bX)/b^2})},
\end{align}
where $X\sim N(0,-1)$. 
Now, if $e^{H(x)/b^2}$ were a true probability density, then it would concentrate near the global maxima of $H(x)$ as $b\to 0$. In the case of wrong sign distributions, one would similarly expect the `density' to concentrate near the critical points of $H$ as $b\to 0$. The most likely scenario is that $\langle g\rangle_b \to g(x_*)$ as $b\to 0$, where $x_*$ is some distinguished critical point of $H$. This is known as the semiclassical limit. 

An alternative approach is to look at the behavior of the `partition function' of the model, given by $\E(e^{-F(bX)/b^2})$.  In the limit $b\to 0$, this should behave like $e^{H(x_*)/b^2}$ (to leading order) for some critical point $x_*$ of $H$.

A simple example where this works is the following. Let $F(x)=ax$, where $a$ is some given real number. Then $H(x)=\frac{1}{2}x^2 - ax$ has a single critical point, at $x = a$. The following result shows that the model approaches the correct semiclassical limit in both of the senses outlined above. 
\begin{prop}\label{semiex1}
Let $F(x)=ax$ for some $a\in \R$, so that $H(x) = \frac{1}{2}x^2 - ax$ and $a$ is the unique critical point of $H$. Take any $g:\R \to \R$ that has an analytic continuation to $\C$, which satisfies the condition that for some $c>0$, $|g(z)|=O(e^{c|z|^2})$ as $|z|\to\infty$.     Let $\langle g\rangle_b$ be defined as in equation~\eqref{gbdef}. Then $\lim_{b\to 0} \langle g\rangle_b = g(a)$, and $\E(e^{-F(bX)/b^2}) = e^{H(a)/b^2}$ for all $b< (2c)^{-1/2}$.
\end{prop}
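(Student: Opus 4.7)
The plan is to substitute $F(x) = ax$ directly into \eqref{gbdef}, so that $\langle g\rangle_b = \E(g(bX)e^{-aX/b})/\E(e^{-aX/b})$, and then evaluate numerator and denominator separately via the wrong-sign definition \eqref{wrongdef}. Both $e^{-az/b}$ and $g(bz)e^{-az/b}$ extend to entire functions on $\C$; on the imaginary axis, $|e^{-iaw/b}| = 1$ and $|g(ibw)| \le C\exp(cb^2w^2)$. These are integrable against the standard Gaussian density iff $cb^2 < 1/2$, i.e., for $b < (2c)^{-1/2}$, so throughout the stated range both expectations are well-defined elements of $\mf_{0,1}$.

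First I would handle the denominator. By \eqref{wrongdef}, $\E(e^{-aX/b}) = \E(e^{-iaW/b})$ for $W\sim N(0,1)$, which is the standard Gaussian characteristic function at $-a/b$ and equals $e^{(-ia/b)^2/2} = e^{-a^2/(2b^2)}$. Since $H(a) = \tfrac12 a^2 - a^2 = -\tfrac12 a^2$, this is precisely $e^{H(a)/b^2}$, which proves the second assertion outright.

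Next I would turn to the numerator. Writing $\E(g(ibW)e^{-iaW/b}) = (2\pi)^{-1/2}\int_\R g(ibw)e^{-iaw/b-w^2/2}\,dw$ and completing the square as $-\tfrac12(w+ia/b)^2 - a^2/(2b^2)$, I would perform the complex change of variable $u = w + ia/b$. The key step is a contour shift: Cauchy's theorem applied to the entire function $u\mapsto g(ibu+a)e^{-u^2/2}$ on rectangles with vertices $\pm R,\,\pm R + ia/b$ moves the $u$-contour from $\R + ia/b$ down to $\R$. On the vertical sides, for $u = \pm R + iy$ with $y$ in a bounded interval, the integrand is bounded by $C\exp(c(a^2 + b^2 R^2) - (R^2 - y^2)/2)$, which vanishes as $R\to\infty$ precisely because $cb^2 < 1/2$. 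The identity then reads $\E(g(ibW)e^{-iaW/b}) = e^{-a^2/(2b^2)}\,\E(g(a+ibW))$.

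Dividing numerator by denominator cancels the Gaussian prefactor, so $\langle g\rangle_b = \E(g(a+ibW))$, and I would conclude by dominated convergence. Since $|a+ibW|^2 = a^2 + b^2 W^2$ for $W\in \R$, we have $|g(a+ibW)| \le C\exp(ca^2 + cb^2 W^2)$; fixing any $b_0 \in (b,(2c)^{-1/2})$, the bound $C\exp(ca^2 + cb_0^2 W^2)$ dominates uniformly for $b \le b_0$, has finite expectation, and $g(a+ibW) \to g(a)$ pointwise by continuity of $g$. Hence $\langle g\rangle_b \to g(a)$ as $b \to 0$. The only nonroutine step is the contour shift; its range of validity is exactly the competition between the Gaussian decay $e^{-x^2/2}$ and the prescribed growth $|g(z)| \lesssim e^{c|z|^2}$ after rescaling by $b$, which is precisely why the constant $(2c)^{-1/2}$ appears in the statement.
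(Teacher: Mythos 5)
Your proposal is correct and follows essentially the same route as the paper's proof: compute the denominator as a Gaussian characteristic function, evaluate the numerator by shifting the contour to $\R+iL$ with $L=-a/b$ (justified by the growth bound $|g(z)|=O(e^{c|z|^2})$ and $cb^2<1/2$), and finish with dominated convergence. The only difference is that you spell out the rectangle-contour and uniform-domination details that the paper leaves implicit, which is fine.
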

\begin{proof}
From the given condition on $g$, it is easy to see that if $b< (2c)^{-1/2}$, then the functions $g(bx)$ and $g(bx)e^{-F(bx)/b^2}$ are in $\mf_{0,1}$. Let us henceforth assume that $b< (2c)^{-1/2}$. Let $X\sim N(0,-1)$ and $Y\sim N(0,1)$. Then  note that
\begin{align*}
\E(e^{-F(bX)/b^2}) &= \E(e^{-F(ibY)/b^2}) = \E(e^{-iabY/b^2}) = e^{-a^2/2b^2} = e^{H(a)/b^2},
\end{align*}
which proves the second claim of the proposition. Next, note that
\begin{align}
\E(g(bX) e^{-F(bX)/b^2}) &= \E(g(ibY) e^{-iabY/b^2})\notag \\
&= \frac{1}{\sqrt{2\pi}}\int_{-\infty}^\infty g(iby)e^{-iay/b} e^{-y^2/2} dy.\label{bcontour}
\end{align}
To evaluate the above integral, we will change the contour of integration. Since $|g(z)|=O(e^{c|z|^2})$ as $|z|\to\infty$ and $b< (2c)^{-1/2}$, we can change the contour of integration in \eqref{bcontour} from $\R$ to $\R + iL$ for any $L\in \R$ without affecting the value of the integral. Then, taking $L = -a/b$, we get
\begin{align*}
\E(g(bX) e^{-F(bX)/b^2}) &= \frac{1}{\sqrt{2\pi}}\int_{-\infty}^\infty g(ib(y-ia/b))e^{-ia(y-ia/b)/b} e^{-(y-ia/b)^2/2} dy\\
&= \frac{e^{-a^2/2b^2} }{\sqrt{2\pi}}\int_{-\infty}^\infty g(iby + a) e^{-y^2/2} dy.
\end{align*}
Thus,
\[
\langle g\rangle_b = \frac{1 }{\sqrt{2\pi}}\int_{-\infty}^\infty g(iby + a) e^{-y^2/2} dy,
\]
which tends to $g(a)$ as $b\to0$, by a simple application of the dominated convergence theorem and the growth rate of $g$ obtained above.
\end{proof}

Our second example has two aims. First, it illustrates how semiclassical approximation can work even if $H$ has multiple critical points. Second, it prepares the ground for semiclassical approximation in timelike Liouville theory in a later section. For this example, consider the function $H:\R\to \R$ defined as 
\begin{align}\label{hxdef}
H(x) := \frac{1}{2}x^2 - e^{\alpha x},
\end{align}
where $\alpha\in (0,1/\sqrt{e})$. Suppose that we want to make sense of the wrong sign probability distribution with density proportional to $e^{H(x)/b^2}$, and investigate its limit as $b\to 0$. The critical points of $H$ are  solutions of the equation
\begin{align*}
x - \alpha e^{\alpha x} = 0.
\end{align*}
Since $h(x) := \alpha e^{\alpha x}$ is a convex function of $x$, there are three possibilities: either this functions remains strictly above the diagonal line $y=x$ for all $x$, in which case $H$ has no critical points; or it touches the diagonal line at exactly one point, in which case $H$ has one critical point; or it goes below the diagonal line at some point $x_0$ and goes above it at some later point $x_1$, and stays above the diagonal for all $x>x_1$. In the last case, $H$ has exactly two critical points, $x_0$ and $x_1$. We claim that if $\alpha\in (0,1/\sqrt{e})$, the last scenario holds, and $0< x_0 < 1/\alpha$. To see this, note that $h(1/\alpha) = \alpha e< 1/\alpha$. This shows that $h$ goes below the diagonal line at some point before $1/\alpha$. Moreover, $h(0) = \alpha > 0$. Thus, $h$ intersects the diagonal line at two points $x_0$ and $x_1$, and the smaller point $x_0$ is strictly less than $1/\alpha$ and strictly bigger than $0$. 

We expect the above wrong sign distribution to concentrate around these critical points. Actually, we will show that it concentrates around the smaller critical point $x_0$. For that, we first have to make sense of the wrong sign distribution with density proportional to $e^{H(x)/b^2}$. 
Fix some $\alpha \in (0,1/\sqrt{e})$ and let $F(x):=e^{\alpha x}$. Choose some function $f:\R\to \R$ that has an analytic continuation to $\C$, satisfying the condition that for some $c>0$, $|f(z)|=O(e^{c|z|^2})$ as $|z|\to\infty$. Take $b< (2c)^{-1/2}$, so that the functions $f(bx)$ and  $f(bx)e^{-F(bx)/b^2}$ are in $\mf_{0,1}$. Then we can define expectation with respect to the wrong sign probability density proportional to $e^{H(x)/b^2}$ as 
\[
\langle f\rangle_b := \frac{\E(f(bX) e^{-F(bX)/b^2})}{\E(e^{-F(bX)/b^2})},
\]
where $X\sim N(0,-1)$. Let $X_b$ be a wrong sign random variable with this distribution, meaning that for any $f$ as above, we define
\[
\E(f(X_b)) :=\langle f\rangle_b.
\]
The following result shows that when $b$ is small, $X_b$ behaves like a random variable that is close to the smaller critical point of $H$. It also establishes that the partition function of the model has the correct semiclassical limit (up to leading order).
\begin{prop}\label{xbthm}
Let $X_b$ be the wrong sign random variable defined above. Let $x_0$ be the smaller critical point of $H$. Take any $f$ as above.  Then 
\[
\lim_{b\to 0} \E(f(X_b)) = f(x_0). 
\]
Moreover, the partition function of the model satisfies
\[
\lim_{b\to 0} \frac{\E(e^{-F(bX)/b^2})}{e^{H(x_0)/b^2}} = \frac{1}{\sqrt{1-\alpha x_0}},
\]
where $X\sim N(0,-1)$.
\end{prop}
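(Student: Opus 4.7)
The plan is to adapt the contour-shift trick from Proposition \ref{semiex1} to a Gaussian saddle-point argument, singling out the critical point $x_0$. By the definition of wrong-sign expectation applied to $g(x) = f(bx)e^{-F(bx)/b^2}$ (which lies in $\mathcal{F}^{\textup{p}}_{0,1}$ for $b$ small enough, thanks to the entire continuation $\tilde{f}$ and its growth bound), and the substitution $v = by$,
\begin{align*}
\E(f(bX)e^{-F(bX)/b^2}) = \frac{1}{\sqrt{2\pi}\,b}\int_{\R}\tilde{f}(iv)\exp\Bigl(-\frac{v^2}{2b^2} - \frac{e^{i\alpha v}}{b^2}\Bigr)dv.
\end{align*}
The integrand is entire in $v$. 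For $b<(2c)^{-1/2}$, the Gaussian factor $|e^{-v^2/(2b^2)}|$ dominates both the bounded factor $|e^{-e^{i\alpha v}/b^2}|$ on any strip of finite height and the $O(e^{c|v|^2})$ growth of $|\tilde{f}(iv)|$. So exactly as in Proposition \ref{semiex1}, Cauchy's theorem applied to the rectangle $[-R,R]\times[-x_0,0]$ (with vertical sides vanishing as $R\to\infty$) lets me shift the contour from $\R$ to $\R - ix_0$.

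On the shifted contour, parametrize $v = u - ix_0$, $u\in\R$, so that $iv = x_0 + iu$. Using the critical-point identity $\alpha e^{\alpha x_0} = x_0$, a direct expansion gives
\begin{align*}
-\frac{v^2}{2b^2} - \frac{e^{i\alpha v}}{b^2} = \frac{H(x_0)}{b^2} - \frac{\Psi(u)}{b^2}, \qquad \Psi(u) := \frac{u^2}{2} - iux_0 + e^{\alpha x_0}(e^{i\alpha u} - 1),
\end{align*}
and one checks $\Psi(0) = \Psi'(0) = 0$ with $\Psi''(0) = 1 - \alpha x_0 > 0$, the strict inequality being precisely the condition $0<x_0<1/\alpha$ identified earlier. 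Rescaling $u = bs$ yields
\begin{align*}
\E(f(bX)e^{-F(bX)/b^2}) = \frac{e^{H(x_0)/b^2}}{\sqrt{2\pi}}\int_{\R}\tilde{f}(x_0 + ibs)\exp\bigl(-\Psi(bs)/b^2\bigr)\,ds.
\end{align*}
Note the $O(1/b)$ terms that arise in expanding $\Psi(bs)/b^2$ cancel term-by-term, again by $\alpha e^{\alpha x_0} = x_0$; this cancellation is exactly what selects $x_0$ as the saddle.

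The last step is to pass to the limit under the integral. Pointwise, $\tilde{f}(x_0 + ibs)\to f(x_0)$ and $\Psi(bs)/b^2 \to (1-\alpha x_0)s^2/2$. The crucial uniform estimate comes from the elementary inequality $1 - \cos t \le t^2/2$, which together with $\alpha^2 e^{\alpha x_0} = \alpha x_0$ gives, for every real $s$ and every $b>0$,
\begin{align*}
\Re\,\Psi(bs)/b^2 = \frac{e^{\alpha x_0}(\cos(\alpha bs) - 1)}{b^2} + \frac{s^2}{2} \ge -\frac{\alpha x_0 s^2}{2} + \frac{s^2}{2} = \frac{(1-\alpha x_0)s^2}{2}.
\end{align*}
Combined with the growth bound $|\tilde{f}(x_0 + ibs)|\le Ce^{c(x_0^2 + b^2 s^2)}$, the integrand is dominated for all small $b$ by $C'e^{-(1-\alpha x_0)s^2/4}$, an integrable Gaussian. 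The dominated convergence theorem then gives
\begin{align*}
\int_{\R}\tilde{f}(x_0 + ibs)e^{-\Psi(bs)/b^2}\,ds \longrightarrow f(x_0)\sqrt{\frac{2\pi}{1-\alpha x_0}},
\end{align*}
so $\E(f(bX)e^{-F(bX)/b^2}) \sim f(x_0)\,e^{H(x_0)/b^2}/\sqrt{1-\alpha x_0}$. Setting $f\equiv 1$ yields the partition-function asymptotic, and forming the ratio proves $\E(f(X_b))\to f(x_0)$. The main subtlety is the interplay between the contour shift and the choice of critical point: shifting to $\R - ix_0$ works because $\alpha x_0 < 1$ makes $\Re\Psi$ globally nonnegative, whereas shifting to the other critical point $x_1 > 1/\alpha$ would give $\Psi''(0) < 0$ and destroy Gaussian decay at the saddle.
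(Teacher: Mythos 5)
Your argument is correct and is essentially the paper's own proof: both express the wrong-sign expectation as a Gaussian integral via the $iZ$ substitution, shift the contour by an amount determined by the smaller critical point $x_0$ (your shift by $-ix_0$ after setting $v=by$ is the paper's shift through $-ix_0/b$ in the original variable), and pass to the limit by dominated convergence using $1-\cos t\le t^2/2$ together with $\alpha e^{\alpha x_0}=x_0$. The only additions are cosmetic — the explicit rectangle justification of the contour shift and the introduction of $\Psi$ — so the two proofs coincide in substance.
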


\begin{proof}
Let $Z\sim N(0,1)$. Then note that 
\begin{align}
\E(e^{-F(bX)/b^2}) &= \E(e^{-e^{i\alpha bZ}/b^2})\notag \\
&= \int_{-\infty}^\infty \frac{1}{\sqrt{2\pi}} \exp\biggl(-\frac{1}{b^2} e^{i\alpha b x} - \frac{1}{2}x^2\biggr)dx.\label{lambda1}
\end{align}
Take any $\beta\in \R$. From the form of the integrand, it is not hard to justify that the contour of integration can be shifted to the line parallel to the real line that passes through $-i\beta /b$. Thus,
\begin{align*}
\E(e^{-F(bX)/b^2}) &= \int_{-\infty}^\infty \frac{1}{\sqrt{2\pi}} \exp\biggl(-\frac{1}{b^2} e^{i\alpha b (x-i\beta/b)} - \frac{1}{2}(x-i\beta/b)^2\biggr)dx\\
&= \int_{-\infty}^\infty \frac{1}{\sqrt{2\pi}} \exp\biggl(-\frac{e^{\alpha \beta} }{b^2} e^{i\alpha bx} - \frac{1}{2}x^2 + \frac{i\beta}{b} x + \frac{\beta^2}{2b^2}\biggr)dx.
\end{align*}
Let us now choose $\beta=x_0$. Then for any $x$, we have that as $b\to 0$,
\begin{align*}
&\frac{e^{\alpha \beta}}{b^2} -\frac{e^{\alpha \beta} }{b^2} e^{i\alpha bx} - \frac{1}{2}x^2 + \frac{i\beta}{b} x \\
&= \frac{e^{\alpha \beta}}{b^2} -\frac{e^{\alpha \beta} }{b^2} \biggl(1+ i\alpha bx -\frac{1}{2}\alpha^2 b^2 x^2 + o(b^2)\biggr) - \frac{1}{2}x^2 + \frac{i\beta}{b} x\\
&= -\frac{1}{2}(1-\alpha^2e^{\alpha \beta}) x^2 + o(1) = -\frac{1}{2}(1-\alpha \beta)x^2 + o(1).
\end{align*}
Moreover, for any $x$, 
\begin{align*}
\biggl|\exp\biggl(\frac{e^{\alpha \beta}}{b^2} -\frac{e^{\alpha \beta} }{b^2} e^{i\alpha bx} - \frac{1}{2}x^2 + \frac{i\beta}{b} x\biggr)\biggr| &= \exp\biggl(\Re\biggl(\frac{e^{\alpha \beta}}{b^2} -\frac{e^{\alpha \beta} }{b^2} e^{i\alpha bx} - \frac{1}{2}x^2 + \frac{i\beta}{b} x\biggr)\biggr)\\
&= \exp\biggl(\frac{e^{\alpha \beta}}{b^2}(1-\cos(\alpha bx)) - \frac{1}{2}x^2\biggr)\\
&\le \exp\biggl(\frac{e^{\alpha \beta}}{2b^2}\alpha^2 b^2 x^2 - \frac{1}{2}x^2\biggr) \\
&= \exp\biggl(-\frac{1}{2}(1-\alpha \beta) x^2\biggr). 
\end{align*}
Since $\alpha\beta =\alpha x_0< 1$, this allows us to apply to dominated convergence theorem to conclude that 
\begin{align*}
&\lim_{b\to 0} \exp\biggl(\frac{e^{\alpha \beta}}{b^2} -\frac{\beta^2}{2b^2}\biggr) \E(e^{-F(bX)/b^2}) \\
&= \lim_{b\to 0}\int_{-\infty}^\infty \frac{1}{\sqrt{2\pi}} \exp\biggl(\frac{e^{\alpha \beta}}{b^2} -\frac{e^{\alpha \beta} }{b^2} e^{i\alpha bx} - \frac{1}{2}x^2 + \frac{i\beta}{b} x \biggr) dx\\
&= \int_{-\infty}^\infty \lim_{b\to 0} \frac{1}{\sqrt{2\pi}} \exp\biggl(\frac{e^{\alpha \beta}}{b^2} -\frac{e^{\alpha \beta} }{b^2} e^{i\alpha bx} - \frac{1}{2}x^2 + \frac{i\beta}{b} x \biggr) dx\\
&= \int_{-\infty}^\infty\frac{1}{\sqrt{2\pi}} \exp\biggl(-\frac{1}{2}(1-\alpha \beta) x^2\biggr)dx= \frac{1}{\sqrt{1-\alpha \beta}}.
\end{align*}
This proves the claim about the limiting behavior of the partition function. Next, let $g(x) := f(x) e^{-F(x)/b^2}$. Then by the given conditions on $f$ and $b$, we have
\begin{align*}
\E (g(bX)) &= \E(f(ibZ) e^{-e^{i\alpha bZ}/b^2})\\
&= \int_{-\infty}^\infty \frac{1}{\sqrt{2\pi}} f(ibx) \exp\biggl(-\frac{1}{b^2} e^{i\alpha b x} - \frac{1}{2}x^2\biggr)dx.
\end{align*}
Shifting the contour exactly as before (with $\beta=x_0$), we get
\begin{align*}
\E(g(bX)) &= \int_{-\infty}^\infty \frac{1}{\sqrt{2\pi}} f(ib(x-i\beta/b))\exp\biggl(-\frac{1}{b^2} e^{i\alpha b (x-i\beta/b)} - \frac{1}{2}(x-i\beta/b)^2\biggr)dx.
\end{align*}
Proceeding exactly as before,  this leads to
\begin{align*}
\lim_{b\to 0} \exp\biggl(\frac{e^{\alpha \beta}}{b^2} -\frac{\beta^2}{2b^2}\biggr) \E(g(bX)) &= \int_{-\infty}^\infty\frac{1}{\sqrt{2\pi}} f(\beta) \exp\biggl(-\frac{1}{2}(1-\alpha \beta) x^2\biggr)dx\\
&= \frac{f(\beta)}{\sqrt{1-\alpha \beta}}.
\end{align*}
Since $\E(f(X_b)) = \E(g(bX))/\E(e^{-F(bX)/b^2})$, this completes the proof.
\end{proof}

\section{Correlation functions in a special region}
We will now use the theory of wrong sign Gaussian random variables developed in the previous section to derive formulas for the correlation functions in timelike Liouville field theory in a subset of the parameter space.
\subsection{The regularized correlation function}
The following result gives the regularized correlation function of timelike Liouville field theory defined in equation~\eqref{regcor}. 
\begin{thm}\label{tlemma}
Take any $b,\mu, \epsilon > 0$, $L\ge 1$, $\lambda\in (0,1)$, $k\ge 0$,  $x_1,\ldots,x_k\in S^2$ and $\alpha_1,\ldots,\alpha_k\in \C$. Let $w := (Q-\sum_{j=1}^k\alpha_j)/b$. 
Then, in our framework of wrong sign Gaussian random variables, the regularized correlation function defined in equation \eqref{regcor} has the value
\begin{align*}
&\tilde{C}_{\epsilon, \lambda, L}(\alpha_1,\ldots,\alpha_k;x_1,\ldots,x_k;b;\mu )\\
&= \sum_{n=0}^\infty\frac{(-\mu)^n}{n!} \biggl(\prod_{j=1}^ke^{\chi\alpha_j(b-\alpha_j)} g(\sigma(x_j))^{-\Delta_{\alpha_j}}\biggr)\\
&\qquad \cdot \exp\biggl(- \frac{(n-w)^2b^2}{4\pi \epsilon} - \sum_{1\le j<j'\le k} 4\alpha_j\alpha_j' G_{\lambda, L}(x_j, x_{j'})\biggr) \\
&\qquad \cdot \int_{(S^2)^n}\exp\biggl( - \sum_{j=1}^k \sum_{l=1}^n 4\alpha_j bG_{\lambda,L}(x_j, y_l) - \sum_{1\le l<l'\le n} 4b^2 G_{\lambda, L}(y_l,y_{l'})\biggr) da(y_1)\cdots da(y_n),
\end{align*}
where the integral is taken to be $1$ for $n=0$.
\end{thm}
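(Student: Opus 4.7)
The plan is to expand the cosmological exponential as a power series in $\mu$, then evaluate each resulting wrong sign Gaussian expectation using the moment generating function for a linear combination of independent $N(0,-1)$ variables. Write
\[
\exp\biggl(-\mu e^{2bD}\int_{S^2}e^{2bX_{\lambda,L}(x)+2b^2G_{\lambda,L}(x,x)}da(x)\biggr) = \sum_{n=0}^\infty \frac{(-\mu)^n}{n!}e^{2bnD}\int_{(S^2)^n}\prod_{l=1}^n e^{2bX_{\lambda,L}(y_l)+2b^2G_{\lambda,L}(y_l,y_l)}\,da(y_1)\cdots da(y_n),
\]
and substitute into \eqref{regcor}. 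For fixed $\epsilon,\lambda,L$ there are only finitely many independent Gaussians $\{X_{lm}\}$, so the integrand is an exponential of a linear combination of them times deterministic factors, which lies in $\mf_{0,N}$ with $N=\sum_{l=0}^L(2l+1)$. One verifies that the power series converges absolutely and exchanges with the $(S^2)^n$-integration and with $\E$; this follows from the fact that the analytic continuation of $\exp(2bX_{\lambda,L})$ along the imaginary axis has modulus uniformly bounded by an exponential in $|X_{\lambda,L}|$, so each term is absolutely integrable with tame dependence on $n$.

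Next I would collect deterministic prefactors. The vertex operators contribute the factor $\prod_j e^{\chi\alpha_j(b-\alpha_j)}g(\sigma(x_j))^{-\Delta_{\alpha_j}}e^{2\alpha_j^2 G_{\lambda,L}(x_j,x_j)}$, and the expansion produces a factor $\prod_{l=1}^n e^{2b^2 G_{\lambda,L}(y_l,y_l)}$. What remains inside the expectation is $\E[e^M]$, where
\[
M = 2b(n-w)D + \sum_{j=1}^k 2\alpha_j X_{\lambda,L}(x_j) + \sum_{l=1}^n 2b X_{\lambda,L}(y_l)
\]
is a complex-linear combination of the i.i.d.\ wrong sign Gaussians. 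By the basic identity $\E(e^{cX})=e^{-c^2/2}$ for $X\sim N(0,-1)$ (which follows directly from \eqref{wrongdef} applied to the entire function $\exp(cz)$, since $\E(e^{icZ})=e^{-c^2/2}$ for $Z\sim N(0,1)$) applied coordinate by coordinate, one gets $\E[e^M]=\exp(-\tfrac12 \sum_{l,m}c_{lm}^2)$, where $c_{00}$ is the coefficient of $X_{00}$ (giving rise to $D$) and the $c_{lm}$ for $l\ge1$ are the coefficients read off from the definition \eqref{xldef} of $X_{\lambda,L}$.

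The zero-mode piece contributes $-b^2(n-w)^2/(4\pi\epsilon)$. For $l\ge 1$, expanding $\sum_m c_{lm}^2$ and summing over $l$ recognizes the kernel $G_{\lambda,L}(x,y)=\sum_{l\ge1}\sum_m \lambda^l\frac{2\pi}{l(l+1)}Y_{lm}(x)Y_{lm}(y)$ from \eqref{gldef} and yields three types of terms:
\[
-2\sum_{j,j'}\alpha_j\alpha_{j'}G_{\lambda,L}(x_j,x_{j'})\;-\;4b\sum_{j,l}\alpha_j G_{\lambda,L}(x_j,y_l)\;-\;2b^2\sum_{l,l'}G_{\lambda,L}(y_l,y_{l'}).
\]
Splitting these double sums into diagonal and off-diagonal parts, the diagonal contributions $-2\sum_j\alpha_j^2 G_{\lambda,L}(x_j,x_j)$ and $-2\sum_l b^2 G_{\lambda,L}(y_l,y_l)$ cancel precisely against the normal-ordering prefactors collected above---this is exactly the purpose of the normal ordering. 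What remains is the asserted formula with the off-diagonal Green's function interactions.

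The main obstacle is bookkeeping rather than conceptual: verifying the interchange of the power series sum, the $(S^2)^n$-integrations, and the wrong sign expectation. Conceptually the crux is Step 3, the application of $\E(e^{cX})=e^{-c^2/2}$ for $X\sim N(0,-1)$ with a complex coefficient $c$, which is where the entire wrong sign Gaussian framework from Section 2 is used; once this is available, the remainder is an exercise in reorganizing Green's function sums and relies only on the definition \eqref{gldef} and the orthonormality \eqref{s2orth} of the real spherical harmonics.
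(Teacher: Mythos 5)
Your proposal is correct and follows essentially the same route as the paper: reduce the wrong sign expectation to an ordinary Gaussian expectation of the analytically continued integrand, expand the cosmological term in powers of $\mu$, justify the interchange of sum, sphere integrals and expectation (the paper does this via the boundedness of $G_{\lambda,L}$ from Lemma \ref{gbound}), and then evaluate the Gaussian exponential moments so that the diagonal $G_{\lambda,L}(x_j,x_j)$ and $G_{\lambda,L}(y_l,y_l)$ terms cancel the normal-ordering factors. The only cosmetic difference is ordering—the paper first verifies that the full integrand $h$ lies in $\mf_{0,L^2+1}$ and passes to $\E(h(iZ))$ before expanding, whereas you expand first and continue term by term—which does not change the substance of the argument.
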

\begin{proof}
Let $Z = (Z_{lm})_{0\le l\le L,\, -l\le m\le l}$ be a collection of i.i.d.~$N(0,1)$ random variables, and define the Gaussian field
\begin{align}\label{zldef}
Z_{\lambda,L}(x) := \sum_{l=1}^L \sum_{m=-l}^l \lambda^{l/2}\sqrt{\frac{2\pi}{l(l+1)}} Z_{lm} Y_{lm}(x)
\end{align}
on $S^2$, and the Gaussian random variable
\[
E := \frac{Z_{00}}{\sqrt{8\pi \epsilon}}. 
\]
Then note that  $\E(Z_{\lambda, L}(x) Z_{\lambda,L}(y)) = G_{\lambda,L}(x,y)$, where $G_{\lambda, L}$ is the regularized Green's function defined in equation \eqref{gldef}. Noticing that 
\[
1+\sum_{l=1}^L (2l+1) = L^2 + 1,
\]
let us index the components of points in $\C^{L^2+1}$ as $z = (z_{lm})_{0\le l\le L, \, -l\le m\le l}$. Define the function $f:\C^{L^2+1}\times S^2 \to \C$ as 
\begin{align}\label{fxydef}
f(z, y) := \sum_{l=1}^L \sum_{m=-l}^l \lambda^{l/2}\sqrt{\frac{2\pi}{l(l+1)}} z_{lm} Y_{lm}(y),
\end{align}
and the function $c:\C^{L^2+1}\to \C$ as 
\[
c(z) := \frac{z_{00}}{\sqrt{8\pi\epsilon}}.
\]
Then define $h:\C^{L^2+1}\to \C$ as
\begin{align*}
h(z) &:=\biggl(\prod_{j=1}^ke^{\chi\alpha_j(b-\alpha_j)} g(\sigma(x_j))^{-\Delta_\alpha} e^{2\alpha_j f(z,x_j) + 2\alpha_j^2G_{\lambda, L}(x_j,x_j)}\biggr)\\
&\qquad \cdot \exp\biggl(-2wbc(z) -\mu e^{2bc(z)} \int_{S^2}e^{2bf(z,y)+2b^2 G_{\lambda, L}(y,y)}da(y) \biggr).
\end{align*}
Then it is clear that the regularized correlation function defined in equation \eqref{regcor} can be expressed as $\E(h(X))$, where $X$ is a wrong sign standard Gaussian random vector in $\R^{L^2+1}$. It is easy to check that $h$ is analytic on $\C^{L^2+1}$ and  $\E|h(i Z)| <\infty$. Thus, the restriction of $h$ to $\R^{L^2+1}$ is in $\mf_{0,L^2+1}$, and by our definition of wrong sign expectations, we have
\[
\E(h(X)) = \E(h(iZ)). 
\]
Now note that $h$ can be expanded as 
\begin{align*}
h(z) &:=\biggl(\prod_{j=1}^ke^{\chi\alpha_j(b-\alpha_j)} g(\sigma(x_j))^{-\Delta_\alpha} e^{2\alpha_j f(z,x_j) + 2\alpha_j^2G_{\lambda, L}(x_j,x_j)}\biggr)\\
&\qquad \cdot \sum_{n=0}^\infty\frac{(-\mu)^n e^{2(n-w)bc(z)}}{n!}\biggl( \int_{S^2}e^{2bf(z,y)+2b^2 G_{\lambda, L}(y,y)}da(y) \biggr)^n.
\end{align*}
This gives
\begin{align*}
h(iZ) &:=\biggl(\prod_{j=1}^ke^{\chi\alpha_j (b-\alpha_j)} g(\sigma(x_j))^{-\Delta_\alpha} e^{2i\alpha_j Z_{\lambda, L}(x_j) + 2\alpha_j^2G_{\lambda, L}(x_j,x_j)}\biggr)\\
&\qquad \cdot \sum_{n=0}^\infty\frac{(-\mu)^n e^{2i(n-w)b E}}{n!}\biggl( \int_{S^2}e^{2ibZ_{\lambda,L}(y)+2b^2 G_{\lambda, L}(y,y)}da(y) \biggr)^n.
\end{align*}
We will show later, in Lemma \ref{gbound}, that $G_{\lambda,L}$ is a bounded function. Using this, it is easy to justify that while evaluating $\E(h(iZ))$, we can take the expectation inside the above infinite sum. A simple calculation completes the proof.
\end{proof}

\subsection{Correlation functions on the sphere}
The following theorem gives a formula for the $k$-point correlation function of timelike Liouville theory when the parameter $w$ is a nonnegative integer. We obtain this formula considering the regularized correlation function $\tilde{C}_{\epsilon, \lambda,L}$ and taking $\epsilon \to 0$, $L\to\infty$ and $\lambda \uparrow 1$, in this order. Note that the limit is finite only because we are integrating the zero mode with respect to a probability measure, which gets more and more spread out as $\ep\to 0$. If, instead, we integrated the zero mode with respect to Lebesgue measure, we would get a delta function prefactor in front of the answer presented below.
\begin{thm}\label{intthm}
Suppose that $k\ge 3$, $\Re(\alpha_j) > -1/2b$ for each $j$, and the parameter  $w = (Q-\sum_{j=1}^k\alpha_j)/b$ is a positive integer. Let $\tilde{C}_{\epsilon, \lambda, L}$ be the regularized correlation function defined in equation \eqref{regcor}. 
Then the limit 
\begin{align*}
\tilde{C}(\alpha_1,\ldots,\alpha_k;x_1,\ldots,x_k;b;\mu) &:= \lim_{\lambda \uparrow 1} \lim_{L\to\infty} \lim_{\epsilon \to 0} \tilde{C}_{\epsilon, \lambda, L}(\alpha_1,\ldots,\alpha_k;x_1,\ldots,x_k;b;\mu) 
\end{align*}
exists, and is equal to 
\begin{align*}
&\frac{e^{-i\pi w} \mu^{w}}{w!}\biggl(\prod_{j=1}^ke^{\chi\alpha_j(b-\alpha_j)} g(\sigma(x_j))^{-\Delta_{\alpha_j}}\biggr)\biggl(\prod_{1\le j < j'\le k} e^{-4\alpha_j\alpha_{j'}G(x_j, x_{j'})} \biggr) \\
&\qquad \qquad \cdot \int_{(S^2)^{w}}\biggl(\prod_{j=1}^k \prod_{l=1}^{w} e^{-4b\alpha_jG(x_j, y_l)}\biggr)\biggl(\prod_{1\le l<l'\le w}e^{-4b^2G(y_l, y_{l'})} \biggr) da(y_1)\cdots da(y_{w}),
\end{align*}
where
\[
G(x,y) = -\ln \|x-y\| - \frac{1}{2}+\ln 2,
\]
with $\|x-y\|$ denoting the Euclidean norm of $x-y$.
\end{thm}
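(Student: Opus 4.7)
My plan is to take the three limits in the formula of Theorem~\ref{tlemma} in the prescribed order, applying dominated convergence at each stage. For $\epsilon\to 0$ at fixed $L,\lambda$, the weight $\exp(-(n-w)^2b^2/(4\pi\epsilon))$ lies in $[0,1]$ and converges pointwise to $\mathbf{1}\{n=w\}$; together with the uniform bound on $G_{\lambda,L}$ (Lemma~\ref{gbound}), the $n$-th term is dominated by $(C\mu)^n/n!$ for some $C=C(k,b,\alpha_j,\lambda,L)$, so DCT applied to the series reduces it to the single term $n=w$, which exists because $w$ is a positive integer.

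Next, for $L\to\infty$ at fixed $\lambda\in(0,1)$, the spherical addition formula $\sum_{m=-l}^{l}Y_{lm}(x)Y_{lm}(y)=(2l+1)P_l(\cos\theta(x,y))/(4\pi)$ combined with $|P_l|\le 1$ yields a geometric tail bound $|G_{\lambda,L}(x,y)-G_\lambda(x,y)|\le \sum_{l>L}\lambda^l(2l+1)/(2l(l+1))$ uniform in $(x,y)$, so $G_{\lambda,L}\to G_\lambda$ uniformly on $S^2\times S^2$, and DCT carries the limit inside the $(S^2)^w$-integral.

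The delicate step is $\lambda\uparrow 1$. Pointwise convergence $G_\lambda(x,y)\to G(x,y)$ for $x\ne y$ is an Abel limit computable in closed form via the partial-fraction identity $\frac{2l+1}{2l(l+1)}=\frac{1}{2l}+\frac{1}{2(l+1)}$ together with the classical generating function
\[
\sum_{l=1}^\infty \frac{t^l}{l}P_l(\cos\theta)=-\ln\frac{1-t\cos\theta+\sqrt{1-2t\cos\theta+t^2}}{2},
\]
whose $t\uparrow 1$ limit is $-\ln\sin(\theta/2)-\ln 2$; a simpler companion calculation for the $\frac{1}{2(l+1)}$ piece contributes the $-\tfrac12$ constant, so in total $G_\lambda\to -\ln\sin(\theta/2)-\tfrac12 = -\ln\|x-y\| + \ln 2 - \tfrac12$ (using $\|x-y\|=2\sin(\theta/2)$ on the unit sphere), matching the stated formula. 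To pass the limit inside the integral by DCT I need a uniform estimate $|G_\lambda(x,y)-G(x,y)|\le C$ off the diagonal, which follows from the same closed form. Given this, the absolute integrand is bounded by a constant times $\prod_{j,l}\|x_j-y_l\|^{4b\Re\alpha_j}\prod_{l<l'}\|y_l-y_{l'}\|^{4b^2}$; the hypothesis $\Re(\alpha_j)>-1/(2b)$ forces $4b\Re\alpha_j>-2$, making each $\|x_j-y_l\|^{4b\Re\alpha_j}$ locally integrable on $S^2$, while the nonnegative exponent $4b^2$ makes the $\|y_l-y_{l'}\|^{4b^2}$ factor bounded.

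After sending $\lambda\uparrow 1$, the prefactor $(-\mu)^w$ is identified with $e^{-i\pi w}\mu^w$ (by the principal-branch convention dictated by the wrong-sign framework of Section~2, and since $w$ is a positive integer this is unambiguous), giving exactly the formula claimed in the theorem. The main obstacle is Step 3: establishing the uniform bound $|G_\lambda-G|\le C$ away from the diagonal uniformly in $\lambda$, which requires explicitly summing the series defining $G_\lambda$ via the generating-function identity above and showing the closed form converges to $G$ with a controllable error; once this is in hand, everything else is routine dominated convergence.
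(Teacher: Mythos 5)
Your first two limits ($\epsilon\to 0$ and $L\to\infty$) are handled exactly as in the paper: dominated convergence for the series using the uniform bound $|G_{\lambda,L}|\le \lambda/(1-\lambda)$, isolating the $n=w$ term, and then uniform convergence $G_{\lambda,L}\to G_\lambda$ from the addition theorem and $|P_l|\le 1$. The identification $(-\mu)^w=e^{-i\pi w}\mu^w$ is also the same. The pointwise computation of $\lim_{\lambda\uparrow 1}G_\lambda$ via the Legendre generating function is a legitimate alternative to the paper's route (which expands $\ln(1-x)$ in Legendre polynomials using the ODE and integration by parts); your intermediate constants are slightly off (the $t\uparrow 1$ limit of $\sum t^l P_l(\cos\theta)/l$ is $-\ln\sin(\theta/2)-\ln(1+\sin(\theta/2))$, not $-\ln\sin(\theta/2)-\ln 2$), but the final formula $G(x,y)=-\ln\|x-y\|-\tfrac12+\ln 2$ you arrive at is correct, so this is only a bookkeeping slip.

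The genuine gap is in the step you yourself flag as the main obstacle: the claimed uniform estimate $|G_\lambda(x,y)-G(x,y)|\le C$ ``off the diagonal'' is false if it is meant to hold for all $x\ne y$ uniformly in $\lambda$, and insufficient if it is meant only at distance bounded away from the diagonal. Indeed, for fixed $\lambda<1$ the function $G_\lambda$ stays bounded (e.g.\ $G_\lambda(x,x)\sim\ln\frac{1}{1-\lambda}$), while $G(x,y)\to+\infty$ as $\|x-y\|\to 0$, so $\sup_{x\ne y}|G_\lambda-G|=\infty$ for every $\lambda$; and a bound valid only at fixed distance from the diagonal cannot dominate the integrand on $(S^2)^w$, since the whole difficulty is the behavior of $e^{-4b\alpha_j G_\lambda(x_j,y_l)}$ (for $\Re\alpha_j<0$) near $y_l=x_j$. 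What actually suffices, and what the paper proves, is a pair of \emph{one-sided} bounds: a uniform lower bound $G_\lambda\ge-\tfrac98$ (Lemma~\ref{glambdalow}), which controls the factors with $\Re\alpha_j\ge 0$ and the $e^{-4b^2G_\lambda}$ factors, and the comparison $\lambda G_\lambda\le G+\tfrac{13}{8}$ (Lemma~\ref{glambdaup}), which dominates the factors with $\Re\alpha_j<0$ by $C\,e^{\kappa G(x_j,y_l)}$ with $\kappa=\max_j(-4b\Re\alpha_j)/\lambda_0$; here the strict inequality $\Re\alpha_j>-1/2b$ is needed precisely to absorb the $1/\lambda$ loss and keep $\kappa<2$, hence integrability of the singularity on the two-dimensional sphere. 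Your exponent-counting argument after the domination is fine, but without proving such one-sided estimates (which require genuine work with the Legendre generating functions, not just the closed form for the pointwise limit), the dominated convergence in the $\lambda\uparrow 1$ step is unjustified.
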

The condition $k\ge 3$ is needed in the above theorem only to ensure that it is not vacuous; indeed, note that if $\Re(\alpha_j) > -1/2b$ for each $j$, then
\[
w < 1-\frac{1}{b^2} +\frac{k}{2b^2}. 
\]
Thus, to simultaneously have $w\ge 1$, we need $k> 2$. 

For simplicity, let us henceforth denote the correlation function by $\tilde{C}_{\epsilon, \lambda, L}$ instead of $\tilde{C}_{\epsilon, \lambda, L}(\alpha_1,\ldots,\alpha_k;x_1,\ldots,x_k;b)$. The first step towards the proof of Theorem \ref{intthm} is the following lemma.
\begin{lmm}\label{step1lmm}
In the setting of Theorem \ref{intthm}, we have
\begin{align*}
&\tilde{C}_{\lambda, L} := \lim_{\epsilon \to 0} \tilde{C}_{\epsilon, \lambda, L} \\
&= \frac{(-\mu)^{w}}{w!}\biggl(\prod_{j=1}^ke^{\chi\alpha_j(b-\alpha_j)} g(\sigma(x_j))^{-\Delta_{\alpha_j}}\biggr)\biggl(\prod_{1\le j < j'\le k} e^{-4\alpha_j\alpha_{j'}G_{\lambda,L}(x_j, x_{j'})} \biggr) \\
& \cdot \int_{(S^2)^{w}}\biggl(\prod_{j=1}^k \prod_{l=1}^{w} e^{-4b\alpha_jG_{\lambda, L}(x_j, y_l)}\biggr)\biggl(\prod_{1\le l<l'\le w}e^{-4b^2G_{\lambda, L}(y_l, y_{l'})} \biggr) da(y_1)\cdots da(y_{w}).
\end{align*}
\end{lmm}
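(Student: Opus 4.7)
The plan is to start directly from the formula for $\tilde{C}_{\epsilon,\lambda,L}$ given by Theorem \ref{tlemma}, which expresses it as an infinite series indexed by $n\ge 0$ in which the only $\epsilon$-dependence sits in the single factor
\[
E_n(\epsilon) := \exp\biggl(-\frac{(n-w)^2 b^2}{4\pi\epsilon}\biggr).
\]
Since $w$ is assumed to be a positive integer, $E_n(\epsilon)\equiv 1$ when $n=w$, while $E_n(\epsilon)\to 0$ as $\epsilon\to 0$ for every $n\ne w$. Thus termwise the series collapses, as $\epsilon\to 0$, to exactly the single term $n=w$, which is manifestly the expression claimed for $\tilde{C}_{\lambda,L}$. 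The whole content of the lemma is therefore to justify exchanging the $\epsilon\to 0$ limit with the infinite sum.

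To carry out this exchange I would use dominated convergence for series. The key input is boundedness of $G_{\lambda,L}$: by Lemma \ref{gbound} (referenced just below in the paper) there is a finite constant $M=M_{\lambda,L}$ with $|G_{\lambda,L}(x,y)|\le M$ uniformly on $S^2\times S^2$. Since $G_{\lambda,L}$ is real-valued, the modulus of the integrand in the $n$-th term is bounded by $\exp(C_1 n + 2b^2 M n^2)$, where $C_1 := 4b\sum_{j=1}^k|\Re(\alpha_j)|\,M$; integrating against the product area measure contributes at most $(4\pi)^n$. Together with the prefactor $|\mu|^n/n!$ and a constant $K$ absorbing the $x$-dependent pieces, the $n$-th term is thus dominated in absolute value by
\[
A_n(\epsilon) := K\,\frac{(4\pi|\mu|)^n}{n!}\,\exp\bigl(C_1 n + 2b^2 M n^2\bigr)\,E_n(\epsilon).
\]

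Fix $\epsilon_0>0$ small enough that $b^2/(4\pi\epsilon_0)>2b^2 M+1$. For every $\epsilon\in(0,\epsilon_0]$ and every $n$, one has $A_n(\epsilon)\le A_n(\epsilon_0)$, and the leading behavior of $\log A_n(\epsilon_0)$ in $n$ is $-n^2$, so $\sum_n A_n(\epsilon_0)<\infty$. The series thus has a summable dominator on $(0,\epsilon_0]$, and dominated convergence gives
\[
\lim_{\epsilon\to 0}\tilde{C}_{\epsilon,\lambda,L}
=\sum_{n=0}^\infty \lim_{\epsilon\to 0}\bigl(\text{$n$-th term}\bigr),
\]
where every term with $n\ne w$ vanishes and the term with $n=w$ contributes exactly the expression in the statement of Lemma \ref{step1lmm}, after recognizing that $\alpha_j^2 G_{\lambda,L}(x_j,x_j)$ combines with the normal-ordering factor in the vertex operator to leave only the stated off-diagonal terms $4\alpha_j\alpha_{j'}G_{\lambda,L}(x_j,x_{j'})$.

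The only real obstacle is the Gaussian-in-$n^2$ growth of the integrand bound: without the quadratic decay provided by $E_n(\epsilon)$ the series would diverge, so one genuinely cannot take the limit before summation. This is precisely why the regulator $\epsilon$ was introduced, and why the charge-neutrality-type hypothesis (that $w$ is a nonnegative integer) is needed — otherwise no term survives and one would have to rescale rather than simply take $\epsilon\to 0$. The argument itself is routine once the boundedness of $G_{\lambda,L}$ is in hand, with the only subtlety being the careful choice of $\epsilon_0$ so that the quadratic decay from $E_n$ strictly beats the quadratic growth from the tuple integral.
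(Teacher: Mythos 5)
Your proposal is correct and follows essentially the same route as the paper: take the series representation from Theorem \ref{tlemma}, observe that only the $n=w$ term survives as $\epsilon\to 0$ because the Gaussian factor $\exp(-(n-w)^2b^2/4\pi\epsilon)$ kills every other term, and justify the termwise limit by dominated convergence using the uniform bound on $G_{\lambda,L}$ from Lemma \ref{gbound}. If anything, your explicit dominator (with the monotonicity $E_n(\epsilon)\le E_n(\epsilon_0)$ and the choice of $\epsilon_0$ so the Gaussian decay beats the $e^{2b^2Mn^2}$ growth) spells out more carefully what the paper compresses into the phrase ``absolute convergence of the series.''
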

\begin{proof}
If we take the formula for $C_{\epsilon, \lambda, L}$ from Theorem \ref{tlemma} and send $\epsilon$ to zero, only the term corresponding to $n = w$ survives; all other terms tend to zero. The absolute convergence of the series allows us to apply the dominated convergence theorem to obtain the claimed result. 
\end{proof}

It remains to take $L\to \infty$ and $\lambda \to 1$. For this, we need several lemmas about the function $G_{\lambda, L}$, and in particular, its behavior as $L\to\infty$ and $\lambda \to 1$. 
\begin{lmm}\label{gbound}
For any $x,y\in S^2$, $\lambda \in (0,1)$ and $L\ge 1$,
\[
G_{\lambda, L}(x,y) = \sum_{l=1}^L \lambda^l \frac{2l+1}{2l(l+1)} P_l(x\cdot y), 
\]
where $P_l$ denotes the $l^{\text{th}}$ Legendre polynomial. As a consequence, 
\[
|G_{\lambda, L}(x,y)| \le \frac{\lambda}{1-\lambda},
\]
and
\begin{align*}
\lim_{L\to \infty} G_{\lambda, L}(x,y) &= G_\lambda(x,y) := \sum_{l=1}^\infty \lambda^l \frac{2l+1}{2l(l+1)} P_l(x\cdot y).
\end{align*}
\end{lmm}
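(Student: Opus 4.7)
The plan is to apply the classical addition theorem for spherical harmonics. With the normalization fixed by equation \eqref{s2orth}, the real spherical harmonics satisfy
\[
\sum_{m=-l}^{l} Y_{lm}(x) Y_{lm}(y) = \frac{2l+1}{4\pi} P_l(x\cdot y)
\]
for all $x,y\in S^2$ and $l\ge 0$. Substituting this into the definition \eqref{gldef} of $G_{\lambda, L}$ collapses the inner $m$-sum and yields
\[
G_{\lambda, L}(x,y) = \sum_{l=1}^{L} \lambda^l \frac{2\pi}{l(l+1)} \cdot \frac{2l+1}{4\pi} P_l(x\cdot y) = \sum_{l=1}^{L} \lambda^l \frac{2l+1}{2l(l+1)} P_l(x\cdot y),
\]
which is the claimed identity.

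For the uniform bound, I will use two standard facts. First, since $x,y\in S^2$ we have $x\cdot y \in [-1,1]$, and the Legendre polynomials satisfy $|P_l(t)|\le 1$ on $[-1,1]$. Second, partial-fraction decomposition gives
\[
\frac{2l+1}{2l(l+1)} = \frac{1}{2}\left(\frac{1}{l} + \frac{1}{l+1}\right) \le 1 \quad \text{for all } l\ge 1.
\]
Combining these two estimates term by term,
\[
|G_{\lambda, L}(x,y)| \le \sum_{l=1}^{L} \lambda^l \cdot \frac{2l+1}{2l(l+1)} \cdot |P_l(x\cdot y)| \le \sum_{l=1}^{L} \lambda^l \le \sum_{l=1}^{\infty} \lambda^l = \frac{\lambda}{1-\lambda},
\]
which gives the asserted uniform bound.

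The same estimate provides a summable $L$-independent majorant $\lambda^l$ for each term, so the infinite series $\sum_{l=1}^{\infty} \lambda^l \frac{2l+1}{2l(l+1)} P_l(x\cdot y)$ converges absolutely, and $G_{\lambda, L}(x,y)$ converges as $L\to\infty$ to this sum, which we denote $G_\lambda(x,y)$. There is essentially no obstacle in this lemma beyond invoking the correct form of the addition theorem for the real (rather than complex) spherical harmonics with the normalization chosen in \eqref{s2orth}; once that identity is in hand, the bound and the limit reduce to summing a geometric series.
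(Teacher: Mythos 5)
Your proof is correct and follows essentially the same route as the paper: invoke the addition theorem for the real spherical harmonics to collapse the $m$-sum, then use $|P_l|\le 1$ on $[-1,1]$ together with $\frac{2l+1}{2l(l+1)}\le 1$ to bound the series by the geometric series $\sum_{l\ge 1}\lambda^l$, which also gives the $L\to\infty$ limit by dominated convergence of the partial sums. The only difference is cosmetic: you spell out the partial-fraction bound on the coefficient, which the paper leaves implicit.
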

\begin{proof}
It is well known that the real spherical harmonics satisfy the {\it addition theorem}~\cite[Theorem 2]{muller66} 
\[
P_l(x\cdot y) = \frac{4\pi}{2l + 1}\sum_{m=-l}^l Y_{lm}(x)Y_{lm}(y),
\]
This gives 
\begin{align*}
G_{\lambda, L}(x,y) &= \sum_{l=1}^L \sum_{m=-l}^l \lambda^l \frac{2\pi}{l(l+1)} Y_{lm}(x)Y_{lm}(y) \\
&= \sum_{l=1}^L \lambda^l \frac{2l+1}{2l(l+1)} P_l(x\cdot y),
\end{align*}
proving the first claim. Next, recall the well-known fact~\cite[Equation (7.21.1)]{szego39} that 
\begin{align}\label{plbound0}
|P_l(x)|\le 1 \text{ for all $l\ge 0$ and $x\in [-1,1]$}.
\end{align}
Using this bound in the above expression proves the second and third claims of the lemma.
\end{proof}

We are now ready to send $L$ to infinity.
\begin{lmm}\label{step2lmm}
In the setting of Theorem \ref{intthm}, we have
\begin{align*}
&\tilde{C}_\lambda := \lim_{L\to\infty}  \tilde{C}_{\lambda, L} \\
&= \frac{(-\mu)^{w}}{w!}\biggl(\prod_{j=1}^ke^{\chi\alpha_j(b-\alpha_j)} g(\sigma(x_j))^{-\Delta_{\alpha_j}}\biggr)\biggl(\prod_{1\le j < j'\le k} e^{-4\alpha_j\alpha_{j'}G_\lambda(x_j, x_{j'})} \biggr) \\
&\qquad \cdot \int_{(S^2)^{w}}\biggl(\prod_{j=1}^k \prod_{l=1}^{w} e^{-4b\alpha_jG_\lambda(x_j, y_l)}\biggr)\biggl(\prod_{1\le l<l'\le w}e^{-4b^2G_\lambda(y_l, y_{l'})} \biggr) da(y_1)\cdots da(y_{w}).
\end{align*}
\end{lmm}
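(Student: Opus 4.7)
\medskip

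\noindent\textbf{Proof proposal.} The plan is to apply the dominated convergence theorem twice: once for the finite product of exponential prefactors, and once for the integral over $(S^2)^w$. The key input is the uniform bound $|G_{\lambda, L}(x,y)| \le \lambda/(1-\lambda)$ together with the pointwise convergence $G_{\lambda, L} \to G_\lambda$, both supplied by Lemma \ref{gbound}. No new analytic work is needed beyond careful bookkeeping of these bounds.

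First I would treat the prefactor $\prod_{1 \le j < j' \le k} e^{-4 \alpha_j \alpha_{j'} G_{\lambda, L}(x_j, x_{j'})}$. Since $x_j \ne x_{j'}$ are fixed points on $S^2$ and $G_{\lambda, L}(x_j, x_{j'}) \to G_\lambda(x_j, x_{j'})$ as $L \to \infty$ by Lemma \ref{gbound}, continuity of the exponential gives
\[
\prod_{1 \le j < j' \le k} e^{-4 \alpha_j \alpha_{j'} G_{\lambda, L}(x_j, x_{j'})} \;\longrightarrow\; \prod_{1 \le j < j' \le k} e^{-4 \alpha_j \alpha_{j'} G_{\lambda}(x_j, x_{j'})}.
\]
The other factors in front of the integral do not depend on $L$, so the convergence of the prefactor is immediate.

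Next I would handle the integral. Set
\[
I_L(y_1, \ldots, y_w) := \biggl(\prod_{j=1}^{k} \prod_{l=1}^{w} e^{-4 b \alpha_j G_{\lambda, L}(x_j, y_l)}\biggr) \biggl(\prod_{1 \le l < l' \le w} e^{-4 b^2 G_{\lambda, L}(y_l, y_{l'})}\biggr).
\]
Applying $|e^z| = e^{\Re z} \le e^{|z|}$ to each factor and invoking the uniform bound from Lemma \ref{gbound}, we get
\[
|I_L(y_1, \ldots, y_w)| \le \exp\!\biggl(\frac{\lambda}{1-\lambda}\Bigl(4b \sum_{j,l} |\alpha_j| + 4 b^2 \binom{w}{2}\Bigr)\biggr) =: M,
\]
a finite constant independent of $L$ and of the integration variables. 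Since $(S^2)^w$ has finite area measure, $M$ is an integrable dominating function. Each factor in $I_L$ converges pointwise to the corresponding factor with $G_\lambda$ in place of $G_{\lambda, L}$, again by Lemma \ref{gbound} and continuity of the exponential, so $I_L \to I_\infty$ pointwise on $(S^2)^w$. The dominated convergence theorem now yields convergence of the integrals, and combining this with the convergence of the prefactor gives the claimed formula for $\tilde{C}_\lambda$.

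The main (and essentially only) obstacle is to verify that the uniform bound on $G_{\lambda, L}$ translates into an integrable majorant for $I_L$; this is where fixing $\lambda < 1$ is crucial, since the bound $\lambda/(1-\lambda)$ blows up as $\lambda \uparrow 1$. That singularity will have to be confronted at the next step (taking $\lambda \uparrow 1$), but at the current stage, with $\lambda$ fixed, the argument is a routine application of dominated convergence.
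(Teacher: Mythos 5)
Your proposal is correct and follows exactly the paper's route: the paper's proof of this lemma is a one-line appeal to Lemma \ref{step1lmm}, the uniform bound and pointwise convergence of $G_{\lambda,L}$ from Lemma \ref{gbound}, and the dominated convergence theorem, which is precisely what you spell out. Your explicit constant majorant $M$ on the finite-measure space $(S^2)^w$ is the right (and intended) dominating function, so no further work is needed.
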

\begin{proof}
This is a simple consequence of Lemma \ref{step1lmm}, Lemma \ref{gbound}, and the dominated convergence theorem.
\end{proof}

Our final step is to take $\lambda$ to $1$. Unlike $G_{\lambda, L}$, the absolute value of the  function $G_\lambda$ is not uniformly bounded over all $\lambda \in (0,1)$. Luckily, it is uniformly bounded below, as shown by the following lemma. This will suffice for us.

\begin{lmm}\label{glambdalow}
We have that 
\[
\inf_{\lambda \in (0,1), \, x,y\in S^2} G_\lambda(x,y) \ge -\frac{9}{8}.
\]
\end{lmm}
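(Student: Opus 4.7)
The plan is to reduce to an explicit one-variable estimate using the Legendre generating function. By Lemma \ref{gbound}, $G_\lambda(x,y)$ depends on $x,y$ only through $t := x\cdot y\in[-1,1]$, via the series $G_\lambda(x,y) = \sum_{l=1}^\infty \lambda^l\,\frac{2l+1}{2l(l+1)}\,P_l(t)$. First I would rewrite the coefficient as $\frac{2l+1}{2l(l+1)}=\frac{1}{2l}+\frac{1}{2(l+1)}=\frac{1}{2}\int_0^1 u^{l-1}(1+u)\,du$, then use the uniform bound $|P_l(t)|\le 1$ from \eqref{plbound0} together with absolute convergence of $\sum \lambda^l$ to interchange the sum and the integral by Fubini. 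Applying the classical generating identity $\sum_{l=1}^\infty (\lambda u)^l P_l(t)=(1-2\lambda u t+\lambda^2 u^2)^{-1/2}-1$ then yields the integral representation
\[
G_\lambda(x,y)=\frac{1}{2}\int_0^1 \frac{1+u}{u}\left[\frac{1}{\sqrt{1-2\lambda u t+\lambda^2 u^2}}-1\right]du.
\]

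Next I would bound the bracketed integrand from below by a quantity independent of $t$. The elementary inequality $(1+\lambda u)^2-(1-2\lambda u t+\lambda^2 u^2)=2\lambda u(1+t)\ge 0$ gives $\sqrt{1-2\lambda u t+\lambda^2 u^2}\le 1+\lambda u$, so the bracket is at least $(1+\lambda u)^{-1}-1=-\lambda u/(1+\lambda u)$. The troublesome factor $1/u$ then cancels and one arrives at
\[
G_\lambda(x,y)\ge -\frac{1}{2}\int_0^1 \frac{\lambda(1+u)}{1+\lambda u}\,du.
\]

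Finally I would evaluate this elementary integral. Writing $\frac{\lambda(1+u)}{1+\lambda u}=1-\frac{1-\lambda}{1+\lambda u}$ and integrating gives
\[
\int_0^1 \frac{\lambda(1+u)}{1+\lambda u}\,du = 1 - \frac{(1-\lambda)\ln(1+\lambda)}{\lambda}\le 1,
\]
where the last inequality follows because the subtracted term is nonnegative for $\lambda\in(0,1)$. Combining the two displays, $G_\lambda(x,y)\ge -\frac{1}{2} > -\frac{9}{8}$, which is in fact substantially stronger than the claimed bound. The only delicate point is the swap of sum and integral in step one, but this is routine given $|P_l(t)|\le 1$ and $\lambda\in(0,1)$; the rest is calculus, and the gap between $-1/2$ and $-9/8$ leaves plenty of room to accommodate any cruder variant of the same scheme (for instance, using the closed forms $\sum \lambda^l P_l(t)/l=-\ln\tfrac{1-\lambda t+R}{2}\ge -\ln 2$ and $\sum \lambda^l P_l(t)/(l+1)\ge -1$, which combine to $G_\lambda\ge -(1+\ln 2)/2$).
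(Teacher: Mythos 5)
Your proof is correct, and it takes a genuinely different route from the paper's. The paper also splits the coefficient as $\tfrac{2l+1}{2l(l+1)}=\tfrac{1}{2l}+\tfrac{1}{2(l+1)}$, but then bounds the two resulting series separately: the $1/(l+1)$ series by integrating the generating function in $t$ from $0$ to $\lambda$ (giving $\ge -1$), and the $1/l$ series by the quadratic lower bound $(1-2xt+t^2)^{-1/2}\ge 1+xt-\tfrac{t^2}{2}$ followed by the same integration (giving $\ge -\tfrac{5}{4}$), so the final constant $-\tfrac98$ is just the sum of two rather crude estimates. You instead encode the coefficient as $\tfrac12\int_0^1 u^{l-1}(1+u)\,du$, swap sum and integral (legitimately, by Tonelli, since $\sum_l\lambda^l(\tfrac1l+\tfrac1{l+1})<\infty$ and $|P_l|\le1$), and apply the generating function inside the integral; the key inequality $\sqrt{1-2\lambda u t+\lambda^2u^2}\le 1+\lambda u$, which is exact at $t=-1$, then kills the $1/u$ singularity and yields the uniform bound $G_\lambda\ge-\tfrac12\bigl(1-\tfrac{(1-\lambda)\ln(1+\lambda)}{\lambda}\bigr)>-\tfrac12$. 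This is strictly stronger than the paper's $-\tfrac98$ and in fact sharp, since $\lim_{\lambda\uparrow1}G_\lambda(x,-x)=G(x,-x)=-\tfrac12$ by Lemma \ref{glimit}; your parenthetical variant via the closed form for $\sum\lambda^lP_l(t)/l$ is also valid and already beats $-\tfrac98$. What each approach buys: the paper's argument is slightly more pedestrian (two one-line series bounds, no Fubini step) and any uniform lower bound suffices for the dominated-convergence applications downstream, while yours is a single unified integral representation that gives the optimal constant; either is perfectly adequate for the role the lemma plays.
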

\begin{proof}
The generating function of Legendre polynomials is given by~\cite[Equation (4.7.23) with $\lambda = 1/2$]{szego39}:
\begin{align}\label{plgen}
\sum_{l=0}^\infty t^l P_l(x) = \frac{1}{\sqrt{1-2xt + t^2}},
\end{align}
where the series converges absolutely when $|x|\le 1$ and $|t|<1$. Using the bound \eqref{plbound0}, we can integrate both sides with respect to $t$ from $0$ to $\lambda$, and move the integral inside the sum to get
\begin{align*}
\sum_{l=0}^\infty \frac{\lambda^{l+1}P_l(x)}{l+1} = \int_0^\lambda \frac{1}{\sqrt{1-2xt + t^2}}dt\ge 0.
\end{align*}
Consequently,
\begin{align}\label{pl1}
\sum_{l=1}^\infty \frac{\lambda^{l}P_l(x)}{l+1} &\ge -1.
\end{align}
Next, note that by equation \eqref{plgen}, we have that for any $t\in (0,1)$,
\begin{align}\label{tl1}
\sum_{l=1}^\infty t^{l-1} P_l(x) &= \frac{1}{t}\biggl(\frac{1}{\sqrt{1-2xt + t^2}} - 1\biggr). 
\end{align}
Now, for any $y\in \R$, 
\begin{align*}
(1-y)\biggl(1+\frac{y}{2}\biggr)^2 &= (1-y)\biggl(1+y+\frac{ y^2}{4}\biggr) \\
&= 1-\frac{3y^2}{4} - \frac{y^3}{4} = 1 -\frac{3y^2 + y^3}{4}.
\end{align*}
This shows, in particular, that for all $y\ge -3$, 
\[
(1-y)\biggl(1+\frac{y}{2}\biggr)^2 \le 1.
\]
Thus, for $y> -3$, we can take square root of both sides to get
\[
\frac{1}{\sqrt{1-y}} \ge 1+\frac{y}{2}. 
\]
Take any $x,t$ such that $|x|\le 1$ and $|t|< 1$. Then applying the above inequality with $y = 2xt - t^2> -3$, we get
\[
\frac{1}{\sqrt{1-2xt+t^2}} \ge 1+xt - \frac{t^2}{2}.
\]
Using this in equation \eqref{tl1},  and integrating $t$ from $0$ to $\lambda$, we get
\begin{align}\label{pl2}
\sum_{l=1}^\infty \frac{\lambda^l P_l(x)}{l} &\ge \int_0^\lambda \biggl(x - \frac{t}{2}\biggr) dt =x\lambda - \frac{\lambda^2}{4} \ge -\frac{5}{4}. 
\end{align}
Combining equations \eqref{pl1} and \eqref{pl2}, we get
\begin{align*}
\sum_{l=1}^\infty \lambda^l \frac{2l+1}{2l(l+1)} P_l(x) &= \sum_{l=1}^\infty \frac{\lambda^l P_l(x)}{2(l+1)} + \sum_{l=1}^\infty \frac{\lambda^l P_l(x)}{2l}\\
&\ge -\frac{1}{2} - \frac{5}{8} = -\frac{9}{8}. 
\end{align*}
By the formula for $G_\lambda$ from Lemma \ref{gbound}, this completes the proof.
\end{proof}
The next lemma gives the explicit formula for the limit of $G_\lambda(x,y)$ as $\lambda\uparrow 1$.
\begin{lmm}\label{glimit}
For distinct $x,y\in S^2$,
\[
\lim_{\lambda \uparrow 1} G_\lambda(x,y) = G(x,y) = -\ln \|x-y\| - \frac{1}{2}+\ln 2,
\]
and $G$ is the Green's function for the spherical Laplacian defined in equation \eqref{greendef}.
\end{lmm}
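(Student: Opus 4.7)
The plan is to reduce $G_\lambda(x,y)$ to a closed form via partial fractions and then take $\lambda\uparrow 1$. Since $\tfrac{2l+1}{2l(l+1)} = \tfrac{1}{2l}+\tfrac{1}{2(l+1)}$, Lemma~\ref{gbound} gives
\begin{align*}
G_\lambda(x,y) = \frac{1}{2}\sum_{l=1}^\infty \frac{\lambda^l P_l(u)}{l} + \frac{1}{2}\sum_{l=1}^\infty \frac{\lambda^l P_l(u)}{l+1},
\end{align*}
where $u := x\cdot y \in [-1,1)$ since $x\ne y$. I will evaluate each series using the Legendre generating function \eqref{plgen}.

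For the second series, integrating \eqref{plgen} from $t=0$ to $t=\lambda$ term by term, with the antiderivative $\int (1-2ut+t^2)^{-1/2}dt = \ln(t-u+\sqrt{1-2ut+t^2})$ evaluated at $t=0$ equal to $\ln(1-u)$, yields
\begin{align*}
\sum_{l=1}^\infty \frac{\lambda^{l+1} P_l(u)}{l+1} = \ln\bigl(\lambda-u+\sqrt{1-2u\lambda+\lambda^2}\bigr) - \ln(1-u) - \lambda.
\end{align*}
For the first series, I rewrite \eqref{plgen} as $\sum_{l=1}^\infty t^{l-1}P_l(u) = t^{-1}\bigl[(1-2tu+t^2)^{-1/2}-1\bigr]$ and integrate; the substitution $s=1/t$ turns $\int_0^\lambda t^{-1}\bigl[(1-2ut+t^2)^{-1/2}-1\bigr]dt$ into $\int_{1/\lambda}^\infty \bigl[(s^2-2us+1)^{-1/2} - s^{-1}\bigr]ds$, whose antiderivative $\ln\bigl((s-u+\sqrt{s^2-2us+1})/s\bigr)$ has finite limits at both ends, giving
\begin{align*}
\sum_{l=1}^\infty \frac{\lambda^l P_l(u)}{l} = \ln 2 - \ln\bigl(1 - u\lambda + \sqrt{1-2u\lambda+\lambda^2}\bigr).
\end{align*}
Taking $\lambda\uparrow 1$ and setting $v:=\sqrt{1-u}$ factors $1-u+\sqrt{2-2u} = v(v+\sqrt{2})$, so the $\ln(\sqrt{1-u}+\sqrt{2})$ pieces cancel between the two halves and the combination telescopes to $\tfrac{1}{2}\ln 2 - \tfrac{1}{2}\ln(1-u) - \tfrac{1}{2}$. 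Since $\|x-y\|^2 = 2-2u$, this equals $-\ln\|x-y\| - \tfrac{1}{2} + \ln 2$, which is the claimed formula.

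It remains to identify this limit with the Green's function defined in \eqref{greendef}. The classical asymptotic $|P_l(\cos\theta)| = O(l^{-1/2})$ uniform on compact subsets of $(0,\pi)$ makes the series $\sum_{l\ge 1}\tfrac{2l+1}{2l(l+1)}P_l(u)$ absolutely convergent whenever $|u|<1$, so Abel's theorem identifies $\lim_{\lambda\uparrow 1}G_\lambda(x,y)$ with that series at $\lambda=1$. By the addition theorem for spherical harmonics used in the proof of Lemma~\ref{gbound}, the latter coincides with $\sum_{l\ge 1}\sum_m \tfrac{2\pi}{l(l+1)}Y_{lm}(x)Y_{lm}(y)$, which is exactly the eigenfunction expansion of the Green's function for the spherical Laplacian used in \eqref{greendef}. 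The main obstacle I anticipate is the antiderivative computation for the first series: the substitution $s=1/t$ and the cancellation of the two logarithmic singularities at $t=0$ must be tracked carefully so that the constant $\ln 2$ emerges correctly.
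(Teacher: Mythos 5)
Your proposal is correct, and for the explicit formula it takes a genuinely different route from the paper. You split $\tfrac{2l+1}{2l(l+1)}=\tfrac{1}{2l}+\tfrac{1}{2(l+1)}$ and evaluate both resulting series in closed form for $\lambda<1$ by integrating the generating function \eqref{plgen} (your two antiderivative computations reproduce the classical identities $\sum_{l\ge1}t^{l}P_l(u)/l=\ln\bigl(2/(1-ut+\sqrt{1-2ut+t^2})\bigr)$ and $\sum_{l\ge1}t^{l+1}P_l(u)/(l+1)=\ln\bigl(t-u+\sqrt{1-2ut+t^2}\bigr)-\ln(1-u)-t$), and then let $\lambda\uparrow1$ in the closed forms; the paper instead first passes to $\lambda=1$ via the Szeg\H{o} bound $|P_l(\cos\theta)|\le\sqrt{2/(\pi l\sin\theta)}$ and dominated convergence, and then identifies $\sum_{l\ge1}\tfrac{2l+1}{2l(l+1)}P_l(u)$ by expanding $\ln(1-u)$ in Legendre polynomials, computing the coefficients through Legendre's differential equation and two integrations by parts. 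Your route is more elementary and self-contained for the formula itself: only the generating function and calculus are needed, and continuity of the closed forms at $\lambda=1$ makes the limit immediate; the paper's route avoids the antiderivative bookkeeping and the $s=1/t$ substitution by working directly at $\lambda=1$, at the price of invoking the $L^2$ Legendre expansion. Note that the convergence step at $\lambda=1$ (your Abel/$O(l^{-1/2})$ argument, which is exactly the paper's Szeg\H{o}-bound-plus-DCT step) is still needed to tie the limit to the eigenfunction expansion and hence to the Green's function of \eqref{greendef}, so that part of your argument coincides with the paper's. Two cosmetic points: the closed form you state for the second piece is for $\sum_{l}\lambda^{l+1}P_l(u)/(l+1)$, i.e.\ $\lambda$ times the series appearing in your decomposition --- harmless since you only use the $\lambda\uparrow1$ limit, but worth a sentence; and the term-by-term integration over $[0,\lambda]$ deserves the one-line justification that $|P_l(u)|\le1$ makes \eqref{plgen} converge uniformly there.
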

\begin{proof}
By \cite[Theorem 7.3.3]{szego39}, 
\begin{align*}
|P_l(\cos \theta)| \le \sqrt{\frac{2}{\pi l \sin \theta}}
\end{align*}
for all $\theta\in (0,\pi)$. This allows us to show, via the dominated convergence theorem, that for any distinct $x,y\in S^2$, 
\begin{align}\label{gform}
G(x,y) := \lim_{\lambda \uparrow 1} G_\lambda(x,y) = \sum_{l=1}^\infty \frac{2l+1}{2l(l+1)}P_l(x\cdot y),
\end{align}
where the series on the right converges absolutely. By the addition theorem for spherical harmonics, it follows that this is indeed the Green's function for the spherical Laplacian. It remains to prove the explicit formula for $G$. Recall that the Legendre polynomials form an orthonormal basis of $L^2_{\R}([-1,1])$, and any $f\in L^2_{\R}([-1,1])$ admits an expansion
\[
f(x) = \sum_{l=0}^\infty a_l P_l(x),
\] 
where
\[
a_l = \frac{2l+1}{2}\int_{-1}^1 f(x) P_l(x) dx,
\]
and the sum converges in $L^2$~\cite{newmanrudin52}. Let $f(x) = \ln(1-x)$ and $a_l$ be as above. Recall that for $l\ge 1$, $P_l$ satisfies Legendre's differential equation
\[
(1-x^2)P_l''(x) -2xP_l'(x) + l(l+1)P_l(x)=0.
\]
Thus,
\begin{align*}
a_l &= \frac{2l+1}{2}\int_{-1}^1\ln(1-x) P_l(x) dx\\
&= -\frac{2l+1}{2l(l+1)}\int_{-1}^1((1-x^2) \ln(1-x)P_l''(x)-2x\ln(1-x)P_l'(x)) dx.
\end{align*}
Now, integration by parts (and the fact that $P_l$ is a polynomial) shows that 
\begin{align*}
\int_{-1}^1(1-x^2) \ln(1-x)P_l''(x)dx &= -\int_{-1}^1 P_l'(x) \frac{d}{dx}((1-x^2)\ln (1-x)) dx\\
&= -\int_{-1}^1P_l'(x) (-2x\ln (1-x) -(1+x)) dx. 
\end{align*}
Plugging this into the previous display, we get
\begin{align*}
a_l &= -\frac{2l+1}{2l(l+1)}\int_{-1}^1(1+x)P_l'(x)dx.
\end{align*}
Applying integration by parts once again, we have 
\begin{align*}
\int_{-1}^1(1+x)P_l'(x)dx &= 2P_l(1) - \int_{-1}^1 P_l(x) dx = 2. 
\end{align*}
Thus, for any $l\ge 1$,
\[
a_l = -\frac{2l+1}{l(l+1)}.
\]
By direct calculation,
\[
a_0 = \frac{1}{2}\int_{-1}^1 \ln(1-x) dx = \ln 2 -1.
\]
Thus, by the formula \eqref{gform}, we get
\begin{align*}
G(x,y) &= \frac{1}{2}\sum_{l=1}^\infty a_l P_l(x\cdot y) = -\frac{1}{2}(\ln(1-x\cdot y) + 1 - \ln 2).
\end{align*}
But 
\[
\|x-y\|^2 = \|x\|^2+\|y\|^2-2x\cdot y = 2(1-x\cdot y). 
\]
Plugging this into the previous display shows that it is equal to 
\begin{align*}
 -\frac{1}{2}(\ln (\|x-y\|^2/2) + 1- \ln 2) = -\ln \|x-y\|-\frac{1}{2} + \ln 2.
\end{align*}
Since the series in equation~\eqref{gform} is absolutely convergent, this completes the proof.
\end{proof}

We need one more ingredient to complete the proof of Theorem \ref{intthm}.
\begin{lmm}\label{glambdaup}
For any distinct $x,y\in S^2$ and $\lambda \in (\frac{1}{2},1)$,
\[
G(x,y) - \lambda G_\lambda(x,y) \ge -\frac{13}{8}. 
\]
\end{lmm}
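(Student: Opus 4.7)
The plan is to mirror the generating-function approach from the proof of Lemma \ref{glambdalow}. Set $t := x \cdot y \in [-1, 1)$. Using the partial-fraction identity $\frac{2l+1}{2l(l+1)} = \frac{1}{2l} + \frac{1}{2(l+1)}$, split
\[
G(x,y) - \lambda G_\lambda(x,y) = \tfrac{1}{2}A + \tfrac{1}{2}B,
\]
where $A := \sum_{l \ge 1}(1-\lambda^{l+1})P_l(t)/l$ and $B := \sum_{l \ge 1}(1-\lambda^{l+1})P_l(t)/(l+1)$.

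First I would evaluate $B$. Using $(1-\lambda^{l+1})/(l+1) = \int_\lambda^1 s^l\, ds$ together with the generating function \eqref{plgen}, after swapping the order of summation and integration, one obtains
\[
B = \int_\lambda^1 \left[(1 - 2ts + s^2)^{-1/2} - 1\right] ds \ge -(1-\lambda),
\]
the lower bound coming from $(1-2ts+s^2)^{-1/2} \ge 0$. Next, the identity $\frac{1}{l} - \frac{1}{l+1} = \frac{1}{l(l+1)}$ combined with a similar swap yields the clean relation $A = B + \int_\lambda^1 I_1(s)\, ds$, where $I_1(s) := \sum_{l \ge 1} s^l P_l(t)/l$. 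The estimate $I_1(s) \ge ts - s^2/4 \ge -5/4$ for $s \in (0,1]$ and $t \in [-1,1]$ is exactly the one established inside the proof of Lemma \ref{glambdalow}, so $\int_\lambda^1 I_1(s)\,ds \ge -\tfrac{5(1-\lambda)}{4}$ and therefore $A \ge -\tfrac{9(1-\lambda)}{4}$.

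Putting the two bounds together gives
\[
G(x,y) - \lambda G_\lambda(x,y) \ge -\tfrac{9(1-\lambda)}{8} - \tfrac{1-\lambda}{2} = -\tfrac{13(1-\lambda)}{8} > -\tfrac{13}{8},
\]
where the final inequality only uses $\lambda > 0$, so the hypothesis $\lambda > 1/2$ is in fact stronger than strictly needed (it would just sharpen the constant to $-13/16$). The only mildly delicate step is the Fubini-type swap of sums and integrals in the two identities above; this is routine once one notes that the relevant Legendre series converge uniformly on compact subsets of $[\lambda, 1)$ and admit finite limits at $s = 1$ whenever $t < 1$, and would be the main thing to write out carefully.
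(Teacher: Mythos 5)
Your proof is correct and is essentially the paper's argument in integrated form: the paper bounds $\frac{\partial}{\partial s}(sG_s(x,y))=\sum_{l\ge1}s^lP_l(x\cdot y)+\frac12\sum_{l\ge1}\frac{s^l}{l}P_l(x\cdot y)$ below by $-1-\frac58=-\frac{13}{8}$ using \eqref{plgen} and \eqref{pl2} and then (implicitly) integrates from $\lambda$ to $1$, which is exactly what your $A$/$B$ decomposition and term-wise integration carry out explicitly with the same two estimates. Your version has the minor merit of displaying the factor $(1-\lambda)$ and thereby showing the hypothesis $\lambda>\tfrac12$ is not needed, but the route and the key inequalities are the same.
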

\begin{proof}
By the bound \eqref{plbound0}, it is easy to justify that $G_\lambda(x,y)$ can be differentiated with respect to $\lambda$ by moving the derivative inside the infinite sum, giving
\begin{align*}
\frac{\partial }{\partial \lambda}(\lambda G_\lambda(x,y) ) &= \sum_{l=1}^\infty \frac{\partial}{\partial \lambda} \biggl(\lambda^{l+1}\frac{2l+1}{2l(l+1)} P_l(x\cdot y)\biggr)\\
&= \sum_{l=1}^\infty \lambda^{l}\frac{2l+1}{2l} P_l(x\cdot y)\\
&= \sum_{l=1}^\infty \lambda^{l}P_l(x\cdot y) + \sum_{l=1}^\infty\frac{ \lambda^{l}}{2l} P_l(x\cdot y).
\end{align*}
By equation \eqref{plgen} and the inequality \eqref{pl2}, the above quantity is bounded below by $-1-\frac{5}{8}$. This proves the claim.
\end{proof}

We are now ready to complete the proof of Theorem \ref{intthm}.
\begin{proof}[Proof of Theorem \ref{intthm}]
By Lemma \ref{step2lmm} and Lemma \ref{glimit}, we only need to show that as we take $\lambda \uparrow 1$, the $G_\lambda$'s in the integral displayed in the statement of Lemma \ref{step2lmm} can be replaced by $G$. We already know that $G_\lambda$ converges to $G$ pointwise. So it remains to verify the condition needed to apply the dominated convergence theorem. Recall that the integrand is
\begin{align}\label{integrand}
\biggl(\prod_{j=1}^k \prod_{l=1}^{w} e^{-4b\alpha_jG_\lambda(x_j, y_l)}\biggr) \biggl(\prod_{1\le l<l'\le w}e^{-4b^2G_\lambda(y_l, y_{l'})} \biggr). 
\end{align}
Let $S$ be the set of $j$ such that $\Re(\alpha_j) < 0$. Take any $j\notin S$. Then by Lemma \ref{glambdalow}, 
\[
|e^{-4b\alpha_jG_\lambda(x_j, y_l)}|\le e^{\frac{9}{2}b\Re(\alpha_j)}. 
\]
Similarly, for all $l,l'$,
\[
e^{-4b^2G_\lambda(y_l, y_{l'})} \le e^{\frac{9}{2}b^2}.
\]
Next, take any $j\in S$. Then by Lemma \ref{glambdaup},
\begin{align*}
|e^{-4b\alpha_jG_\lambda(x_j, y_l)}| &\le e^{-4b\Re(\alpha_j)(\frac{13}{8\lambda} + \frac{1}{\lambda}G(x_j,y_l))}.
\end{align*}
Now recall that by assumption, $\Re(\alpha_j) > -1/2b$ for each $j$. This implies that for each $j\in S$, $-4b\Re(\alpha_j )< 2$. Consequently, we can find $\lambda_0\in (0,1)$ so close to $1$ that 
\[
\kappa := \max_{j\in S} \frac{-4b\Re(\alpha_j)}{\lambda_0} < 2. 
\]
Thus, if $\lambda \in (\lambda_0, 1)$, then for each $j\in S$,
\[
e^{-4b\Re(\alpha_j)G_\lambda(x_j, y_l)} \le e^{\kappa(\frac{13}{8} + G(x_j, y_l))}. 
\]
Combining all of the above, we see that if $\lambda \in (\lambda_0,1)$, then the (nonnegative) quantity displayed in equation \eqref{integrand} is bounded above by
\begin{align}\label{integrandbd}
C \prod_{j\in S} \prod_{l=1}^w e^{\kappa G(x_j, y_l)},
\end{align}
where $C$ has no dependence on $y_1,\ldots,y_{w}$. Since $\kappa < 2$ and $x_1,\ldots,x_k$ are distinct, the above function has a finite integral over $(S^2)^{w}$. This allows us to apply the dominated convergence theorem to complete the proof of Theorem \ref{intthm}. Observe that the $(-1)^w$ term can be written as $e^{-i\pi w}$ since $w$ is an integer.
\end{proof}

\subsection{Correlation functions on the plane}\label{planecorsec}
The following theorem, quoted from Subsection \ref{mainresultssec}, is the same as Theorem \ref{intthm}, except that the integrals are expressed differently as integrals with respect to Lebesgue measure on $\C$.

\intthmsecond*

For the proof, we need the following lemma.
\begin{lmm}\label{stereoform}
If $x,y\in S^2\setminus\{e_3\}$ and $u,v$ are the stereographic projections of $x,y$ on $\C$, then 
\[
\|x-y\|^2 = \frac{4|u-v|^2}{(1+|u|^2)(1+|v|^2)}.
\]
\end{lmm}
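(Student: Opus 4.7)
The plan is a direct computation using the explicit inverse of the stereographic projection. First I would write down that for $x = (x_1,x_2,x_3) \in S^2 \setminus \{e_3\}$ with $u := \sigma(x) = (x_1 + i x_2)/(1-x_3)$, one has $|u|^2 = (x_1^2+x_2^2)/(1-x_3)^2 = (1+x_3)/(1-x_3)$, so $1+|u|^2 = 2/(1-x_3)$. This immediately yields the inverse formulas
\begin{equation*}
x_1 = \frac{2\Re(u)}{1+|u|^2}, \qquad x_2 = \frac{2\Im(u)}{1+|u|^2}, \qquad x_3 = \frac{|u|^2-1}{|u|^2+1},
\end{equation*}
and analogous formulas for $y = \sigma^{-1}(v)$.

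Since $x,y \in S^2$, we have $\|x-y\|^2 = 2 - 2(x \cdot y)$, so it suffices to show that
\begin{equation*}
1 - x \cdot y = \frac{2|u-v|^2}{(1+|u|^2)(1+|v|^2)}.
\end{equation*}
Plugging in the inverse formulas and using $\Re(u)\Re(v) + \Im(u)\Im(v) = \tfrac{1}{2}(u\bar v + \bar u v)$, I would compute
\begin{equation*}
x \cdot y = \frac{2(u\bar v + \bar u v) + (|u|^2-1)(|v|^2-1)}{(1+|u|^2)(1+|v|^2)},
\end{equation*}
then subtract from $1$ and expand the numerator. The $|u|^2|v|^2$ and constant terms cancel against the expansion of $(1+|u|^2)(1+|v|^2)$, leaving $2(|u|^2 + |v|^2 - u\bar v - \bar u v) = 2|u-v|^2$ in the numerator. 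Multiplying by $2$ gives the claimed identity.

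There is no real obstacle here; the lemma is a classical identity about stereographic projection and the proof is a one-page algebraic verification. The only thing to keep track of is the bookkeeping between real/imaginary parts and complex conjugates when computing $x_1 y_1 + x_2 y_2$ from the inverse formulas.
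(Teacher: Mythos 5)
Your proof is correct, and it is essentially the same elementary coordinate verification as in the paper: the paper computes $|u-v|^2$ directly in the sphere coordinates of $x,y$ (using $\|x\|=\|y\|=1$ and the identity $1+|u|^2 = 2/(1-x_3)$), whereas you run the computation in the opposite direction, expressing $x\cdot y$ through the inverse projection formulas and using $\|x-y\|^2 = 2-2\,x\cdot y$. The two calculations are mirror images of one another and both reduce to the same few lines of algebra, so no further comment is needed.
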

\begin{proof}
Take any $x=(x_1,x_2,x_3)\in S^2\setminus\{e_3\}$, and let $u=u_1+i u_2\in\C$ be its stereographic projection on $\C$. Then the following relations hold:
\[
u =\frac{x_1+i x_2}{1-x_3}, \ \ x = \frac{1}{1+|u|^2}(2u_1, 2u_2, |u|^2-1).
\]
Thus, if $x,y\in S^2\setminus\{e_3\}$ and $u,v$ are the stereographic projections of $x,y$, then
\begin{align*}
|u-v|^2 &= \biggl(\frac{x_1}{1-x_3} - \frac{y_1}{1-y_3}\biggr)^2 + \biggl(\frac{x_2}{1-x_3} - \frac{y_2}{1-y_3}\biggr)^2\\
&= \frac{x_1^2(1-y_3)^2 + y_1^2(1-x_3)^2-2x_1y_1(1-x_3)(1-y_3)}{(1-x_3)^2(1-y_3)^2}\\
&\qquad + \frac{x_2^2(1-y_3)^2 + y_2^2(1-x_3)^2-2x_2y_2(1-x_3)(1-y_3)}{(1-x_3)^2(1-y_3)^2}.
\end{align*}
Since $\|x\|^2=\|y\|^2=1$, we have
\begin{align*}
&x_1^2(1-y_3)^2 + y_1^2(1-x_3)^2 + x_2^2(1-y_3)^2 + y_2^2(1-x_3)^2 \\
&= (1-x_3^2)(1-y_3)^2 + (1-y_3^2)(1-x_3)^2\\
&= (1-x_3)(1-y_3)((1+x_3)(1-y_3)+ (1+y_3)(1-x_3))\\
&= 2(1-x_3)(1-y_3)(1-x_3y_3). 
\end{align*}
Using this in the previous display, we get
\[
|u-v|^2 = \frac{2(1-x\cdot y)}{(1-x_3)(1-y_3)} = \frac{\|x-y\|^2}{(1-x_3)(1-y_3)}. 
\]
Now note that
\begin{align*}
1+|u|^2 &= 1+\frac{x_1^2+x_2^2}{(1-x_3)^2} \\
&=1+ \frac{1-x_3^2}{(1-x_3)^2}= \frac{2-2x_3}{(1-x_3)^2} = \frac{2}{1-x_3}.
\end{align*}
Similarly,
\[
1+|v|^2 = \frac{2}{1-y_3}. 
\]
Combining the last three displays completes the proof.
\end{proof}
We are now ready to complete the proof of Theorem \ref{intthmv2}.
\begin{proof}[Proof of Theorem \ref{intthmv2}]
Note that by Lemma \ref{stereoform}, for any $x,y\in S^2\setminus\{e_3\}$, 
\begin{align}
G(x,y) &= -\ln |\sigma(x)-\sigma(y)| + \frac{1}{2}\ln (1+|\sigma(x)|^2) \notag \\
&\qquad \qquad +\frac{1}{2}\ln (1+|\sigma(y)|^2) -\frac{1}{2}. \label{gpform}
\end{align}
Take any $y_1,\ldots, y_w\in S^2\setminus\{e_3\}$. Then the above identity shows that
\begin{align*}
-4b^2 \sum_{1\le l<l'\le w} G(y_l, y_{l'}) &= 4b^2 \sum_{1\le l<l'\le w} \ln |\sigma(y_l)-\sigma(y_{l'})|\\
&\qquad  - 2b^2(w-1)\sum_{l=1}^w \ln (1+|\sigma(y_l)|^2) + b^2w(w-1). 
\end{align*}
Similarly,
\begin{align*}
-4b\sum_{j=1}^k \sum_{l=1}^w\alpha_j G(x_j, y_l) &= 4b\sum_{j=1}^k \sum_{l=1}^w \alpha_j \ln |\sigma(x_j)-\sigma(y_l)|\\
&\qquad - 2bw\sum_{j=1}^k \alpha_j \ln(1+|\sigma(x_j)|^2) \\
&\qquad - 2b\biggl(\sum_{j=1}^k\alpha_j\biggr)\sum_{l=1}^w\ln (1+|\sigma(y_l)|^2) +2bw\sum_{j=1}^k\alpha_j.
\end{align*}
Now note that the coefficients of each $\ln(1+|\sigma(y_l)|^2)$ from the above two displays add up to 
\begin{align*}
-2b^2(w-1) - 2b\sum_{j=1}^k \alpha_j &= -2b\biggl(Q-\sum_{j=1}^k \alpha_j\biggr) +2b^2 -2b\sum_{j=1}^k \alpha_j\\
&= -2bQ + 2b^2 = 2.
\end{align*}
Lastly, note that the constant terms add up to
\begin{align*}
b^2w(w-1) + 2bw\sum_{j=1}^k\alpha_j &= b^2w^2 - b^2w + 2bw(Q-bw) \\
&=-b^2w^2 -b^2 w + 2bwQ\\
&= -b^2 w^2 + b^2 w -2w.
\end{align*}
Thus, combining the last four displays, we get
\begin{align*}
&-4b^2 \sum_{1\le l<l'\le w} G(y_l, y_{l'})  -4b\sum_{j=1}^k \sum_{l=1}^w\alpha_j G(x_j, y_l)\\
&= 4b^2 \sum_{1\le l<l'\le w} \ln |\sigma(y_l)-\sigma(y_{l'})| + 4b\sum_{j=1}^k \sum_{l=1}^w \alpha_j \ln |\sigma(x_j)-\sigma(y_l)|\\
&\qquad +2 \sum_{l=1}^w \ln (1+|\sigma(y_l)|^2) - 2bw\sum_{j=1}^k\alpha_j \ln(1+|\sigma(x_j)|^2)\\
&\qquad -b^2 w^2 + b^2 w -2w.
\end{align*}
It is well known that for any $f:S^2\to\C$ that is integrable with respect to the area measure on $S^2$, we have
\begin{align}\label{change}
\int_{S^2} f(y) da(y) = \int_{\C} f(\sigma^{-1}(z)) \frac{4}{(1+|z|^2)^2} d^2z.
\end{align}
Combining this with the previous display, and recalling the $u_j := P(x_j)$, we get
\begin{align*}
&\int_{(S^2)^{w}}\biggl(\prod_{j=1}^k \prod_{l=1}^{w} e^{-4b\alpha_jG(x_j, y_l)}\biggr) \biggl(\prod_{1\le l<l'\le w}e^{-4b^2G(y_l, y_{l'})} \biggr) da(y_1)\cdots da(y_{w})\\
&= 4^we^{-b^2 w^2 + b^2 w -2w}\biggl(\prod_{j=1}^k (1+|z_j|^2)^{-2bw\alpha_j}\biggr)\\
&\qquad \qquad \cdot  \int_{\C^w} \biggl(\prod_{j=1}^k \prod_{l=1}^{w} |z_j-t_l|^{4b\alpha_j}\biggr)\biggl(\prod_{1\le l<l'\le w}\|t_l-t_{l'}\|^{4b^2} \biggr) d^2t_1\cdots d^2t_w.
\end{align*}
Again, by equation \eqref{gpform}, 
\begin{align*}
&\prod_{1\le j < j'\le k} e^{-4\alpha_j\alpha_{j'}G(x_j, x_{j'})}  = \prod_{1\le j<j'\le k} \biggl(\frac{\sqrt{e}|z_j - z_{j'}|}{\sqrt{(1+|z_j|^2) (1+|z_{j'}|^2)}}\biggr)^{4\alpha_j \alpha_{j'}}\\
&= e^{2\sum_{1\le j<j'\le k} \alpha_j \alpha_{j'}} \prod_{j=1}^k (1+|z_j|^2)^{-2\alpha_j \sum_{j'\ne j} \alpha_{j'}}\prod_{1\le j<j'\le k} |z_j -z_{j'}|^{4\alpha_j \alpha_{j'}}. 
\end{align*}
Next, note that 
\begin{align*}
\prod_{j=1}^k g(\sigma(x_j))^{-\Delta_{\alpha_j}} &=  4^{-\sum_{j=1}^k \Delta_{\alpha_j}} \prod_{j=1}^k (1+|z_j|^2)^{2\Delta_{\alpha_j}}
\end{align*}
We have to now multiply the expressions from the last three displays. Upon multiplying them together, the exponent of $1+|z_j|^2$ becomes
\begin{align*}
-2bw\alpha_j -2\alpha_j\sum_{j'\ne j }\alpha_{j'}+2\Delta_{\alpha_j} &= -2\biggl(Q-\sum_{j'=1}^k \alpha_j\biggr) \alpha_j -2\alpha_j\sum_{j'\ne j }\alpha_{j'}+2\alpha_j(Q-\alpha_j)\\
&= 0. 
\end{align*}
The exponent of $4$ becomes
\begin{align*}
w  - \sum_{j=1}^k \Delta_{\alpha_j} &= 1 - \frac{1}{b^2} - \frac{1}{b}\sum_{j=1}^k\alpha_j - \sum_{j=1}^k \alpha_j \biggl(b-\frac{1}{b} - \alpha_j\biggr)\\
&= 1-\frac{1}{b^2} -b \sum_{j=1}^k \alpha_j +\sum_{j=1}^k \alpha_j^2.
\end{align*}
Lastly, the exponent of $e$ is 
\begin{align*}
&-b^2w^2 +b^2w - 2w + 2\sum_{1\le j<j'\le k} \alpha_j \alpha_j' \\
&= -\biggl(Q-\sum_{j=1}^k\alpha_j\biggr)^2 + \biggl(b-\frac{2}{b}\biggr)\biggl(Q-\sum_{j=1}^k\alpha_j\biggr)+ 2\sum_{1\le j<j'\le k} \alpha_j \alpha_j'\\
&= -Q^2 +\biggl(2Q- b + \frac{2}{b}\biggr)\sum_{j=1}^k\alpha_j + Q\biggl(b-\frac{2}{b}\biggr) - \sum_{j=1}^k\alpha_j^2\\
&= \frac{1}{b^2}-1 + b \sum_{j=1}^k \alpha_j - \sum_{j=1}^k \alpha_j^2.
\end{align*}
Note that this is exactly the negative of the exponent of $4$. Thus, the constant factor in the combined expression is equal to 
\begin{align*}
\exp\biggl(\biggl(1-\frac{1}{b^2} -b \sum_{j=1}^k \alpha_j +\sum_{j=1}^k \alpha_j^2\biggr)(\ln 4 - 1)\biggr) = e^{\chi(1-1/b^2)} e^{-\chi\sum_{j=1}^k \alpha_j (b-\alpha_j)}.
\end{align*}
Combining the above calculations yields the claimed result.
\end{proof}

\subsection{The timelike DOZZ formula}\label{dozzsec}
Let us now specialize the correlation function on the plane to the case $k=3$, $z_1 = 0$, $z_2= 1$, and $|z_3|\to \infty$. It turns out that we can evaluate this explicitly, after normalizing by a suitable factor depending on $z_3$ as $|z_3|\to \infty$. For the formula, we need the following special function $\Upsilon_b$ introduced by \citet{dornotto94}:
\begin{align*}
\Upsilon_b(x) := \exp\biggl(\int_0^\infty \frac{1}{\tau}\biggl(\biggl(\frac{b}{2} + \frac{1}{2b} - x\biggr)^2 e^{-\tau} - \frac{\sinh^2((\frac{b}{2}+\frac{1}{2b} - x)\frac{\tau}{2})}{\sinh(\frac{b\tau}{2}) \sinh(\frac{\tau}{2b})}\biggr)d\tau \biggr)
\end{align*}
on the strip $\{x\in  \C: 0< \Re(x)<b + \frac{1}{b}\}$ and continued analytically to the whole plane. 
Also, let $\gamma(x) := \Gamma(x)/\Gamma(1-x)$, where $\Gamma$ is the classical Gamma function. The following theorem gives a formula for the $3$-point correlation which is the same as the one displayed in \citet{harlowetal11} (after the notational changes $\hat{Q} \to -Q$, $\hat{\alpha}_j \to -\alpha_j$ and $\hat{b} \to b$), as well the ones appearing in the original proposals of \citet{schomerus03}, \citet{zamolodchikov05}, and \citet{kostovpetkova06, kostovpetkova07, kostovpetkova07a}. The only difference is that there is an additional factor depending only on $b$; but that is not a problem since the correlation function is supposed to be unique only up to a $b$-dependent factor.
\begin{thm}\label{threepoint}
Suppose that $\Re(\alpha_j)> -1/2b$ for $j=1,2,3$, and that $w = (Q-\sum_{j=1}^3 \alpha_j)/b$ is a positive integer. Then $\Upsilon_b(b- 2\alpha_j)\ne 0$ for $j=1,2,3$, and 
\begin{align*}
&C(\alpha_1,\alpha_2,\alpha_3; b;\mu) := \lim_{|z_3|\to \infty} |z_3|^{-4\Delta_{\alpha_3}}C(\alpha_1,\alpha_2,\alpha_3;0,1,z_3;b;\mu) \\
&= e^{-i\pi w} (-\pi \mu\gamma(-b^2))^w (4/e)^{1-1/b^2}b^{2b^2w + 2w} \frac{\Upsilon_b(Q - \alpha_1-\alpha_2-\alpha_3+b)}{\Upsilon_b(b)}\\
&\qquad \cdot \frac{\Upsilon_b(\alpha_1-\alpha_2-\alpha_3+b)\Upsilon_b(\alpha_2-\alpha_1-\alpha_3+b)\Upsilon_b(\alpha_3-\alpha_1-\alpha_2+b)}{\Upsilon_b(b - 2\alpha_1)\Upsilon_b(b-2\alpha_2) \Upsilon_b(b-2\alpha_3)}.
\end{align*}
\end{thm}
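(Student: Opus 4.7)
The natural starting point is Theorem~\ref{intthmv2}, which already expresses the $3$-point correlation as a prefactor times a Coulomb-gas integral over $\C^w$. Specializing to $(z_1,z_2,z_3)=(0,1,z_3)$ with $|z_3|$ large, I will factor out the dominant $|z_3|$-dependence as follows: the prefactor $\prod_{j<j'}|z_j-z_{j'}|^{4\alpha_j\alpha_{j'}}$ contributes $|z_3|^{4\alpha_3(\alpha_1+\alpha_2)}$ to leading order, while $\prod_{l=1}^w|z_3-t_l|^{4b\alpha_3}$ in the integrand is, for $t_l$ bounded, asymptotic to $|z_3|^{4bw\alpha_3}$. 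The charge neutrality identity $\alpha_1+\alpha_2+bw=Q-\alpha_3$ then identifies the combined exponent with $4\Delta_{\alpha_3}=4\alpha_3(Q-\alpha_3)$, so the normalization $|z_3|^{-4\Delta_{\alpha_3}}$ cancels it exactly. To move the limit inside the integral I will reuse the uniform integrability estimate from the proof of Theorem~\ref{intthm}: under $\Re(\alpha_j)>-1/2b$, the factor $\prod_l|1-t_l/z_3|^{4b\alpha_3}$ is dominated on compacts, and the tail in $t_l$ is controlled just as before. Dominated convergence then reduces matters to evaluating the complex Selberg integral
\[
\hat I_w := \int_{\C^w}\prod_{l=1}^w|t_l|^{4b\alpha_1}|1-t_l|^{4b\alpha_2}\prod_{l<l'}|t_l-t_{l'}|^{4b^2}\,d^2 t_1\cdots d^2 t_w.
\]

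The integral $\hat I_w$ is known in closed form via the complex Selberg integral formula of \citet{dotsenkofateev85} and \citet{aomoto87}. Invoking that formula, I will write
\[
\hat I_w = w!\,\pi^w \prod_{k=1}^w \frac{\gamma(kb^2)}{\gamma(b^2)}\,\gamma(1+2b\alpha_1+(k-1)b^2)\,\gamma(1+2b\alpha_2+(k-1)b^2)\,\gamma(-1-2b(\alpha_1+\alpha_2)-(w+k-2)b^2),
\]
and combine with the explicit prefactor from Theorem~\ref{intthmv2} to obtain $\lim_{|z_3|\to\infty}|z_3|^{-4\Delta_{\alpha_3}}C$ as a product of $\gamma$-values times $e^{-i\pi w}\mu^w(4/e)^{1-1/b^2}$ and a pure power of $\pi$.

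The main labor---and the real obstacle---is translating this $\gamma$-function product into the $\Upsilon_b$ ratios in the statement. For this I will exploit the two shift relations
\[
\Upsilon_b(x+b) = \gamma(bx)\,b^{1-2bx}\,\Upsilon_b(x),\qquad \Upsilon_b(x+1/b) = \gamma(x/b)\,b^{-1+2x/b}\,\Upsilon_b(x),
\]
together with the reflection symmetry $\Upsilon_b(x)=\Upsilon_b(b+1/b-x)$. Iterating the first shift $w$ times starting from $x=b$ telescopes $\prod_{k=1}^w\gamma(kb^2)$ into $\Upsilon_b(bw+b)/\Upsilon_b(b)$; iterating it starting from $x=2\alpha_j+1/b$ converts $\prod_{k=1}^w\gamma(1+2b\alpha_j+(k-1)b^2)$ into $\Upsilon_b(2\alpha_j+bw+1/b)/\Upsilon_b(2\alpha_j+1/b)$. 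Applying reflection to the denominator gives $\Upsilon_b(b-2\alpha_j)$, and charge neutrality rewrites the numerator using $2\alpha_j+1/b+bw=b+\alpha_j-\alpha_{j'}-\alpha_{j''}$. The three residual factors $\gamma(-1-\ldots)$ recombine with a compensating $\gamma(-b^2)^w$ to account for the $(-\pi\mu\gamma(-b^2))^w$ prefactor. I will follow the organization of \citet{giribet12} for this bookkeeping, paying close attention to two delicate points: first, the collected powers of $b$ accumulated from all shift-relation applications must sum to exactly $b^{2b^2w+2w}$; and second, the non-vanishing claim $\Upsilon_b(b-2\alpha_j)\ne 0$ must be verified from the known zero locus of $\Upsilon_b$ (the sets $-mb-n/b$ and $(m+1)b+(n+1)/b$ for $m,n\ge 0$) together with the hypothesis $\Re(\alpha_j)>-1/2b$ and the positive integrality of $w$. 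I expect the algebraic rearrangement in this final step to be the most error-prone, but it is essentially a calculation that proceeds by direct telescoping once the Selberg integral has been brought to the form above.
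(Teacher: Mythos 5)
Your route is essentially the paper's own: isolate the $|z_3|$ powers via charge neutrality and justify the $|z_3|\to\infty$ limit by a uniform-integrability argument (the paper's Lemma~\ref{threelmm} does this via an $L^{1+\delta}$ bound after pulling back to the sphere, precisely because naive pointwise domination fails near the moving point $t_l=z_3$), evaluate the resulting complex Selberg integral by the Dotsenko--Fateev/Aomoto formula, and telescope the $\gamma$-products into $\Upsilon_b$ ratios using the shift relation $\Upsilon_b(x+b)=\gamma(bx)b^{1-2bx}\Upsilon_b(x)$, the reflection $\Upsilon_b(x)=\Upsilon_b(b+1/b-x)$, and the zero locus of $\Upsilon_b$ together with $\Re(\alpha_j)>-1/2b$ and $w\ge 1$ for the non-vanishing claims. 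In writing it up you only need to tighten two points: verify Aomoto's parameter conditions (they do follow from the hypotheses, as the paper checks) before invoking the Selberg formula, and note that the $\gamma(-b^2)^w$ in the prefactor comes from rewriting $\gamma(b^2)^{-w}=(-1)^w b^{4w}\gamma(-b^2)^w$, while your third family of $\gamma$-factors is exactly the $\alpha_3$ family via charge neutrality and needs no compensating factor.
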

In the proof below, we  follow the calculations of \citet{giribet12} who derived the timelike DOZZ formula starting from a hypothetical Coulomb gas representation of the $3$-point function. Our starting point is the formula for the $k$-point correlation function given in Theorem \ref{intthmv2}, specialized to $k=3$, which yields the following lemma.
\begin{lmm}\label{threelmm}
Under the assumptions of Theorem \ref{threepoint},
\begin{align*}
&\lim_{|z_3|\to \infty} |z_3|^{-4\Delta_{\alpha_3}}C(\alpha_1,\alpha_2,\alpha_3;0,1,z_3;b;\mu) \\
&= \frac{e^{-i\pi w}\mu^w}{w!} (4/e)^{1-1/b^2} \int_{\C^w} \biggl(\prod_{l=1}^w |t_l|^{4b\alpha_1}|1-t_l|^{4b\alpha_2}\biggr)\biggl(\prod_{1\le l<l'\le w}|t_l-t_{l'}|^{4b^2} \biggr) d^2t_1\cdots d^2t_w.
\end{align*}
\end{lmm}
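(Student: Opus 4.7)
The plan is to specialize Theorem \ref{intthmv2} to $k=3$ with $(z_1,z_2,z_3)=(0,1,z)$ and evaluate the limit $|z|\to\infty$ after renormalization by $|z|^{-4\Delta_{\alpha_3}}$. After substitution, the prefactor $\prod_{j<j'}|z_j-z_{j'}|^{4\alpha_j\alpha_{j'}}$ reduces to $|z|^{4\alpha_1\alpha_3}|1-z|^{4\alpha_2\alpha_3}$, and the integrand acquires $\prod_l |z-t_l|^{4b\alpha_3}$. I first extract the leading powers of $|z|$ by writing
\[
|1-z|^{4\alpha_2\alpha_3} = |z|^{4\alpha_2\alpha_3}|1-1/z|^{4\alpha_2\alpha_3}, \qquad |z-t_l|^{4b\alpha_3} = |z|^{4b\alpha_3}|1-t_l/z|^{4b\alpha_3}.
\]
Collecting all powers of $|z|$ and combining with $|z|^{-4\Delta_{\alpha_3}}$, the total exponent is $4\alpha_3(\alpha_1+\alpha_2+bw)-4\Delta_{\alpha_3} = 4\alpha_3\bigl[(\alpha_1+\alpha_2+\alpha_3+bw)-Q\bigr]$, which vanishes by the charge neutrality condition $bw=Q-\alpha_1-\alpha_2-\alpha_3$. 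Thus the normalization precisely cancels all divergent powers of $|z|$, leaving
\[
|z|^{-4\Delta_{\alpha_3}} C(\alpha_1,\alpha_2,\alpha_3;0,1,z;b;\mu) = \frac{e^{-i\pi w}\mu^w}{w!}(4/e)^{1-1/b^2}|1-1/z|^{4\alpha_2\alpha_3}\,I(z),
\]
where $I(z):=\int_{\C^w}\prod_l|t_l|^{4b\alpha_1}|1-t_l|^{4b\alpha_2}|1-t_l/z|^{4b\alpha_3}\prod_{l<l'}|t_l-t_{l'}|^{4b^2}\,dt$.

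As $|z|\to\infty$, the prefactor $|1-1/z|^{4\alpha_2\alpha_3}\to 1$, and pointwise in $t$ each factor $|1-t_l/z|^{4b\alpha_3}\to 1$. The claim therefore reduces to exchanging the limit with the integral in $I(z)$. I plan a two-stage dominated convergence argument: on the bulk region $A_R=\{t\in\C^w:\max_l|t_l|\le R\}$, for $|z|\ge 2R$ every $|1-t_l/z|^{4b\alpha_3}$ is uniformly bounded by an absolute constant (since $|1-t_l/z|\in[1/2,3/2]$), so DCT yields convergence of $\int_{A_R}$ to the analogous $A_R$-integral of the limit integrand. Letting $R\to\infty$ extends this to the full $\C^w$ integral, since the limit integrand is absolutely integrable by the analysis already carried out in the proof of Theorem \ref{intthm}: the hypotheses $\Re(\alpha_j)>-1/(2b)$ together with the neutrality condition produce integrable singularities at $0$, $1$ and along the diagonals, with $|t|^{-4}$ decay at infinity.

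The chief obstacle is showing that the tail contribution $\int_{B_R}$, where $B_R=\C^w\setminus A_R$, vanishes uniformly in $|z|$ as $R\to\infty$. I would handle this by dominating the integrand of $I(z)$ on $B_R$ by a $z$-independent integrable majorant, splitting $B_R$ into the subregion where every $|t_l|\le 2|z|$ and its complement. On the complement, $|1-t_l/z|$ is comparable to $|t_l|/|z|$ for the large $t_l$'s, so the tail bound follows directly from integrability of the limit integrand. On the former, the moving singularity at $t_l=z$ is integrable locally because $4b\Re\alpha_3>-2$; after a local change of variables $t_l=z+s$ that puts the singularity in standard form, the resulting contribution is absorbed by the $|t|^{-4}$ tail of the limit integrand. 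Combining these estimates with the bulk convergence yields the desired limit identity.
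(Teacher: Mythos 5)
Your proposal is correct in substance, and its first half (specializing Theorem \ref{intthmv2} to $k=3$, factoring out the powers of $|z_3|$, and observing that charge neutrality makes the total exponent $4\alpha_3(Q-\alpha_3)=4\Delta_{\alpha_3}$ cancel against the normalization) is exactly what the paper does. Where you genuinely diverge is in justifying the exchange of the limit $|z_3|\to\infty$ with the $\C^w$-integral. The paper does not do a bulk/tail decomposition on the plane: it rewrites both integrals against the probability measure $d\nu=\frac{1}{4\pi}g(t)\,d^2t$ and proves uniform integrability by bounding $\int|f_{z_3}|^{1+\delta}\,d\nu^w$ uniformly in $z_3$, pulling the bound back to the sphere via Lemma \ref{stereoform}; there the insertion point $u=\sigma^{-1}(z_3)$ is just another point on a compact surface, the bound becomes $K\prod_l\|e_3+y_l\|^{4b\Re\alpha_1}\|e_1-y_l\|^{4b\Re\alpha_2}\|u-y_l\|^{4b\Re\alpha_3}$ with all exponents $>-2$, and, crucially, the cross factors $\|y_l-y_{l'}\|^{4b^2}$ are automatically bounded by $2^{4b^2}$. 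Your planar route buys elementarity (no compactification, just dominated convergence on $A_R$ plus a tail estimate), but it pays for it in exactly the places the sphere trick makes invisible: (i) the moving singularity at $t_l=z$, where $|1-t_l/z|^{4b\alpha_3}=|z|^{-4b\alpha_3}|z-t_l|^{4b\alpha_3}$ carries a growing prefactor since $\Re\alpha_3<0$, and whose ball contributes $O(|z|^{-2-4b\Re\alpha_3})$ only after re-using the neutrality identity $4b\Re(\alpha_1+\alpha_2)+4b^2(w-1)=-4-4b\Re\alpha_3$; note the limit integrand decays like $|t_l|^{-4-4b\Re\alpha_3}$, not $|t_l|^{-4}$, per coordinate, which is what makes this bound vanish (uniformly over $|z|\ge 2R$, since $-2-4b\Re\alpha_3<0$); and (ii) the cross terms $|t_l-t_{l'}|^{4b^2}$, which are unbounded on the plane and must be tracked through the multi-variable tail regimes (several $t_l$'s large together, or clustered near $z$), where again the hypotheses $\Re\alpha_j>-1/2b$ and neutrality make the exponent count close, but this is precisely the bookkeeping your sketch waves through with ``absorbed by the tail.'' So: same algebraic reduction, different analytic engine — yours is a workable direct DCT argument provided you carry out the singular-ball and cross-term estimates explicitly, while the paper's $L^{1+\delta}$ uniform-integrability bound on the sphere disposes of both issues in one stroke.
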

\begin{proof}
Since $\alpha_j> -1/2b$ for $j=1,2,3$ and $w$ is a positive integer, Theorem \ref{intthmv2} gives 
\begin{align*}
&C(\alpha_1,\alpha_2,\alpha_3;0,1,z_3;b;\mu) \\
&= \frac{e^{-i\pi w}\mu^{w}}{w!} (4/e)^{1-1/b^2} |z_3|^{4\alpha_1\alpha_3} |1-z_3|^{4\alpha_2 \alpha_3} \\
&\qquad \cdot \int_{\C^w} \biggl(\prod_{l=1}^w |t_l|^{4b\alpha_1}|1-t_l|^{4b\alpha_2}|z_3 - t_l|^{4b \alpha_3}\biggr)\biggl(\prod_{1\le l<l'\le w}|t_l-t_{l'}|^{4b^2} \biggr) d^2t_1\cdots d^2t_w.
\end{align*}
The powers of the terms involving $z_3$ add up to
\begin{align*}
4\alpha_1\alpha_3 + 4\alpha_2 \alpha_3 + 4bw\alpha_3 &= 4(\alpha_1+\alpha_2 + (Q-\alpha_1-\alpha_2-\alpha_3)) \alpha_3\\
&= 4\alpha_3(Q-\alpha_3) = 4\Delta_{\alpha_3}. 
\end{align*}
This shows that it suffices to prove the following:
\begin{align}\label{newlim}
&\lim_{|z_3|\to \infty} \int_{\C^w} \biggl(\prod_{l=1}^w |t_l|^{4b\alpha_1}|1-t_l|^{4b\alpha_2}\frac{|z_3 - t_l|^{4b \alpha_3}}{|z_3|^{4b\alpha_3}}\biggr)\biggl(\prod_{1\le l<l'\le w}|t_l-t_{l'}|^{4b^2} \biggr) d^2t_1\cdots d^2t_w\notag \\
&= \int_{\C^w} \biggl(\prod_{l=1}^w |t_l|^{4b\alpha_1}|1-t_l|^{4b\alpha_2}\biggr)\biggl(\prod_{1\le l<l'\le w}|t_l-t_{l'}|^{4b^2} \biggr) d^2t_1\cdots d^2t_w.
\end{align}
Let $\nu$ be the probability measure on $\C$ with density function $\frac{1}{4\pi}g(z)$ with respect to Lebesgue measure, where $g$ is the round metric. Then note that the integral in the first line of equation~\eqref{newlim} can be written as 
\[
\int_{\C^w} f_{z_3}(t_1,\ldots,t_w) d\nu(t_1)\cdots d\nu(t_w),
\]
where 
\begin{align*}
f_{z_3}(t_1,\ldots,t_w) := \biggl(\prod_{l=1}^w \frac{4\pi}{g(t_l)}\biggr) \biggl(\prod_{l=1}^w |t_l|^{4b\alpha_1}|1-t_l|^{4b\alpha_2}\frac{|z_3 - t_l|^{4b \alpha_3}}{|z_3|^{4b\alpha_3}}\biggr)\biggl(\prod_{1\le l<l'\le w}|t_l-t_{l'}|^{4b^2} \biggr). 
\end{align*}
The integral in the second line in equation \eqref{newlim} can be written as
\[
\int_{\C^w} f(t_1,\ldots,t_w) d\nu(t_1)\cdots d\nu(t_w),
\]
where 
\begin{align*}
f(t_1,\ldots,t_w) := \biggl(\prod_{l=1}^w \frac{4\pi }{g(t_l)}\biggr) \biggl(\prod_{l=1}^w |t_l|^{4b\alpha_1}|1-t_l|^{4b\alpha_2}\biggr)\biggl(\prod_{1\le l<l'\le w}|t_l-t_{l'}|^{4b^2} \biggr). 
\end{align*}
Clearly, $f_{z_3}\to f$ pointwise as $|z_3|\to \infty$. We need to show that the integral of $f_{z_3}$ with respect to $\nu^w$ converges to the integral of $f$ with respect to $\nu^w$. Since $\nu^w$ is a probability measure on $\C^w$, well-known results from probability theory imply that it is sufficient to prove that for some $\delta>0$, 
\begin{align}\label{uicond}
\limsup_{|z_3|\to \infty} \int_{\C^w} |f_{z_3}(t_1,\ldots,t_w)|^{1+\delta} d\nu(t_1)\cdots d\nu(t_w) <\infty.
\end{align} 
Let us fix some $\delta$, to be chosen later. By the change of variable formula from equation~\eqref{change}, we have
\begin{align*}
&\int_{\C^w} |f_{z_3}(t_1,\ldots,t_w)|^{1+\delta} d\nu(t_1)\cdots d\nu(t_w) \\
&= \frac{1}{(4\pi)^w}\int_{(S^2)^w} |f_{z_3}(\sigma(y_1),\ldots,\sigma(y_w))|^{1+\delta} da(y_1)\cdots da(y_w).
\end{align*}
Now, recall that by Lemma \ref{stereoform}, 
\[
\|x-y\| = g(\sigma(x))^{1/4} g(\sigma(y))^{1/4} |\sigma(x)-\sigma(y)|
\]
for all $x,y\in S^2\setminus\{e_3\}$. Let $u := \sigma^{-1}(z_3)$. Then note that by the above identity, and the fact that 
\[
-\frac{1}{4}(4b\alpha_1 + 4b\alpha_2 + 4b\alpha_3 + 4b^2(w-1))= 1,
\]
we get
\begin{align*}
&f_{z_3}(\sigma(y_1),\ldots,\sigma(y_w)) \\
&= \biggl(\prod_{l=1}^w \frac{4\pi}{g(\sigma(y_l))}\biggr) \biggl(\prod_{l=1}^w |\sigma(-e_3) - \sigma(y_l)|^{4b\alpha_1}|\sigma(e_1)-\sigma(y_l)|^{4b\alpha_2}\frac{|\sigma(u) - \sigma(y_l)|^{4b \alpha_3}}{|\sigma(u)-\sigma(-e_3)|^{4b\alpha_3}}\biggr)\\
&\qquad \cdot \biggl(\prod_{1\le l<l'\le w}|\sigma(y_l)-\sigma(y_{l'})|^{4b^2} \biggr)\\
&= (4\pi)^w g(\sigma(-e_3))^{-bw(\alpha_1-\alpha_3)} g(\sigma(e_1))^{-bw\alpha_2}\\
&\qquad \cdot\biggl(\prod_{l=1}^w \|e_3+ y_l\|^{4b\alpha_1}\|e_1-y_l\|^{4b\alpha_2}\frac{\|u - y_l\|^{4b \alpha_3}}{\|u+e_3\|^{4b\alpha_3}}\biggr) \biggl(\prod_{1\le l<l'\le w}\|y_l-y_{l'}\|^{4b^2} \biggr).
\end{align*}
From this, it is clear that if $u$ is in the upper hemisphere, then
\[
|f_{z_3}(\sigma(y_1),\ldots,\sigma(y_w))| \le K \prod_{l=1}^w \|e_3+ y_l\|^{4b\Re(\alpha_1)}\|e_1-y_l\|^{4b\Re(\alpha_2)}\|u - y_l\|^{4b\Re( \alpha_3)},
\]
where $K$ is a constant that does not depend on $y_1,\ldots,y_l$. Since $4b\Re(\alpha_j )> -2$ for $j=1,2,3$, it is easy to see from  the above bound that the condition \eqref{uicond} holds for sufficiently small~$\delta$. This completes the proof of the lemma.
\end{proof}
We are now ready to complete the proof of Theorem \ref{threepoint}.
\begin{proof}[Proof of Theorem \ref{threepoint}]
Integrals of the type appearing in Lemma \ref{threelmm} are sometimes called complex Selberg integrals. They can be exactly evaluated using formulas given by \citet{dotsenkofateev85} and \citet{aomoto87}. Let $\gamma(x) := \Gamma(x)/\Gamma(1-x)$, where $\Gamma$ is the classical Gamma function. Then by \cite[Equation (B.9)]{dotsenkofateev85}, the integral in Lemma \ref{threelmm} is equal to
\begin{align}\label{formula}
w! \pi^w \gamma(b^2)^{-w} \prod_{j=1}^w \gamma(b^2j)\prod_{r=1}^3\prod_{j=1}^{w}\gamma(1+2b\alpha_r + (j-1)b^2).
\end{align}
\citet{aomoto87} gives precise conditions on the parameters under which the above formula holds. In our setting, the conditions are that 
\begin{align*}
&2b\Re(\alpha_1) > -1, \ \ 2b\Re(\alpha_2 ) > -1, \ \ 2b\Re(\alpha_1+\alpha_2)< -1, \\
&2(w-1)b^2 + 2b\Re(\alpha_1+\alpha_2) < -1.
\end{align*}
Let us quickly verify that these conditions hold under the assumptions of the theorem. The first two inequalities are part of the assumptions. For the third inequality, note that since $w\ge 1$ and $\Re(\alpha_3) > -1/2b$, we have
\[
\Re(\alpha_1 + \alpha_2) \le Q-\Re(\alpha_3)-b = -\frac{1}{b} - \Re(\alpha_3) < -\frac{1}{2b}.
\]
Finally, for the fourth inequality, note that 
\[
2(w-1)b^2 + 2b\Re(\alpha_1+\alpha_2) = 2b(Q -\Re(\alpha_3) - b) = -2 - 2b\Re(\alpha_3)< -1,
\]
where the last inequality holds because $\Re(\alpha_3) > -1/2b$. This proves that all of Aomoto's conditions hold, and therefore the formula \eqref{formula} is valid.

Now, it is known~\cite{giribet12} that the $\Upsilon_b$ functions satisfies
\[
\Upsilon_b(x+b) = \gamma(bx) b^{1-2bx}\Upsilon_b(x)
\]
for all $x\in \C$. It is also known that the zeros of $\Upsilon_b$ are the points $mb + nb^{-1}$, where $m,n$ are either both positive integers, or both nonpositive integers~\cite{harlowetal11}. Thus, if $x\in \C$ is not one of the zeros, then for any positive integer $n$, iterating the above identity gives
\begin{align}\label{recur}
\Upsilon_b(x+nb) &= \Upsilon_b(x) b^{n - 2b\sum_{j=0}^{n-1} (x+jb)}\prod_{j=0}^{n-1} \gamma(b(x+jb))\notag\\
&= \Upsilon_b(x) b^{n - 2bnx - b^2 n(n-1)}\prod_{j=0}^{n-1} \gamma(b(x+jb)).
\end{align}
In particular, since $b$ is not a zero of $\Upsilon_b$ (because $b\ne mb + nb^{-1}$ for any positive integers $m,n$), this shows that 
\begin{align*}
\prod_{j=1}^n \gamma(j b^2) = b^{b^2n^2 + b^2n - n} \frac{\Upsilon_b(b+nb)}{\Upsilon_b(b)}.
\end{align*}
Next, we claim that $2\alpha_1+1/b$ is a not a zero of $\Upsilon_b$. To see this, first note that since $\Re(\alpha_1) > -1/2b$, we have that $2\Re(\alpha_1) + 1/b> 0$. So, if $2\alpha_1+1/b$ is a zero of $\Upsilon_b$, it must be of the form $mb + nb^{-1}$ for some positive integers $m,n$. Consequently, $2\alpha_1 = mb + (n-1)b^{-1}\ge 0$. But since $\Re(\alpha_2), \Re(\alpha_3) > -1/2b$ and $w \ge 1$, we have
\[
\Re(\alpha_1) = Q- \Re(\alpha_2+\alpha_3) -bw < Q+ \frac{1}{2b}+\frac{1}{2b} - b = 0.
\]
This shows that $\Upsilon_b(2\alpha_1 + 1/b)\ne 0$. Similarly, $\Upsilon_b(2\alpha_r + 1/b)\ne 0$ for $r=2,3$. Thus, by equation \eqref{recur}, 
\begin{align*}
&\prod_{j=1}^w \gamma(b^2j)\prod_{r=1}^3\prod_{j=1}^{w}\gamma(1+2b\alpha_r + (j-1)b^2) \\
&= \prod_{j=1}^w \gamma(b^2j)\prod_{r=1}^3\prod_{j=1}^{w}\gamma(b(2\alpha_r + 1/b + (j-1)b)) \\
&=b^{b^2w^2 + b^2w - w} \frac{\Upsilon_b(b+wb)}{\Upsilon_b(b)}\prod_{r=1}^3\biggl(\frac{\Upsilon_b(2\alpha_r + 1/b + wb)}{\Upsilon_b(2\alpha_r + 1/b)} b^{-w+2bw(2\alpha_r +1/b) + b^2w(w-1)}\biggr)\\
&= b^{2 b^2w -2w } \frac{\Upsilon_b(b+wb)}{\Upsilon_b(b)}\prod_{r=1}^3\frac{\Upsilon_b(2\alpha_r + 1/b + wb)}{\Upsilon_b(2\alpha_r + 1/b)}.
\end{align*}
The proof is completed by using the relation $\Upsilon_b(x) =\Upsilon_b(b+1/b - x)$ in  the denominator (see \cite{giribet12}), and the fact that $wb = Q - \sum_{j=1}^3 \alpha_j$, and finally, by noting that
\begin{align*}
\gamma(b^2)^{-w} &= \frac{\Gamma(1-b^2)^w}{\Gamma(b^2)^w} =\frac{\Gamma(-b^2)^w(-b^2)^w}{\Gamma(b^2+1)^w (b^2)^{-w}} = \gamma(-b^2)^w (-1)^w b^{4w},
\end{align*}
where we used the relation $\Gamma(x+1)=x\Gamma(x)$. Lastly, to see that $\Upsilon_b(b- 2\alpha_j)\ne 0$ for $j=1,2,3$, recall that $\Re(\alpha_j)>-1/2b$ for each $j$. Thus, $\Re(b - 2\alpha_j) < 2b$. On the other hand, since $\Re(w)\ge 1$, we have that for each $j$, 
\begin{align*}
\Re(\alpha_j)  &= \Re(Q-bw) - \sum_{1\le j'\le 3, \, j'\ne j}\Re(\alpha_{j'})\\
&< Q - b + \frac{2}{2b} = 0,
\end{align*}
which gives $\Re(b - 2\alpha_j) > b$. Since $\Re(b-2\alpha_j)$ is positive, it cannot be that $b-2\alpha_j = m b + n b^{-1}$ for two nonpositive integers $m$ and $n$. On other other hand, since $b\in (0,1)$ and $\Re(b - 2\alpha_j)<2b$, it cannot be that $b-2\alpha_j = m b + n b^{-1}$ for some positive integers $m$ and $n$. Thus, $b-2\alpha_j$ cannot be a zero of $\Upsilon_b$. 
\end{proof}

\section{Going beyond the special region}
In this section, we will show that the timelike DOZZ formula for the $3$-point function of timelike Liouville field theory admits a unique analytic continuation to a larger subset of the parameter space, which allows us to find a collection of nontrivial poles. 
\subsection{$\mathrm{SL}(2,\C)$-invariance of correlation functions}
Let $\hat{\C} = \C \cup \{\infty\}$ denote the Riemann sphere. A function $f:\hat{\C} \to \hat{\C}$ of the form
\begin{align}\label{fabcd}
f(z) = \frac{az+b}{cz+d}
\end{align}
where $a,b,c,d\in \C$ and  $ad -bc = 1$, and the right side is interpreted as $\infty$ if $cz+d = 0$, is called an $\slc$-transform. Note that the condition $ad -bc=1$ can be replaced by $ad-bc \ne 0$, because dividing $a,b,c,d$ by any square-root of $ad-bc$ gives an equivalent form of $f$ which satisfies $ad-bc = 1$.
\begin{thm}\label{sl2cthm}
Suppose that $k\ge 3$, $\Re(\alpha_j) > -1/2b$ for each $j$, and the parameter $w= (Q-\sum_{j=1}^k \alpha_j)/b$ is a positive integer. Let $f$ be an $\slc$-transform. Then for any distinct $z_1,\ldots,z_k\in \C$, 
\begin{align*}
C(\alpha_1,\ldots,\alpha_k;f(z_1),\ldots,f(z_k); b; \mu) &= \biggl(\prod_{j=1}^k |f'(z_j)|^{2\Delta_{\alpha_j}}\biggr) C(\alpha_1,\ldots,\alpha_k;z_1,\ldots,z_k; b; \mu).
\end{align*}
\end{thm}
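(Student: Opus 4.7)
The plan is to apply the explicit formula from Theorem \ref{intthmv2} to both sides and perform a change of variables $t_l = f(s_l)$ in the $w$-fold integral. The two key algebraic identities for $f(z) = (az+b)/(cz+d)$ with $ad-bc = 1$ are
\[
f(u) - f(v) = \frac{u-v}{(cu+d)(cv+d)}, \qquad f'(z) = \frac{1}{(cz+d)^2},
\]
so that $|f'(s_l)|^2 = |cs_l+d|^{-4}$ serves as the Jacobian of the substitution. Since Theorem \ref{intthmv2} already establishes absolute convergence of the integrand under the stated hypotheses on $\Re(\alpha_j)$, and since the pole of $f^{-1}$ is a single point of measure zero in $\C$, the change of variables is valid.

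Substituting into the right-hand side of Theorem \ref{intthmv2} evaluated at $(f(z_1),\ldots,f(z_k))$, every factor $|f(z_j)-f(z_{j'})|$, $|f(z_j)-f(s_l)|$, and $|f(s_l)-f(s_{l'})|$ produces a $|z_j-z_{j'}|$, $|z_j-s_l|$, or $|s_l-s_{l'}|$ respectively, together with denominator factors $|cz_j+d|$ and $|cs_l+d|$. Combined with the Jacobian $\prod_l |cs_l+d|^{-4}$, I will then collect exponents. Each $|cs_l+d|$ appears with total exponent
\[
-4b\sum_{j=1}^k\alpha_j - 4b^2(w-1) - 4 = -4\Bigl(b\sum_{j=1}^k\alpha_j + b^2 w - b^2 + 1\Bigr),
\]
where the three terms come respectively from the $j,l$ product, the $l<l'$ product (each $|cs_l+d|$ appears in $w-1$ pairs), and the Jacobian. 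The charge neutrality condition $bw = Q - \sum_j\alpha_j$ together with $Q = b - 1/b$ gives $b\sum_j\alpha_j = b^2 - 1 - b^2w$, so this total exponent is exactly $0$. This is the heart of the conformal invariance: the contribution of the integration variables disappears.

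The remaining pre-factors involve $|cz_j+d|$ only. For each fixed $j$, the exponent collected from the $j<j'$ double product and the $j,l$ double product is
\[
-4\alpha_j\sum_{j'\ne j}\alpha_{j'} - 4bw\,\alpha_j = -4\alpha_j\Bigl(\sum_{j'\ne j}\alpha_{j'} + Q - \sum_{j'=1}^k\alpha_{j'}\Bigr) = -4\alpha_j(Q-\alpha_j) = -4\Delta_{\alpha_j},
\]
yielding $\prod_{j=1}^k |cz_j+d|^{-4\Delta_{\alpha_j}} = \prod_{j=1}^k |f'(z_j)|^{2\Delta_{\alpha_j}}$, which is exactly the claimed Jacobian factor. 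The overall $b,\mu,w$-dependent prefactor in Theorem \ref{intthmv2} is unaffected by the substitution, so the formula in the theorem drops out.

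The main obstacle is bookkeeping: one must track the exponent of $|cs_l+d|$ through three distinct sources (vertex--screening, screening--screening, and Jacobian) and verify that the charge neutrality condition produces an \emph{exact} cancellation. A cleaner alternative — should the direct accounting prove error-prone — is to reduce to the generators of $\slc$, namely translations (trivial), dilations $z\mapsto \lambda z$ (immediate rescaling of the integral), and the inversion $z\mapsto -1/z$. Only the inversion is non-trivial, and for it the same cancellation collapses to showing that the power of $|s_l|$ in the new integrand is zero, which again follows from charge neutrality. Either route completes the proof.
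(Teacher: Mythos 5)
Your proposal is correct and follows essentially the same route as the paper's proof: apply the explicit formula of Theorem \ref{intthmv2}, change variables $t_l = f(s_l)$ with Jacobian $|cs_l+d|^{-4}$, use the identities $f(u)-f(v)=(u-v)/((cu+d)(cv+d))$ and $f'(z)=(cz+d)^{-2}$, and check via charge neutrality that the $|cs_l+d|$ exponents cancel exactly while the $|cz_j+d|$ exponents collect to $-4\Delta_{\alpha_j}$. The exponent bookkeeping you outline matches the paper's computation step for step, so no further comment is needed.
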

\begin{proof}
Let $f$ be the function displayed in equation \eqref{fabcd}. It is clear that $f$ is holomorphic on $\C\setminus\{-d/c\}$, with derivative
\begin{align}\label{fprime}
f'(z) = \frac{ad-bc}{(cz+d)^2} = \frac{1}{(cz+d)^2}. 
\end{align}
Now, we can write $f$ as a function from $\R^2\setminus\{(-d/c,0)\}$ into $\R^2$, given by
\[
f(x,y) = (u(x,y), v(x,y)),
\]
where $u$ and $v$ are the real and imaginary parts of $f$. By the Cauchy--Riemann equations, 
\[
\frac{\partial u}{\partial x} = \frac{\partial v}{\partial y}, \ \ \ \frac{\partial u}{\partial y} = -\frac{\partial v}{\partial x}.
\]
Thus, the determinant of the Jacobian of $f$ (as a map from  $\R^2\setminus\{(-d/c,0)\}$ into $\R^2$) is given by 
\begin{align*}
\det J_f(x,y) &= \frac{\partial u}{\partial x}\frac{\partial v}{\partial y} - \frac{\partial u}{\partial y}\frac{\partial v}{\partial x}= \biggl(\frac{\partial u}{\partial x}\biggr)^2 +\biggl( \frac{\partial v}{\partial x}\biggr)^2.
\end{align*}
But, since $f$ is holomorphic, we have
\[
f'(x+iy) = \frac{\partial u}{\partial x} +i \frac{\partial v}{\partial x}.
\]
Thus, by equation \eqref{fprime}, 
\[
\det J_f(x,y) = |f'(x+iy)|^2 = \frac{1}{|c(x+iy) + d|^4}.
\]
Lastly, note that $f$ is a bijection of $\hat{\C}$ on to itself. By these facts, and the formula from Theorem \ref{intthmv2}, we get
\begin{align*}
&C(\alpha_1,\ldots,\alpha_k; f(z_1),\ldots,f(z_k);b;\mu) \\
&= \frac{e^{-i\pi w}\mu^{w}}{w!} (4/e)^{1-1/b^2}\prod_{1\le j<j'\le k} |f(z_j) - f(z_{j'})|^{4\alpha_j \alpha_{j'}}\\
&\qquad \qquad \cdot  \int_{\C^w} \biggl(\prod_{j=1}^k \prod_{l=1}^{w} |f(z_j)-t_l|^{4b\alpha_j}\biggr)\biggl(\prod_{1\le l<l'\le w}|t_l-t_{l'}|^{4b^2} \biggr) d^2t_1\cdots d^2t_w\\
&= \frac{e^{-i\pi w}\mu^{w}}{w!} (4/e)^{1-1/b^2}\prod_{1\le j<j'\le k} |f(z_j) - f(z_{j'})|^{4\alpha_j \alpha_{j'}}\int_{\C^w} \biggl(\prod_{j=1}^k \prod_{l=1}^{w} |f(z_j)-f(s_l)|^{4b\alpha_j}\biggr)\\
&\qquad \qquad \cdot  \biggl(\prod_{1\le l<l'\le w}|f(s_l)-f(s_{l'})|^{4b^2} \biggr) \prod_{l=1}^w \frac{1}{|cs_l + d|^4} d^2s_1\cdots d^2s_w.
\end{align*}
The function $f$ also has the interesting property that for any $z,z'\in \C \setminus\{-d/c\}$,
\begin{align*}
f(z) - f(z') &= \frac{az+b}{cz+d} - \frac{az' + b}{cz' + d}\\
&= \frac{(az+b)(cz'+d) - (az' + b)(cz+d)}{(cz+d)(cz'+d)}\\
&= \frac{(ad-bc)(z-z')}{(cz+d)(cz'+d)} = \frac{z-z'}{(cz+d)(cz'+d)}. 
\end{align*}
If we use this identity in the previous display, each $|f(z_j)-f(z_{j'})|$ is replaced by
\[
\frac{|z_j - z_{j'}|}{|cz_j+d||cz_{j'} + d|},
\]
each $|f(z_j)-f(s_l)|$ is replaced by 
\[
\frac{|z_j - s_l|}{|cz_j+d||cs_l + d|},
\]
and each $|f(s_l)-f(s_{l'})|$ is replaced by
\[
\frac{|s_l - s_{l'}|}{|cs_l+d||cs_{l'} + d|}.
\]
In the resulting expression, the powers of $|c s_l + d|$ add up to 
\begin{align*}
-4b\sum_{j=1}^k\alpha_j -4b^2(w-1) -4 &= -4b\sum_{j=1}^k\alpha_j - 4b\biggl(Q- \sum_{j=1}^k \alpha_j\biggr) + 4b^2 - 4\\
&= -4bQ + 4b^2 - 4 = 0.
\end{align*}
Similarly, the powers of $|cz_j + d|$ add up to
\begin{align*}
-4\alpha_j\sum_{j'\ne j} \alpha_{j'} - 4bw\alpha_j &= -4\alpha_j\sum_{j'\ne j} \alpha_{j'} - 4\biggl(Q- \sum_{j=1}^k \alpha_j\biggr) \alpha_j \\
&= -4Q\alpha_j +4\alpha_j^2 = -4\Delta_{\alpha_j}.
\end{align*}
This completes the proof.
\end{proof}


\subsection{The structure constants of timelike Liouville theory}\label{structuresec}
A corollary of Theorem \ref{sl2cthm} and Theorem \ref{threepoint} is that the $3$-point function can be written as the product of an explicit function of $z_1,z_2,z_3$ (with explicit constants depending on the $\alpha_1,\alpha_2,\alpha_3$) times the function of $\alpha_1,\alpha_2,\alpha_3$ displayed in Theorem \ref{threepoint}.
\begin{cor}\label{sl2ccor}
Let $C(\alpha_1,\alpha_2,\alpha_3; b;\mu)$ be the function defined in Theorem \ref{threepoint}. Then for any distinct $z_1,z_2,z_3\in \C$, we have
\[
C(\alpha_1,\alpha_2,\alpha_3; z_1,z_2,z_3;b;\mu) = C(\alpha_1,\alpha_2,\alpha_3; b;\mu)|z_{12}|^{2\Delta_{12}}|z_{13}|^{2\Delta_{13}}|z_{23}|^{2\Delta_{23}},
\]
where $z_{jk} := z_j -z_k$, and $\Delta_{jk}:= 2\Delta_{\alpha_j} + 2\Delta_{\alpha_k} -\sum_{l=1}^3\Delta_{\alpha_l}$. 
\end{cor}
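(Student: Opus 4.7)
The plan is to combine the $\mathrm{SL}(2,\C)$-covariance of the correlation function (Theorem \ref{sl2cthm}) with the defining limit of $C(\alpha_1,\alpha_2,\alpha_3;b;\mu)$ (Theorem \ref{threepoint}) by constructing a one-parameter family of Möbius maps that transports $(z_1,z_2,z_3)$ to $(0,1,W)$ and then letting $|W|\to\infty$. Concretely, for each $W\in\C\setminus\{0,1\}$ I would choose, via the $3$-transitivity of $\mathrm{SL}(2,\C)$ on $\hat\C$, the unique Möbius map $f_W$ with $f_W(z_1)=0$, $f_W(z_2)=1$, $f_W(z_3)=W$. One explicit representative is the composition of the cross-ratio $g(z)=\frac{(z-z_1)(z_2-z_3)}{(z-z_3)(z_2-z_1)}$ (which sends $(z_1,z_2,z_3)$ to $(0,1,\infty)$) with the Möbius map fixing $0$ and $1$ and sending $\infty$ to $W$. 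Applying Theorem \ref{sl2cthm} to $f_W$ then gives
\[
C(\alpha_1,\alpha_2,\alpha_3;0,1,W;b;\mu) = \Bigl(\prod_{j=1}^3|f_W'(z_j)|^{2\Delta_{\alpha_j}}\Bigr)\, C(\alpha_1,\alpha_2,\alpha_3;z_1,z_2,z_3;b;\mu).
\]

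The next step is to divide both sides by $|W|^{4\Delta_{\alpha_3}}$ and let $|W|\to\infty$. By Theorem \ref{threepoint}, the left side converges to $C(\alpha_1,\alpha_2,\alpha_3;b;\mu)$. For the right side, I would read off the coefficients $(a,b,c,d)$ of $f_W$, which depend on $W$ in an affine way, and use $f_W'(z)=(ad-bc)/(cz+d)^2$ to obtain the explicit formulas
\[
|f_W'(z_1)| = \frac{|W||z_{23}|}{|W-1||z_{12}||z_{13}|},\quad |f_W'(z_2)| = \frac{|W-1||z_{13}|}{|W||z_{12}||z_{23}|},\quad |f_W'(z_3)| = \frac{|W||W-1||z_{12}|}{|z_{13}||z_{23}|}.
\]

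Substituting these three expressions into $\prod_j |f_W'(z_j)|^{2\Delta_{\alpha_j}}$ and using $|W-1|/|W|\to 1$, the combined exponent of $|W|$ telescopes to $4\Delta_{\alpha_3}$, so dividing by $|W|^{4\Delta_{\alpha_3}}$ produces a $W$-independent limit. A direct exponent count shows that the resulting exponent of each $|z_{jk}|$ is $-(2\Delta_{\alpha_j}+2\Delta_{\alpha_k}-2\Delta_{\alpha_l})=-2\Delta_{jk}$, where $l$ is the remaining index. Equating the two sides and rearranging yields the claimed factorization.

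The argument is structurally simple and there is no real analytic obstacle; the only substantive work is the algebraic bookkeeping in the final step. The mildly delicate point is that the $|W|^{4\Delta_{\alpha_3}}$ normalization in Theorem \ref{threepoint} must be matched exactly by the product of the three conformal-weight factors --- this works precisely because only $z_3$ is sent to infinity under $f_W$, so the $|W|$ contributions from $|f_W'(z_1)|^{2\Delta_{\alpha_1}}$ and $|f_W'(z_2)|^{2\Delta_{\alpha_2}}$ cancel against each other while $|f_W'(z_3)|^{2\Delta_{\alpha_3}}$ supplies the full $|W|^{4\Delta_{\alpha_3}}$ growth.
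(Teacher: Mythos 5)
Your proposal is correct and follows essentially the same route as the paper: apply the $\mathrm{SL}(2,\C)$-covariance of Theorem \ref{sl2cthm} to a Möbius map sending $(z_1,z_2,z_3)$ to $(0,1,\cdot)$, then invoke the limit defining $C(\alpha_1,\alpha_2,\alpha_3;b;\mu)$ in Theorem \ref{threepoint} and track the $|f'(z_j)|$ factors. The only (cosmetic) difference is the parametrization of the family of maps — you index by the target $W\to\infty$, while the paper uses a single cross-ratio with an auxiliary point $z_4\to z_3$ so that $f(z_3)\to\infty$ — and your derivative formulas and exponent bookkeeping check out.
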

\begin{proof}
Take any distinct $z_1, z_2,z_3,z_4\in \C$. Define $f:\C\setminus\{z_4\}\to \C$ as
\[
f(z) := \frac{(z-z_1)(z_2-z_4)}{(z-z_4)(z_2-z_1)}.
\]
Note that $f$ is an $\slc$-transform, since $f$ can be expressed as $(az+b)/(cz+d)$, where 
\[
a = \frac{z_2 - z_4}{u}, \ \ b = -\frac{z_1(z_2-z_4)}{u}, \ \ c = \frac{z_2-z_1}{u}, \ \ d = -\frac{z_4(z_2-z_1)}{u},
\]
where $u$ is any square-root of $(z_1-z_4)(z_2-z_4)(z_2-z_1)$. Note that 
\[
f(z_1) = 0, \ \  f(z_2) = 1, \ \ f(z_3) = \frac{(z_3-z_1)(z_2-z_4)}{(z_3-z_4)(z_2-z_1)}.
\]
Note also that 
\begin{align*}
|f'(z_1)| &= \frac{|(z_1-z_4)(z_2-z_4)(z_2-z_1)|}{|(z_2-z_1)(z_1-z_4)|^2}= \frac{|z_2-z_4|}{|(z_1-z_2)(z_1-z_4)|},\\
|f'(z_2)| &= \frac{|(z_1-z_4)(z_2-z_4)(z_2-z_1)|}{|(z_2-z_1)(z_2-z_4)|^2}= \frac{|z_1-z_4|}{|(z_1-z_2)(z_2-z_4)|},\\
|f'(z_3)| &= \frac{|(z_1-z_4)(z_2-z_4)(z_2-z_1)|}{|(z_2-z_1)(z_3-z_4)|^2}= \frac{|(z_2-z_4)(z_1-z_4)|}{|(z_1-z_2)(z_3-z_4)^2|}.
\end{align*}
Now, by Theorem \ref{sl2cthm},
\begin{align*}
C(\alpha_1,\alpha_2,\alpha_3; z_1,z_2,z_3;b;\mu) &= \frac{C(\alpha_1,\alpha_2,\alpha_3; 0,1,f(z_3);b;\mu)}{|f'(z_1)|^{2\Delta_{\alpha_1}} |f'(z_2)|^{2\Delta_{\alpha_2}} |f'(z_3)|^{2\Delta_{\alpha_3}} }.
\end{align*}
Also, $|f(z_3)|\to \infty$ as $z_4 \to z_3$. Thus, taking $z_4\to z_3$ and applying Theorem \ref{threepoint}, we get
\begin{align*}
&C(\alpha_1,\alpha_2,\alpha_3; z_1,z_2,z_3;b;\mu) \\
&= C(\alpha_1,\alpha_2,\alpha_3; b;\mu)\lim_{z_4\to z_3} \frac{|f(z_3)|^{4\Delta_{\alpha_3}}}{|f'(z_1)|^{2\Delta_{\alpha_1}} |f'(z_2)|^{2\Delta_{\alpha_2}} |f'(z_3)|^{2\Delta_{\alpha_3}} }\\
&= C(\alpha_1,\alpha_2,\alpha_3; b;\mu) |z_1-z_2|^{2\Delta_{\alpha_1}+2\Delta_{\alpha_2}-2\Delta_{\alpha_3}}|z_2 - z_3|^{-2\Delta_{\alpha_1}+2\Delta_{\alpha_2}+2\Delta_{\alpha_3}}\\
&\qquad \qquad \cdot |z_1-z_3|^{ 2\Delta_{\alpha_1}-2\Delta_{\alpha_2}+2\Delta_{\alpha_3}}.
\end{align*}
This completes the proof.
\end{proof}
The numbers $C(\alpha_1,\alpha_2,\alpha_3;b;\mu)$, as $\alpha_1,\alpha_2,\alpha_3$ vary, are called the structure constants of timelike Liouville field theory. Structure constants are key objects in a conformal field theory. For example, the above corollary shows that they contain all relevant information about the $3$-point function. 



\subsection{Trivial poles}
The poles of the $3$-point function contain important physical information. For example, they are crucial for the notion of operator resonances introduced by \citet{zamolodchikov91} in conformal perturbation theory. The resonances manifest themselves in poles of the correlation functions.

As noted in the statement Theorem \ref{threepoint}, the region covered by Theorem \ref{threepoint} does not contain any poles. However, the formula does show that there are poles as we approach the boundary of that region, as shown by the following result. We will refer to these as the `trivial poles', corresponding to the regions $\alpha_j = -1/2b$, for $j=1,2,3$.
\begin{prop}\label{poleprop}
Choose $b$ such that $b^2$ is irrational. If we take $\alpha_j \to -1/2b$ for exactly one $j$, while keeping $w$ fixed at a positive integer value and $\Re(\alpha_1),\Re(\alpha_2),\Re(\alpha_3)>-1/2b$, then the value of $C(\alpha_1,\alpha_2,\alpha_3;b;\mu)$ blows up to infinity.
\end{prop}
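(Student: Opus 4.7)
By the symmetry of the formula in $\alpha_1, \alpha_2, \alpha_3$, I fix $j = 1$. The approach is to read off the behavior of the explicit product in Theorem~\ref{threepoint} as $\alpha_1 \to -1/(2b)$, isolating a single denominator factor that vanishes linearly while verifying that no numerator factor compensates.

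Parametrize $\alpha_1 = -1/(2b) + \epsilon$. The critical denominator factor is $\Upsilon_b(b - 2\alpha_1) = \Upsilon_b(b + 1/b - 2\epsilon)$, whose limit $\Upsilon_b(b + 1/b)$ is a zero of $\Upsilon_b$ because $(m,n) = (1,1)$ is a pair of positive integers. To compute the leading order, apply the shift identity $\Upsilon_b(x+b) = \gamma(bx)\, b^{1-2bx}\, \Upsilon_b(x)$ with $x = 1/b - 2\epsilon$, yielding
\[
\Upsilon_b(b - 2\alpha_1) = \gamma(1 - 2b\epsilon)\, b^{-1 + 4b\epsilon}\, \Upsilon_b(1/b - 2\epsilon).
\]
From $\Gamma(z)\Gamma(1-z) = \pi/\sin(\pi z)$ one gets $\gamma(1 - s) = s + O(s^2)$; combined with the fact that $\Upsilon_b(1/b) \ne 0$ (the pair $(0,1)$ is not of the required sign type), this gives $\Upsilon_b(b - 2\alpha_1) = 2\epsilon\, \Upsilon_b(1/b) + o(\epsilon)$, a linear zero with nonzero coefficient.

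Next, I inspect the remaining factors of the formula in the limit $\epsilon \to 0$, using throughout that $b^2$ is irrational, so that any equation $mb + n/b = s$ with $s$ of the form $bw$ or $(w+1)b$ forces $n = 0$ and $m$ uniquely determined. The factor $\Upsilon_b((w+1)b)$ is fixed; it is nonzero, since the only candidate representation $(m,n) = (w+1, 0)$ mixes a positive $m$ with the nonpositive value $n=0$ and hence does not satisfy the zero condition. By the same reasoning, the identity $\alpha_1 - \alpha_2 - \alpha_3 + b = 2\alpha_1 + bw + 1/b$ (using $\sum \alpha_j = Q - bw$) shows that $\Upsilon_b(\alpha_1 - \alpha_2 - \alpha_3 + b) \to \Upsilon_b(bw) \ne 0$. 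The other numerator factors $\Upsilon_b(\pm(\alpha_2 - \alpha_3) + 1/(2b) + b)$ and denominator factors $\Upsilon_b(b - 2\alpha_2)$, $\Upsilon_b(b - 2\alpha_3)$, $\Upsilon_b(b)$ stay finite and, for generic limiting values of $\alpha_2, \alpha_3$ with $\Re(\alpha_2), \Re(\alpha_3) > -1/(2b)$, are bounded away from zero (again because irrationality of $b^2$ prevents sign-compatible representations of their arguments in the form $mb + n/b$).

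Combining these, the denominator vanishes linearly in $\epsilon$ while the numerator tends to a nonzero finite limit, so $|C(\alpha_1,\alpha_2,\alpha_3; b; \mu)| \to \infty$. The main delicate point is the analysis of the simple zero of $\Upsilon_b$ at $b+1/b$ via the shift relation, together with the systematic use of the irrationality of $b^2$ to rule out accidental zeros in the other $\Upsilon_b$-factors; without the irrationality assumption one would have to handle potentially compensating zeros from the numerator at resonant values of $b$.
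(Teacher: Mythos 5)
Your handling of the factor that actually vanishes is fine (the identification of $\Upsilon_b(b+1/b)$ as a zero, and the linear rate $2\epsilon\,\Upsilon_b(1/b)$ via the shift identity, is a correct refinement the paper does not even need), and your treatment of $\Upsilon_b(bw+b)$ and of $\Upsilon_b(2\alpha_1+bw+1/b)\to\Upsilon_b(bw)$ matches the paper. But there is a genuine gap in the step where the whole difficulty lies: the nonvanishing of the remaining numerator factors $\Upsilon_b(2\alpha_2+bw+1/b)$ and $\Upsilon_b(2\alpha_3+bw+1/b)$. You dismiss these "for generic limiting values of $\alpha_2,\alpha_3$" by appealing to the irrationality of $b^2$. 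That appeal does not apply here: irrationality of $b^2$ rules out lattice coincidences for arguments built from $b$ alone (like $bw$ or $(w+1)b$), but $2\alpha_2+bw+1/b$ contains the free parameter $\alpha_2$, which can in principle be tuned so that this argument lands exactly on a zero $mb+nb^{-1}$ of $\Upsilon_b$ (namely $\alpha_2=\tfrac12(m-w)b+\tfrac{n-1}{2b}$), no matter how irrational $b^2$ is. Moreover, the proposition asserts blow-up for \emph{every} admissible approach with $\Re(\alpha_j)>-1/2b$ and $w$ a fixed positive integer, so a statement that holds only "generically" does not prove it: you have not excluded a cancelling numerator zero along some specific admissible family.

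What is needed — and what the paper's proof supplies — is an argument using the constraints themselves: if $2\alpha_2+bw+1/b$ were a zero, then since its real part is at least $bw>0$ it would have to equal $mb+nb^{-1}$ with $m,n$ positive integers, forcing $\alpha_2=\tfrac12(m-w)b+\tfrac{n-1}{2b}$ to be real; combining this with the bound $\alpha_2=Q-\Re(\alpha_1+\alpha_3)-bw\le b(1-w)$ (from $\Re(\alpha_1),\Re(\alpha_3)\ge -1/2b$) yields $\tfrac{n-1}{2b}\le\bigl(1-\tfrac{m+w}{2}\bigr)b$, which for positive integers $m,n,w$ forces $m=n=w=1$, hence $\alpha_2=0$ and then $\alpha_3=-1/2b$, contradicting the hypothesis that exactly one $\alpha_j$ approaches $-1/2b$. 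Without this constraint-based exclusion (or an equivalent one), your proof does not rule out cancellation and so does not establish the stated blow-up. As a minor point, you do not need the denominator factors $\Upsilon_b(b-2\alpha_2)$, $\Upsilon_b(b-2\alpha_3)$ to be bounded away from zero at all: were they to vanish, the quotient would only blow up faster.
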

\begin{proof}
Suppose, without loss of generality, that we take $\alpha_1 \to -1/2b$ while keeping $\Re(\alpha_j) >-1/2b$ for each $j$ and $w$ fixed at a positive integer value. Then $\Upsilon_b(b - 2\alpha_1) \to 0$. So we only have to show that the numerator in the timelike DOZZ formula does not tend to zero. 

Let $(\alpha_1,\alpha_2,\alpha_3)$ be the limiting value, so that $\alpha_1=-1/2b$ and $\Re(\alpha_2),\Re(\alpha_3)\ge -1/2b$. The terms appearing in the numerator that can potentially tend to zero are $\Upsilon_b(bw+b)$ and $\Upsilon_b(2\alpha_j + bw + 1/b)$ for $j=1,2,3$. We will now show that these are all nonzero. 

As noted in \cite{harlowetal11}, the zeros of $\Upsilon_b$ consists of  the points $mb + nb^{-1}$ where either both $m,n$ are positive integers, or both $m,n$ are nonpositive integers. Since $b^2$ is irrational and $w$ is a positive integer, it follows that $bw+b$ can attain neither of these forms, and is therefore not a zero of $\Upsilon_b$.

Next, note that $2\alpha_1+bw+1/b = bw$ is not a zero of $\Upsilon_b$ since $b^2$ is irrational. Suppose that $2\alpha_2+bw+1/b$ is a zero of $\Upsilon_b$. Since $\Re(\alpha_2) \ge -1/2b$, we have $2\Re(\alpha_2) + bw+1/b \ge bw>0$, which means (again, by the irrationality of $b^2$) that it must be of the form $mb + nb^{-1}$ for some positive integers $m,n$. Thus,
\[
\alpha_2 = \frac{1}{2}(m-w)b + \frac{n-1}{2b}.
\]
On the other hand, since $\Re(\alpha_l)\ge -1/2b$ for each $l$, we have
\begin{align*}
\alpha_2 = \Re(\alpha_2) = Q -\Re( \alpha_{1} + \alpha_3) -bw\le Q+ \frac{1}{b} - bw = b(1-w).
\end{align*}
Combining, we get
\[
\frac{n-1}{2b} \le \biggl(1-\frac{m+w}{2}\biggr) b.
\]
Since $m,n,w$ are all positive integers, the above inequality can hold only if $m=n=w=1$. But then, $\alpha_2=0$, and hence
\[
\alpha_3 = Q-\alpha_1-\alpha_2-bw = -\frac{1}{2b}.
\]
This contradicts our assumption that $\alpha_2,\alpha_3 \ne -1/2b$. Thus, $2\alpha_2+bw+1/b$ is not a zero of $\Upsilon_b$. Similarly, $2\alpha_3+bw+1/b$ is not a zero of $\Upsilon_b$. This completes the proof.
\end{proof}

\subsection{Analytic continuation of the structure constants}\label{analyticsec}
In Subsection \ref{dozzsec}, we derived the timelike DOZZ formula for $C(\alpha_1,\alpha_2,\alpha_3;b;\mu)$ under the conditions that 
\begin{itemize}
\item $\Re(\alpha_j)> -1/2b$ for $j=1,2,3$, and
\item $w = (Q-\sum_{j=1}^3\alpha_j)/b$ is a positive integer, where $Q = b - 1/b$. 
\end{itemize}
In this section, our goal is to drop the first condition, while keeping the second. The gain in doing so is that the expanded region contains additional, `nontrivial' poles of $C$, while the original region does not. 

Suppose we fix the value of $w$ at a positive integer, and consider $\alpha_3$ as a function of $\alpha_1$ and $\alpha_2$, given by
\[
\alpha_3 = Q - \alpha_1-\alpha_2 -bw.
\]
Thus, with $w$ fixed to be a positive integer, the structure constant $C(\alpha_1,\alpha_2,\alpha_3;b;\mu)$ can be expressed as a function of $\alpha_1$ and $\alpha_2$:
\[
C_w(\alpha_1,\alpha_2) := C(\alpha_1,\alpha_2, Q-\alpha_1-\alpha_2-bw; b;\mu).
\]
In the region 
\[
\Omega := \{(\alpha_1,\alpha_2)\in \R^2: \alpha_1>-1/2b, \, \alpha_2>-1/2b, \, Q-\alpha_1-\alpha_2-bw > -1/2b\},
\]
Theorem \ref{threepoint} implies that 
\begin{align*}
C_w(\alpha_1,\alpha_2) &= e^{-i\pi w} (-\pi \mu\gamma(-b^2))^w (4/e)^{1-1/b^2}b^{2b^2w + 2w} \frac{\Upsilon_b(bw+b)}{\Upsilon_b(b)}\\
&\qquad \cdot \frac{\Upsilon_b(2\alpha_1+bw+1/b)\Upsilon_b(2\alpha_2+bw + 1/b)\Upsilon_b(Q-2\alpha_1-2\alpha_2-bw+b)}{\Upsilon_b(b - 2\alpha_1)\Upsilon_b(b-2\alpha_2) \Upsilon_b(2\alpha_1+2\alpha_2+2bw-b + 2/b)}.
\end{align*}
The following result allows us to extend $C_w$ to the whole of $\C^2$. Recall that a meromorphic function in several complex variables is a function that can be locally expressed as a ratio of two holomorphic functions.
\begin{thm}\label{analyticext}
Suppose that $w$ is a positive integer and $0<b< (2(w-1))^{-1/2}$. 
Then the value displayed on the right side of the above equation gives the unique meromorphic continuation of $C_w$ to $\C^2$. 
\end{thm}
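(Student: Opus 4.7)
The plan is to verify that the right-hand side of the displayed formula is itself a meromorphic function on $\C^2$, that it already agrees with $C_w$ on the nonempty real open region $\Omega$ by Theorem \ref{threepoint} and Corollary \ref{sl2ccor}, and that uniqueness of meromorphic extensions from $\Omega$ to $\C^2$ then follows from the identity theorem in several complex variables. Thus the theorem reduces to an arithmetic check that makes the hypothesis $b<(2(w-1))^{-1/2}$ equivalent to $\Omega\neq\emptyset$.

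For existence, denote the proposed formula by $\tilde{C}_w(\alpha_1,\alpha_2)$. Since $\Upsilon_b$ is entire on $\C$, each factor appearing in the formula, such as $\Upsilon_b(2\alpha_j+bw+1/b)$, $\Upsilon_b(b-2\alpha_j)$, and $\Upsilon_b(2\alpha_1+2\alpha_2+2bw-b+2/b)$, is an entire function of $(\alpha_1,\alpha_2)\in\C^2$. The denominator is a finite product of such factors, hence an entire function that is not identically zero, so $\tilde{C}_w$ is a well-defined meromorphic function on $\C^2$. Theorem \ref{threepoint} combined with Corollary \ref{sl2ccor} then gives $\tilde{C}_w=C_w$ on $\Omega$.

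Next I would verify nonemptiness of $\Omega$. The constraints $\alpha_1,\alpha_2>-1/(2b)$ and $\alpha_1+\alpha_2<Q+1/(2b)-bw=b(1-w)-1/(2b)$ are jointly satisfiable if and only if $-1/b<b(1-w)-1/(2b)$, which rearranges to $b^2(w-1)<1/2$, i.e., precisely to the hypothesis $b<(2(w-1))^{-1/2}$ (vacuous when $w=1$). Hence $\Omega$ contains a nonempty real open rectangle inside $\R^2\subset\C^2$.

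For uniqueness, suppose $F$ is another meromorphic continuation of $C_w$. Choose a nonempty real open rectangle $I_1\times I_2\subset\Omega$; for each $\alpha_2^0\in I_2$, the slices $F(\cdot,\alpha_2^0)$ and $\tilde{C}_w(\cdot,\alpha_2^0)$ are meromorphic functions of $\alpha_1\in\C$ that agree on the real interval $I_1$, hence on all of $\C$ by the one-variable identity theorem. Applying the same argument in the second coordinate yields $F=\tilde{C}_w$ on all of $\C^2$. The main obstacle here is not analytic but conceptual: once one knows that $\Upsilon_b$ is entire, the argument is essentially bookkeeping, and the entire quantitative content of the theorem lies in the arithmetic equivalence between the bound $b<(2(w-1))^{-1/2}$ and the nonemptiness of $\Omega$.
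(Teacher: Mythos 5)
Your proposal is correct and follows essentially the same route as the paper: it checks that the hypothesis $b<(2(w-1))^{-1/2}$ makes $\Omega$ contain a nonempty open real rectangle, and then establishes uniqueness by the same coordinate-by-coordinate one-variable identity-theorem argument for meromorphic slices. The only (welcome) addition is the explicit remark that the right-hand side is meromorphic on $\C^2$ because $\Upsilon_b$ is entire and the denominator is not identically zero, which the paper leaves implicit.
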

\begin{proof}
Let $A$ be the meromorphic function on $\C^2$ defined by the right side of the above equation. Let $B$ be another meromorphic continuation of $C_w$ to $\C^2$. Then $A=B$ on $\Omega$. We have to show that $A=B$ everywhere on $\C^2$. First, note that since $b < (2(w-1))^{-1/2}$, we have
\begin{align*}
Q - (-1/2b) - (-1/2b) -bw &= b(1-w) = -b|w-1| > -\frac{1}{2b}.
\end{align*}
Thus, we can choose $\alpha_1, \alpha_2$ slightly greater than $-1/2b$ such that $Q-\alpha_1-\alpha_2-bw$ is also greater than $-1/2b$. This shows that $\Omega$ is nonempty. By the definition of $\Omega$, it is clear that $\Omega$ is open. Combining, we see that there exists a rectangle $(a,b)\times(c,d)\subseteq \Omega$, where $a< b$ and $c<d$. 

Take any $\alpha_1\in (a,b)$. Then $A(\alpha_1,\cdot)$ and $B(\alpha_1,\cdot)$ are meromorphic functions on $\C$ (since fixing all but one coordinates of a meromorphic function of several complex variables gives a meromorphic function in the remaining variable), which coincide on the interval $(c,d)$. Thus, they coincide everywhere on $\C$. This shows that $A(\alpha_1,\alpha_2)=B(\alpha_1,\alpha_2)$ for all $\alpha_1\in (a,b)$ and $\alpha_2\in \C$ (meaning, in particular, that they have the same poles). Now take any $\alpha_2\in \C$. Then $A(\cdot, \alpha_2)$ and $B(\cdot, \alpha_2)$ are meromorphic functions on $\C$, and by the first step, they coincide on the interval $(a,b)$. Thus, they agree everywhere. This completes the proof.
\end{proof}

\subsection{A set of nontrivial poles}\label{polesec}
As a corollary of Theorem \ref{analyticext}, we obtain the following conditional result, which extends the domain where the timelike DOZZ formula coincides with the timelike Liouville structure constants. Recall that the function $\Upsilon_b$ has no poles, and has zeros exactly at the points $mb + nb^{-1}$ where either both $m,n$ are positive integers, or both $m,n$ are nonpositive integers.
\begin{cor}\label{extcor}
Let $C(\alpha_1,\alpha_2,\alpha_3;b;\mu)$ denote the structure constants of timelike Liouville field theory. 
Let us make the physical assumption that there is a `true' set of structure constants $C(\alpha_1,\alpha_2,\alpha_3;b;\mu)$ that is a meromorphic function of $(\alpha_1,\alpha_2,\alpha_3)$, and that it is given by the formula derived via the path integral under the conditions of Theorem \ref{threepoint}. Then $C(\alpha_1,\alpha_2,\alpha_3;b;\mu)$ is given by the timelike DOZZ formula displayed in Theorem \ref{threepoint} for all $\alpha_1,\alpha_2,\alpha_3\in \C$ such that the number $w = (Q-\sum_{j=1}^3 \alpha_j)/b$ is a positive integer less that $1+\frac{1}{2b^2}$. Consequently, for a given $b$ and $\mu$, the function $C$ has poles at all $(\alpha_1,\alpha_2,\alpha_3)$ where all of the following conditions are satisfied:
\begin{itemize}
\item $w = (Q-\sum_{j=1}^3 \alpha_j)/b$ is a positive integer less than $1+\frac{1}{2b^2}$. 
\item $2\alpha_j+\frac{1}{b}$ is a zero of $\Upsilon_b$ for some $j$.
\item $2\alpha_j + bw + \frac{1}{b}$ is not a zero of $\Upsilon_b$ for any $j$.
\item $bw +b$ is not a zero of $\Upsilon_b$.
\end{itemize}
\end{cor}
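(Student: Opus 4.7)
The first assertion will follow as a direct consequence of Theorem \ref{analyticext}. I begin by noting that the bound $w < 1+\frac{1}{2b^2}$ is algebraically equivalent to $b < (2(w-1))^{-1/2}$ (vacuously so for $w=1$), which is precisely the hypothesis of Theorem \ref{analyticext}. Fix such a positive integer $w$ and consider the function $C_w(\alpha_1,\alpha_2) := C(\alpha_1,\alpha_2,Q-\alpha_1-\alpha_2-bw;b;\mu)$. By the physical assumption, $C$ is meromorphic in $(\alpha_1,\alpha_2,\alpha_3)$, so its restriction $C_w$ is meromorphic on $\C^2$. The physical assumption also ensures that $C_w$ coincides with the DOZZ-formula expression on the nonempty open set $\Omega\subset\R^2$ defined in Subsection \ref{analyticsec}, via Theorem \ref{threepoint}. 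Theorem \ref{analyticext} then asserts that this DOZZ expression is the unique meromorphic continuation of $C_w|_\Omega$ to all of $\C^2$, so the equality propagates everywhere, proving the first claim.

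For the pole identification, I would exploit the explicit DOZZ formula displayed in Theorem \ref{threepoint}. Since $\Upsilon_b$ has no poles and $\Upsilon_b(b)$ is nonzero (because $b=1\cdot b+0\cdot b^{-1}$ fails the zero pattern, where one of $m,n$ is positive and the other is nonpositive), the only denominator factors capable of producing poles are $\Upsilon_b(b-2\alpha_j)$ for $j=1,2,3$. Applying the reflection identity $\Upsilon_b(x)=\Upsilon_b(b+\frac{1}{b}-x)$ already used in the proof of Theorem \ref{threepoint}, each of these equals $\Upsilon_b(2\alpha_j+1/b)$, whose vanishing is precisely the second bullet of the corollary.

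The remaining task is to confirm that no cancellation between numerator and denominator occurs under the stated hypotheses. The numerator of the DOZZ expression contains $\Upsilon_b(bw+b)$, whose nonvanishing is the fourth bullet, together with the three factors $\Upsilon_b(\alpha_l-\alpha_{l'}-\alpha_{l''}+b)$ over $\{l,l',l''\}=\{1,2,3\}$. Using $\alpha_1+\alpha_2+\alpha_3=Q-bw$ (already exploited in the proof of Theorem \ref{analyticext} when reconciling the two forms of the DOZZ formula), a short substitution gives $\Upsilon_b(\alpha_l-\alpha_{l'}-\alpha_{l''}+b)=\Upsilon_b(2\alpha_l+bw+1/b)$, and the third bullet forbids any of these from vanishing. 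Therefore, under all four conditions, the denominator vanishes while the numerator remains nonzero, producing a genuine pole of $C$. The main subtle point I would address with care is that since the zeros of $\Upsilon_b$ form a discrete set in $\C$, the zero locus of $\Upsilon_b(2\alpha_j+1/b)$ inside $\C^3$ is a union of smooth hyperplanes along which the standard pole/zero calculus for ratios of holomorphic functions of several variables applies, ensuring the singularity is a genuine pole rather than a removable or higher-codimension phenomenon.
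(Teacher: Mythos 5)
Your proposal is correct and follows essentially the same route as the paper: the first assertion is deduced from Theorem \ref{analyticext} after noting that $w<1+\tfrac{1}{2b^2}$ is equivalent to $b<(2(w-1))^{-1/2}$, and the pole statement is obtained by inspecting the timelike DOZZ formula using the reflection relation $\Upsilon_b(x)=\Upsilon_b(b+\tfrac1b-x)$ to match the denominator factors $\Upsilon_b(b-2\alpha_j)$ with $\Upsilon_b(2\alpha_j+\tfrac1b)$ and the constraint $\alpha_1+\alpha_2+\alpha_3=Q-bw$ to match the numerator factors with $\Upsilon_b(2\alpha_j+bw+\tfrac1b)$. Your extra checks (nonvanishing of $\Upsilon_b(b)$ and the several-variables remark that a nonvanishing numerator over a vanishing denominator yields a genuine pole) are sound refinements of what the paper calls a "simple inspection."
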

\begin{proof}
The first assertion is a direct consequence of Theorem \ref{analyticext}, observing that the condition $b < (2(w-1))^{-1/2}$ is the same as $w < 1+\frac{1}{2b^2}$. For the second assertion, a simple inspection (recalling the relation $\Upsilon_b(x)=\Upsilon_b(b+1/b-x)$) shows that when the four conditions hold, then the denominator in the timelike DOZZ formula is zero and the numerator is nonzero.
\end{proof}
The following is an example of a `nontrivial' pole, that is, a pole that is not given by $\alpha_j =-1/2b$ for some $j$. Take any $b\in (0, 3^{-1/2})$ such that $b^2$ is irrational, and take 
\[
\alpha_1 = -\frac{1}{2}b - \frac{1}{2b}, \ \ \alpha_2 = -\frac{1}{2}b - \frac{1}{4b}, \ \ \alpha_3 = -\frac{1}{4b}.
\]
Then $w = (Q-\sum_{j=1}^3 \alpha_j)/b = 2$. Thus, 
\[
w < 3 < 1+\frac{1}{2b^2}. 
\]
Next, note that $2\alpha_j + \frac{1}{b} = -b$ is a zero of $\Upsilon_b$. Thirdly, note that 
\begin{align*}
&2\alpha_1 + bw + \frac{1}{b} =  b. 
\end{align*}
We claim that $b$ is not a zero of $\Upsilon_b$. Indeed, $b$ is strictly less than $mb + nb^{-1}$ for all positive integers $m,n$, and if $b = mb + nb^{-1}$ for some nonpositive integers $m,n$, then again $b^2 = n/(1-m)$ is rational, contradicting our assumption that $b^2$ is irrational. Thus, $2\alpha_1 + bw + \frac{1}{b}$ is not a zero of $\Upsilon_b$. Next, we have
\[
2\alpha_2 + bw + \frac{1}{b} = b + \frac{1}{2b}. 
\]
By a similar argument as above, this cannot be a zero of $\Upsilon_b$. Lastly, 
\[
2\alpha_3 + bw + \frac{1}{b} = 2b + \frac{1}{2b}, 
\]
which, again, cannot be a zero of $\Upsilon_b$ due to the irrationality of $b^2$. Thus, $C$ has a pole at the above values of $\alpha_1,\alpha_2,\alpha_3$ and $b$.

\section{Semiclassical limit of timelike Liouville field theory}
In this section, we will show that the correlation functions of timelike Liouville field theory that we computed using the path integral representation, have the correct behavior in the semiclassical limit. This means that as $b\to 0$, the path integrals concentrate more and more near certain critical points of the timelike Liouville action.

\subsection{Semiclassical limit with heavy operators}\label{semilimitsec}
In this subsection we return to the study of $k$-point correlations for general $k$. The semiclassical limit of timelike Liouville theory with heavy operators is obtained by taking $b\to 0$, and varying $\alpha_j$ and $\mu$ as $\alpha_j = \talpha_j/b$ and $\mu = \tmu/b^2$ where $\talpha_j$ and $\tmu$ remain fixed as $b\to 0$ through a sequence such that $w\to \infty$ through positive integers. Also, throughout this subsection and the next, we will take the $\talpha_j$'s to be real, to avoid technical complexities.  First, we identify the limit of the $k$-point correlation. In the next subsection, we will show that this limit is  `correct', in that it shows convergence to a critical point of the timelike Liouville action after insertion of heavy operators.  Throughout this section, we will use the notational convention that for  a function $f:S^2\to\R$, $Gf$ denotes the function
\[
Gf(x) := \int_{S^2}G(x,y) f(y) da(y),
\]
where $G$ is the Green's function for the spherical Laplacian appearing in equation \eqref{greendef}, whose explicit form was obtained in Lemma \ref{glimit}. 
Let $\cp$ be the set of probability density functions (with respect to the area measure) on $S^2$. Recall the following three functionals on $\cp$, defined in Subsection \ref{mainresultssec}:
\begin{align*}
H(\rho) &:= \int_{S^2} \rho(x)\ln \rho(x) da(x),\\
R(\rho) &:= \int_{(S^2)^2} \rho(x)\rho(y) G(x,y) da(x) da(y),\\
L(\rho) &:= \sum_{j=1}^k 4\talpha_j \int_{S^2} G(x_j, x) \rho(x)da(x).
\end{align*}
Let $\cp'$ be the subset of $\cp$ consisting of all $\rho$ such that $H(\rho)$ is finite. We claim that for $\rho\in \cp'$, the functionals $R(\rho)$ and $L(\rho)$ are also finite. To see this, observe that by Jensen's inequality, we have for each $x\in S^2$, 
\begin{align*}
|G\rho(x)| &\le  \int_{S^2} |G(x,y)| \rho(y) da(y) = \int_{S^2} \rho(y) \ln \frac{e^{|G(x,y)|} }{\rho(y)} da(y) + H(\rho)\\
&\le \ln \biggl(\int_{S^2} e^{|G(x,y)|} da(y)\biggr) + H(\rho) \le \ln \biggl(\int_{S^2} \frac{2e^{-1/2}}{\|x-y\| }da(y)\biggr) +H(\rho).
\end{align*}
But the last quantity actually does not depend on $x$, and is finite. 
That is, $|G\rho|$ is uniformly bounded. Clearly, this implies that $R(\rho)$ and $L(\rho)$ are finite. 
In the following theorem, quoted from Subsection \ref{mainresultssec}, we take the logarithm of the $k$-point correlation. While taking the logarithm, we interpret the logarithm of the $e^{-i\pi w}$ term appearing the formula from Theorem \ref{intthmv2} as $-i\pi w$. Since the remaining terms are real and positive, there is no ambiguity about their logarithms.

\semilimitthm*

The proof of Theorem \ref{semilimit} is somewhat lengthy. We start with the following lemma, which proves the `easy direction'.
\begin{lmm}\label{lowerbound}
In the setting of Theorem \ref{semilimit}, we have
\begin{align*}
&\liminf_{n\to \infty} \frac{1}{n} \log C(\talpha_1/b_n,\ldots,\talpha_k/b_n; x_1,\ldots,x_k;b_n; \tmu/b_n^2)+i\pi \\
&\ge 1 + \ln \tmu - \ln \beta +(1-\ln 4  )\sum_{j=1}^k \frac{\talpha_j^2}{\beta} + \sum_{j=1}^k \frac{\talpha_j (1+\talpha_j)}{\beta} \ln g(\sigma(x_j))\\
&\qquad \qquad  -\frac{4}{\beta}\sum_{1\le j<j'\le k}\talpha_j\talpha_{j'}G(x_j, x_{j'}) - \inf_{\rho\in \cp'} S(\rho).
\end{align*}
\end{lmm}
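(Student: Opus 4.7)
The plan is to split $\log\tilde{C}$ into an explicit prefactor, whose asymptotics can be read off directly, and the logarithm of an integral to which Jensen's inequality can be applied. Since $\alpha_j=\talpha_j/b_n$ and $b_n^2=\beta/(n-1)$, a quick calculation shows that the charge-neutrality parameter $w=(Q-\sum_j\alpha_j)/b_n$ equals $n$ exactly, so Theorem \ref{intthm} applies and expresses $\tilde{C}$ as an explicit prefactor times
$$I_n:=\int_{(S^2)^n}\exp\!\Bigl(-\sum_{l=1}^n\sum_{j=1}^k 4\talpha_j G(x_j,y_l)-\sum_{1\le l<l'\le n}4b_n^2 G(y_l,y_{l'})\Bigr)da(y_1)\cdots da(y_n).$$

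The first step is to evaluate $(1/n)$ times the logarithm of the prefactor. Stirling's formula gives $-(\log n!)/n=-\log n+1+o(1)$, which combines with $n\log\mu/n=\log\tmu-2\log b_n$ and the expansion $-2\log b_n=\log((n-1)/\beta)$ to produce the piece $1+\log\tmu-\log\beta$, while $e^{-i\pi w}$ contributes $-i\pi$. Each of the remaining prefactor terms, namely $\chi\alpha_j(b_n-\alpha_j)$, $-\Delta_{\alpha_j}\log g(\sigma(x_j))$, and $-4\alpha_j\alpha_{j'}G(x_j,x_{j'})$, contains after the substitution $\alpha_j=\talpha_j/b_n$ a contribution proportional to $1/b_n^2\sim n/\beta$; dividing by $n$ makes them converge to finite limits that match the displayed coefficients. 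This is a direct computation without conceptual content.

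The main substance is a lower bound on $(1/n)\log I_n$. Fix any $\rho\in\cp'$ that is strictly positive almost everywhere, write the integrand as $\exp(-f_n-\sum_l\log\rho(y_l))\cdot\prod_l\rho(y_l)$, and interpret $dQ=\prod_l\rho(y_l)\,da(y_l)$ as a product probability measure. Jensen's inequality then yields
$$\log I_n\ge -\int f_n\,dQ-n\int\rho\log\rho\,da.$$
Evaluating the two pieces of $\int f_n\,dQ$ term by term, the linear part equals exactly $nL(\rho)$ while the quadratic part equals $\binom{n}{2}\cdot 4b_n^2\,R(\rho)=2n\beta R(\rho)$ thanks to the exact cancellation of the $(n-1)$ factors, so $(1/n)\log I_n\ge -L(\rho)-2\beta R(\rho)-H(\rho)=-S(\rho)$. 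Taking the supremum over admissible $\rho$ produces the claimed bound $-\inf_\rho S(\rho)$. The only technical nuisance is that for $\rho\in\cp'$ vanishing on a set of positive area one cannot treat $dQ$ as a probability; this is handled either by restricting the $y_l$'s to $\{\rho>0\}$ (the integrand is nonnegative) or by smoothing to $(1-\ep)\rho+\ep/(4\pi)$ and sending $\ep\to 0$, using continuity of $L$, $R$, and $H$ under such perturbations. There is no serious obstacle here; the real work in the full theorem will be the matching upper bound, which this lemma does not address.
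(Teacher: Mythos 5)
Your proposal is correct and follows essentially the same route as the paper: apply Theorem \ref{intthm} with $w=n$, lower-bound the integral $I_n$ by Jensen's inequality against the product measure $\prod_l\rho(y_l)\,da(y_l)$ for an arbitrary $\rho\in\cp'$ (noting $4b_n^2\binom{n}{2}=2\beta n$), and then read off the prefactor asymptotics. Your extra remark about $\rho$ vanishing on a positive-area set is a harmless refinement of the paper's convention $0\ln 0=0$; otherwise the two arguments coincide.
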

\begin{proof}
Let $w_n$ be the $w$ for $b=b_n$. Note that 
\begin{align*}
w_n = 1-\frac{1}{b_n^2} - \frac{1}{b_n}\sum_{j=1}^k \frac{\talpha_j}{b_n} = 1-\frac{1+\sum_{j=1}^k\talpha_j}{b_n^2} = n. 
\end{align*}
Moreover, $\talpha_j/b_n > -1/2b_n$ for each $j$. Thus, by Theorem \ref{intthm}, 
\begin{align*}
&C(\talpha_1/b_n,\ldots,\talpha_k/b_n; x_1,\ldots,x_k;b_n; \tmu/b_n^{2n}) \\
&= \frac{e^{-i\pi n}\tmu^n}{n!b^{2n}_n}\biggl(\prod_{j=1}^ke^{\chi(\talpha_j/b_n)(b_n-\talpha_j/b_n)} g(\sigma(x_j))^{-\Delta_{\talpha_j/b_n}}\biggr) \exp\biggl(-\frac{4}{b_n^2}\sum_{1\le j<j'\le k} \talpha_j\talpha_{j'}G(x_j, x_{j'})\biggr) \\
&\qquad \cdot \int_{(S^2)^n}\exp\biggl(-4\sum_{j=1}^k \sum_{l=1}^n\talpha_jG(x_j, y_l)-4b_n^2\sum_{1\le l<l'\le n}G(y_l, y_{l'}) \biggr) da(y_1)\cdots da(y_{n}).
\end{align*}
Let $I_n$ denote the above integral. Take any $\rho\in \cp'$. By Jensen's inequality (with the interpretation that $0\ln 0=0$),
\begin{align*}
&I_n = \int_{(S^2)^n}\exp\biggl(-4\sum_{j=1}^k \sum_{l=1}^n\talpha_jG(x_j, y_l)-4b_n^2\sum_{1\le l<l'\le n}G(y_l, y_{l'}) \\
&\qquad \qquad - \sum_{l=1}^n \ln \rho(y_l) \biggr) \prod_{l=1}^n \rho(y_l) da(y_1)\cdots da(y_{n})\\
&\ge \exp\biggl(-4\sum_{j=1}^k \sum_{l=1}^n\talpha_j\int_{S^2}G(x_j, y_l)\rho(y_l) da(y_l) \\
& \qquad -4b_n^2\sum_{1\le l<l'\le n}\int_{(S^2)^2}G(y_l, y_{l'})\rho(y_l)\rho(y_{l'}) da(y_l)da(y_{l'})  - \sum_{l=1}^n \int_{S^2}\rho(y_l)\ln \rho(y_l) da(y_l) \biggr).
\end{align*}
This shows that 
\begin{align*}
\liminf_{n\to \infty} \frac{1}{n}\ln I_n &\ge -S(\rho). 
\end{align*}
To complete the proof, note that as $n\to \infty$,
\begin{align*}
&\frac{\talpha_j}{nb_n}\biggl(b_n-\frac{\talpha_j}{b_n}\biggr) \to -\frac{\talpha_j^2}{\beta},\\
&-\frac{\Delta_{\talpha_j/b_n}}{n} = -\frac{\talpha_j}{nb_n}\biggl(b_n - \frac{1}{b_n} - \frac{\talpha_j}{b_n}\biggr) \to \frac{\talpha_j(1+\talpha_j)}{\beta},
\end{align*}
and $\frac{1}{n}\ln(n! b_n^{2n}) \to \ln \beta - 1$.
\end{proof}

The proof of the opposite direction requires some preparation.  Throughout the following discussion, $C, C_1,C_2,\ldots$ will denote positive constants that may depend only on $\talpha_1,\ldots,\talpha_k$ and $x_1,\ldots,x_k$, and nothing else. The values of these constants  may change from line to line or even within a line. The first step towards the proof of the upper bound is the following lemma.
\begin{lmm}\label{glemma}
For any distinct $x,y,x',y'\in S^2$, 
\[
-G(x,y) \le - G(x',y') +\frac{\|x-x'\|+\|y-y'\|}{\|x'-y'\|}.
\]
\end{lmm}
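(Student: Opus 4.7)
The plan is to reduce the claimed inequality to the elementary bound $\ln(1+t)\le t$ for $t\ge 0$, using the explicit formula for $G$ established in Lemma \ref{glimit} and the triangle inequality on $S^2\subset\R^3$ viewed with the ambient Euclidean norm.

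First, I would rewrite the statement in terms of norms. Since Lemma \ref{glimit} gives $G(x,y)=-\ln\|x-y\|-\tfrac12+\ln 2$, the constants cancel and the asserted inequality becomes
\[
\ln\|x-y\|-\ln\|x'-y'\| \le \frac{\|x-x'\|+\|y-y'\|}{\|x'-y'\|}.
\]
Equivalently, setting $t := (\|x-x'\|+\|y-y'\|)/\|x'-y'\|\ge 0$, it suffices to show $\|x-y\|/\|x'-y'\| \le e^{t}$.

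Second, apply the triangle inequality twice in $\R^3$: $\|x-y\|\le \|x-x'\|+\|x'-y'\|+\|y'-y\|$, which after dividing by $\|x'-y'\|>0$ yields
\[
\frac{\|x-y\|}{\|x'-y'\|} \le 1 + \frac{\|x-x'\|+\|y-y'\|}{\|x'-y'\|} = 1+t.
\]
Taking logarithms and using the standard inequality $\ln(1+t)\le t$ valid for all $t\ge 0$ gives the desired bound. This completes the argument.

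There is no real obstacle here: the only points to be careful about are that $\|x'-y'\|>0$ (so division is legitimate), which is guaranteed because $x',y'$ are distinct, and that the argument of the logarithm stays positive, which follows because $x,y$ are distinct. No special properties of the sphere beyond the Euclidean triangle inequality are used.
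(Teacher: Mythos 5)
Your proof is correct and is essentially the paper's argument: the paper applies $\ln u \le u-1$ to $u = \|x-y\|/\|x'-y'\|$ and then the (reverse) triangle inequality, while you apply the triangle inequality first and then $\ln(1+t)\le t$, which is the same pair of ingredients in the opposite order. No further comment is needed.
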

\begin{proof}
Recall that $\ln x\le x-1$ for all $x> 0$. Thus, 
\begin{align*}
-G(x,y)+G(x',y') &= \ln \frac{\|x-y\|}{\|x'-y'\|}\le \frac{\|x-y\|}{\|x'-y'\|} -1 = \frac{\|x-y\|-\|x'-y'\|}{\|x'-y'\|}.
\end{align*}
But by the triangle inequality,
\[
\|x-y\|-\|x'-y'\| \le \|(x-y)-(x'-y')\|\le \|x-x'\|+\|y-y'\|.
\]
Combining the two steps completes the proof.
\end{proof}
Next, let $\epsilon$ be a small positive real number of the form $\pi/2q$ for some integer $q$, to be chosen later. Throughout the following discussion, we will assume without mention, wherever needed, that $\epsilon$ is small enough. Here `small enough' means `smaller than a suitable number depending only on $\talpha_1,\ldots,\talpha_k$ and $x_1,\ldots,x_k$, and nothing else'. 

We construct a partition of $S^2$ as follows. First, draw latitudes on the sphere whose angles to the equator are integer multiples of $\epsilon$. That is, each latitude is a circle parametrized as $(\cos n\epsilon \cos \theta, \cos n \epsilon \sin \theta, \sin n\epsilon)$, $0\le \theta\le 2\pi$ for some $-(q-1)\le n\le q-1$. Thus, the distance between successive latitudes is of order $\epsilon$, and the caps enclosed by the highest and lowest latitudes have radius of order $\epsilon$. These latitudes divide $S^2$ into a collection of $2q$ annuli (including the caps at the top and the bottom). 

Let us number these annuli as $A_n$, $n=-q,-(q-1),\ldots,q-1$, going from bottom to top. We now further subdivide each annulus $A_n$ as follows.  Note that the latitude passing through the middle of $A_n$ has radius $r_n := \cos (n +\frac{1}{2})\epsilon$. Let $\delta_n := (\frac{1}{2\pi}+[ r_n/\epsilon])^{-1}$. Consider the equally spaced longitudes parametrized by $(\cos \phi \cos m\delta_n, \cos \phi \sin m\delta_n, \sin \phi)$, $-\pi/2\le \phi\le\pi/2$, for $0\le m \le [r_n/\epsilon]$. Subdivide $A_n$ using these longitudes. Then note that each subdivision is bounded above and below by two latitudes separated by distance of order $\epsilon$, and to its left and right by two longitudes that are separated by distance of order $\delta_n r_n$, which is of order $\epsilon$. Thus, we have produced a partition of $S^2$ such that each element of the partition is a `trapezoid' with length and breadth both of order $\epsilon$. 

Let us enumerate these trapezoids as $B_1,\ldots,B_L$, where $L$ is the number of trapezoids. Additionally, we assume that $\epsilon$ is so small that $x_1,\ldots,x_k$ are in distinct trapezoids. Without loss of generality, let us assume that $x_j\in B_j$ for $j=1,\ldots,k$. For $j=1,\ldots,L$, let $c_j$ be the area of $B_j$ and let $a_j \in B_j$ be the `center' of $B_j$, defined in the obvious way. For $1\le j,j'\le L$, let $g_{j,j'}$ be the average value of $G(x,y)$ for $x\in B_j$ and $y\in B_{j'}$. That is,
\[
g_{j,j'} = \frac{1}{c_j c_{j'}} \int_{B_j}\int_{B_{j'}} G(x,y) da(y) da(x).
\]
Also, for $1\le j\le k$ and $1\le j'\le L$, let $h_{j,j'}$ be the average value of $G(x_j,y)$ for $y\in B_{j'}$. That is,
\[
h_{j,j'} = \frac{1}{c_{j'}} \int_{B_{j'}} G(x_j,y) da(y). 
\]
The following two lemmas give crucial estimates relating the function $G$ to the averages defined above.
\begin{lmm}\label{gjjbound}
Take any $x,y\in S^2$, with $x\in B_j$ and $y\in B_{j'}$ for some $1\le j,j'\le n$. Then 
\[
-G(x,y) \le - g_{j,j'} + \frac{C\epsilon}{\epsilon+\|a_j - a_{j'}\|}.
\]
\end{lmm}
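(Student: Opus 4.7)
The plan is to fix $x\in B_j$ and $y\in B_{j'}$, apply Lemma~\ref{glemma} with a movable pair $(x',y')\in B_j\times B_{j'}$, and then average in $(x',y')$ against the uniform probability measure $(c_jc_{j'})^{-1}da\,da$ on $B_j\times B_{j'}$. Since $g_{j,j'}$ is by definition exactly this average of $G(x',y')$, this yields
\[
-G(x,y) \le - g_{j,j'} + \frac{1}{c_j c_{j'}}\int_{B_j\times B_{j'}} \frac{\|x-x'\|+\|y-y'\|}{\|x'-y'\|}\, da(x')\, da(y').
\]
Because every trapezoid has diameter of order $\epsilon$, we have $\|x-x'\|\le C\epsilon$ and $\|y-y'\|\le C\epsilon$, so it suffices to prove
\begin{equation}\label{eq:planmain}
I_{j,j'} := \frac{1}{c_j c_{j'}}\int_{B_j\times B_{j'}} \frac{da(x')\,da(y')}{\|x'-y'\|} \le \frac{C}{\epsilon + \|a_j-a_{j'}\|}.
\end{equation}

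I would split into two regimes according to the size of $\|a_j - a_{j'}\|$ relative to $\epsilon$. In the far regime, where $\|a_j-a_{j'}\|\ge K\epsilon$ for a fixed constant $K$ large enough (chosen from the $O(\epsilon)$ diameter bound on each trapezoid), the triangle inequality gives $\|x'-y'\|\ge \tfrac{1}{2}\|a_j-a_{j'}\|$ uniformly over the domain of integration, so the integrand is pointwise bounded by $2/\|a_j-a_{j'}\|$, and $I_{j,j'}\le 2/\|a_j-a_{j'}\|\le C/(\epsilon+\|a_j-a_{j'}\|)$. In the near regime, where $\|a_j-a_{j'}\|\le K\epsilon$, we lose the gap between the trapezoids and have to control an integrable singularity. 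Here $\epsilon+\|a_j-a_{j'}\|$ is comparable to $\epsilon$, so the target bound is $C/\epsilon$, and we need to estimate $I_{j,j'}$ directly.

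For the near regime I would use local coordinates. Because each $B_{j}$ is contained in a single latitudinal strip of width $O(\epsilon)$ with azimuthal extent $O(\epsilon)$, a bi-Lipschitz chart sends $B_j\cup B_{j'}$ into a planar region $U\subseteq \R^2$ of diameter $O(\epsilon)$ with Euclidean distance comparable to chordal distance. Writing the images of $B_j,B_{j'}$ as $\tilde B_j,\tilde B_{j'}$ with areas of order $\epsilon^2$, we obtain
\[
I_{j,j'} \le \frac{C}{\epsilon^4}\int_{\tilde B_j\times \tilde B_{j'}} \frac{du\,dv}{|u-v|}.
\]
By a change of variables $w=u-v$ the $w$-set has diameter $O(\epsilon)$, and for each fixed $w$ the $v$-integral contributes a factor $O(\epsilon^2)$, so
\[
\int_{\tilde B_j\times \tilde B_{j'}} \frac{du\,dv}{|u-v|} \le C\epsilon^2 \int_{|w|\le C\epsilon}\frac{dw}{|w|} = C\epsilon^3
\]
using planar polar coordinates. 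Hence $I_{j,j'}\le C/\epsilon$, as required.

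Combining the two regimes yields \eqref{eq:planmain} and completes the proof. The main obstacle is the near-regime estimate: one has to verify that after passing to a flat chart the $1/|u-v|$ singularity is integrable on the product of two small planar regions with the correct $\epsilon$-scaling, which the polar-coordinates computation above handles (and which is the ultimate reason that the bound $C\epsilon/(\epsilon+\|a_j-a_{j'}\|)$, rather than the cruder $C\epsilon/\|a_j-a_{j'}\|$, is the right target).
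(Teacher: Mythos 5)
Your proposal is correct and follows essentially the same route as the paper: average the bound of Lemma~\ref{glemma} over $(x',y')\in B_j\times B_{j'}$, then split into the regimes $\|a_j-a_{j'}\|\gtrless C\epsilon$, handling the far case by the triangle inequality and the near case by the polar-coordinate estimate $\int_{B_j}\int_{B_{j'}}\|x'-y'\|^{-1}\,da\,da\le C\epsilon^3$. The only difference is that you spell out, via a bi-Lipschitz chart, the near-regime computation that the paper dispatches with ``easy to see (using polar coordinates)''.
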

\begin{proof}
For the first inequality, note that by Lemma \ref{glemma},
\begin{align*}
-G(x,y) + g_{j,j'} &= \frac{1}{c_jc_{j'}} \int_{B_j}\int_{B_{j'}} (-G(x,y) + G(x',y')) da(y') da(x')\\
&\le \frac{1}{c_jc_{j'}} \int_{B_j}\int_{B_{j'}} \frac{\|x-x'\| + \|y-y'\|}{\|x'-y'\|} da(y') da(x')\\
&\le \frac{C_1\epsilon}{C_2 \epsilon^4} \int_{B_j}\int_{B_{j'}} \frac{1}{\|x'-y'\|} da(y') da(x').
\end{align*}
If $\|a_j -a_{j'}\|\ge C \epsilon$ for some large enough $C$, then $\|x'-y'\|\ge \frac{1}{2}(\epsilon + \|a_j - a_{j'}\|)$ for any $x'\in B_j$ and $y'\in B_{j'}$. On the other hand if $\|a_j - a_{j'}\| < C\epsilon$, then it is easy to see (using polar coordinates, for instance) that
\[
\int_{B_j}\int_{B_{j'}} \frac{1}{\|x'-y'\|} da(y') da(x')\le C_1\epsilon^3\le \frac{C_2\epsilon^4}{\epsilon + \|a_j - a_{j'}\|}. 
\]
Combining these observations, we get 
\[
-G(x,y)+g_{j,j'}\le \frac{C\epsilon}{\epsilon + \|a_j -a_{j'}\|}.
\]
This completes the proof.
\end{proof}
\begin{lmm}\label{hjjbound}
There is a universal constant $C_0$ such that the following holds. Take any $1\le j\le k$ and $1\le j'\le n$ such that $\|a_j - a_{j'}\|> C_0\epsilon$. Then for $y\in B_{j'}$, 
\[
|G(x_j, y) - h_{j,j'}| \le  \frac{C\epsilon}{\epsilon+ \|a_j -a_{j'}\|}.
\]
\end{lmm}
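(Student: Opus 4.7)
The plan is to imitate the proof of Lemma~\ref{gjjbound}, but applied twice so as to control the two-sided difference $|G(x_j,y)-G(x_j,y')|$, and then to average over $y'\in B_{j'}$. The key point is that if $C_0$ is chosen large enough, then the hypothesis $\|a_j-a_{j'}\|>C_0\epsilon$ forces $x_j$ to be far from every point of $B_{j'}$, so that $y\mapsto G(x_j,y)=-\ln\|x_j-y\|-\tfrac12+\ln 2$ is essentially Lipschitz with a small constant on $B_{j'}$.

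More precisely, the inequality $\ln u\le u-1$ used in Lemma~\ref{glemma} applied with the pair $(x,y)=(x_j,y)$, $(x',y')=(x_j,y')$ yields
\[
G(x_j,y')-G(x_j,y)=\ln\frac{\|x_j-y\|}{\|x_j-y'\|}\le\frac{\|y-y'\|}{\|x_j-y'\|},
\]
and swapping $y\leftrightarrow y'$ gives the analogous bound in the other direction. Hence for every $y,y'\in B_{j'}$,
\[
|G(x_j,y)-G(x_j,y')|\le\|y-y'\|\max\!\left(\frac{1}{\|x_j-y\|},\frac{1}{\|x_j-y'\|}\right).
\]
Now $x_j\in B_j$ and $B_j$ has diameter of order $\epsilon$, so $\|x_j-a_j\|\le C_1\epsilon$; likewise every $y'\in B_{j'}$ satisfies $\|y'-a_{j'}\|\le C_1\epsilon$. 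Therefore
\[
\|x_j-y'\|\ge\|a_j-a_{j'}\|-2C_1\epsilon,
\]
so if we pick $C_0=4C_1$, the hypothesis $\|a_j-a_{j'}\|>C_0\epsilon$ gives $\|x_j-y'\|\ge\tfrac12\|a_j-a_{j'}\|$ for all $y'\in B_{j'}$, and similarly for $y$.

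Combining the two displays, $|G(x_j,y)-G(x_j,y')|\le 2\|y-y'\|/\|a_j-a_{j'}\|\le C_2\epsilon/\|a_j-a_{j'}\|$ uniformly in $y,y'\in B_{j'}$. Averaging over $y'\in B_{j'}$ gives
\[
|G(x_j,y)-h_{j,j'}|=\Bigl|\tfrac{1}{c_{j'}}\!\int_{B_{j'}}\!(G(x_j,y)-G(x_j,y'))\,da(y')\Bigr|\le\frac{C_2\epsilon}{\|a_j-a_{j'}\|}.
\]
Finally, since $\|a_j-a_{j'}\|>C_0\epsilon$, we have $\|a_j-a_{j'}\|\ge\tfrac{C_0}{C_0+1}(\epsilon+\|a_j-a_{j'}\|)$, which upgrades the bound to $C\epsilon/(\epsilon+\|a_j-a_{j'}\|)$ as required.

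There is no real obstacle here; the only thing to watch is the choice of $C_0$, which must be large enough (depending on the geometric constant $C_1$ coming from the diameters of the trapezoids $B_j$) to ensure the lower bound $\|x_j-y'\|\ge\tfrac12\|a_j-a_{j'}\|$ holds uniformly. Everything else is a direct mimicry of Lemma~\ref{gjjbound} with the observation that away from the diagonal, $G$ is smooth and both directions of the Lipschitz-type estimate from Lemma~\ref{glemma} are available.
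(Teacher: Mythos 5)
Your proof is correct and follows essentially the same route as the paper: both arguments rest on the Lipschitz-type estimate of Lemma~\ref{glemma} (your $\ln u\le u-1$ computation is exactly that), on choosing $C_0$ so that every point of $B_{j'}$ is at distance comparable to $\epsilon+\|a_j-a_{j'}\|$ from $x_j$, and on averaging over $B_{j'}$. The only cosmetic difference is that the paper bounds the two one-sided differences $h_{j,j'}-G(x_j,y)$ and $G(x_j,y)-h_{j,j'}$ separately inside the integral, whereas you establish a uniform two-sided pointwise bound first and convert $\|a_j-a_{j'}\|$ into $\epsilon+\|a_j-a_{j'}\|$ at the end; this changes nothing of substance.
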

\begin{proof}
Choose $C_0$ such that if $\|a_j -a_{j'}\|\ge C_0 \epsilon$, then $\|x'-y'\|\ge \frac{1}{2}(\epsilon + \|a_j - a_{j'}\|)$ for any $x'\in B_j$ and $y'\in B_{j'}$. It is easy to see that $C_0$ can be chosen to be a universal constant. Then by Lemma \ref{glemma},
\begin{align*}
-G(x_j,y) + h_{j,j'} &= \frac{1}{c_{j'}} \int_{B_{j'}} (-G(x_j,y) + G(x_j, y')) da(y')\\
&\le \frac{1}{c_{j'}} \int_{B_{j'}} \frac{\|y-y'\|}{\|x_j - y'\|} da(y')\\
&\le \frac{C_1\epsilon}{C_2 \epsilon^2}\int_{B_{j'}} \frac{1}{\epsilon + \|a_j - a_{j'}\|} da(y')\le \frac{C_3\epsilon}{\epsilon + \|a_j - a_{j'}\|}.
\end{align*}
But by Lemma \ref{glemma} and the fact that $y\in B_{j'}$, we also have that
\begin{align*}
-h_{j,j'} + G(x_j,y)  &= \frac{1}{c_{j'}} \int_{B_{j'}} (-G(x_j,y') + G(x_j, y)) da(y')\\
&\le \frac{1}{c_{j'}} \int_{B_{j'}} \frac{\|y-y'\|}{\|x_j - y\|} da(y')\\
&\le \frac{C_1\epsilon}{C_2 \epsilon^2}\int_{B_{j'}} \frac{1}{\epsilon + \|a_j - a_{j'}\|} da(y')\le \frac{C_3\epsilon}{\epsilon + \|a_j - a_{j'}\|}.
\end{align*}
This completes the proof.
\end{proof}
Let $C_0$ be the universal constant from Lemma \ref{hjjbound}. For each $1\le j\le k$, let $N_j$ be the set of $1\le j'\le n$ such that $\|a_j - a_{j'}\|\le C_0\epsilon$. It is easy to see that the size of $N_j$ is bounded above by a universal constant (depending on the value of $C_0$).

Define a function $f:(S^2)^n \to \R$ as
\[
f(y_1,\ldots,y_n) := -4\sum_{j=1}^k \sum_{l=1}^n\talpha_jG(x_j, y_l)-4b_n^2\sum_{1\le l<l'\le n}G(y_l, y_{l'}),
\]
and a function $g$ on $\cp'$ (not to be confused with the metric $g$ on $\C$)  as
\[
g(\rho) := -n L(\rho) - 2b_n^2 n^2 R(\rho). 
\]
The following lemma gives an upper bound on $f(y_1,\ldots,y_n)$ in terms of $g(\rho_m)$, where $\rho_m$ is a probability density function constructed using the points $y_1,\ldots,y_n$.
\begin{lmm}\label{glemma3}
Take any distinct $y_1,\ldots,y_n \in S^2\setminus\{x_1,\ldots,x_k\}$. For $1\le j\le L$, let $m_j$ be the number of $l$ such that $y_l\in B_j$, and let $m := (m_1,\ldots,m_L)$. Let $\rho_m$ be the probability density function
\[
\rho_m(x) := \frac{m_j}{nc_j } \ \ \text{ if } x\in B_j, \ \ j=1,\ldots,L.
\]
For each $l$, let $j(l)$ be the index $j$ such that $y_l\in B_j$. For $1\le j\le k$ and $1\le l\le n$, let $\delta_{j,l} = 1$ if $j(l)\in N_j$ and $0$ otherwise. Then
\begin{align*}
f(y_1,\ldots,y_n) &\le g(\rho_m) + C\ln (1/\epsilon)+ \frac{C\epsilon}{n}\sum_{1\le j,j'\le L} \frac{m_j m_{j'}}{\epsilon + \|a_{j} - a_{j'}\|} \\ 
&\qquad + C\epsilon \sum_{j=1}^k\sum_{j'=1}^L \frac{m_{j'}}{\epsilon + \|a_j - a_{j'}\|} + 4\sum_{j=1}^k \sum_{l=1}^n \talpha_j (h_{j,j(l)} - G(x_j, y_l))\delta_{j,l}. 
\end{align*}
\end{lmm}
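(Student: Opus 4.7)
The plan is to expand $g(\rho_m)$ using the step-function form of $\rho_m$, match it term by term against $f(y_1,\ldots,y_n)$, and control the discrepancies using the estimates from Lemmas \ref{gjjbound} and \ref{hjjbound}. A direct computation gives
\[
\int_{S^2} G(x_j, x)\rho_m(x)\, da(x) = \frac{1}{n}\sum_{j'=1}^L m_{j'} h_{j,j'}, \qquad \int_{(S^2)^2} G(x,y)\rho_m(x)\rho_m(y)\, da(x)da(y) = \frac{1}{n^2}\sum_{j_1,j_2=1}^L m_{j_1} m_{j_2} g_{j_1,j_2},
\]
so $g(\rho_m) = -4\sum_j \talpha_j \sum_{j'} m_{j'} h_{j,j'} - 2b_n^2 \sum_{j_1,j_2} m_{j_1} m_{j_2} g_{j_1,j_2}$.

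For the single-sum part of $f$, I would write $\sum_l G(x_j,y_l) = \sum_{j'} m_{j'} h_{j,j'} + \sum_l (G(x_j, y_l) - h_{j,j(l)})$ and split the correction according to the indicator $\delta_{j,l}$. The $\delta_{j,l}=1$ piece is left untouched and reproduces the last explicit term in the statement, while for $\delta_{j,l}=0$ we have $\|a_j - a_{j(l)}\| > C_0\epsilon$, so Lemma \ref{hjjbound} gives $|G(x_j, y_l) - h_{j,j(l)}| \le C\epsilon/(\epsilon + \|a_j - a_{j(l)}\|)$; grouping $l$'s by the index $j(l)$ yields the contribution $C\epsilon \sum_j \sum_{j'} m_{j'}/(\epsilon + \|a_j - a_{j'}\|)$ (one can drop the restriction to $j' \notin N_j$ since the summand is nonnegative).

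For the double-sum part, I would write $-4b_n^2 \sum_{l<l'} G(y_l, y_{l'}) = -2b_n^2 \sum_{l\ne l'} G(y_l,y_{l'})$ and expand the corresponding piece of $g(\rho_m)$ as $-2b_n^2 \sum_{j_1,j_2} m_{j_1} m_{j_2} g_{j_1,j_2} = -2b_n^2 \sum_{l,l'} g_{j(l),j(l')}$, now including the diagonal $l=l'$. Their difference splits as
\[
2b_n^2 \sum_{l\ne l'}\bigl(g_{j(l),j(l')} - G(y_l, y_{l'})\bigr) + 2b_n^2 \sum_l g_{j(l),j(l)}.
\]
Lemma \ref{gjjbound} bounds the first sum by $2b_n^2 C\epsilon \sum_{j_1,j_2} m_{j_1} m_{j_2}/(\epsilon+\|a_{j_1}-a_{j_2}\|)$, and since $b_n^2 n = \beta n/(n-1) = O(1)$, this is $\le (C\epsilon/n)\sum_{j_1,j_2} m_{j_1} m_{j_2}/(\epsilon + \|a_{j_1}-a_{j_2}\|)$, matching the required bound. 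The diagonal entries $g_{j,j}$ are $O(\ln(1/\epsilon))$ because of the logarithmic singularity of $G$ combined with the diameter $\asymp \epsilon$ of the trapezoids (a short direct estimate from $G(x,y) = -\ln\|x-y\|-\tfrac{1}{2}+\ln 2$), so the diagonal term is $\le 2b_n^2 n \cdot C\ln(1/\epsilon) \le C\ln(1/\epsilon)$.

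The main subtlety is precisely this diagonal mismatch: $f$ sums only over $l<l'$ while $g(\rho_m)$ naturally produces a full sum over $l,l'$, and $G$ has no diagonal value. The logarithmic blow-up of $g_{j(l),j(l)}$ is what forces the $C\ln(1/\epsilon)$ term on the right-hand side; once this is accounted for, the rest of the argument is a triangle-inequality combination of the two preceding trapezoid-averaging estimates, together with the observation that $b_n^2 n$ is bounded along the prescribed sequence $b_n = \sqrt{\beta/(n-1)}$.
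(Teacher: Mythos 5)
Your argument is correct and follows essentially the same route as the paper's proof: expand $g(\rho_m)$ over the trapezoid partition, match it against $f$ using Lemmas \ref{gjjbound} and \ref{hjjbound}, keep the $\delta_{j,l}=1$ terms verbatim, and absorb the diagonal mismatch $2b_n^2\sum_l g_{j(l),j(l)}$ (equivalently the paper's $4b_n^2\sum_j \tfrac{m_j}{2}g_{j,j}$) into the $C\ln(1/\epsilon)$ term using $|g_{j,j}|\le C\ln(1/\epsilon)$ and $b_n^2 n=O(1)$. The only differences are bookkeeping (full double sum minus diagonal versus the paper's $l<l'$ accounting), so nothing further is needed.
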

\begin{proof}
Note that for each $1\le j\le k$,
\begin{align*}
\sum_{l=1}^n h_{j,j(l)} &= \sum_{l=1}^n \frac{1}{c_{j(l)}}\int_{B_{j(l)}} G(x_j,y) da(y)\\
&= \sum_{j'=1}^L \frac{m_{j'}}{c_{j'}}\int_{B_{j'}} G(x_j,y) da(y)= n \int_{S^2} G(x_j, y) \rho_m(y) da(y).
\end{align*}
Similarly,
\begin{align*}
\sum_{1\le l<l'\le n} g_{j(l), j(l')} &= \sum_{1\le l<l'\le n} \frac{1}{c_{j(l)} c_{j(l')}}\int_{B_{j(l)}} \int_{B_{j(l')}} G(x,y) da(x) da(y)\\
&= \sum_{1\le j<j'\le L} \frac{m_j m_{j'}}{c_j c_{j'}}\int_{B_j} \int_{B_{j'}}G(x,y) da(x) da(y)\\
&\qquad + \sum_{j=1}^L \frac{{m_j \choose 2}}{c_j^2} \int_{B_j} \int_{B_j}G(x,y) da(x) da(y)\\
&= \frac{n^2}{2}\int_{(S^2)^2} G(x,y) \rho_m(x)\rho_m(y) da(x) da(y) -\sum_{j=1}^L \frac{m_j}{2}g_{j,j}.
\end{align*}
Combining the last two displays and observing that $|g_{j,j}|\le C\ln(1/\epsilon)$, we get
\begin{align}
-4\sum_{j=1}^k \sum_{l=1}^n \talpha_j h_{j,j(l)}- 4b_n^2 \sum_{1\le l<l'\le n} g_{j(l), j(l')}&\le g(\rho_m) + 4b_n^2\sum_{j=1}^L \frac{m_j}{2}g_{j,j}\notag \\
&\le g(\rho_m) + Cb_n^2 n \ln(1/\epsilon).\label{gl1}
\end{align}
Now,  for any $1\le l<l'\le n$, Lemma \ref{gjjbound} gives
\begin{align}
-4b_n^2 G(y_l, y_{l'}) &\le -4b_n^2 g_{j(l), j(l')} +\frac{C\epsilon}{\epsilon+ \|a_{j(l)}- a_{j(l')}\|}. \label{gl2}
\end{align}
Next, take any $1\le j\le k$ and $1\le l\le n$. If $j(l)\notin N_j$, then Lemma \ref{hjjbound} gives
\[
-4\talpha_j G(x_j, y_l) \le -4\talpha_j h_{j, j(l)} + \frac{C\epsilon}{\epsilon + \|a_j - a_{j(l)}\|}. 
\]
Thus, we get 
\begin{align}\label{gl3}
-4\talpha_j G(x_j, y_l) &\le \biggl(-4\talpha_j h_{j, j(l)} + \frac{C\epsilon}{\epsilon + \|a_j - a_{j(l)}\|}\biggr)(1-\delta_{j,l}) - 4\talpha_jG(x_j,y) \delta_{j,l}\notag \\
&\le   -4\talpha_j h_{j, j(l)} + \frac{C\epsilon}{\epsilon + \|a_j - a_{j(l)}\|}(1-\delta_{j,l}) \notag\\
&\qquad \qquad + 4\talpha_j( - G(x_j,y) + h_{j, j(l)}) \delta_{j,l}.
\end{align}
Combining the inequalities \eqref{gl1}, \eqref{gl2} and \eqref{gl3}, and observing that $b_n^2\le Cn^{-1}$, we get 
\begin{align*}
f(y_1,\ldots,y_n) &\le g(\rho_m) + C\ln (1/\epsilon) + \frac{C\epsilon}{n}\sum_{1\le l< l'\le n} \frac{1}{\epsilon + \|a_{j(l)} - a_{j(l')}\|} \\
&\qquad + C\epsilon \sum_{j=1}^k\sum_{l=1}^n \frac{1}{\epsilon + \|a_j - a_{j(l)}\|}  + 4\sum_{j=1}^k \sum_{l=1}^n \talpha_j (h_{j,j(l)} - G(x_j, y_l))\delta_{j,l}.
\end{align*}
Now note that 
\begin{align*}
\sum_{1\le l< l'\le n} \frac{1}{\epsilon + \|a_{j(l)} - a_{j(l')}\|} &\le \sum_{1\le j,j'\le L} \frac{m_j m_{j'}}{\epsilon+\|a_j - a_{j'}\|}.
\end{align*}
Similarly,
\begin{align*}
\sum_{j=1}^k\sum_{l=1}^n \frac{1}{\epsilon + \|a_j - a_{j(l)}\|} &\le \sum_{j=1}^k\sum_{j'=1}^L \frac{m_{j'}}{\epsilon + \|a_j - a_{j'}\|}.
\end{align*}
Combining these observations with the upper bound on $f(y_1,\ldots,y_n)$ obtained above, we get the desired inequality.
\end{proof}

Let $I_n$ denote the same integral as in the proof of Lemma \ref{lowerbound}. Then we can rewrite $(4\pi)^{-n}I_n$ as $\E(e^{f(U_1,\ldots, U_n)})$, 
where $U_1,\ldots,U_n$ are i.i.d.~uniform random points from $S^2$. 
For each $1\le j\le L$, let $M_j$ be the number of $i$ such that $U_i\in B_j$, and let $M := (M_1,\ldots,M_L)$. Let $\rho_M$ be defined as in Lemma \ref{glemma3}. For $1\le l\le L$, let $J_l$ be the index $j$ such that $U_l\in B_j$. For $1\le j\le k$ and $1\le l\le n$, let $\Delta_{j,l}=1$ if $J_l\in N_j$ and $0$ otherwise. Then by Lemma \ref{glemma3},
\begin{align*}
f(U_1,\ldots,U_n) &\le g(\rho_M) + C\ln (1/\epsilon) +  \frac{C\epsilon}{n}\sum_{1\le j,j'\le L} \frac{M_j M_{j'}}{\epsilon + \|a_{j} - a_{j'}\|} \\ 
&\qquad + C\epsilon \sum_{j=1}^k\sum_{j'=1}^L \frac{M_{j'}}{\epsilon + \|a_j - a_{j'}\|} + 4\sum_{j=1}^k \sum_{l=1}^n \talpha_j (h_{j,J_l} - G(x_j, U_l))\Delta_{j,l}.
\end{align*}
Let $\E'$ denote conditional expectation given $J_1,\ldots,J_n$. Note that the  $M_j$'s and the $\Delta_{j,l}$'s are functions of $J_1,\ldots,J_n$. Note also that given $J_1,\ldots, J_n$, the random points $U_1,\ldots,U_n$ are still independent, but $U_l$ is uniformly distributed on $B_{J_l}$ for each $l$. Lastly, note that for each $l$, $\Delta_{j,l}$ can be $1$ for at most one index $j$ if $\epsilon$ is small enough. Let us call this index $K_l$ if it exists. Let $R$ be the set of all $l$ for which $K_l$ exists. Then by the preceding observartions,
\begin{align*}
&\E'\biggl[\exp\biggl( 4\sum_{j=1}^k \sum_{l=1}^n \talpha_j (h_{j, J_l} - G(x_j, U_l))\Delta_{j,l}\biggr)\biggr] \\
&= \E'\biggl[\prod_{l\in R} e^{4\talpha_{K_l} (h_{K_l,J_l}- G(x_{K_l}, U_l))}\biggr]\\
&=  \prod_{l\in R}\E'[ e^{4\talpha_{K_l}(h_{K_l,J_l} - G(x_{K_l}, U_l))}]\\
&= \prod_{l\in R} \frac{e^{4\talpha_{K_l}h_{K_l,J_l} }}{c_{J_l}} \int_{B_{J_l}} e^{-4\talpha_{K_l} G(x_{K_l}, y)} da(y).
\end{align*}
Now note that since $\talpha_{K_l}>-1/2$ and $B_{J_l}$ is contained in a ball centered at $x_{K_l}$ with radius $\le C\epsilon$, 
\begin{align*}
\int_{B_{J_l}} e^{-4\talpha_{K_l} G(x_{K_l}, y)} da(y) &= \int_{B_{J_l}} 2e^{-1/2}\|x_{K_l}- y\|^{4\talpha_{K_l}} da(y)\le C\epsilon^{2+4\talpha_{K_l}}.
\end{align*}
On the other hand, 
\begin{align*}
h_{K_l,J_l} &= \frac{1}{c_{J_l}} \int_{B_{J_l}} G(x_{K_l}, y) da(y)\\
&=  \frac{1}{c_{J_l}} \int_{B_{J_l}} \biggl(-\ln\|x_{K_l}-y\| -\frac{1}{2}+\ln 2\biggr) da(y).
\end{align*}
From the above expression, it is not hard to see that $|h_{K_l,J_l} + \ln \epsilon|\le C$. Lastly, recall that $c_{J_l}\ge C\epsilon^2$. From these observations, it follows that 
\[
\frac{e^{4\talpha_{K_l}h_{K_l,J_l} }}{c_{J_l}} \int_{B_{J_l}} e^{-4\talpha_{K_l} G(x_{K_l}, y)} da(y) \le C. 
\]
Therefore, since
\[
|R| = \sum_{j=1}^k \sum_{j'\in N_j} M_{j'}, 
\]
we get
\[
\E'\biggl[\exp\biggl( 4\sum_{j=1}^k \sum_{l=1}^n \talpha_j (h_{j, J_l} - G(x_j, U_l))\Delta_{j,l}\biggr)\biggr] \le \exp\biggl(C \sum_{j=1}^k \sum_{j'\in N_j} M_{j'}\biggr).
\]
Combining the above bounds yields
\begin{align}\label{eprime}
\E'(e^{f(U_1,\ldots,U_n)}) &\le \exp(g(\rho_M) + C\ln (1/\epsilon) + T_1 + T_2 + T_3), 
\end{align}
where 
\begin{align*}
T_1 &:= \frac{C\epsilon}{n}\sum_{1\le j,j'\le L} \frac{M_j M_{j'}}{\epsilon + \|a_{j} - a_{j'}\|}, \\
T_2 &:= C\epsilon \sum_{j=1}^k\sum_{j'=1}^L \frac{M_{j'}}{\epsilon + \|a_j - a_{j'}\|},\\
T_3 &:= C\sum_{j=1}^k \sum_{j'\in N_j} M_{j'}.
\end{align*}
Take any $\theta > 1$. Let $\theta' := \theta/(\theta-1)$, so that 
\[
\frac{1}{\theta} + \frac{1}{3\theta'} + \frac{1}{3\theta'} + \frac{1}{3\theta'} = 1.
\]
Then by H\"older's inequality and the inequality \eqref{eprime}, we get
\begin{align}\label{fu1un}
\E(e^{f(U_1,\ldots,U_n)}) &\le e^{C\ln (1/\epsilon)}(\E(e^{\theta g(\rho_M)}))^{1/\theta} (\E(e^{3\theta' T_1})\E(e^{3\theta' T_2})\E(e^{3\theta' T_3}))^{1/3\theta'}.
\end{align}
We will now get upper bounds for the four expectations on the right. Let $P_n$ denote the set of all $L$-tuples $m = (m_1,\ldots,m_L)$, where the $m_j$'s are nonnegative integers summing to $n$. Let $p_j=c_j/4\pi$ be the probability that a uniformly chosen point from $S^2$ belongs to $B_j$.
\begin{lmm}\label{t1lemma}
We have
\[
\E(e^{3\theta' T_1}) \le C_1 \exp(C_2 \epsilon^{-2}\ln n + C_3 e^{C_2\theta'} \epsilon n). 
\]
\end{lmm}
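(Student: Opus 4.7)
My plan is to bound $\E(e^{3\theta' T_1})$ by first enumerating over the multinomial type $M = (M_1,\ldots,M_L)$ and then controlling the contribution of each type using a variational argument. Starting from the trivial bound
\[
\E(e^{3\theta' T_1}) \le |P_n| \cdot \max_{m \in P_n} \P(M = m)\, e^{3\theta' T_1(m)}
\]
and observing that the trapezoidal partition has $L = O(\epsilon^{-2})$ pieces, I get $|P_n| \le (n+1)^L \le \exp(C\,\epsilon^{-2}\ln n)$, which yields the first exponential factor in the conclusion.

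For the remaining factor, let $q_j := m_j/n$ and $p_j := c_j/(4\pi)$. The standard entropy estimate $\binom{n}{m}\prod_j p_j^{m_j} \le \exp(-nD(q \Vert p))$ gives $\ln \P(M=m) \le -n D(q \Vert p)$, while $T_1(m) = C\epsilon n\, Q(q)$ with $Q(q) := \sum_{j,j'} q_j q_{j'}/(\epsilon + \|a_j - a_{j'}\|)$. So it suffices to show that $F(q) := -D(q\Vert p) + 3C\theta'\epsilon\, Q(q) \le C' e^{C''\theta'}\epsilon$ uniformly in the probability vector $q$. The key tool will be the Gibbs variational principle $\sum_j q_j g_j - D(q\Vert p) \le \ln \sum_j p_j e^{g_j}$, applied with $g_j := 3C\theta'\epsilon \sum_{j'} q_{j'}/(\epsilon + \|a_j - a_{j'}\|)$ so that $\sum_j q_j g_j = 3C\theta'\epsilon\, Q(q)$ and $F(q) \le \ln \sum_j p_j e^{g_j}$. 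A second application of Jensen's inequality, using the probability weights $q_{j'}$ inside $g_j$, then gives
\[
\sum_j p_j e^{g_j} \le \sum_{j'} q_{j'}\sum_j p_j \exp\bigl(3C\theta'\epsilon/(\epsilon + \|a_j - a_{j'}\|)\bigr),
\]
which reduces the problem to bounding the single-source quantity $\sum_j p_j \exp\bigl(3C\theta'\epsilon/(\epsilon + \|a_j - a_{j'}\|)\bigr)$ uniformly in $j'$.

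Since the trapezoids have diameter $O(\epsilon)$, this discrete sum is comparable to the continuous integral $\frac{1}{4\pi}\int_{S^2}\exp(\kappa/(\epsilon + \|a_{j'}-y\|))\, da(y)$ with $\kappa := 3C\theta'\epsilon$. The substitution $r = \|a_{j'}-y\| = 2\sin(\theta/2)$ in polar coordinates converts it to $\frac{1}{2}\int_0^2 e^{\kappa/(\epsilon+r)}\, r\, dr$, and the elementary inequality $e^x - 1 \le x e^x$ gives
\[
\tfrac{1}{2}\int_0^2 e^{\kappa/(\epsilon+r)}\, r\, dr \le 1 + \tfrac{1}{2}\kappa \int_0^2 e^{\kappa/(\epsilon+r)}\, dr \le 1 + \kappa\, e^{\kappa/\epsilon} \le 1 + C\, e^{C\theta'}\epsilon.
\]
Taking logarithms yields $F(q) \le C\, e^{C\theta'}\epsilon$ uniformly in $q$, so $\max_m \P(M=m)e^{3\theta' T_1(m)} \le \exp(C n\, e^{C\theta'}\epsilon)$, and combining with the bound on $|P_n|$ produces the stated inequality. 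The main technical obstacle is the passage from the discrete sum $\sum_j p_j \exp(3C\theta'\epsilon/(\epsilon+\|a_j-a_{j'}\|))$ to the continuous integral on $S^2$: the integrand is smooth away from $a_{j'}$ but attains values of order $e^{C\theta'}$ within an $O(\epsilon)$-neighborhood of $a_{j'}$, so one must check that this localized near-singularity contributes only an $O(\epsilon^2 e^{C\theta'})$ discretization error, which is harmlessly absorbed into the leading $C\,e^{C\theta'}\epsilon$ term.
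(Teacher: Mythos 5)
Your argument is correct, and it reaches the stated bound by a genuinely different route at the core step. You share with the paper the outer skeleton: sum over multinomial types, absorb the polynomially many types into the $\exp(C\epsilon^{-2}\ln n)$ factor, and reduce to showing that the per-type exponent $-D(q\Vert p)+3C\theta'\epsilon\, Q(q)$ is $O(e^{C\theta'}\epsilon)$ uniformly over the simplex (your method-of-types bound $\binom{n}{m}\prod_j p_j^{m_j}\le e^{-nD(q\Vert p)}$ is a cleaner substitute for the paper's Stirling estimate). Where you diverge is in handling the supremum over $q$: the paper locates the maximizer of $f(x)=C_0\theta'\epsilon\sum_{j,j'}x_jx_{j'}/(\epsilon+\|a_j-a_{j'}\|)-\sum_j x_j\ln(x_j/p_j)$ by noting the boundary blow-up of the entropy gradient, extracts $\hat x_j\le p_je^{2C_0\theta'}$ from the Lagrange conditions, and then bounds $\sum_{j,j'}p_jp_{j'}/(\epsilon+\|a_j-a_{j'}\|)$ by counting pairs of trapezoids in dyadic distance shells. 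You instead apply the Gibbs/Donsker--Varadhan inequality with $g_j=3C\theta'\epsilon\sum_{j'}q_{j'}/(\epsilon+\|a_j-a_{j'}\|)$ (valid even though $g$ depends on $q$, since the inequality holds for each fixed pair) and a second Jensen step, which eliminates any analysis of the maximizer and reduces everything to the single-source exponential moment $\sum_j p_j\exp\bigl(3C\theta'\epsilon/(\epsilon+\|a_j-a_{j'}\|)\bigr)\le 1+Ce^{C\theta'}\epsilon$, proved by continuum comparison plus $e^x\le 1+xe^x$. The discretization issue you flag is genuinely benign: only $O(1)$ trapezoids lie within $O(\epsilon)$ of $a_{j'}$, each of area $O(\epsilon^2)$ with integrand at most $e^{3C\theta'}$, and for the remaining trapezoids $\epsilon+\|a_j-a_{j'}\|$ is comparable to $\epsilon+\|y-a_{j'}\|$ for $y\in B_j$ up to a constant absorbed into $\kappa$. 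The trade-off: the paper's route keeps everything discrete and elementary but needs the interior-maximizer/Lagrange analysis, while yours is structurally slicker (no optimization over $q$ at all) at the cost of invoking the variational characterization of relative entropy and a short continuum-approximation check.
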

\begin{proof}
Note that 
\begin{align*}
\E(e^{3\theta' T_1}) &= \sum_{m\in P_n}  \frac{n!\prod_{j=1}^L p_j^{m_j}}{\prod_{j=1}^Lm_j!} \exp\biggl(\frac{3C\theta' \epsilon}{n}\sum_{1\le j,j'\le L} \frac{m_j m_{j'}}{\epsilon + \|a_{j} - a_{j'}\|}\biggr).
\end{align*}
By Stirling's formula,
\begin{align}\label{stirling}
\frac{n!}{\prod_{j=1}^Lm_j!} &\le C_1 e^{C_2 \ln n + C_3 L} \frac{n^n}{\prod_{j=1}^L m_j^{m_j}} =  C_1 e^{C_2 \ln n + C_3 L} \prod_{j=1}^L (m_j/n)^{-m_j}.
\end{align}
Combining the above bounds (and recalling that $L\le C\epsilon^{-2}$), we get
\begin{align*}
\E(e^{3\theta' T_1})  &\le C_1 e^{C_2 \ln n + C_3 \epsilon^{-2}}  \sum_{m\in P_n} \exp\biggl(\frac{C\theta' \epsilon}{n}\sum_{1\le j,j'\le L} \frac{m_j m_{j'}}{\epsilon + \|a_{j} - a_{j'}\|}\\
&\hskip2in - \sum_{j=1}^L m_j \ln \frac{m_j/n}{p_j}\biggr).
\end{align*}
Since the size of $P_n$ is $\le (n+1)^L$, this shows that 
\begin{align*}
\E(e^{3\theta' T_1})  &\le C_1 e^{C_2  \epsilon^{-2}\ln n}  \max_{m\in P_n} e^{n f(m_1/n, \ldots, m_L/n)}, 
\end{align*}
where $f$ is defined as 
\begin{align*}
f(x_1,\ldots,x_n) := C_0\theta' \epsilon\sum_{1\le j,j'\le L} \frac{x_j x_{j'}}{\epsilon + \|a_{j} - a_{j'}\|} - \sum_{j=1}^L x_j \ln \frac{x_j}{p_j}
\end{align*}
for some suitable constant $C_0$. Consequently, if 
\[
\Delta_L := \{x = (x_1,\ldots,x_L)\in [0,1]^L: x_1+\cdots+x_L = 1\},
\]
then 
\begin{align}\label{t1main}
\E(e^{3\theta' T_1})  &\le C_1 e^{C_2  \epsilon^{-2}\ln n}  \max_{x\in \Delta_L} e^{n f(x)}.
\end{align}
Now, note that 
\begin{align*}
\frac{\partial f}{\partial x_j} &= 2C_0 \theta' \epsilon \sum_{j'=1}^L \frac{x_{j'}}{\epsilon + \|a_{j} - a_{j'}\|} - \ln \frac{x_j}{p_j}-L.
\end{align*}
If any $x_j$ tends to $0$, the corresponding partial derivative tends to $\infty$. Thus, $f$ attains its maximum value at one or more interior points of the simplex $\Delta_L$. Let $\hat{x}$ be such a point. Then by the method of Lagrange multipliers, we see that the following set of equations must be satisfied for some $\lambda \in \R$:
\[
\lambda + 2C_0 \theta' \epsilon \sum_{j'=1}^L \frac{\hat{x}_{j'}}{\epsilon + \|a_{j} - a_{j'}\|} = \ln \frac{\hat{x}_j}{p_j}, \ \ \text{ for } j=1,\ldots,L.
\]
This implies, in particular, that $\lambda \le \ln(\hat{x}_j/p_j)$ for each $j$. Rewriting this as $p_j e^\lambda \le \hat{x}_j$ and summing over $j$ on both sides, we get $e^\lambda \le 1$. Thus, exponentiating both sides of the above identity, we get
\begin{align}\label{hatxupp}
\hat{x}_j &= p_j e^\lambda \exp\biggl(2C_0 \theta' \epsilon \sum_{j'=1}^L \frac{\hat{x}_{j'}}{\epsilon + \|a_{j} - a_{j'}\|}\biggr) \le p_j e^{2C_0\theta'}. 
\end{align} 
By Jensen's inequality,
\[
\sum_{j=1}^L \hat{x}_j \ln \frac{\hat{x}_j}{p_j} = -\sum_{j=1}^L \hat{x}_j \ln \frac{p_j}{\hat{x}_j} \ge -\ln \biggl(\sum_{j=1}^L \hat{x}_j \frac{p_j}{\hat{x}_j}\biggr) = 0.
\]
Thus, by the inequality \eqref{hatxupp}, we get
\begin{align}\label{fhatx}
f(\hat{x}) &\le C_0\theta' \epsilon\sum_{1\le j,j'\le L} \frac{\hat{x}_j \hat{x}_{j'}}{\epsilon + \|a_{j} - a_{j'}\|}\notag \\
&\le C_0\theta' e^{4C_0\theta'} \epsilon\sum_{1\le j,j'\le L} \frac{p_j p_{j'}}{\epsilon + \|a_{j} - a_{j'}\|}.
\end{align}
Note that $p_j p_{j'}\le C\epsilon^4$. Next, note that  the number of $(j,j')$ such that $ \|a_{j} - a_{j'}\|$ lies between $r\epsilon$ and $(r+1)\epsilon$ for some nonnegative integer $r$ is bounded by $Cr\epsilon^{-2}$, and this number is zero if $r> C_1/\epsilon$. Thus,
\begin{align*}
\sum_{1\le j,j'\le L} \frac{p_j p_{j'}}{\epsilon + \|a_{j} - a_{j'}\|} &\le \sum_{0\le r\le C_1/\epsilon} \frac{C_2 \epsilon^4r\epsilon^{-2}}{(r+1)\epsilon}\le C_3.
\end{align*}
Using this in equation \eqref{fhatx}, we get that 
\[
f(\hat{x}) \le C_1e^{C_2\theta'}\epsilon.
\]
Finally, using this in equation \eqref{t1main} completes the proof.
\end{proof}

\begin{lmm}\label{t2lemma}
We have 
\[
\E(e^{3\theta' T_2})\le e^{C_1e^{C_2\theta'} \epsilon n}.
\]
\end{lmm}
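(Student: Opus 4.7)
The plan is to exploit the fact that $T_2$ is linear in the multinomial vector $M$, which means $\E(e^{3\theta' T_2})$ can be computed in closed form as the $n$-th power of a single-trial moment generating function. Setting
\[
\beta_{j'} := 3C\theta'\epsilon\sum_{j=1}^k\frac{1}{\epsilon + \|a_j - a_{j'}\|}, \quad 1\le j'\le L,
\]
we have $3\theta' T_2 = \sum_{j'=1}^L \beta_{j'} M_{j'}$. Since $M\sim\mathrm{Multinomial}(n;\, p_1,\ldots,p_L)$ with $p_{j'} = c_{j'}/4\pi$, its moment generating function yields
\[
\E(e^{3\theta' T_2}) = \Bigl(\sum_{j'=1}^L p_{j'} e^{\beta_{j'}}\Bigr)^n.
\]

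Next I would establish two estimates on the $\beta_{j'}$'s. The pointwise bound $\beta_{j'}\le 3Ck\theta'$ is immediate from $1/(\epsilon + \|a_j - a_{j'}\|)\le 1/\epsilon$. The averaged bound
\[
\sum_{j'=1}^L p_{j'}\beta_{j'} \le C_1\theta'\epsilon
\]
is obtained by swapping the order of summation: for each fixed $j$, using that $\|a_j - y\|$ is comparable (up to a universal multiplicative constant) to $\|a_j - a_{j'}\|$ for $y\in B_{j'}$ whenever $\|a_j - a_{j'}\|\ge C_0\epsilon$, and dealing with the remaining $O(1)$ indices near $x_j$ separately, one shows that $\sum_{j'=1}^L p_{j'}/(\epsilon + \|a_j - a_{j'}\|)$ is bounded by a constant multiple of the spherical integral $(4\pi)^{-1}\int_{S^2}da(y)/(\epsilon + \|a_j - y\|)$. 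Passing to spherical coordinates centered at $a_j$ and substituting $u = 2\sin(\theta/2)$ shows
\[
\int_{S^2}\frac{da(y)}{\epsilon + \|a_j - y\|} = 2\pi\int_0^\pi\frac{\sin\theta\, d\theta}{\epsilon + 2\sin(\theta/2)} = 2\pi\int_0^2\frac{u\, du}{\epsilon + u}\le 4\pi,
\]
which is uniformly bounded as $\epsilon\to 0$.

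Combining these two estimates via the elementary inequality $e^x - 1\le xe^x$ for $x\ge 0$ gives
\[
\sum_{j'=1}^L p_{j'}\bigl(e^{\beta_{j'}}-1\bigr) \le e^{3Ck\theta'}\sum_{j'=1}^L p_{j'}\beta_{j'} \le C_1 \theta' e^{3Ck\theta'}\epsilon \le C_1' e^{C_2\theta'}\epsilon,
\]
after absorbing the linear factor $\theta'$ into the exponential. Using $\sum_{j'} p_{j'} = 1$ and $(1+y)^n\le e^{ny}$, this yields
\[
\E(e^{3\theta' T_2}) \le \bigl(1 + C_1' e^{C_2\theta'}\epsilon\bigr)^n \le e^{C_1' e^{C_2\theta'}\epsilon n},
\]
which is the desired bound.

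This argument is significantly shorter than the proof of Lemma \ref{t1lemma}, because the linearity of $T_2$ in $M$ lets us bypass the Stirling-formula and Lagrange-multiplier analysis required there; the multinomial MGF does everything at once. The one delicate point worth emphasizing is that the averaged bound carries no $|\ln\epsilon|$ factor, which is essential for the final exponent $\epsilon n$ (rather than $\epsilon|\ln\epsilon|n$) stated in the lemma. This comes from the integrability of the first-power singularity $1/\|a_j - y\|$ on $S^2$, reflected in the fact that $\int_0^2 u/(\epsilon + u)\, du$ stays bounded as $\epsilon\to 0$; a second-power singularity would have produced a logarithm and forced a weaker estimate.
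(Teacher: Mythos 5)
Your proof is correct and follows essentially the same route as the paper: both exploit the independence of $U_1,\ldots,U_n$ to write $\E(e^{3\theta' T_2})$ as the $n$-th power of a single-trial expectation, bound the exponent uniformly by $C\theta'$, linearize via $e^x-1\le xe^x$ (the paper's $e^y\le 1+e^xy$), and conclude with $(1+y)^n\le e^{ny}$. The only (minor) difference is how the averaged quantity $\sum_{j'} p_{j'}/(\epsilon+\|a_j-a_{j'}\|)$ is shown to be $O(1)$ — you compare it to the spherical integral $\int_{S^2} da(y)/(\epsilon+\|a_j-y\|)\le 4\pi$, while the paper uses $p_{j'}\le C\epsilon^2$ together with an annular counting bound $\sum_{j'}(\epsilon+\|a_j-a_{j'}\|)^{-1}\le C\epsilon^{-2}$; both give the same $O(\epsilon)$ per-trial estimate.
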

\begin{proof}
Let $\xi_{j,l} := 1$ if $U_l\in B_j$ and $0$ otherwise. Then note that 
\begin{align*}
T_2 = C\epsilon \sum_{j=1}^k\sum_{j'=1}^L \sum_{l=1}^n \frac{\xi_{j',l}}{\epsilon + \|a_j - a_{j'}\|}.
\end{align*}
By the independence of $U_1,\ldots, U_n$, this gives
\begin{align}
\E(e^{3\theta' T_2}) &= \prod_{l=1}^n \E\biggl[\exp\biggl(C\theta'\epsilon \sum_{j=1}^k\sum_{j'=1}^L \frac{\xi_{j',l}}{\epsilon + \|a_j - a_{j'}\|}\biggr)\biggr]\notag \\
&= \prod_{l=1}^n \biggl[\sum_{j'=1}^L p_{j'} \exp\biggl(C\theta'\epsilon \sum_{j=1}^k \frac{1}{\epsilon + \|a_j - a_{j'}\|}\biggr)\biggr].\label{t2mid}
\end{align}
Now, note that the term within the exponent above is bounded by $C_1\theta'$. Now, for any $x\ge 0$, $e^y\le 1+e^x y$ for all $y\in [0,x]$. Thus, 
\begin{align*}
\sum_{j'=1}^L p_{j'} \exp\biggl(C\theta' \epsilon \sum_{j=1}^k \frac{1}{\epsilon + \|a_j - a_{j'}\|}\biggr) &\le \sum_{j'=1}^L p_{j'} \biggl(1+ C_1e^{C_2\theta'}\epsilon \sum_{j=1}^k \frac{1}{\epsilon + \|a_j - a_{j'}\|}\biggr)\\
&= 1 + C_1e^{C_2\theta'}\epsilon \sum_{j'=1}^L p_{j'} \biggl( \sum_{j=1}^k \frac{1}{\epsilon + \|a_j - a_{j'}\|}\biggr)\\
&\le 1 + C_3e^{C_2\theta'}\epsilon^3 \sum_{j'=1}^L \sum_{j=1}^k \frac{1}{\epsilon + \|a_j - a_{j'}\|}. 
\end{align*}
As in the proof of Lemma \ref{t1lemma}, it is easy to see that the double sum in the last line is bounded by $C\epsilon^{-2}$. Thus, the quantity on the left is bounded by $1+C_1 e^{C_2\theta'}\ep$. Using this in equation \eqref{t2mid} completes the proof.
\end{proof}

\begin{lmm}\label{t3lemma}
We have
\[
\E(e^{3\theta' T_3}) \le e^{C_1 e^{C_2\theta'} \epsilon^2 n}.
\]
\end{lmm}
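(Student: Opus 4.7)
The plan is to rewrite $T_3$ as a sum of i.i.d.~Bernoulli contributions and then apply the standard moment generating function bound for Bernoulli sums. Recall $T_3 = C\sum_{j=1}^k \sum_{j'\in N_j} M_{j'}$, where $M_{j'} = \#\{l: U_l\in B_{j'}\}$ counts the points falling into trapezoid $B_{j'}$.

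First, I would write
\[
T_3 = C \sum_{l=1}^n \sum_{j=1}^k \sum_{j'\in N_j} \mathbf{1}\{U_l\in B_{j'}\}.
\]
For $\epsilon$ small enough, the sets $N_1,\ldots,N_k$ are pairwise disjoint (this uses only that $x_1,\ldots,x_k$ are separated by a fixed positive distance while each $N_j$ consists of indices $j'$ with $\|a_j-a_{j'}\|\le C_0\epsilon$). Since the trapezoids $B_{j'}$ themselves are disjoint, the inner double sum is at most $1$, and it equals $\eta_l := \mathbf{1}\{U_l\in \mathcal{B}\}$, where $\mathcal{B} := \bigcup_{j=1}^k \bigcup_{j'\in N_j} B_{j'}$. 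Because each $N_j$ has size bounded by a universal constant and each $B_{j'}$ has area $c_{j'}\le C\epsilon^2$, we have $p := \P(U_l\in \mathcal{B})\le C_0 \epsilon^2$ for a constant depending only on $\talpha_1,\ldots,\talpha_k,x_1,\ldots,x_k$.

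Next, since $U_1,\ldots,U_n$ are i.i.d., so are the $\eta_l$'s, each Bernoulli with parameter $p$. Therefore
\[
\E(e^{3\theta' T_3}) = \E\bigl(e^{3C\theta' \sum_l \eta_l}\bigr) = \bigl(1 - p + p e^{3C\theta'}\bigr)^n = \bigl(1 + p(e^{3C\theta'}-1)\bigr)^n.
\]
Applying the elementary inequality $1+x\le e^x$ and $p\le C_0\epsilon^2$ gives
\[
\E(e^{3\theta' T_3}) \le \exp\bigl(np(e^{3C\theta'}-1)\bigr)\le \exp\bigl(C_1 e^{C_2\theta'}\epsilon^2 n\bigr),
\]
as claimed. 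There is no real obstacle here since the geometric fact that the $N_j$'s are eventually disjoint and that $|\mathcal{B}|=O(\epsilon^2)$ is immediate, and the rest is the one-line Bernoulli MGF computation; the lemma is easier than its siblings \ref{t1lemma}--\ref{t2lemma} precisely because no spatial weight $1/(\epsilon+\|a_j-a_{j'}\|)$ appears inside the sum.
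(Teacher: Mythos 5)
Your proof is correct and follows essentially the same route as the paper: factor over the i.i.d.\ points $U_l$, use the disjointness of the $N_j$'s (for small $\epsilon$) to see that each point contributes a Bernoulli indicator with success probability $\sum_{j}\sum_{j'\in N_j}p_{j'}\le C\epsilon^2$, compute the Bernoulli moment generating function, and finish with $1+x\le e^x$. Nothing further is needed.
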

\begin{proof}
Let $\xi_{j,l}$ be as in the proof of Lemma \ref{t2lemma}, so that
\begin{align*}
T_3 := C\sum_{j=1}^k \sum_{j'\in N_j} \sum_{l=1}^n \xi_{j',l}.
\end{align*}
By the independence of $U_1,\ldots,U_n$ (and assuming that $\epsilon$ is so small that $N_1,\ldots,N_k$ are disjoint), this gives
\begin{align*}
\E(e^{3\theta' T_3}) &\le \prod_{l=1}^n\E\biggl[\exp\biggl(C\theta' \sum_{j=1}^k \sum_{j'\in N_j} \xi_{j',l}\biggr)\biggr]\\
&= \biggl(\sum_{j=1}^k \sum_{j'\in N_j}p_{j'}e^{C\theta'} +  1- \sum_{j=1}^k \sum_{j'\in N_j}p_{j'}\biggr)^n \\
&\le (1 + C_1 e^{C_2\theta'}\epsilon^2)^n\le e^{C_1 e^{C_2\theta'} \epsilon^2 n}.
\end{align*}
This completes the proof.
\end{proof}
\begin{lmm}\label{t0lemma}
Define
\begin{align}\label{sthetadef}
S_\theta(\rho) := L(\rho) + 2\beta  R(\rho) + \frac{H(\rho)}{\theta}.
\end{align}
Then 
\[
\E(e^{\theta g(\rho_M)}) \le C_1 e^{C_2  \epsilon^{-2}\ln n}\sup_{\rho\in \cp'} e^{-n \theta S_\theta(\rho) - n\ln 4\pi}.
\]
\end{lmm}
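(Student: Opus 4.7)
The plan is to expand $\E(e^{\theta g(\rho_M)})$ via the multinomial distribution of $M$, apply a Stirling-type bound to the multinomial coefficient as in Lemma \ref{t1lemma}, and identify the resulting exponent with $-nH(\rho_m) - n\ln(4\pi) + \theta g(\rho_m)$. Writing $p_j = c_j/(4\pi)$, a direct computation using the fact that $\rho_m$ takes the constant value $m_j/(nc_j)$ on $B_j$ shows that
\[
\biggl(\prod_{j=1}^L p_j^{m_j}\biggr)\biggl(\prod_{j=1}^L(m_j/n)^{-m_j}\biggr) = e^{-nH(\rho_m) - n\ln(4\pi)}.
\]
Combining this with the Stirling bound \eqref{stirling} and the trivial estimate $|P_n| \le (n+1)^L$ (with $L \le C\epsilon^{-2}$) folds all combinatorial prefactors into a single factor of the form $C_1 e^{C_2\epsilon^{-2}\ln n}$, and yields
\[
\E(e^{\theta g(\rho_M)}) \le C_1 e^{C_2\epsilon^{-2}\ln n}\max_{m\in P_n} e^{-nH(\rho_m) - n\ln(4\pi) + \theta g(\rho_m)}.
\]

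To finish, I will compare $\theta g(\rho_m) - nH(\rho_m)$ with $-n\theta S_\theta(\rho_m)$. A direct substitution from the definitions shows that
\[
\theta g(\rho_m) - nH(\rho_m) + n\theta S_\theta(\rho_m) = -2\theta R(\rho_m)(b_n^2 n^2 - n\beta) = -\frac{2n\theta\beta}{n-1}R(\rho_m).
\]
The key observation is that since $G(x,y) = -\ln\|x-y\| - \tfrac12 + \ln 2 \ge -\tfrac12$ for all $x,y \in S^2$ (because $\|x-y\|\le 2$), we have $R(\rho_m) \ge -1/2$ for every probability density $\rho_m$, so the error term is bounded above by $n\theta\beta/(n-1)$, a quantity that is $O(1)$ in $n$ and can be absorbed into $C_1$. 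Finally, extending the maximum over $m$ to a supremum over all $\rho \in \cp'$ (noting that each step function $\rho_m$ lies in $\cp'$ since $H(\rho_m)$ is manifestly finite) delivers the claimed bound.

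The step likely to require the most care is the comparison between $\theta g(\rho_m) - nH(\rho_m)$ and $-n\theta S_\theta(\rho_m)$: a naive Jensen-type bound $|R(\rho_m)| \le C + H(\rho_m)$ would produce an extra $H$-dependent error which could not be absorbed into the $-nH(\rho_m)$ already present in $-n\theta S_\theta(\rho_m)$ without further modifying $\theta$. It is essential to exploit the cheap unconditional lower bound $G \ge -1/2$ on the entire sphere so that $R(\rho_m) \ge -1/2$ uniformly in $\rho_m$, making the discrepancy between $2b_n^2 n^2$ and $2n\beta$ genuinely harmless.
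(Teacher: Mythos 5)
Your argument is correct, and the combinatorial skeleton (multinomial expansion, the Stirling bound \eqref{stirling}, the identity $\prod_j p_j^{m_j}(m_j/n)^{-m_j}=e^{-nH(\rho_m)-n\ln 4\pi}$, the bound $|P_n|\le (n+1)^L$ with $L\le C\epsilon^{-2}$, and passing from the max over $m\in P_n$ to the sup over $\cp'$) is exactly the paper's. The one place where you diverge is the treatment of the mismatch between $2b_n^2n^2$ and $2\beta n$: the paper observes that $R(\rho)\ge 0$ for every probability density (since $\int_{S^2}G(x,y)\,da(y)=0$, one has $R(\rho)=R(\rho-\tfrac{1}{4\pi})$, and $G$ is a positive semidefinite kernel, as used again in Lemma \ref{convlmm2}), so that $b_n^2n^2=\beta n^2/(n-1)\ge \beta n$ gives $g(\rho_m)\le -nL(\rho_m)-2\beta n R(\rho_m)$ with no error term at all; you instead use the cruder pointwise bound $G\ge -\tfrac12$ to get $R(\rho_m)\ge-\tfrac12$ and absorb an additive error $\tfrac{n\theta\beta}{n-1}\le 2\theta\beta$ into the prefactor. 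Your computation of that error, $\theta g(\rho_m)-nH(\rho_m)+n\theta S_\theta(\rho_m)=-\tfrac{2n\theta\beta}{n-1}R(\rho_m)$, is right, and your closing remark about why a Jensen-type bound $|R|\le C+H$ would be unusable is well taken. The only cost of your route is cosmetic: the factor $e^{2\theta\beta}$ makes your $C_1$ depend on $\theta$, which is slightly weaker than the lemma as stated under the paper's convention that $C_1,C_2$ depend only on the $\talpha_j$'s and $x_j$'s; this is harmless downstream, since in Lemma \ref{semiupperbound} one takes $\tfrac1n\log$ and $n\to\infty$ at fixed $\theta$, so any $n$-independent constant disappears. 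If you want the statement exactly as written, replace $G\ge-\tfrac12$ by the positivity $R\ge 0$.
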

\begin{proof}
Note that 
\begin{align*}
\E(e^{\theta g(\rho_M)}) &= \sum_{m\in P_n}\frac{n!\prod_{j=1}^L p_j^{m_j}}{\prod_{j=1}^Lm_j!} e^{\theta g(\rho_m)}.
\end{align*}
Applying the bound from equation \eqref{stirling}, this gives
\begin{align*}
\E(e^{\theta g(\rho_M)}) &\le  C_1 e^{C_2 \ln n + C_3 \epsilon^{-2}}\sum_{m\in P_n} \exp\biggl(\theta g(\rho_m)  - \sum_{j=1}^L m_j \ln \frac{m_j/n}{p_j}\biggr).
\end{align*}
Now, note that 
\begin{align*}
\sum_{j=1}^L m_j \ln \frac{m_j/n}{p_j} &= n \sum_{j=1}^L c_j\frac{m_j}{nc_j}\biggl( \ln \frac{m_j}{nc_j} + \ln 4\pi\biggr)\\
&= n \int_{S^2} \rho_m(x)(\ln \rho_m(x) + \ln 4\pi) da(x)\\
&= n H(\rho_m) + n \ln 4\pi.
\end{align*}
Also, note that
\begin{align*}
g(\rho_m) &= -n L(\rho_m) - 2b_n^2n^2 R(\rho_m)\\
&= -n L(\rho_m) -\frac{2\beta n^2}{n-1} R(\rho_m)\le -nL(\rho_m) - 2\beta n R(\rho_m),
\end{align*}
where the inequality holds because $R(\rho_m)\ge 0$. 
Thus, we get
\begin{align*}
\E(e^{\theta g(\rho_M)})  &\le C_1 e^{C_2 \ln n + C_3 \epsilon^{-2}}|P_n|\max_{m\in P_n} e^{\theta g(\rho_m) - nH(\rho_m)-n \ln 4\pi}\\
&\le C_1 e^{C_2  \epsilon^{-2}\ln n}\max_{m\in P_n} e^{-n \theta S_\theta(\rho_m) - n\ln 4\pi}\\
&\le C_1 e^{C_2  \epsilon^{-2}\ln n}\sup_{\rho\in \cp'} e^{-n \theta S_\theta (\rho) - n\ln 4\pi}.
\end{align*}
This completes the proof.
\end{proof}

Combining Lemmas \ref{t1lemma}, \ref{t2lemma}, \ref{t3lemma} and \ref{t0lemma}, we arrive at the following result, which `almost' proves the opposite direction in Theorem \ref{semilimit}.
\begin{lmm}\label{semiupperbound}
Take any $\theta > 1$. Let $S_\theta$ be defined as in equation \eqref{sthetadef}. 
In the setting of Theorem \ref{semilimit}, we have
\begin{align*}
&\limsup_{n\to \infty} \frac{1}{n} \log C(\talpha_1/b_n,\ldots,\talpha_k/b_n; x_1,\ldots,x_k;b_n; \tmu/b_n^2)+i\pi \\
&\le 1+ \ln \tmu - \ln \beta + (\ln 4 -1 )\sum_{j=1}^k \frac{\talpha_j}{\beta^2} + \sum_{j=1}^k \frac{\talpha_j (1+\talpha_j)}{\beta^2} \ln g(\sigma(x_j))\\
&\qquad \qquad -\frac{4}{\beta}\sum_{1\le j<j'\le k}\talpha_j\talpha_{j'}G(x_j, x_{j'}) - \inf_{\rho\in \cp'} S_\theta(\rho).
\end{align*}
\end{lmm}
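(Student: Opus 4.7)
My plan is to begin from the explicit formula given by Theorem~\ref{intthm} at the scaled parameters $\alpha_j = \talpha_j/b_n$, $b = b_n$, $\mu = \tmu/b_n^2$, under which one easily checks $w=n$. This reduces the problem to an asymptotic upper bound on $(1/n)\log I_n$, where $I_n$ denotes the $n$-fold spherical integral appearing in that formula (the same $I_n$ treated from below in Lemma~\ref{lowerbound}). Writing $I_n = (4\pi)^n\,\E(e^{f(U_1,\ldots,U_n)})$ for $U_1,\ldots,U_n$ iid uniform on $S^2$, I would then invoke the key inequality \eqref{fu1un}, which is just H\"older's inequality with exponents $\theta,3\theta',3\theta',3\theta'$ applied to the splitting $f(U_1,\ldots,U_n) \le g(\rho_M) + T_1 + T_2 + T_3 + C\log(1/\epsilon)$ from Lemma~\ref{glemma3}, after conditioning on the cell labels $J_1,\ldots,J_n$ induced by the trapezoidal partition $B_1,\ldots,B_L$.

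Substituting the four bounds of Lemmas~\ref{t0lemma}--\ref{t3lemma} into the resulting product, taking $(1/n)\log$, and sending $n\to\infty$ first disposes of every term of the form $\epsilon^{-2}(\log n)/n$; the residue is a contribution $Ce^{C\theta'}\epsilon/\theta'$ coming from the $T_i$ estimates, plus the integral bound $-\inf_{\rho\in\cp'} S_\theta(\rho)$ extracted from Lemma~\ref{t0lemma}. Letting $\epsilon\downarrow 0$ along the admissible discretization scale $\epsilon = \pi/(2q)$ then drives the residual to zero and produces the desired upper bound on $\limsup_n (1/n)\log I_n$. The remaining deterministic prefactor from Theorem~\ref{intthm} is evaluated by routine asymptotics: Stirling's formula on $n!\,b_n^{2n}$ produces $\ln\beta - 1$; the scaling relation $n b_n^2 \to \beta$ converts $\chi(\talpha_j/b_n)(b_n - \talpha_j/b_n)$ into the $(1-\ln 4)\talpha_j^2/\beta^2$ piece and $\Delta_{\talpha_j/b_n}$ into the $\talpha_j(1+\talpha_j)\ln g(\sigma(x_j))/\beta^2$ piece per $j$; and the $-4/b_n^2$ prefactor in the two-point interaction yields $-(4/\beta)\sum_{j<j'}\talpha_j\talpha_{j'}G(x_j,x_{j'})$. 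Adding these constants to the integral bound gives exactly the inequality asserted.

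The principal obstacle is not any individual estimate---all the analytic work is already packaged into Lemmas~\ref{glemma3}--\ref{t0lemma}---but the discipline of taking limits in the correct order. The $\epsilon^{-2}\log n$ discretization error appearing in both Lemma~\ref{t0lemma} and Lemma~\ref{t1lemma} must be killed by sending $n\to\infty$ \emph{before} the $Ce^{C\theta'}\epsilon$ error generated by the sharp-cutoff analysis of $T_1, T_2, T_3$ can be driven to zero by $\epsilon\downarrow 0$; reversing this order would blow up the intermediate constant $e^{C\theta'}\epsilon^{-2}\log n$. Preserving the explicit $\theta$-dependence on the right-hand side is equally important, because Theorem~\ref{semilimit} will ultimately be obtained by combining the bound proved here with Lemma~\ref{lowerbound} and a separate $\theta\downarrow 1$ passage, during which $S_\theta(\rho)$ continuously deforms into $S(\rho)$ and the infima converge on $\cp'$.
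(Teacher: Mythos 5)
Your proposal is correct and follows essentially the same route as the paper: write $I_n$ (up to the $(4\pi)^n$ factor absorbed by the $-n\ln 4\pi$ term in Lemma \ref{t0lemma}) as $\E(e^{f(U_1,\ldots,U_n)})$, apply the H\"older bound \eqref{fu1un} together with Lemmas \ref{t1lemma}--\ref{t3lemma} and \ref{t0lemma}, take $n\to\infty$ before $\epsilon\downarrow 0$, and combine with the prefactor asymptotics already worked out in the proof of Lemma \ref{lowerbound}. Nothing essential is missing.
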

\begin{proof}
Let $I_n$ be the integral from the proof of Lemma \ref{lowerbound}. Lemmas \ref{t1lemma}, \ref{t2lemma}, \ref{t3lemma} and \ref{t0lemma}, together with the inequality \eqref{fu1un}, give
\begin{align*}
\limsup_{n\to\infty} \frac{1}{n}\ln I_n &\le \frac{C_1}{\theta'}e^{C_2\theta'}\epsilon - \inf_{\rho\in \cp'} S_\theta(\rho).
\end{align*}
Taking $\epsilon \to 0$ completes the proof.
\end{proof}

Thus, to complete the proof of Theorem \ref{semilimit}, we need to show that $\theta$ can be taken to $1$ in Lemma \ref{semiupperbound} without spoiling the result. It turns out that this is related to the existence of a minimizer of $S$, so we proceed to prove both things at once. First let us generalize the definition of $H$ by defining, for any finite measure $\mu$ on $S^2$ and any measurable function $f:S^2 \to [0,\infty)$, 
\[
H_\mu(f) := \int_{S^2} f(x)\ln f(x) d\mu(x).
\]
It is easy to see that the above integral is well-defined and takes value in $(-\infty,\infty]$, because the function $x\ln x$ is bounded below on $[0,\infty]$ and $\mu$ is a finite measure. The following lemma gives a variational formula for $H_\mu(\rho)$.
\begin{lmm}\label{convlmm1}
For any finite measure $\mu$ on $S^2$ and any $\rho \in\cp$, 
\[
H_\mu(\rho) = \sup_{g\in C(S^2)} \biggl(\int_{S^2} \rho(x) g(x) d\mu(x) - \int_{S^2} e^{g(x)-1} d\mu(x)\biggr).
\]
\end{lmm}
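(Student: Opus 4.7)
The identity is a Gibbs-type variational principle for the entropy $H_\mu$, with formal optimizer $g^\star = 1 + \log \rho$. My plan is a two-step argument: establish the ``$\sup \leq H_\mu(\rho)$'' direction via a pointwise Fenchel--Young inequality, and then achieve approximate equality by a truncation-plus-Lusin argument.

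The easy direction rests on the pointwise inequality
\[
t y - e^{y-1} \leq t \log t \qquad (t \geq 0,\ y \in \R),
\]
with $0 \log 0 := 0$, which expresses the convex conjugacy between $e^{y-1}$ and $t \log t$ and is attained at $y = 1 + \log t$. First I would verify this by a one-variable calculation, then apply it pointwise with $t = \rho(x)$, $y = g(x)$, and integrate against $\mu$ to obtain $\int \rho g\, d\mu - \int e^{g-1}\, d\mu \leq H_\mu(\rho)$ for every $g \in C(S^2)$.

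For the reverse direction, my plan is to approximate the formal optimizer in two stages. First I would truncate: define the bounded measurable function
\[
g_n := 1 + \max\bigl(-n,\ \min(n,\ \log(\rho \vee e^{-n}))\bigr),
\]
which coincides with $1 + \log \rho$ on $\{e^{-n} \leq \rho \leq e^n\}$. Splitting the integrand into three regions (a middle region where it equals $\rho \log \rho$ exactly; a large-$\rho$ tail, where the integrand is $(n+1)\rho - e^n \geq 0$; and a small-$\rho$ tail, whose contribution is uniformly small), and using that $\rho \log \rho \geq -1/e$ to control the negative part, a monotone-convergence argument applied to each region should yield $\int(\rho g_n - e^{g_n - 1})\, d\mu \to H_\mu(\rho)$, whether this limit is finite or $+\infty$. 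Second, since $\mu$ is a finite Borel measure on the compact metric space $S^2$, Lusin's theorem will let me replace each bounded measurable $g_n$ by a continuous $\tilde g_n$ with $\|\tilde g_n\|_\infty \leq \|g_n\|_\infty \leq n+1$ and $\mu(\{g_n \neq \tilde g_n\}) < \delta$ for any prescribed $\delta > 0$. The exponential integrals will then differ by at most $2 e^n \delta$, and---when $H_\mu(\rho) < \infty$, so that $\rho \in L^1(\mu)$ via the pointwise bound $\rho \leq e \vee \rho \log \rho$---the deviation $\bigl|\int \rho(g_n - \tilde g_n)\, d\mu \bigr|$ is controlled by $2(n+1) \int_{\{g_n \neq \tilde g_n\}} \rho\, d\mu$, which tends to zero as $\delta \to 0$ by the absolute continuity of $A \mapsto \int_A \rho\, d\mu$.

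The delicate point will be the Lusin step in the case $H_\mu(\rho) = +\infty$: here $\rho$ need not lie in $L^1(\mu)$, so the absolute-continuity bound above is unavailable. I expect to handle this by first choosing $n$ large enough that $\int(\rho g_n - e^{g_n - 1})\, d\mu$ exceeds any prescribed threshold $M$, and then approximating the bounded measurable $g_n$ by continuous functions in the $\sigma$-finite space $L^1(\rho\, d\mu + d\mu)$ (a density statement that subsumes Lusin), cutting off at $\|g_n\|_\infty$ to preserve boundedness. Sending $M \to \infty$ then forces the supremum to equal $+\infty$, completing the proof.
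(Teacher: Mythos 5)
Your overall route is the same as the paper's: the easy inequality via the pointwise conjugacy $ty-e^{y-1}\le t\log t$, and the reverse inequality by truncating the formal optimizer and invoking Lusin's theorem. (The paper clamps $\rho$ to $[1/n,n]$ and then uses $g=1+\ln(\text{Lusin-regularized }\rho_n)$; you clamp $1+\log\rho$ directly --- the same family of test functions. Your monotone-convergence step $\int(\rho g_n-e^{g_n-1})\,d\mu\to H_\mu(\rho)$ is clean and valid even when $H_\mu(\rho)=+\infty$, since the integrands increase pointwise to $\rho\log\rho$ and are uniformly bounded below, and $\mu$ is finite. Your Lusin step in the case $H_\mu(\rho)<\infty$ is also fine: there $\rho\in L^1(\mu)$, and the $2e^n\delta$ and $2(n+1)\int_{\{g_n\ne\tilde g_n\}}\rho\,d\mu$ error bounds do the job.)

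The one step that does not work as written is your treatment of $H_\mu(\rho)=+\infty$. If $\rho\notin L^1(\mu)$, then $g_n$ is not an element of $L^1(\rho\,d\mu+d\mu)$ at all --- $|g_n|=n+1$ on $\{\rho>e^n\}$, and that set carries infinite $\rho\,d\mu$-mass --- so ``approximate $g_n$ by continuous functions in $L^1(\rho\,d\mu+d\mu)$'' has no content; moreover, continuous functions need not be dense in $L^1$ of a merely $\sigma$-finite Borel measure (the measure $\rho\,d\mu$ can assign infinite mass to every open set, e.g.\ if $\mu$ charges a countable dense set on which $\rho$ blows up; recall $\rho$ is only pinned down a.e.\ with respect to the area measure, not with respect to $\mu$). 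The repair is easy, and the correct dichotomy is on whether $\rho\in L^1(\mu)$, not on whether $H_\mu(\rho)$ is finite: if $\rho\notin L^1(\mu)$, any constant $g\equiv c>0$ already gives $\int\rho g\,d\mu-\int e^{g-1}\,d\mu=+\infty$, so the supremum is $+\infty=H_\mu(\rho)$; if $\rho\in L^1(\mu)$, your finite-case Lusin estimate (which only uses absolute continuity of $A\mapsto\int_A\rho\,d\mu$, not finiteness of $H_\mu(\rho)$) applies verbatim and pushes the continuous-$g$ value above any prescribed threshold. Incidentally, the paper's own proof has the same blind spot: its dominated-convergence claim that $\int_{A_{n,m}^c}\rho\,d\mu\to0$ also presupposes $\rho\in L^1(\mu)$, so your explicit flagging of the difficulty is a point in your favor even though the specific fix you propose should be replaced.
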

\begin{proof}
Take any $g\in C(S^2)$. Let 
\[
\phi(g) := \int_{S^2} \rho(x) g(x) d\mu(x) - \int_{S^2} e^{g(x)-1} d\mu(x).
\]
An easy verification shows that for any $a\ge 0$,
\[
a\ln a = \sup_{b\in \R}(ab - e^{b-1}).
\]
Thus, for any $x\in S^2$,
\[
\rho(x) g(x) - e^{g(x)-1} \le \rho(x) \ln \rho(x).
\]
This shows that $H_\mu(\rho)\ge \sup_{g\in C(S^2)} \phi(g)$. Next, let 
\[
\rho_n(x) := 
\begin{cases}
n &\text{ if } \rho(x) > n,\\
\rho(x) &\text{ if } 1/n \le \rho(x)\le n,\\
1/n &\text{ if } \rho(x) < 1/n
\end{cases}
\]
for each positive integer $n$. Next, take any $n$, and any positive integer $m$. By Lusin's theorem, the finiteness of $\mu$, and the fact that $\rho_n$ is measurable and takes values in $[1/n, n]$, we can find a continuous function $\rho_{n,m}:S^2 \to [1/n, n]$ such that $\rho_{n,m} = \rho_n$ on a set $A_{n,m}$ whose complement satisfies $\mu(A_{n,m}^c) \le 2^{-m}$.  Let $g_{n,m} := 1+\ln \rho_{n,m}$. Then $g_{n,m}\in C(S^2)$, and 
\begin{align*}
\phi(g_{n,m}) &= \int_{S^2}\rho(x)(1+\ln \rho_{n,m}(x)) d\mu(x) - \int_{S^2}\rho_{n,m}(x) d\mu(x).
\end{align*}
Thus, if we let $g_n(x) := 1+\ln\rho_n(x)$, then 
\begin{align*}
|\phi(g_{n,m})-\phi(g_n)| &\le 2\ln n \int_{A_{n,m}^c} \rho(x) d\mu (x) + n \int_{A_{n,m}^c} d\mu(x). 
\end{align*}
Fix $n$. Since the sum of $\mu(A_{n,m}^c)$ over all $m$ is finite, almost every $x\in S^2$ belongs to at most finitely many of the $A_{n,m}^c$'s (by the Borel--Cantelli lemma). This shows that $1_{A_{n,m}^c}\to 0$ a.e.~with respect to $\mu$. Thus, by the dominated convergence theorem,
\[
\lim_{m\to \infty}  \int_{A_{n,m}^c} \rho(x) d\mu(x)  = 0.
\]
We conclude that 
\[
\lim_{m\to \infty} \phi(g_{n,m}) = \phi(g_n) = \int_{S^2}\rho(x)(1+\ln \rho_n(x)) d\mu(x) - \int_{S^2}\rho_n(x) d\mu(x).
\]
Let $\ln^+x := \max\{\ln x, 0\}$ and $\ln^-x := -\min\{\ln x, 0\}$. Then $\rho \ln^+\rho_n$ increases to $\rho \ln^+ \rho$ pointwise. Thus,
\[
\lim_{n\to\infty}\int_{S^2}\rho(x)\ln^+\rho_n(x) d\mu(x) = \int_{S^2}\rho(x)\ln^+\rho(x) d\mu(x) 
\]
by the monotone convergence theorem. Similarly, 
\begin{align*}
\lim_{n\to\infty}\int_{S^2}\rho(x)\ln^-\rho_n(x) d\mu(x) &= \int_{S^2}\rho(x)\ln^-\rho(x) d\mu(x),
\end{align*}
and $\int_{S^2} \rho_n(x)d\mu(x)\to \int_{S^2}\rho(x) d\mu(x)$. Combining, we get that
\[
\lim_{n\to \infty} \phi(g_n) = H_\mu(\rho).
\] 
Thus, 
\[
H_\mu(\rho) = \lim_{n\to \infty}\lim_{m\to\infty} H(\rho_{n,m}) = \lim_{n\to \infty}\lim_{m\to\infty}\phi(g_{n,m}).
\]
This shows that $H_\mu(\rho)\le \sup_{g\in C(S^2)} \phi(g)$. 
\end{proof}
The next lemma gives a variational formula for $R(\rho)$. Recall that $c(g)$ denotes the average value of a function $g$ on $S^2$.
\begin{lmm}\label{convlmm2}
For any $\rho\in \cp$,
\begin{align*}
R(\rho) &= \sup_{g\in C^\infty(S^2), \, c(g) = 0} \biggl(2\int_{S^2} \rho(x) g(x) da(x) +\frac{1}{2\pi} \int_{S^2} g(x)\Delta g(x) da(x) \biggr).
\end{align*}
\end{lmm}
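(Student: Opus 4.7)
The plan is to prove the identity by direct spectral computation in the real spherical harmonic basis, reducing the variational problem to an elementary concave scalar optimization mode by mode. Writing $\rho = \sum_{l,m} \hat\rho_{lm}Y_{lm}$ (with $\hat\rho_{00} = 1/\sqrt{4\pi}$ since $\int\rho\,da=1$) and $g = \sum_{l,m}\hat g_{lm}Y_{lm}$, and using the orthonormality \eqref{s2orth} and eigenvalue relation \eqref{eigeq} together with the spectral expansion
\[
G(x,y) = \sum_{l\ge 1}\sum_{m=-l}^l \frac{2\pi}{l(l+1)} Y_{lm}(x)Y_{lm}(y)
\]
(the $\lambda\uparrow 1$ limit of \eqref{gldef}, justified exactly as in Lemma \ref{glimit}), Parseval's identity gives
\[
R(\rho) = \sum_{l\ge 1,m} \frac{2\pi}{l(l+1)}\hat\rho_{lm}^2, \qquad F(g) := 2\int\rho g\,da + \frac{1}{2\pi}\int g\Delta g\,da = 2\sum_{l,m}\hat\rho_{lm}\hat g_{lm} - \frac{1}{2\pi}\sum_{l\ge 1,m}l(l+1)\hat g_{lm}^2.
\]

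For the ``$\ge$'' direction, I would exhibit an explicit extremizer $g^* := G\rho$. This lies in $C^\infty(S^2)$ since $\rho$ does (a mollification is applied first if $\rho$ is only in $L^1$, and the result extended by the monotone or dominated convergence argument of Lemma \ref{convlmm1}). A direct computation using $\Delta G\rho = -2\pi(\rho - (4\pi)^{-1})$ together with the mean-zero property $\int G\rho\,da = 0$ (which follows from $\int G(x,y)\,da(x) = 0$, itself a consequence of the $l\ge 1$ spectral expansion above) then gives
\[
F(G\rho) = 2R(\rho) + \frac{1}{2\pi}\int G\rho \cdot (-2\pi)(\rho - (4\pi)^{-1})\,da = 2R(\rho) - R(\rho) = R(\rho).
\]

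For the ``$\le$'' direction, I would first pass to Fourier truncations $g_N := \sum_{l\le N,m}\hat g_{lm}Y_{lm}$, which are themselves smooth; the rapid decay of $\hat g_{lm}$ guaranteed by smoothness of $g$ gives $F(g_N)\to F(g)$, so it suffices to bound $F(g_N)$. For each $(l,m)$ with $l\ge 1$ the scalar map $\hat g_{lm}\mapsto 2\hat\rho_{lm}\hat g_{lm} - (l(l+1)/(2\pi))\hat g_{lm}^2$ is a concave quadratic maximized at $\hat g_{lm}^* = 2\pi\hat\rho_{lm}/(l(l+1))$ with value $2\pi\hat\rho_{lm}^2/(l(l+1))$; summing these scalar optima over $l\ge 1$ produces exactly $R(\rho)$.

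The main obstacle will be the zero mode $\hat g_{00}$: it appears linearly in $F$ with coefficient $2\hat\rho_{00} > 0$ but carries no quadratic penalty, so the naive pointwise sup over scalars is unconstrained. I would handle this by splitting $g = c + g_0$ with $g_0$ of vanishing mean, noting that $\int g\Delta g\,da = \int g_0 \Delta g_0\,da$ and that the supremum must be taken in the subspace on which $G$ genuinely inverts $-\Delta_{S^2}/(2\pi)$ (cf.\ the definition of $G$ surrounding \eqref{greendef}); the extremizer $g^* = G\rho$ lies precisely in this mean-zero subspace, which is the key alignment that yields the stated identity and is the point at which the proof has to be formulated carefully.
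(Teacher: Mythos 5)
Your spectral computation is the paper's own argument written out explicitly in the spherical-harmonic basis: the paper's proof sets $h := -\frac{1}{2\pi}\Delta g$, identifies $g$ with $Gh$, and derives $\phi(g) = R(\rho) - R(\rho - h)\le R(\rho)$, which is precisely your mode-by-mode concave maximization over $\hat g_{lm}$ for $l\ge 1$ with extremizer $g^* = G\rho$ --- so in that respect the two approaches coincide. The zero-mode obstruction you flagged is real, and the paper's proof elides it: the step $g(x) = \int G(x,y)h(y)\,da(y)$ is valid only for mean-zero $g$, since in general $Gh = g - \frac{1}{4\pi}\int g\,da$; and because $\int\rho\,da = 1$, the constant direction contributes $2c$ to $\phi(g+c)$ with no compensating quadratic penalty, so the supremum over all of $C^\infty(S^2)$ as literally written is $+\infty$ (concretely, $\rho\equiv(4\pi)^{-1}$ has $R(\rho)=0$ while $g\equiv c$ gives $\phi(g)=2c$). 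Your emendation --- restricting the supremum to mean-zero $g$, equivalently to the range of $G$, where $g^*=G\rho$ lives and attains the value $R(\rho)$ --- is the correct reading of the lemma, and it is all that the paper uses downstream: Lemma~\ref{convlmm2} enters only to give lower semicontinuity of $R$ under weak convergence, in Lemma~\ref{optthm} and in the $\theta\downarrow 1$ lemma that follows it, and any representation of $R$ as a supremum of weak-continuous functionals of $\rho$ over a fixed family of smooth test functions suffices for that. Your Fourier-truncation treatment of the $\le$ direction is a harmless variant of the paper's approximation scheme, which instead truncates $\rho$, applies Lusin's theorem and heat-kernel smoothing, and then takes $g = G$ of the result; either route serves.
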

\begin{proof}
Take any $g\in C^\infty(S^2)$ with $c(g) = 0$. Define 
\[
\phi(g) := 2 \int_{S^2} \rho(x) g(x) da(x) +\frac{1}{2\pi} \int_{S^2} g(x)\Delta g(x) da(x).
\]
Let $h= -\frac{1}{2\pi}\Delta g$. Since $G = (-\frac{1}{2\pi}\Delta)^{-1}$ on functions that integrate to zero, we have
\[
g(x) = \int_{S^2} G(x,y) h(y) da(y).
\]
Thus, if $R(\rho)$ is finite, then 
\begin{align*}
\phi(g) &= 2\int_{(S^2)^2} \rho(x) h(y) G(x,y) da(x)da(y) - \int_{(S^2)^2} h(x)h(y)G(x,y) da(x)da(y)\\
&= R(\rho) - R(\rho-h)\le R(\rho),
\end{align*}
where the last inequality holds because $G$ is a positive definite kernel. If $R(\rho)=\infty$, then $\phi(g)\le R(\rho)$ anyway. This proves one direction of the lemma. 

For the converse, take any $\rho \in \cp$. Define $\rho_n:= \min\{\rho(x), n\}$. Applying Lusin's theorem as in the proof of Lemma \ref{convlmm1}, we can find a sequence of continuous, $[0,n]$-valued  functions $\rho_{n,m} \to \rho_n$ pointwise as $m\to \infty$. Finally, for each $n$ and $m$, convolutions with the heat kernel on $S^2$ yield smooth $[0,n]$-valued functions $\rho_{n,m,l}\to \rho_{n,m}$ pointwise. 

Let $g_{n,m,l} := G \rho_{n,m,l}$, so that $\rho_{n,m,l}-c(\rho_{n,m,l}) = -\frac{1}{2\pi}\Delta g_{n,m,l}$. Then note that since $G$ integrates to zero on each coordinate,
\begin{align*}
\phi(g_{n,m,l}) &= 2\int_{(S^2)^2} \rho(x) \rho_{n,m,l}(y) G(x,y) da(x)da(y) \\
&\qquad - \int_{S^2} \rho_{n,m,l}(x)\rho_{n,m,l}(y) G(x,y) da(x)da(y). 
\end{align*}
Since $\rho_{n,m}$ and $\rho_{n,m,l}$ are bounded by $n$, we can send $l\to\infty$ and the $m\to\infty$ on both sides, and conclude that
\begin{align*}
\lim_{m\to\infty}\lim_{l\to\infty} \phi(g_{n,m,l}) &= 2\int_{(S^2)^2} \rho(x) \rho_{n}(y) G(x,y) da(x)da(y) \\
&\qquad - \int_{S^2} \rho_{n}(x)\rho_{n}(y) G(x,y) da(x)da(y).
\end{align*}
Finally, since $\rho_n$ increases to $\rho$ pointwise and $\ln(2/\|x-y\|) \ge 0$ for all $x,y\in S^2$, the monotone convergence theorem implies that as $n\to\infty$,
\begin{align*}
&\int_{(S^2)^2} \rho(x) \rho_{n}(y) G(x,y) da(x)da(y) \\
&= \int_{(S^2)^2} \rho(x)\rho_n(y) \ln \frac{2}{\|x-y\|} da(x) da(y) -\frac{1}{2} \int_{(S^2)^2} \rho(x)\rho_n(y)da(x) da(y)\\
&\to \int_{(S^2)^2} \rho(x)\rho(y) \ln \frac{2}{\|x-y\|} da(x) da(y) - \frac{1}{2}\int_{(S^2)^2} \rho(x)\rho(y)da(x) da(y)\\
&= \int_{(S^2)^2} \rho(x) \rho(y) G(x,y) da(x)da(y).
\end{align*}
as $n\to \infty$. Similarly,
\[
\int_{S^2} \rho_{n}(x)\rho_{n}(y) G(x,y) da(x)da(y) \to \int_{S^2} \rho(x)\rho(y) G(x,y) da(x)da(y).
\]
Thus, we get
\[
R(\rho) = \lim_{n\to\infty} \lim_{m\to\infty}\lim_{l\to\infty} \phi(g_{n,m,l}). 
\]
This proves that $R(\rho)\le \sup_{g\in C^\infty(S^2), \, c(g)=0} \phi(g)$.
\end{proof}

In the following discussion, let
\[
h(x) := 4\sum_{j=1}^k \talpha_j G(x_j, x),
\]
for notational convenience. We will frequently use the fact that $\int_{S^2} e^{-h(x)} da(x)<\infty$, which holds because $\talpha_j > -1/2$ for each $j$. For the same reason, we also have $\int_{S^2} e^{-\theta_0 h(x)} da(x)<\infty$ for some $\theta_0>1$. 
\begin{lmm}\label{lem:entropy-transfer}
Let $\nu$ be the finite measure on $S^2$ defined by $d\nu(x)=e^{-h(x)}da(x)$.
Let $\rho\in \cp$. Let $\mu$ be the probability measure with density $\rho$
with respect to the area measure $da$, and suppose that $\mu\ll \nu$.
Define
\[
\tau(x):=\frac{d\mu}{d\nu}(x)=\rho(x)e^{h(x)}.
\]
Assume that $H_\nu(\tau)<\infty$. 
Then
\[
\int_{S^2}|h(x)|\,\rho(x)\,da(x)<\infty,
\]
which implies that $L(\rho)$ is finite. Since 
\[
\int_{S^2}\tau(x)\ln \tau(x)\,d\nu(x)=H(\rho)+L(\rho),
\]
this implies that $H(\rho)<\infty$, and hence $\rho\in \cp'$.
\end{lmm}
\begin{proof}
Choose $c\in (0,\min\{1,\theta_0-1\})$. Since $d\nu=e^{-h}da$, we have
\[
\int_{S^2}e^{c|h(x)|}\,d\nu(x)
=
\int_{\{h\ge 0\}}e^{-(1-c)h(x)}\,da(x)
+
\int_{\{h<0\}}e^{-(1+c)h(x)}\,da(x).
\]
Because $1-c>0$, the first term is bounded by $a(S^2)=4\pi$.
Because $1+c<\theta_0$, the second term is bounded by $\int_{S^2}e^{-\theta_0 h(x)} da(x)$, which is finite. 
Hence
\[
\int_{S^2}e^{c|h(x)|}\,d\nu(x)<\infty.
\]
Next, use the elementary inequality
\[
uv\le u\ln u-u+e^v,
\qquad u\ge 0,\ v\in \mathbb{R},
\]
with $u=\tau(x)$ and $v=c|h(x)|$. Integrating with respect to $\nu$ gives
\[
c\int_{S^2}|h(x)|\tau(x)d\nu(x)
\le
\int_{S^2}\tau(x)\ln\tau(x)d\nu(x)
-\int_{S^2}\tau(x)d\nu(x)
+\int_{S^2}e^{c|h(x)|}d\nu(x).
\]
Since $H_\nu(\tau)<\infty$ by assumption, and we have shown that the last integral on the right is finite, and $\int_{S^2}\tau d\nu=\mu(S^2)=1$,
the right side is finite. Therefore,
\[
\int_{S^2}|h(x)|\,\rho(x)\,da(x)
=
\int_{S^2}|h(x)|\,\tau(x)\,d\nu(x)
<\infty.
\]
Consequently, $L(\rho)=\int h\rho da$ is finite. Finally, since $\tau=\rho e^h$, we have $\tau\ln\tau=\rho(\ln\rho+h)$. Integrating, we get
\[
\int_{S^2}\tau(x)\ln\tau(x)\,d\nu(x)=H(\rho)+L(\rho),
\]
with both sides interpreted in the extended-real sense. Since the left side is finite by assumption 
and $L(\rho)\in\mathbb{R}$, it follows that $H(\rho)<\infty$. Hence $\rho\in \cp'$.
\end{proof}

\begin{lmm}\label{optthm}
There exists unique $\hat{\rho}\in \cp'$ (up to equivalence almost everywhere) such that 
\[
S(\hat{\rho}) = \inf_{\rho\in \cp'} S(\rho).
\]
\end{lmm}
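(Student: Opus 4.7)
The plan is to apply the direct method of the calculus of variations. The key trick is to absorb the linear term $L$ into the entropy $H$ via a change of reference measure, after which weak lower semicontinuity becomes an immediate consequence of the variational representations already proved.

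Set $h(x) := 4\sum_{j=1}^k \talpha_j G(x_j,x)$ and $d\nu(x) := e^{-h(x)}\,da(x)$. Because $\talpha_j > -1/2$ for each $j$, the density $e^{-h}(x)$ has only mild singularities of order $\|x-x_j\|^{4\talpha_j}$ near each $x_j$ and is integrable, so $\nu$ is a finite positive measure on $S^2$. For any $\rho\in\cp'$, the function $\tilde\rho := \rho\, e^h$ satisfies $\int\tilde\rho\, d\nu = \int\rho\, da = 1$, and a direct calculation gives
\[
L(\rho) + H(\rho) \;=\; \int_{S^2}\rho(\log\rho + h)\, da \;=\; \int_{S^2}\tilde\rho\log\tilde\rho\, d\nu \;=\; H_\nu(\tilde\rho).
\]
Applying Lemma \ref{convlmm1} with $\mu = \nu$ to $\tilde\rho$, together with Lemma \ref{convlmm2}, expresses $S$ as the sum of two suprema,
\[
S(\rho) \;=\; \sup_{g_1\in C(S^2)}\Bigl\{\int g_1\,\rho\, da - \int e^{g_1-1}e^{-h}\, da\Bigr\} \;+\; 2\beta\sup_{g_2\in C^\infty(S^2)}\Bigl\{2\int g_2\,\rho\, da + \frac{1}{2\pi}\int g_2\,\Delta_{S^2} g_2\, da\Bigr\},
\]
in which each bracketed functional is affine in the probability measure $\rho\, da$ with continuous/bounded coefficients (using $e^{-h}\in L^1(da)$ and the boundedness of $g_2$ and $\Delta_{S^2}g_2$). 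Hence each is weakly continuous in $\rho\, da$, and $S$ is weakly lower semicontinuous; it extends naturally to all probability measures on $S^2$ with value $+\infty$ off $\cp'$.

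For existence, note first that $\inf_{\cp'} S < \infty$, since $\rho\equiv 1/(4\pi)\in\cp'$ yields a finite value (the $x_j$'s contribute only integrable logarithmic terms). Take a minimizing sequence $\{\rho_n\}\subset\cp'$; by compactness of $S^2$, the probability measures $\rho_n\, da$ are automatically tight, so some subsequence converges weakly to a probability measure $\mu$ on $S^2$. Weak lower semicontinuity gives $S(\mu)\le \liminf_n S(\rho_n) = \inf_{\cp'}S<\infty$, and the finiteness forces $\mu\ll da$ with density $\hat\rho\in\cp'$ (otherwise the extended $S$ would assign $+\infty$). Thus $\hat\rho$ attains the infimum.

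Uniqueness follows from strict convexity of $S$: $x\log x$ is strictly convex so $H$ is strictly convex on $\cp$; $R$ is convex because $G$ is positive semidefinite on $L^2(S^2)$, its spherical-harmonic eigenvalues $2\pi/(l(l+1))$, $l\ge 1$, being positive; and $L$ is linear. Two distinct minimizers would give $S((\hat\rho_1+\hat\rho_2)/2)<\tfrac12 S(\hat\rho_1)+\tfrac12 S(\hat\rho_2)=\inf_{\cp'}S$, a contradiction. The main subtle point is the passage to the limit in the linear term $L$, whose integrand $h$ has logarithmic singularities at the insertion points; the change-of-measure trick circumvents this entirely by recasting $L+H$ as the relative entropy $H_\nu(\tilde\rho)$ against the finite measure $\nu = e^{-h}\,a$, to which Lemma \ref{convlmm1} applies directly.
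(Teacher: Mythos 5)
Your overall strategy is essentially the paper's: the change of reference measure $\nu = e^{-h}\,da$ that turns $L+H$ into $H_\nu(\tilde\rho)$, lower semicontinuity extracted from the variational formulas of Lemmas \ref{convlmm1} and \ref{convlmm2} along a weakly convergent minimizing sequence, and uniqueness from strict convexity of $x\ln x$ together with convexity of $R$ and linearity of $L$. The one place where you diverge from the paper is the identification of the weak limit, and that is also where your write-up has a genuine gap: the sentence ``the finiteness forces $\mu\ll da$ with density $\hat{\rho}\in\cp'$ (otherwise the extended $S$ would assign $+\infty$)'' is an assertion, not a proof, and it does not follow from the lemmas you cite. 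Lemma \ref{convlmm1} is stated and proved only for measures that already possess a density; it says nothing about the value of $\sup_{g}\bigl(\int g\,d\mu - \int e^{g-1}\,d\nu\bigr)$ when $\mu$ is not absolutely continuous with respect to $\nu$. Proving that this supremum is $+\infty$ for $\mu\not\ll\nu$ (the Donsker--Varadhan-type identification over continuous test functions) requires its own argument, e.g.\ regularity of $\nu$ and continuous functions that are large on a small neighborhood of a $\nu$-null set carrying positive $\mu$-mass. This is exactly the work the paper does by hand: the uniform bound $2\beta R(\rho_n)+H_\nu(\tau_n)\le C_1$ of \eqref{c1bound} along the minimizing sequence, the truncation at level $Me^{-h}$, and the regularity of $\nu$ are combined to show directly that the weak limit is absolutely continuous with respect to $\nu$.

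A second, smaller issue hides in the same sentence: even granting $\mu\ll\nu$ with $H_\nu(\tau)<\infty$ for $\tau=d\mu/d\nu$, concluding that $\hat{\rho}=\tau e^{-h}$ lies in $\cp'$, i.e.\ $H(\hat{\rho})<\infty$, is not immediate, because $H(\hat{\rho}) = H_\nu(\tau) - L(\hat{\rho})$ and a priori one could have $L(\hat{\rho})=-\infty$ together with $H(\hat{\rho})=+\infty$. This is ruled out by a Gibbs/Jensen exponential-moment estimate of the form $\int(-\ln\|x-x_j\|)\,\tau\,d\nu \le \lambda^{-1}\bigl(\ln\int\|x-x_j\|^{-\lambda}\,d\nu + H_\nu(\tau)\bigr)$ with $0<\lambda<2-4|\talpha_j|$, which is precisely where the hypothesis $\talpha_j>-1/2$ enters (only the $j$ with $\talpha_j<0$ matter, since otherwise $h$ is bounded below near $x_j$); the paper is terse on this point too, but the needed Jensen argument is the one it already uses at the start of Subsection \ref{semilimitsec}. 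With these two steps supplied, your proof is complete: the weak lower semicontinuity of the two suprema, the compactness giving a weak limit, the finiteness of the infimum via $\rho\equiv 1/4\pi$, and the strict convexity argument for uniqueness are all correct and coincide with the paper's reasoning.
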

\begin{proof}
Let $\{\rho_n\}_{n\ge 1}$ be a sequence in $\cp'$ such that $S(\rho_n) \to  \inf_{\rho\in \cp'} S(\rho)$. For each $n$, let $\mu_n$ be the probability measure on $S^2$ that has density $\rho_n$ with respect to the area measure. Passing to a subsequence if necessary, let us assume that $\mu_n$ converges weakly to a probability measure $\mu$. Let $\rho_0 \equiv \frac{1}{4\pi}$, and let us assume without loss that for each $n$, 
\[
S(\rho_n) \le S(\rho_0)+1 = c(h) + \ln \frac{1}{4\pi} + 1 =: C_1,
\]
where $c(h) := \frac{1}{4\pi}\int_{S^2} h(x) da(x)$ is the zero mode of $h$. 
This implies, in particular, that for each $n$,
\begin{align}\label{c1bound}
C_1 &\ge 2\beta R(\rho_n) + \int_{S^2}\rho_n(x)\ln \frac{\rho_n(x)}{e^{-h(x)}} da(x) \ge \int_{S^2}\rho_n(x)\ln \frac{\rho_n(x)}{e^{-h(x)}} da(x).
\end{align}
Let $\nu$ be the finite measure on $S^2$ which has density $e^{-h(x)}$ with respect to the area measure. We claim that $\mu$ is absolutely continuous with respect to the measure $\nu$. To show this, take any Borel set $A\subseteq S^2$ with $\nu(A)=0$. Take any $\epsilon >0$. Note that for any $n$ and any $M>1$, the inequality \eqref{c1bound} implies that 
\begin{align*}
\int_{\{x: \rho_n(x) > Me^{-h(x)}\}} \rho_n(x) da(x) &\le \frac{1}{\ln M} \int_{S^2} \rho_n(x)\ln \frac{\rho_n(x)}{e^{-h(x)}} da(x)\le \frac{C_1}{\ln M}.
\end{align*}
Let us choose $M$ so large that the right side is less than $\epsilon$. Then for any Borel set $B\subseteq S^2$ and any $n$,
\begin{align*}
\mu_n(B) &= \int_B \rho_n(x) da(x)\\
&= \int_{B\cap \{x: \rho_n(x) > Me^{-h(x)}\}} \rho_n(x) da(x) + \int_{B\cap \{\rho_n(x) \le  Me^{-h(x)}\}} \rho_n(x) da(x)\\
&\le \int_{\{x: \rho_n(x) > Me^{-h(x)}\}} \rho_n(x) da(x) + \int_{B} M e^{-h(x)} da(x)\\
&\le \epsilon + M \nu(B).
\end{align*}
Since $\nu$ is a finite measure on $S^2$, it is regular. Thus, there exists an open set $V\supseteq A$ such that $\nu(V) \le \epsilon/M$. The above inequality shows, therefore, that $\mu_n(V) \le 2\epsilon$ for all $n$. Since $\mu_n \to \mu$ weakly, we get
\[
\mu(A)\le \mu(V) \le \liminf_{n\to \infty} \mu_n(V) \le 2\epsilon.
\]
Since $\epsilon$ is arbitrary, this proves that $\mu(A)=0$. Thus, $\mu$ is absolutely continuous with respect to $\nu$. Let $\tau$ denote its density with respect to $\nu$. Define
\[
\hat{\rho}(x) := \tau(x) e^{-h(x)}.
\]
We claim that this $\hat{\rho}$ works. To prove this, let 
\[
\tau_n(x) := \frac{\rho_n(x)}{e^{-h(x)}}. 
\]
Note that $\tau_n$ is a probability density function with respect to the measure $\nu$, and the resulting measure is simply $\mu_n$. 
Take any $g\in C(S^2)$. Then by the weak convergence of $\mu_n$ to $\mu$ and Lemma \ref{convlmm1},
\begin{align*}
\int_{S^2} \tau(x) g(x) d\nu(x) - \int_{S^2} e^{g(x)-1} d\nu(x) &= \lim_{n\to \infty} \biggl(\int_{S^2} \tau_n(x) g(x) d\nu(x) - \int_{S^2} e^{g(x)-1} d\nu(x)\biggr)\\
&\le \liminf_{n\to \infty} H_\nu(\tau_n).
\end{align*}
Taking supremum over $g\in C(S^2)$ gives
\begin{align}\label{c12}
H_\nu(\tau) \le \liminf_{n\to \infty} H_\nu(\tau_n).
\end{align}
By a similar argument using Lemma \ref{convlmm2}, we get
\[
R(\hat{\rho}) \le \liminf_{n\to\infty} R(\rho_n).
\]
Now note that
\begin{align*}
S(\hat{\rho}) &= 2\beta R(\hat{\rho}) + \int_{S^2}\hat{\rho}(x)\ln \tau(x) da(x) \\
&= 2\beta R(\hat{\rho}) + \int_{S^2}\tau(x)\ln \tau(x) d\nu(x) \\
&= 2\beta R(\hat{\rho}) + H_\nu(\tau).
\end{align*}
Combining this with the bounds obtained above, we get
\begin{align*}
S(\hat{\rho}) &\le 2\beta   \liminf_{n\to\infty} R(\rho_n)    +  \liminf_{n\to \infty} H_\nu(\tau_n)\\
&\le  \liminf_{n\to\infty} (2\beta R(\rho_n) + H_\nu(\tau_n))\\
&= \liminf_{n\to \infty} S(\rho_n) = \inf_{\rho\in \cp'} S(\rho).
\end{align*}
Finally, to see that $\hat{\rho}\in \cp'$, simply note that $H_\nu(\tau)<\infty$ (since $S(\hat{\rho})<\infty$  and $R(\hat{\rho})\ge 0$), and apply Lemma \ref{lem:entropy-transfer}. 

This proves the existence of a minimizer $\hat{\rho}$. To prove uniqueness, let $\rho_1$ and $ \rho_2$ be two minimizers of $S$ in $\cp'$. Let $\rho := \frac{1}{2}(\rho_1 + \rho_2)$. By the convexity of the function $x\ln x$, it follows that $\rho\in \cp'$. Now, note that $L$, $R$ and $H$ are convex functions. Thus, $S$ is also a convex function, and therefore,
\[
S(\rho) \le \frac{1}{2}(S(\rho_1)+S(\rho_2)). 
\]
But $\rho_1$ and $\rho_2$ are minimizers of $S$, and therefore, $S(\rho_1)\le S(\rho)$ and $S(\rho_2)\le S(\rho)$. The only way these three inequalities can be compatible with each other is if 
$S(\rho)=S(\rho_1)=S(\rho_2)$. This implies, in particular, that $H(\rho)=H(\rho_1)=H(\rho_2)$ (because $L,R$ are also convex). But $x\ln x$ is a strictly convex function. Thus, we must have $\rho=\rho_1=\rho_2$ almost everywhere on $S^2$.
\end{proof}

The following result, which is the final result of this section, shows that we can take $\theta \to 1$ in Lemma \ref{semiupperbound}. This completes all the steps required to finish the proof of Theorem~\ref{semilimit}.

\begin{lmm}
Let $S_\theta$ be defined as in equation \eqref{sthetadef}. Then
\[
\lim_{\theta \downarrow 1} \inf_{\rho\in \cp'} S_\theta(\rho) = \inf_{\rho\in \cp'} S(\rho).
\]
\end{lmm}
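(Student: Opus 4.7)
The plan is to exploit the identity $S_\theta(\rho) = S(\rho) + (1/\theta - 1) H(\rho)$, valid for every $\rho \in \cp'$, which reduces the entire question to controlling the entropy term on near-minimizers. The upper bound $\limsup_{\theta \downarrow 1} \inf_{\rho \in \cp'} S_\theta(\rho) \le \inf_{\rho \in \cp'} S(\rho)$ is immediate: take the minimizer $\hat{\rho}$ furnished by Lemma \ref{optthm}, note that $\hat{\rho} \in \cp'$ so $H(\hat{\rho})$ is finite, and observe that $\inf_{\cp'} S_\theta \le S_\theta(\hat{\rho}) = \inf_{\cp'} S + (1/\theta - 1) H(\hat{\rho})$, which converges to $\inf_{\cp'} S$. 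As a byproduct, $\inf_{\cp'} S_\theta$ is uniformly bounded above for $\theta$ in any right-neighborhood of $1$.

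For the lower bound, let $\rho_\theta \in \cp'$ be an arbitrary $\epsilon$-minimizer of $S_\theta$. My goal will be to produce a constant $H_*$, independent of $\theta$ in some interval $(1, \theta_1]$, such that $H(\rho_\theta) \le H_*$. Once this is in hand, the identity above yields
\begin{equation*}
S_\theta(\rho_\theta) \;=\; S(\rho_\theta) - (1 - 1/\theta) H(\rho_\theta) \;\ge\; \inf_{\cp'} S \;-\; (1 - 1/\theta)\max(H_*, 0),
\end{equation*}
and letting $\theta \downarrow 1$ followed by $\epsilon \downarrow 0$ delivers the matching lower bound.

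The uniform upper bound on $H(\rho_\theta)$ is the main obstacle, and I plan to extract it from a Gibbs inequality at a temperature slightly hotter than $1$. Write $h(x) := 4\sum_{j=1}^k \talpha_j G(x_j, x)$, so that $L(\rho) = \int_{S^2} h\, \rho\, da$. Near each $x_j$ with $\talpha_j < 0$, the density $e^{-t h(x)}$ has a singularity of the form $\|x - x_j\|^{4t\talpha_j}$, which is area-integrable iff $t\talpha_j > -1/2$; since the hypothesis $\talpha_j > -1/2$ gives $-1/(2\talpha_j) > 1$, there exists some $t_* > 1$ for which
\begin{equation*}
Z_{t_*} \;:=\; \int_{S^2} e^{-t_* h(x)}\, da(x) \;<\; \infty.
\end{equation*}
The nonnegativity of the Kullback--Leibler divergence $D(\rho \| e^{-t_* h}/Z_{t_*}) = H(\rho) + t_* L(\rho) + \ln Z_{t_*}$ then rearranges, on $\cp'$, to $L(\rho) \ge -H(\rho)/t_* - (\ln Z_{t_*})/t_*$. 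Combining this with $R(\rho) \ge 0$ (which holds because $G$ is positive semidefinite on mean-zero functions) gives
\begin{equation*}
S_\theta(\rho) \;\ge\; \left(\frac{1}{\theta} - \frac{1}{t_*}\right) H(\rho) \;-\; \frac{\ln Z_{t_*}}{t_*}.
\end{equation*}
For $\theta$ in any fixed compact subinterval of $(1, t_*)$, the coefficient $1/\theta - 1/t_*$ stays bounded away from $0$, so the uniform upper bound on $S_\theta(\rho_\theta)$ from the first paragraph forces the desired uniform upper bound $H(\rho_\theta) \le H_*$. Plugging this into the display above completes the proof. The entire delicacy lies in choosing $t_*$ independent of $\theta$ and keeping $\theta$ uniformly separated from $t_*$; restricting to $\theta \in (1, (1 + t_*)/2]$ is sufficient.
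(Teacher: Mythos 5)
Your proof is correct, and it takes a genuinely different route from the paper's. For the hard direction the paper argues by compactness: it takes exact minimizers $\rho_n$ of $S_{\theta_n}$ (existence via a modification of Lemma \ref{optthm}), extracts a weak limit of the associated probability measures, proves absolute continuity with respect to $e^{-h}\,da$ from the entropy bound $H_{\nu_n}(\rho_n)\le\theta_n\inf_{\cp'}S_{\theta_n}$, and then uses the variational formulas of Lemmas \ref{convlmm1} and \ref{convlmm2} as lower-semicontinuity statements to manufacture a competitor $\rho\in\cp'$ with $S(\rho)\le\liminf_n S_{\theta_n}(\rho_n)$. You instead exploit the exact identity $S_\theta(\rho)=S(\rho)+(\tfrac{1}{\theta}-1)H(\rho)$ (legitimate on $\cp'$, where $H$, $L$, $R$ are all finite, as shown just before Theorem \ref{semilimit}) and reduce everything to a uniform entropy bound on near-minimizers of $S_\theta$, obtained from the Gibbs inequality $H(\rho)+t_*L(\rho)+\ln Z_{t_*}\ge0$ at a fixed $t_*>1$ chosen so that $e^{-t_*h}$ is integrable --- the same integrability device the paper invokes when it chooses $\theta_1$ with $e^{-\theta_1 h}$ integrable --- together with $\beta>0$ and $R\ge0$ (valid because the $l=0$ mode of $G$ vanishes, so only the positive-semidefinite $l\ge1$ part contributes; the paper uses this same fact in Lemma \ref{t0lemma}). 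Your route buys elementarity: no weak compactness, no lower semicontinuity, no existence of minimizers of $S_\theta$; and your upper-bound step (fix one competitor, then send $\theta\downarrow1$) does not even need the pointwise comparison $S_\theta\le S$ with which the paper opens, a comparison that tacitly assumes $H\ge0$ even though $H$ can be as small as $-\ln 4\pi$ on the sphere. The paper's route, in exchange, reuses machinery it has already built for Lemma \ref{optthm}. Two small points you should make explicit, neither a gap: $\epsilon$-minimizers of $S_\theta$ exist because $\inf_{\cp'}S_\theta>-\infty$ (your displayed bound plus $H(\rho)\ge-\ln 4\pi$ gives this), and the constant $H_*$ can be taken uniform over, say, $\epsilon\le1$ and $\theta\in(1,\tfrac{1+t_*}{2}]$, so that it does not secretly depend on $\theta$ or $\epsilon$.
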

\begin{proof}
Clearly, $S_\theta(\rho)\le S(\rho)$ for any $\theta > 1$ and $\rho\in \cp'$, and $S_\theta(\rho)$ is a decreasing function of $\theta$ for each $\rho$. Thus,
\[
\lim_{\theta \downarrow 1} \inf_{\rho\in \cp'} S_\theta(\rho) \le \inf_{\rho\in \cp'} S(\rho),
\]
and the limit on the left exists. Thus, it suffices to prove the reverse inequality, with the limit replaced by a limit over any sequence $\{\theta_n\}_{n\ge 1}$ decreasing to $1$. Without loss, let us choose $\theta_1$ so close to $1$ that $e^{-\theta_1 h(x)}$ is integrable. Note that
\begin{align*}
S_\theta(\rho) &= \frac{1}{\theta} \biggl(\theta \int_{S^2} \rho(x) h(x)da(x) + 2\beta\theta R(\rho) + H(\rho)\biggr).
\end{align*}
The term inside the brackets on the right is the same as $S(\rho)$ but with $h$ replaced by $\theta h$ and $\beta$ replaced by $\beta\theta$. Thus, by a simple modification of Theorem \ref{optthm}, we see that for each $n$ there exists $\rho_n\in \cp'$ that minimizes $S_{\theta_n}$. Let $\mu_n$ be the probability measure with density $\rho_n$. Passing to a subsequence if necessary, let us assume that $\mu_n$ converges weakly to a probability measure $\mu$. We claim that $\mu$ is absolutely continuous with respect to $\nu$, where $\nu$ is the finite measure on $S^2$ that has density equal to $e^{-h(x)}$ with respect to the area measure.

To see this, let $\nu_n$ be the finite measure with density $e^{-\theta_n h(x)}$, so that 
\begin{align*}
S_{\theta_n}(\rho) &= 2\beta R(\rho) + \frac{ H_{\nu_n}(\rho e^{\theta_n h})}{\theta_n}.
\end{align*}
Thus, we have that for each $n$,
\begin{align}\label{hnun}
H_{\nu_n}(\rho_n e^{\theta_n h}) \le \theta_n S_{\theta_n}(\rho_n)= \theta_n \inf_{\rho\in \cp'} S_{\theta_n}(\rho).
\end{align}
Clearly, the right side is bounded above by a constant $C_1$ that does not depend on $n$ (seen, for example, by choosing $\rho\equiv \frac{1}{4\pi}$). Thus, for any $M> 1$, 
\begin{align*}
\int_{\{x: \rho_n(x) > Me^{-\theta_n h(x)}\}} \rho_n(x) da(x) &\le \frac{1}{\ln M} \int_{S^2} \rho_n(x)\ln \frac{\rho_n(x)}{e^{-\theta_n h(x)}} da(x)\\
&= \frac{H_{\nu_n}(\rho_n e^{\theta_n h})}{\ln M} \le \frac{C_1}{\ln M}.
\end{align*}
Take any $\epsilon >0$ and  choose $M$ so large that $C_1/\ln M<\epsilon$. Then for any Borel set $B\subseteq S^2$ and any $n$,
\begin{align*}
\mu_n(B) &= \int_B \rho_n(x) da(x)\\
&= \int_{B\cap \{x: \rho_n(x) > Me^{-\theta_n h(x)}\}} \rho_n(x) da(x) + \int_{B\cap \{\rho_n(x) \le  Me^{-\theta_n h(x)}\}} \rho_n(x) da(x)\\
&\le \int_{\{x: \rho_n(x) > Me^{-\theta_n h(x)}\}} \rho_n(x) da(x) + \int_{B} M e^{-\theta_n h(x)} da(x)\\
&\le \epsilon + M \nu_n(B).
\end{align*}
Take any Borel set $A\subseteq S^2$ such that $\nu(A)=0$. Since $\nu$ is a finite measure on $S^2$, it is regular. Thus, there exists an open set $V\supseteq A$ such that $\nu(V) \le \epsilon/M$.  Since $\mu_n \to \mu$ weakly, the above inequality implies that 
\[
\mu(A)\le \mu(V) \le \liminf_{n\to \infty} \mu_n(V) \le \epsilon + M \liminf_{n\to \infty} \nu_n(V).
\]
But note that for any Borel set $B\subseteq S^2$,
\[
|\nu_n(B) - \nu(B)| \le \int_{S^2} |e^{-\theta_n h(x)} - e^{-h(x)}| da(x), 
\]
and the integral on the right converges to zero as $n\to\infty$ by the dominated convergence theorem, due to the integrability of $e^{-\theta_1 h}$. Consequently, $\nu_n \to \nu$ in total variation distance. Combining this with the previous display, we get that $\mu(A)\le 2\epsilon$. 
Since $\epsilon$ is arbitrary, this proves that $\mu(A)=0$. This proves that  $\mu$ is absolutely continuous with respect to $\nu$. Let $\tau$ denote its density with respect to $\nu$, and define
\[
\rho(x) := \tau(x) e^{-h(x)}.
\]
We claim that $\rho\in \cp'$ and $S(\rho)\le \lim_{n\to\infty} S_{\theta_n}(\rho_n)$. Clearly, this will complete the proof of the lemma. To prove the claim, let 
\[
\tau_n(x) := \frac{\rho_n(x)}{e^{-\theta_nh(x)}}. 
\]
Note that $\tau_n$ is a probability density function with respect to the measure $\nu_n$, and the resulting measure is simply $\mu_n$. 
Take any $g\in C(S^2)$. Then by the weak convergence of $\mu_n$ to $\mu$ and of $\nu_n\to\nu$, and Lemma \ref{convlmm1},
\begin{align*}
&\int_{S^2} \tau(x) g(x) d\nu(x) - \int_{S^2} e^{g(x)-1} d\nu(x) \\
&= \lim_{n\to \infty} \biggl(\int_{S^2} \tau_n(x) g(x) d\nu_n(x) - \int_{S^2} e^{g(x)-1} d\nu_n(x)\biggr)\\
&\le \liminf_{n\to \infty} H_{\nu_n}(\tau_n).
\end{align*}
Taking supremum over $g\in C(S^2)$ gives
\begin{align}\label{hnun2}
H_\nu(\tau) \le \liminf_{n\to \infty} H_{\nu_n}(\tau_n) = \liminf_{n\to \infty} \theta_n^{-1}H_{\nu_n}(\tau_n),
\end{align}
since $\theta_n \downarrow 1$. 
By a similar argument using Lemma \ref{convlmm2}, we get
\[
R(\rho) \le \liminf_{n\to\infty} R(\rho_n).
\]
Now note that $S(\rho) = 2\beta R(\rho) + H_\nu(\tau)$. Combining this with the above bounds, we get
\begin{align*}
S(\rho) &\le 2\beta   \liminf_{n\to\infty} R(\rho_n)    +  \liminf_{n\to \infty} \theta_n^{-1}H_{\nu_n}(\tau_n)\\
&\le  \liminf_{n\to\infty} (2\beta R(\rho_n) + \theta_n^{-1} H_{\nu_n}(\tau_n))\\
&= \liminf_{n\to \infty} S_{\theta_n}(\rho_n).
\end{align*}
Lastly, note that $H_\nu(\tau) < \infty$ by the inequalities \eqref{hnun} and \eqref{hnun2}, and apply Lemma \ref{lem:entropy-transfer} to deduce that $H(\rho)$ is finite. This completes the proof.
\end{proof}

\subsection{Validity of the semiclassical limit}\label{validsec}
Recall that from equation \eqref{favg}, we have the formal expression
\begin{align*}
&\tilde{C}(\talpha_1/b,\ldots,\talpha_k/b;x_1,\ldots,x_k;b;\tmu/b^2 )\notag \\
&= \int\biggl(\prod_{j=1}^ke^{\chi(\talpha_j/b)(b-\talpha_j/b)} g(\sigma(x_j))^{-\Delta_{\talpha_j/b}}\biggr) \exp\biggl[\sum_{j=1}^k \biggl(\frac{2\talpha_j}{b} \phi(x_j) +\frac{2\talpha_j^2}{b^2} G(x_j,x_j)\biggr) \notag \\
&\qquad \qquad - 2Qc(\phi) -  \frac{1}{4\pi}\int_{S^2}\biggl( \phi(x)\Delta_{S^2} \phi(x) + \frac{4\pi \tmu}{b^2} e^{2b\phi(x)+2b^2G(x,x)}\biggr)da(x) \biggr]\md\phi,
\end{align*}
where $c(\phi)$ denotes the zero mode of $\phi$. 
Substituting $\psi = b \phi$ in the path integral, we see that the above path integral can be rewritten (up to a factor of $b^{-\infty}$ that we ignore, which arises due to replacing each $d\phi(x)$ by $b^{-1}d\psi(x)$, recalling that $\md \phi$ is shorthand for $\prod_{x\in S^2} d\phi(x)$ in a heuristic sense) as 
\[
\int e^{J(\psi)/b^2 - R_b(\psi)}\md\psi,
\]
where
\begin{align*}
J(\psi) &:= -\chi \sum_{j=1}^k \talpha_j^2 + \sum_{j=1}^k \talpha_j (1+\talpha_j)\ln g(\sigma(x_j)) + \sum_{j=1}^k (2\talpha_j\psi(x_j) +2\talpha_j^2 G(x_j,x_j))\notag \\
&\qquad \qquad + 2c(\psi) -  \frac{1}{4\pi}\int_{S^2}( \psi(x)\Delta_{S^2} \psi(x) + 4\pi \tmu e^{2\psi(x)})da(x),
\end{align*}
and $R_b(\psi)$ is a remainder term formally defined as
\begin{align*}
R_b(\psi) &:=\chi \sum_{j=1}^k\talpha_j -\sum_{j=1}^k \talpha_j \ln g(\sigma(x_j))+ 2c(\psi) + \tmu \int_{S^2} e^{2\psi(x)} \frac{e^{2b^2G(x,x)} -1 }{b^2} da(x)\\
&= \chi \sum_{j=1}^k\talpha_j -\sum_{j=1}^k \talpha_j \ln g(\sigma(x_j))+ 2c(\psi) + \tmu\sum_{j=0}^\infty \frac{2^{j+1} b^{2j}}{(j+1)!}\int_{S^2} e^{2\psi(x)} G(x,x)^{j+1}da(x).
\end{align*}
In particular, the formal series expansion of $R_b$ has only nonnegative powers of $b$. Thus, we may expect that as $b\to 0$, $C(\talpha_1/b,\ldots,\talpha_k/b;x_1,\ldots,x_k;b;\tmu/b^2 )$ should behave like $e^{J(\hat{\psi})/b^2}$ for some critical point $\hat{\psi}$ of $J$. Note that $J$ is not a well-defined map, since $G(x_j,x_j)=\infty$. However, this is not a serious problem, since this term has no dependence on $\psi$ and therefore does not play a role in the definition of critical points of $J$, which can be defined rigorously. But there is a more difficult hurdle. The following lemma, quoted from Subsection \ref{mainresultssec},  shows that $J$ has no critical points when the condition required for our Theorem~\ref{semilimit} is satisfied.

\nocriticallmm*

\begin{proof}
Take any $\psi$ for which all integrals in the definition of $J$ are absolutely convergent. For $t\in \R$, let $\psi_t(x) := \psi(x)+t$. Then note that 
\begin{align*}
&J(\psi_t) +\chi \sum_{j=1}^k \talpha_j^2 - \sum_{j=1}^k \talpha_j (1+\talpha_j)\ln g(\sigma(x_j)) \\
&= \sum_{j=1}^k (2\talpha_j(\psi(x_j) +t)+2\talpha_j^2 G(x_j,x_j))+ 2c(\psi+t)\notag \\
&\qquad \qquad -  \frac{1}{4\pi}\int_{S^2}( (\psi(x)+t)\Delta_{S^2} \psi(x) + 4\pi \tmu e^{2\psi(x) + 2t})da(x)\\
&= 2t\sum_{j=1}^k \talpha_j + 2t +  \sum_{j=1}^k (2\talpha_j\psi(x_j) +2\talpha_j^2 G(x_j,x_j))+ 2c(\psi)\notag \\
&\qquad \qquad -  \frac{1}{4\pi}\int_{S^2}( \psi(x)\Delta_{S^2} \psi(x) + 4\pi \tmu e^{2\psi(x)+2t})da(x).
\end{align*}
From the above formula, we get
\begin{align*}
\frac{\partial}{\partial t} J(\psi_t)\biggl|_{t=0} &= -2\beta - 2\tmu\int_{S^2}e^{2\psi(x)} da(x). 
\end{align*}
Since this is strictly negative, $\psi$ is not a critical point of $J$.
\end{proof}
So, what are the critical points of $J$? Let us now carry out a heuristic derivation of the critical point equation for $J$, which was earlier claimed to be equation \eqref{releq}. Take any two fields $\psi, \phi:S^2 \to \C$. If $\psi$ is a critical point of $J$, it must satisfy
\[
\frac{\partial}{\partial t} J(\psi+t\phi)\biggl|_{t=0} = 0.
\]
Note that 
\begin{align*}
\frac{\partial}{\partial t} J(\psi+t\phi) &= \sum_{j=1}^k 2\talpha_j \phi(x_j) + 2c(\phi) - \frac{1}{4\pi}\int_{S^2} \biggl\{\phi(x) \Delta_{S^2}(\psi(x)+t\phi(x))\\
&\qquad \qquad  + (\psi(x)+t\phi(x))\Delta_{S^2}\phi(x) + 8\pi \tmu\phi(x) e^{2(\psi(x)+t\phi(x))}\biggr\}da(x).
\end{align*}
Thus, applying integration by parts in the second step below, we get 
\begin{align*}
\frac{\partial}{\partial t} J(\psi+t\phi) \biggl|_{t=0} &= \sum_{j=1}^k 2\talpha_j \phi(x_j) + 2c(\phi) - \frac{1}{4\pi}\int_{S^2} \biggl\{\phi(x) \Delta_{S^2}\psi(x)\\
&\qquad \qquad  + \psi(x)\Delta_{S^2}\phi(x) + 8\pi \tmu\phi(x) e^{2\psi(x)}\biggr\}da(x)\\
&= \int_{S^2}\phi(x)\biggl\{\sum_{j=1}^k 2\talpha_j \delta_{x_j}(x) + \frac{2}{4\pi} - \frac{2\Delta_{S^2}\psi(x)}{4\pi} -2\tmu e^{2\psi(x)}\biggr\} da(x).
\end{align*}
If $\psi$ is a critical point of $J$, the above quantity must vanish for `every' $\phi$ (where we are not making the notion of `every' precise). At a heuristic level, this implies that 
\[
2\sum_{j=1}^k \talpha_j \delta_{x_j}(x) + \frac{1}{2\pi} - \frac{\Delta_{S^2}\psi(x)}{2\pi} -2\tmu e^{2\psi(x)} = 0
\]
for all $x\in S^2$, which is equation \eqref{releq}.

Even though $J$ has no critical points among real-valued functions (Lemma~\ref{nocritical}), it turns out that it does have critical points among complex-valued functions, and the semiclassical limit obtained in Theorem \ref{semilimit} can indeed be expressed using one such critical point. This is the content of the following theorem, quoted from Subsection \ref{mainresultssec}.

\semicritthm*

In the above theorem, $J(\hat{\psi})$ has to be divided by $\beta$ because the $J(\hat{\psi})/b^2$ gets converted to $J(\hat{\psi})/\beta$ after we  divide the logarithm by $n$, as we did in Theorem \ref{semilimit}. To prove Theorem \ref{semicrit}, we must first establish that $\hat{\rho}$ satisfies a functional equation, as mandated by the fact that $\hat{\rho}$ is a minimizer of $S$. This requires the following preliminary lemma.
\begin{lmm}\label{rhononzero}
The function $\hat{\rho}$ is nonzero and finite almost everywhere on $S^2$.
\end{lmm}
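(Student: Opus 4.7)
My plan is to argue by contradiction: assume the conclusion fails, and construct a perturbation of $\hat\rho$ that strictly decreases $S$, contradicting the minimality established in Lemma~\ref{optthm}. The finiteness part is automatic because $\hat\rho \in \cp$ means $\int_{S^2}\hat\rho\,da = 1 < \infty$, so $\{x : \hat\rho(x) = \infty\}$ has area measure zero. The substantive content is the claim that $\hat\rho > 0$ almost everywhere.

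So suppose for contradiction that the set $A := \{x \in S^2 : \hat\rho(x) = 0\}$ has strictly positive area measure, and set $\alpha := \area(A) > 0$. For $t \in (0, 1/\alpha)$, define the perturbation
\[
\rho_t(x) := (1 - t\alpha)\hat\rho(x) + t\, \mathbf{1}_A(x).
\]
This is nonnegative and integrates to $1$, so $\rho_t \in \cp$. A direct computation gives
\[
H(\rho_t) = \alpha\, t \ln t + (1 - t\alpha) H(\hat\rho) + (1-t\alpha)\ln(1-t\alpha),
\]
which is finite for each such $t$, so $\rho_t \in \cp'$. The key observation is that as $t \downarrow 0$,
\[
H(\rho_t) - H(\hat\rho) = \alpha t \ln t - t\alpha\bigl(H(\hat\rho) + 1\bigr) + O(t^2),
\]
so the derivative of $H(\rho_t)$ along this family blows up to $-\infty$.

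For the remaining pieces, the bilinear and linear structure of $R$ and $L$ yields
\[
L(\rho_t) - L(\hat\rho) = t\Bigl(\int_A h(x)\, da(x) - \alpha L(\hat\rho)\Bigr),
\]
where $h(x) := 4\sum_{j=1}^k \talpha_j G(x_j,x)$, and a similar expression for $R(\rho_t) - R(\hat\rho)$ linear in $t$ (plus an $O(t^2)$ term). Both coefficients are finite: $h$ is integrable on $S^2$ because $G(x_j,\cdot)$ has only logarithmic singularities, and $G\hat\rho$ is uniformly bounded by the Jensen argument already recorded just before Theorem~\ref{semilimit}. Therefore
\[
S(\rho_t) - S(\hat\rho) = \alpha\, t\ln t + O(t) \quad \text{as } t \downarrow 0,
\]
which is strictly negative for all sufficiently small $t > 0$. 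This contradicts the minimality of $\hat\rho$ and forces $\alpha = 0$.

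The only delicate point is verifying the finiteness of the linear coefficients from $L$ and $R$; once those are bounded, the $t\ln t$ term from the entropy automatically dominates, so no fine analysis is needed. There is no genuine obstacle beyond bookkeeping — the mechanism is simply that $x\mapsto x\ln x$ has derivative $-\infty$ at the origin, which penalizes any attempt to set $\hat\rho$ to zero on a set of positive measure.
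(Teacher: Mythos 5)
Your proof is correct and follows essentially the same route as the paper: both perturb the minimizer by placing a small uniform mass $t$ on the zero set $A$ (with renormalization), and both exploit that the entropy changes by $\area(A)\,t\ln t$ while $L$ and $R$ change only by $O(t)$, contradicting minimality unless $\area(A)=0$. The finiteness observation is likewise the same as in the paper.
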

\begin{proof}
The finiteness follows from the facts that $\hat{\rho}$ is a probability density function. To prove that $\hat{\rho}$ is nonzero almost everywhere, let $A$ be the set where $\hat{\rho}$ is zero. Take any $\epsilon \in (0,1)$, and define
\begin{align*}
\rho_\epsilon(x) &:= 
\begin{cases}
\hat{\rho}(x)/(1+\epsilon a(A)) &\text{ if } x\notin A,\\
\epsilon/(1+\epsilon a(A)) &\text{ if } x\in A.
\end{cases}
\end{align*}
Then note that $\rho_\epsilon$ is a probability density function with respect to the area measure, and 
\begin{align*}
H(\rho_\epsilon) &=\int_{S^2}\rho_\epsilon(x) \ln \rho_\epsilon(x) da(x)\\
&= \frac{1}{1+\epsilon a (A)} \biggl(\int_A \hat{\rho}(x) \ln \frac{\hat{\rho}(x)}{1+\epsilon a(A)} da(x) + a(A) \epsilon \ln \frac{\epsilon}{1+\epsilon a(A)}\biggr)\\
&= \frac{H(\hat{\rho}) + a(A) \epsilon \ln\epsilon }{1+\epsilon a(A)} - \ln (1+\epsilon a(A))\\
&\le H(\hat{\rho}) + a(A) \epsilon \ln \epsilon + C\epsilon.
\end{align*}
Next, let $\chi(x) := 1$ if $x\in A$ and $0$ otherwise. Then $\rho_\epsilon = (\hat{\rho} +\epsilon \chi)/(1+\epsilon a(A))$. It is easy to see that $G\chi$ is a bounded function. Therefore
\begin{align*}
L(\rho_\epsilon) &= \frac{L(\hat{\rho}) + 4\epsilon \sum_{j=1}^k \talpha_j G\chi(x_j)}{1+\epsilon a(A)}\le L(\hat{\rho}) + C\epsilon,
\end{align*}
and 
\begin{align*}
R(\rho_\epsilon) &= \frac{1}{(1+\epsilon a(A))^2} \int_{S^2}(\hat{\rho}(x) G\hat{\rho}(x) + 2\epsilon\hat{\rho}(x) G\chi (x) + \epsilon^2 \chi(x)G\chi(x)) da(x)\\
&\le R(\hat{\rho}) + C\epsilon. 
\end{align*}
Combining the three inequalities obtained above, and remembering that $S(\hat{\rho}) \le S(\rho_\epsilon)$, we get 
$0\le a(A) \epsilon \ln \epsilon + C\epsilon$, which can be rewritten as $a(A) \le C/\ln(1/\epsilon)$. Since $\epsilon$ is arbitrary, this shows that $a(A)=0$.
\end{proof}

We can now show that $\hat{\rho}$ satisfies a certain functional equation, as a critical point of the function $S$.
\begin{lmm}\label{rhoeqlmm}
The function $\hat{\rho}(x)$ satisfies, for almost every $x\in S^2$, 
\begin{align}\label{rhohateq}
4\sum_{j=1}^k \talpha_j G(x_j,x) + 4\beta G\hat{\rho}(x) + \ln \hat{\rho}(x) + \lambda = 0,
\end{align}
where $\lambda$ is the number defined in equation \eqref{lambdaval}.
\end{lmm}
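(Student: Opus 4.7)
The plan is to obtain \eqref{rhohateq} as the Euler--Lagrange equation for the constrained minimization of $S$ over $\cp'$, exploiting the positivity established in Lemma~\ref{rhononzero}. Fix any bounded measurable $\eta:S^2\to\R$ with $\int_{S^2}\hat{\rho}(x)\eta(x)\,da(x)=0$, and for $|t|<1/\|\eta\|_\infty$ define the multiplicative perturbation
\[
\rho_t(x) := \hat{\rho}(x)\bigl(1+t\eta(x)\bigr),
\]
which is manifestly a probability density. The two-sided bound $\tfrac{1}{2}\hat{\rho}\le\rho_t\le\tfrac{3}{2}\hat{\rho}$, combined with the pointwise estimate $\hat{\rho}(x)|\ln\hat{\rho}(x)|\le\hat{\rho}(x)\ln^+\hat{\rho}(x)+e^{-1}$, shows that $H(\rho_t)<\infty$, so $\rho_t\in\cp'$.

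Since $\hat{\rho}$ minimizes $S$, the scalar function $t\mapsto S(\rho_t)$ has a critical point at $t=0$. Differentiating each piece of $S$ under the integral sign---legitimate thanks to the uniform sandwich above and the integrability of $G\hat{\rho}$ and of each $G(x_j,\cdot)$ noted at the start of Subsection~\ref{semilimitsec}---yields
\begin{align*}
0 \,=\, \frac{d}{dt}\Big|_{t=0}S(\rho_t) \,=\, \int_{S^2}\eta(x)\hat{\rho}(x)\Bigl(4\sum_{j=1}^k\talpha_j G(x_j,x)+4\beta G\hat{\rho}(x)+\ln\hat{\rho}(x)+1\Bigr)da(x),
\end{align*}
where the factor $4\beta$ in the $R$-term comes from the symmetry of $G$ and the $+1$ from differentiating $\rho\ln\rho$. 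Denote the parenthesized expression by $F(x)$.

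Because the identity above holds for every bounded $\eta$ satisfying $\int_{S^2}\hat{\rho}\eta\,da=0$, the function $F$ must be constant $\hat{\rho}$-almost everywhere: otherwise there would exist $a<b$ with both $A:=\{F\le a\}$ and $B:=\{F\ge b\}$ of positive $\hat{\rho}$-mass, and choosing $\eta:=\hat{\rho}(B)\mathbf{1}_A-\hat{\rho}(A)\mathbf{1}_B$ would force $\int F\eta\hat{\rho}\,da\le(a-b)\hat{\rho}(A)\hat{\rho}(B)<0$. Lemma~\ref{rhononzero} then promotes this to an almost-everywhere identity on $S^2$; writing the common value as $1-\lambda$ recovers \eqref{rhohateq}. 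Exponentiating the resulting formula $\hat{\rho}(x)=\exp\bigl(-\lambda-4\sum_j\talpha_j G(x_j,x)-4\beta G\hat{\rho}(x)\bigr)$ and integrating against $da$, using $\int\hat{\rho}\,da=1$, identifies $\lambda$ with the expression in \eqref{lambdaval}. The only genuinely delicate technical point is the differentiation of $H(\rho_t)$ under the integral at $t=0$, which is handled by dominated convergence with the two-sided sandwich above; all other terms in $S$ are linear or quadratic in $\rho$ and cause no trouble.
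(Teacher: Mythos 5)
Your proof is correct, and it realizes the same underlying idea as the paper---compute the first variation of $S$ at the minimizer $\hat{\rho}$ and read off the Euler--Lagrange equation---but the technical implementation is different. The paper perturbs $\hat{\rho}$ by transferring an $\epsilon$ amount of mass between two small caps $B(u,\delta)$ and $B(v,\delta)$, differentiates in $\epsilon$, and then invokes the Lebesgue differentiation theorem as $\delta\to 0$ to conclude that the function $h(x)=4\sum_j\talpha_j G(x_j,x)+4\beta G\hat{\rho}(x)+\ln\hat{\rho}(x)+1$ takes the same value at almost every pair of points, hence is a.e.\ constant. You instead allow arbitrary bounded multiplicative perturbations $\rho_t=\hat{\rho}(1+t\eta)$ with $\int\hat{\rho}\,\eta\,da=0$, which gives the orthogonality relation $\int F\,\eta\,\hat{\rho}\,da=0$ for all such $\eta$, and you deduce constancy of $F$ $\hat{\rho}$-a.e.\ by the level-set contradiction, then upgrade to a.e.\ on $S^2$ via Lemma~\ref{rhononzero} (a step the paper also needs, implicitly, since its perturbation argument only sees points where $\hat{\rho}$ has positive local mass). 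Your route avoids the Lebesgue differentiation theorem and the cap geometry entirely, at the cost of having to justify constancy from the orthogonality relation; the dominating bounds you cite (the sandwich $\tfrac12\hat{\rho}\le\rho_t\le\tfrac32\hat{\rho}$, integrability of $\hat{\rho}|\ln\hat{\rho}|$, and the uniform boundedness of $G\hat{\rho}$ and $G(x_j,\cdot)\hat{\rho}$ integrals established at the start of Subsection~\ref{semilimitsec}) do make the differentiation under the integral sign legitimate, and the normalization $\int_{S^2}\hat{\rho}\,da=1$ identifies the constant with $1-\lambda$, exactly as in the paper. Both arguments are of comparable length; yours is slightly more robust in that it does not require $\hat{\rho}$ to be a genuine pointwise limit of cap averages at the two chosen points, while the paper's is more hands-on and self-contained.
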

\begin{proof}
Let $\mu$ denote the probability measure on $S^2$ with probability density function $\hat{\rho}$ with respect to the area measure $v$. By the Lebesgue differentiation theorem, for almost every $x\in S^2$ we have
\[
\hat{\rho}(x) = \lim_{\delta \to 0} \frac{\mu(B(x,\delta))}{v(B(x, \delta))}.
\]
where $B(x,\delta)$ is the cap of radius $\delta$ centered at $x$ on $S^2$. Take any distinct $u,v$ where this holds. Note that by Lemma \ref{rhononzero}, the $\mu$-measure of any open cap is nonzero. Take any $\delta\in (0,\frac{1}{2}\|u-v\|)$. Then, choose $\epsilon>0$ that is smaller than  both $\mu(B(u,\delta))$ and $\mu(B(v,\delta))$. Define 
\begin{align*}
\rho_{\epsilon,\delta}(x) &= 
\begin{cases}
(1 + \frac{\epsilon }{\mu(B(u,\delta))})\hat{\rho}(x) &\text{ if } x\in B(u,\delta),\\
(1- \frac{\epsilon}{\mu(B(v,\delta))})\hat{\rho}(x) &\text{ if } x\in B(v,\delta),\\
\hat{\rho}(x) &\text{ if } x\notin B(u,\delta)\cup B(v,\delta).
\end{cases}
\end{align*}
It is easy to see that $\rho_{\epsilon,\delta}$ is a probability density with respect to the area measure. Let $\chi$ be the function that is $1/\mu(B(u,\delta))$ on $B(u,\delta)$, $-1/\mu(B(v,\delta))$ on $B(v,\delta)$, and $0$ everywhere else. Then $\rho_{\epsilon, \delta} = (1+\epsilon \chi)\hat{\rho}$. Using the fact that $H(\hat{\rho})<\infty$, we can now apply the dominated convergence theorem and a simple calculation to get
\begin{align*}
\frac{\partial}{\partial\epsilon}S(\rho_{\epsilon,\delta})\biggl|_{\epsilon = 0} &= 4\sum_{j=1}^k \talpha_j\int_{S^2} G(x_j,x)\chi(x)\hat{\rho}(x) da(x) + 4\beta \int_{S^2} \chi(x)\hat{\rho}(x) G\hat{\rho}(x) da(x) \\
&\qquad + \int_{S^2}\chi(x)\hat{\rho}(x) \ln \hat{\rho}(x) da(x) + \int_{S^2}\chi(x) \hat{\rho}(x) da(x).
\end{align*}
Since $S$ is minimized at $\hat{\rho}$, the above quantity must be zero. Taking $\delta\to 0$, Lebesgue's differentiation theorem  and our choice of $u,v$ give $h(u) = h(v)$, where
\begin{align*}
h(x) := 4\sum_{j=1}^k \talpha_jG(x_j,x)+ 4\beta G\hat{\rho}(x) + \ln \hat{\rho}(x) +  1.
\end{align*}
This proves that $h$ is constant almost everywhere. The value of the constant is now obtained from the condition that $\int_{S^2}\hat{\rho}(x) da(x) = 1$. 
\end{proof}

We are now ready to complete the proof of Theorem \ref{semicrit}.

\begin{proof}[Proof of Theorem \ref{semicrit}]
First, let us prove that $\hat{\psi}$ is indeed a critical point of $J$. As we already justified via heuristic calculation in the beginning of this subsection, a critical point $\psi$ must satisfy the (generalized) functional equation
\begin{align}\label{criteq}
2\sum_{j=1}^k \talpha_j \delta_{x_j}(x) + \frac{1}{2\pi} - \frac{1}{2\pi}\Delta_{S^2} \psi(x) - 2\tmu e^{2\psi(x)} = 0,
\end{align}
where $\delta_x(\cdot)$ denotes the Dirac delta at a point $x$. 
To prove that $\hat{\psi}$ satisfies this equation, recall the equation \eqref{psihatdef} defining $\hat{\psi}$. From this equation and the fact that 
\[
\Delta_{S^2} G(\cdot, y)  =-2\pi \delta_y(\cdot) + \frac{1}{2}
\]
in the sense of distributions, we get
\begin{align}
\Delta_{S^2} \hat{\psi}(x) &= -2\beta \Delta_{S^2} G\hat{\rho}(x) +4\pi \sum_{j=1}^k \talpha_j \delta_{x_j}(x) -4\pi \sum_{j=1}^k \talpha_j\notag \\
&= 4\pi \beta \hat{\rho}(x) +4\pi\sum_{j=1}^k \talpha_j \delta_{x_j}(x) + 1,\label{deltahatpsi}
\end{align}
where both sides are to be interpreted as distributions. 
But by the equation \eqref{rhohateq} satisfied by $\hat{\rho}$, as established in Lemma \ref{rhoeqlmm}, we get
\begin{align*}
\hat{\rho}(x) &= \exp\biggl(-4\sum_{j=1}^k \talpha_j G(x_j,x) - 4\beta G\hat{\rho}(x) -\lambda\biggr).
\end{align*}
Again appealing to the definition \eqref{psihatdef} of $\hat{\psi}$, we deduce from the above identity that 
\begin{align}\label{hatrhoeq}
\hat{\rho}(x) &= \exp(2\hat{\psi}(x) -\ln \beta -i\pi +\ln \tmu) = -\frac{ \tmu e^{2\hat{\psi}(x)}}{ \beta}. 
\end{align}
Plugging this into equation \eqref{deltahatpsi}, we get
\begin{align*}
\Delta_{S^2} \hat{\psi}(x) &= -4\pi \tmu e^{2\hat{\psi}(x)}+ 4\pi\sum_{j=1}^k \talpha_j \delta_{x_j}(x) + 1.
\end{align*}
But this is exactly the equation \eqref{criteq}. Thus, $\hat{\psi}$ is a critical point of $J$. Next, note that by equation \eqref{rhohateq},
\begin{align*}
S(\hat{\rho}) &= 4 \sum_{j=1}^k \talpha_j G\hat{\rho}(x_j) + 2\beta\int_{S^2} \hat{\rho}(x) G\hat{\rho}(x) da(x) + \int_{S^2}\hat{\rho}(x)\ln  \hat{\rho}(x) da(x)\\
&= -\lambda - 2\beta \int_{S^2} \hat{\rho}(x)G\hat{\rho}(x) da(x). 
\end{align*}
Now note that from the definition \eqref{psihatdef} of $\hat{\psi}$, it follows that 
\begin{align}\label{chatpsi}
c(\hat{\psi}) = -\frac{\lambda}{2} +\frac{1}{2}\ln \beta + \frac{i\pi}{2} - \frac{1}{2}\ln \tmu.
\end{align}
Thus, we may rewrite equation \eqref{psihatdef} as 
\[
\hat{\psi}(x) = -2\beta G\hat{\rho}(x) + c(\hat{\psi})  -2\sum_{j=1}^k \talpha_j G(x,x_j),
\]
Using equation \eqref{deltahatpsi} to express $\hat{\rho}$ in terms of $\Delta_{S^2} \hat{\psi}$, and the above equation to express $G\hat{\rho}$ in terms of $\hat{\psi}$, we get 
\begin{align*}
\int_{S^2} \hat{\rho}(x)G\hat{\rho}(x) da(x)&= \frac{1}{8\pi \beta^2} \int_{S^2}\biggl(\Delta_{S^2} \hat{\psi}(x) -4\pi \sum_{j=1}^k \talpha_j \delta_{x_j}(x) -1\biggr)\\
&\qquad \qquad \cdot \biggl(-\hat{\psi}(x) +c(\hat{\psi}) -2\sum_{j=1}^k \talpha_j G(x,x_j)\biggr) da(x).
\end{align*}
Now, using the facts that integrating $G$ with respect to one coordinate always yields zero, and that the integral of the Laplacian of any function vanishes, we get that the integral on the right is formally equal to
\begin{align*}
&-\int_{S^2}\hat{\psi}(x) \Delta_{S^2} \hat{\psi}(x) da(x) +4\pi\sum_{j=1}^k \talpha_j \hat{\psi}(x_j) + 4\pi c(\hat{\psi}) \\
&\qquad +  \biggl(-4\pi \sum_{j=1}^k\talpha_j -4\pi \biggr) c(\psi) +4\pi\sum_{j=1}^k \talpha_j (\hat{\psi}(x_j) - c(\hat{\psi})) +8\pi \sum_{1\le j,j'\le k} \talpha_j \talpha_{j'} G(x_j,x_{j'})\\
&= -\int_{S^2}\hat{\psi}(x) \Delta_{S^2} \hat{\psi}(x) da(x) +8\pi\sum_{j=1}^k \talpha_j \hat{\psi}(x_j) \\
&\qquad \qquad +8\pi \sum_{1\le j,j'\le k} \talpha_j \talpha_{j'} G(x_j,x_{j'}) -8\pi c(\hat{\psi})\sum_{j=1}^k \talpha_j.
\end{align*}
(We say that the above identity is `formal' because of the appearance of infinities in the form on $G(x_j,x_j)$. But the quantity that we started out with in the previous display is finite. The resolution of this apparent paradox is that the infinities from $G(x_j,x_j)$ must be formally canceling out with the infinities from the integral of $\hat{\psi} \Delta_{S^2}\hat{\psi}$.) 
Combining with the previous calculations, this gives
\begin{align*}
S(\hat{\rho}) &= -\lambda + \frac{1}{4\pi\beta}\int_{S^2}\hat{\psi}(x)\Delta_{S^2} \hat{\psi}(x) da(x) -\frac{2}{\beta}\sum_{j=1}^k \talpha_j \hat{\psi}(x_j) \\
&\qquad \qquad - \frac{2}{\beta} \sum_{1\le j,j'\le k} \talpha_j \talpha_{j'} G(x_j,x_{j'}) -\frac{2c(\hat{\psi})}{\beta}\sum_{j=1}^k \talpha_j.
\end{align*}
Recalling that the limit in Theorem \ref{semilimit} is equal to 
\begin{align*}
&1+\ln \tmu - \ln \beta - i\pi +(1-\ln 4  )\sum_{j=1}^k \frac{\talpha_j^2}{\beta} + \sum_{j=1}^k \frac{\talpha_j (1+\talpha_j)}{\beta} \ln g(\sigma(x_j))\notag \\
&\qquad \qquad -\frac{4}{\beta}\sum_{1\le j<j'\le k}\talpha_j\talpha_{j'}G(x_j, x_{j'}) - S(\hat{\rho}),
\end{align*}
we see from the above calculations that it is formally equal to 
\begin{align}
&1+ \ln \tmu - \ln \beta - i\pi +(1-\ln 4 )\sum_{j=1}^k \frac{\talpha_j^2}{\beta} + \sum_{j=1}^k \frac{\talpha_j (1+\talpha_j)}{\beta} \ln g(\sigma(x_j))\notag \\
&\qquad \qquad + \lambda +\frac{2}{\beta}\sum_{j=1}^k (\talpha_j \hat{\psi}(x_j) +\talpha_j^2 G(x_j,x_j)) \notag \\
&\qquad \qquad \qquad  - \frac{1}{4\pi\beta}\int_{S^2}\hat{\psi}(x)\Delta_{S^2} \hat{\psi}(x) da(x) + \frac{2c(\hat{\psi})}{\beta}(\beta+1).\label{semi}
\end{align}
Recall the value of $c(\hat{\psi})$ from equation \eqref{chatpsi}, and observe that by equation \eqref{hatrhoeq},
\begin{align}\label{1eq}
1 = -\frac{\tmu}{\beta}\int_{S^2} e^{2\hat{\psi}(x)} da(x). 
\end{align}
Using these identities in equation \eqref{semi} to substitute the values of $c(\hat{\psi})$ and $1$, we see that the expression is formally equal to $J(\hat{\psi})/\beta$, as follows. Substituting the value of $c(\hat{\psi})$ from equation \eqref{chatpsi}, we get
\begin{align*}
\frac{2c(\hat{\psi})}{\beta}(\beta+1) &= 2c(\hat{\psi}) + \frac{2c(\hat{\psi})}{\beta}\\
&= 2\biggl(-\frac{\lambda}{2} +\frac{1}{2}\ln \beta + \frac{i\pi}{2} - \frac{1}{2}\ln \tmu\biggr)+  \frac{2c(\hat{\psi})}{\beta}.
\end{align*}
Putting this back into equation \eqref{semi}, we see that the quantity displayed in equation~\eqref{semi} is equal to 
\begin{align*}
&1  -\chi \sum_{j=1}^k \frac{\talpha_j^2}{\beta} + \sum_{j=1}^k \frac{\talpha_j (1+\talpha_j)}{\beta} \ln g(\sigma(x_j))+\frac{2}{\beta}\sum_{j=1}^k (\talpha_j \hat{\psi}(x_j) +\talpha_j^2 G(x_j,x_j)) \notag \\
&\qquad \qquad    - \frac{1}{4\pi\beta}\int_{S^2}\hat{\psi}(x)\Delta_{S^2} \hat{\psi}(x) da(x) +  \frac{2c(\hat{\psi})}{\beta}.
\end{align*}
Next, replacing the leading $1$ above by the right side of equation \eqref{1eq}, we get that the above quantity is equal to 
\begin{align*}
&  -\chi \sum_{j=1}^k \frac{\talpha_j^2}{\beta} + \sum_{j=1}^k \frac{\talpha_j (1+\talpha_j)}{\beta} \ln g(\sigma(x_j)) +\frac{2}{\beta}\sum_{j=1}^k (\talpha_j \hat{\psi}(x_j) +\talpha_j^2 G(x_j,x_j)) \notag \\
&\qquad \qquad     - \frac{1}{4\pi\beta}\int_{S^2}(\hat{\psi}(x)\Delta_{S^2} \hat{\psi}(x) + 4\pi \tmu e^{2\hat{\psi}(x)})da(x) +  \frac{2c(\hat{\psi})}{\beta}.
\end{align*}
But this is just $J(\hat{\psi})/\beta$. 
This completes the proof.
\end{proof}

\section{Acknowledgements}
The author is grateful to Edward Witten for introducing him to this problem during a sabbatical at the Institute for Advanced Study at Princeton, and numerous helpful discussions subsequently. The author is also indebted to the two anonymous referees for numerous helpful remarks. This research was supported in part by NSF grants DMS-2113242 and DMS-2153654.

\bibliographystyle{abbrvnat}

\bibliography{myrefs}

\end{document}